\pdfoutput=1
\documentclass[10pt]{article}

\usepackage{natbib}                   %
\usepackage{fullpage}                   %
\usepackage{authblk}                    %
\usepackage{url}
\usepackage{booktabs,longtable,array,ragged2e}
\usepackage{xparse}

\usepackage{amsmath}
\usepackage{amssymb}
\usepackage{mathtools}
\usepackage{algorithm}
\usepackage{algorithmic}
\usepackage{amsthm}
\usepackage{enumitem}
\usepackage{pdfrender}
\usepackage{graphicx}
\usepackage{subcaption}
\usepackage{booktabs}
\usepackage{float}
\usepackage{microtype}
\usepackage{placeins}
\usepackage{xcolor}
\usepackage{pgfplots}
\usepgfplotslibrary{groupplots}
\usetikzlibrary{calc}
\pgfplotsset{compat=1.18}
\usepackage{booktabs}

\newtheorem{theorem}{Theorem}[section]
\newtheorem{lemma}[theorem]{Lemma}
\newtheorem{proposition}[theorem]{Proposition}
\newtheorem{corollary}[theorem]{Corollary}

\theoremstyle{definition}
\newtheorem{assumption}[theorem]{Assumption}

\theoremstyle{remark}

\newcommand{\R}{\mathbb{R}}
\newcommand{\E}{\mathbb{E}}

\newcommand{\rbb}{\mathbb{R}}  %
\newcommand{\ebb}{\mathbb{E}}  %

\newcommand{\scal}{\mathcal{S}}

\newcommand{\ip}[2]{\langle#1, #2 \rangle}

\newcommand{\bigO}{\raisebox{-0.1ex}{$O$}}
\newcommand{\bigOtilde}{\raisebox{-0.15ex}{$\widetilde{O}$}}

\usepackage{hyperref}                   %
\definecolor{darkblue}{rgb}{0.0, 0.0, 0.5}  %

\hypersetup{
    colorlinks=true,
    linkcolor=darkblue,
    anchorcolor=darkblue,
    citecolor=darkblue,
    filecolor=darkblue,
    menucolor=darkblue,
    urlcolor=darkblue
}

\title{Adaptive Federated Optimization \\ Across Smooth and Nonsmooth Regimes}
\author[1,$\ast$]{Zimeng Wang}
\author[2,$\ast$]{Ali Dadras}
\author[3]{Alp Yurtsever}
\affil[1,2,3]{Department of Mathematics and Mathematical Statistics, Umeå University, Sweden}
\affil[1]{\texttt{zimeng.wang@umu.se}}
\affil[2]{\texttt{ali.dadras@umu.se}}
\affil[3]{\texttt{alp.yurtsever@umu.se}}
\date{}

\begin{document}

\maketitle
{\renewcommand{\thefootnote}{\fnsymbol{footnote}}\footnotetext[1]{Equal contribution.}}

\begin{abstract}
AdaGrad-style methods can simultaneously adapt to both the order and constants of smoothness in centralized optimization.
However, existing extensions to federated optimization with heterogeneous client objectives and multiple local steps require the smoothness order to be specified in advance.
We address this limitation with AdaFed, a federated method that adapts across several regimes under a single choice of parameters.
In the deterministic setting, AdaFed achieves an $\bigO(1/T)$ convergence rate for both smooth nonconvex and smooth convex objectives, and an $\bigOtilde(1/\sqrt{T})$ rate for Lipschitz nonsmooth convex objectives.
Under a light-tailed stochastic-gradient noise assumption, these guarantees extend to the stochastic setting with additional logarithmic factors and noise-dependent terms.
Experiments across the three problem classes further support our theoretical findings.
\end{abstract}

\section{Introduction}
\label{sec:introduction}

Federated learning (FL) aims to train a shared model from data distributed across many clients without requiring the clients to transmit their raw data. The resulting optimization problem is often formulated as
\begin{equation}
    \min_{x \in \R^d} F(x):= \frac{1}{m}\sum_{i=1}^m F_i(x).
    \label{eq:FL-prob}
\end{equation}
Here $F_i:\R^d\to\R$ is the local loss associated with the $i$-th client, which in stochastic settings can be expressed as:
\[
F_i(x)=\E_{\xi \sim \mathcal{D}_i}[\ell_i(x;\xi)],
\] 
and the data distributions $\mathcal{D}_i$ can differ across clients. To reduce communication cost when $m$ is large, classic algorithms such as FedAvg \citep{mcmahan2017communication,li2020fedavg} allow each participating client to perform multiple local stochastic-gradient steps before its update is aggregated at the server. When the difference among the data distributions $\{\mathcal{D}_i\}_{i=1}^m$ is significant, methods such as SCAFFOLD \citep{karimireddy2020scaffold} and FedVARP \citep{jhunjhunwala2022fedvarp} address the local drift arising from multiple local steps through control variates or server-side variance reduction. However, both theoretical guarantees and practical behavior of these methods depend on step-size schedules whose effective choices vary with the problem class and its related parameters (e.g., smoothness, gradient noise, heterogeneity, etc.).

In the centralized setting, adaptive gradient methods offer a natural way to overcome this reliance on prior problem knowledge. The pioneering method AdaGrad \citep{duchi2011adaptive} adjusts its step sizes adaptively using accumulated gradient information. Beyond empirical success, AdaGrad-type adaptive methods \citep{levy2017online,levy2018online,kavis2019unixgrad,ene2021adaptive} also exhibit an important theoretical advantage: a single update rule can achieve \emph{universal} (in the sense of \citet{nesterov2015universal}) convergence by attaining the rates associated with smooth and nonsmooth objectives without knowing in advance the problem class or smoothness parameter.

However, transferring this universal convergence property to FL settings is highly nontrivial, where algorithms couple client-side local updates with server-side aggregation. 
Server-level adaptation alone is insufficient to guarantee universal convergence because the main optimization steps occur locally on clients, and their resulting updates directly shape server aggregation.
Client-level adaptation can also fail because individual client gradients need not vanish even at a stationary point of $F$. Hence naively accumulating the client gradients during local updates may cause the step size to continuously decay.
Several adaptive FL methods have been proposed in the literature. Specifically, server-level methods include FedOpt \citep{reddi2021adaptive} and FedAMS/FedCAMS \citep{wang2022communication}, client-level methods include $\Delta$-SGD \citep{kim2024adaptive} and FedSPS \citep{mukherjee2023locally}, and methods operating at both levels include Mime \citep{karimireddy2021mime}, FAFED \citep{wu2023faster}, and FedAda$^2$ \citep{lee2026efficient}. 
Despite these efforts, the convergence results established in these works are tailored to specific problem classes, and some guarantees still require knowledge of smoothness parameters. Consequently, none of these works addresses the question central to this work:

\emph{Can a single adaptive FL method, without prior knowledge of the problem class or related parameters, automatically attain the convergence rates associated with smooth nonconvex, smooth convex, and Lipschitz convex objectives?}

\subsection{Our contributions}
We answer this question affirmatively with a new AdaGrad-type FL method. Our contributions are summarized as follows.
\begin{itemize}[leftmargin=1.5em, topsep=0.5em, itemsep=0.25em]
    \item \textbf{Algorithm.} We develop a new adaptive FL method for solving~\eqref{eq:FL-prob} that accommodates heterogeneous client objectives, stochastic local gradients, and multiple local updates per communication round. Its key feature is a scaled AdaGrad-type step size that is shared across clients and fixed throughout each round, and is updated after server aggregation based on the server displacement.

    \item \textbf{Universal convergence.}  The same algorithm automatically attains appropriate convergence rates for smooth nonconvex, smooth convex, and Lipschitz convex objectives, without prior knowledge of the problem class or associated parameters. These rates match those of classical AdaGrad in centralized settings up to logarithmic factors, and benefit jointly from the number of clients and multiple local updates (see Table~\ref{tab:results} for a summary). To the best of our knowledge, this is the first universal convergence guarantee in FL that covers all three problem classes with a single algorithm and without problem-dependent tuning.

    \item \textbf{Further extensions.} We extend the algorithm to independent, nonuniform partial participation and establish universal convergence with exact local gradients across the same three problem classes; see Appendix~\ref{app:partial-participation}. We also prove that, with exact local gradients, the full-participation algorithm automatically adapts to H\"older smoothness of any order $\alpha\!\in\![0,1]$ for convex objectives, attaining the corresponding $\smash{\bigOtilde(T^{-(1+\alpha)/2})}$ rate; see Appendix~\ref{app:holder-convex}.
\end{itemize}

\begin{table}[ht!]
\centering
\caption{Summary of convergence results for Algorithm~\ref{alg:FL-adagrad}. $\bigOtilde(\cdot)$ hides logarithmic factors.}
\label{tab:results}
\renewcommand{\arraystretch}{1.1}
\setlength{\tabcolsep}{5pt}
\begin{tabular}{@{}ccc l l l@{}}
\hline
\multicolumn{3}{c}{\textbf{Problem class}} & \textbf{Gradient type} & \textbf{Assumptions} & \textbf{Convergence} \\
\cline{1-3}
\textbf{Smooth} & \textbf{Lipschitz} & \textbf{Convex} & & & \\
\hline\\[-1.0em]
$\checkmark$ & $\times$ & $\times$ & Exact & Assm.~\ref{ass:lower-bounded} & $\bigO\big(\frac{1}{T}\big)$ \\[0.15em]
$\checkmark$ & $\times$ & $\times$ & Stochastic & Assms.~\ref{ass:lower-bounded},~\ref{ass:stochastic-noise} & $\bigOtilde\big(\frac1T+\frac{\sigma}{\sqrt{mKT}}\big)$ \\[0.5em]
$\checkmark$ & $\times$ & $\checkmark$ & Exact & Assm.~\ref{ass:lower-bounded} & $\bigO\big(\frac{1}{T}\big)$ \\[0.15em]
$\checkmark$ & $\times$ & $\checkmark$ & Stochastic & Assms.~\ref{ass:lower-bounded},~\ref{ass:stochastic-noise} & $\bigOtilde\big(\frac1{T}+\frac{\sigma}{\sqrt{mKT}}\big)$ \\[0.5em]
$\times$ & $\checkmark$ & $\checkmark$ & Exact & Assm.~\ref{ass:lower-bounded} & $\bigOtilde\big(\frac{1}{\sqrt{T}}\big)$ \\[0.15em]
$\times$ & $\checkmark$ & $\checkmark$ & Stochastic & Assms.~\ref{ass:lower-bounded},~\ref{ass:stochastic-noise} & $\bigOtilde\big(\frac{1}{\sqrt{T}}+\frac{\sigma}{\sqrt{mKT}}\big)$ \\[0.3em]
\hline
\end{tabular}
\end{table}

Compared to centralized analyses of AdaGrad~\citep{duchi2011adaptive,ward2020adagrad,wang2023adagrad}, FL creates two additional difficulties: local drift from multiple client updates and delayed accumulation due to unavailable aggregated gradient information. To handle these challenges, we link local drift to the server displacement, thereby reducing the analysis to server updates. We then bound this displacement almost surely by the maximal noise, whose moments admit logarithmic bounds, and use the result with a shifted summation lemma to control delayed accumulation.
Notably, our analysis removes bounded-domain or bounded-iterate assumptions, which are commonly used in AdaGrad-type analyses~\citep{levy2017online,levy2018online,kavis2019unixgrad,ene2021adaptive}.

The remainder of the paper is organized as follows. Section~\ref{sec:related-work} reviews related work. Section~\ref{sec:algorithm-design} presents the algorithm. Section~\ref{sec:convergence} gives the main convergence results and a proof sketch. Section~\ref{sec:numerical-experiments} reports the numerical experiments and Section~\ref{sec:conclusion} concludes the paper. All technical proofs are deferred to Appendix~\ref{app:preliminary}--\ref{app:lipschitz-convex}. Extensions to partial participation and H\"older smoothness are established in Appendix~\ref{app:partial-participation} and~\ref{app:holder-convex}, respectively. 
\section{Related Work}
\label{sec:related-work}

\noindent\textbf{Universal adaptive optimization.}
AdaGrad \citep{duchi2011adaptive} introduced adaptive step sizes based on accumulated gradients, with popular variants including Adam \citep{kingma2015adam} and AMSGrad \citep{reddi2018convergence}. Scalar AdaGrad steps have since been analyzed for stochastic and nonconvex optimization under progressively weaker assumptions. \citet{kavis2022high,ward2020adagrad} provide high-probability guarantees under uniformly bounded gradients, which is later extended in \citet{attia2023sgd,faw2022power,wang2023adagrad} with relaxed assumptions. For convex optimization, AcceleGrad \citep{levy2018online}, universal accelerated methods \citep{ene2021adaptive,kavis2019unixgrad,levy2018online} improve over AdaGrad by achieving near-optimal rates simultaneously for convex smooth and nonsmooth objectives. Nesterov's universal method \citep{nesterov2015universal} instead uses line search. Further extensions treat inexact stochastic oracles \citep{rodomanov2024universality} and H\"older smoothness \citep{orabona2023normalized}. More closely related, \citet{wang2026universal} replace gradient accumulation with gradient-mapping accumulation and obtain a method for smooth nonconvex, smooth convex, and nonsmooth convex composite problems.

\vspace{1em}

\noindent\textbf{Adaptive federated optimization.}
Existing adaptive FL methods differ mainly in where the adaptive step size is maintained. At the server, FedOpt \citep{reddi2021adaptive} applies AdaGrad, Adam, or Yogi to aggregated client displacements, while FedAMS/FedCAMS \citep{wang2022communication} add AMSGrad-style stabilization. Other server-side variants include FedUR \citep{zhang2023fedur}, FedDuA \citep{takakura2025feddua}, FedAgg \citet{yuan2025aggregated}, and the asynchronous method FADAS \citep{wang2024fadas}. At the client level, Local AdaAlter \citep{xie2020local} synchronizes AdaGrad denominators, whereas $\Delta$-SGD \citep{kim2024adaptive} follows a different type of adaptivity \citep{malitsky2020adaptive} to adapt each client's step size to its local smoothness. Other local adaptive methods include FedSPS \citep{mukherjee2023locally} and FedLADA \citep{sun2023efficient}. Methods acting at both levels include FAFED \citep{wu2023faster} and FedAda$^2$ \citep{lee2026efficient}, which coordinate server statistics, preconditioners, or global corrections with local adaptation. However, their analyses largely focus on one specific problem class (mostly smooth nonconvex or smooth convex), and some results require smoothness-dependent step-size conditions (e.g., FedOpt \citep{reddi2021adaptive} and FAFED \citep{wu2023faster}). In contrast, our method uses one unchanged rule to adapt across three problem classes, without knowledge of problem-related parameters.
A more detailed summary of existing adaptive FL methods is provided in Appendix~\ref{app:adaptive-fl-assumptions}.

\section{Algorithm Design}
\label{sec:algorithm-design}

\noindent\textbf{Notation.}
We use $\|\cdot\|$ for the Euclidean norm and denote $[n]:=\{1, 2, \dots, n\}$. %
For a stochastic algorithm, we denote by $\ebb_t[\cdot]:=\ebb\left[\cdot \mid \mathcal{F}_{t}\right]$ the expectation conditioned on $\mathcal{F}_{t}$, which contains all the information available at step $t$. Depending on the differentiability of $F$, we use $\nabla F$ to denote the gradient or a subgradient of $F$.

\vspace{1em}

\noindent\textbf{Algorithm overview.} We present the adaptive federated learning method in Algorithm~\ref{alg:FL-adagrad}. It initializes the server model at $x_1$ and the shared step size at $\eta_1=\eta/\gamma$. At each round $t$, the server broadcasts $(x_t,\eta_t,\tau_t)$ to all $m$ clients. Each client $i$ starts from $x_{i,0}^t=x_t$, performs $K$ local (stochastic) gradient\,/subgradient steps with step size $\tau_t\eta_t$, and returns its local displacement $\smash{\Delta_i^t=x_{i,K}^t-x_t}$ to server. The server then computes the new model and uses it to update the step size.

\begin{algorithm}[t]
    \caption{Adaptive Federated Learning Method (AdaFed)}\label{alg:FL-adagrad}
    \begin{algorithmic}
    \REQUIRE{} initial model $x_1\in\R^d$; local-step count $K\geq1$; server iterations $T\geq1$; prescribed local scales $\tau_t>0$; parameters $\gamma,\eta>0$.
    \STATE{} Initialize the step size $\eta_1:=\eta/\gamma$.
    \FOR{$t = 1, 2, \dots, T$}
    \STATE{} \textbf{Server} broadcasts $(x_t,\eta_t,\tau_t)$. %
    \FOR{each \textbf{client} $i\in[m]$ \textbf{in parallel}}
    \STATE{} Initialize $x_{i,0}^t:=x_t$.
    \FOR{$k=0,\dots,K-1$}
    \STATE{} Update $x_{i,k+1}^t:=x_{i,k}^t-\tau_t\eta_t g_{i,k}^t$ with $g_{i,k}^t\approx\nabla F_i(x_{i,k}^t)$.
    \ENDFOR{}
    \STATE{} Set $\Delta_i^t:=x_{i,K}^t-x_t$ and send $\Delta_i^t$ to the server.
    \ENDFOR{}
    \STATE{} \textbf{Server} updates the model and the step size:
    \begin{align*}
        x_{t+1}:=x_t+\frac1{mK\tau_t}\sum_{i=1}^m\Delta_i^t,\qquad \frac{1}{\eta_{t+1}^2}:=\frac{1}{\eta_t^2}\left(1+\frac{\|x_{t+1}-x_t\|^2}{\eta^2}\right).
    \end{align*}
    \ENDFOR{}
    \STATE{} \textbf{Output:} $\bar{x}_T$ chosen uniformly at random from $\{x_t\}_{t=1}^T$.
    \end{algorithmic}
\end{algorithm}

Note that the scales $\{\tau_t\}$ are introduced to mitigate local drift due to multiple local updates. %
At the server, the factor $1/\tau_t$ in the aggregation cancels this scaling, so that $\tau_t$ does not directly affect the server-level update. 
Indeed, let $q_t:=1/\eta_t$, and define
\begin{equation*}
    \bar g_t^{(i)}:=\frac1K\sum_{k=0}^{K-1}g_{i,k}^t,\qquad \bar g_t:=\frac1m\sum_{i=1}^m\bar g_t^{(i)}.
\end{equation*}
Then, since $\Delta_i^t=-(K\tau_t/q_t)\bar g_t^{(i)}$, the server update in Algorithm~\ref{alg:FL-adagrad} can be written as
\begin{equation}
\tag{Core-FP}
x_{t+1}=x_t-\frac{\bar g_t}{q_t},\qquad q_{t+1}^2=q_t^2+\frac{\|\bar g_t\|^2}{\eta^2}.
\label{eq:nocv-outer-method}
\end{equation}

\vspace{1em}

\noindent\textbf{Motivation for the adaptive step size.} 
Individual client gradients generally do not vanish at a stationary point of the global objective. As a result, accumulating their squared norms can force the adaptive step size to keep decaying even as the global gradient vanishes, preventing the method from exploiting global smoothness. We therefore construct the adaptive accumulator from the aggregated server direction $\bar g_t$. In the special case $K=1$ with exact local gradients, $\bar{g}_t=\nabla{F}(x_t)$, and the recursion in \eqref{eq:nocv-outer-method} reduces directly to a centralized AdaGrad-type update on the global objective. With multiple local updates, however, $\bar{g}_t$ consists of gradients evaluated along different client trajectories and is no longer the global gradient at $x_t$. Additionally, the adaptive accumulator is delayed since the server displacement $x_{t+1}-x_t$ is unavailable during clients' local updates in round $t$. Establishing that this adaptive mechanism retains universal convergence despite these discrepancies is one of the main technical challenges of our analysis.

\section{Convergence Analysis}
\label{sec:convergence}
In this section, we present the convergence results of Algorithm~\ref{alg:FL-adagrad}.  We start by making the following standard assumption on problem~\eqref{eq:FL-prob}.
\begin{assumption}\label{ass:lower-bounded}
$F$ is lower bounded, i.e, $F^\star:=\inf_{x\in\rbb^d} F(x)>-\infty$. For convex $F$, we further assume that the minimum is attained at some point $x^\star\in\rbb^d$.
\end{assumption}
Let $\mathcal F_t$ contain the history before round $t$, including $(x_t,q_t)$. For client $i$, let $\mathcal F_{t,k}^{(i)}$ contain $\mathcal F_t$ and its first $k$ oracle calls in round $t$, so $x_{i,k}^t$ is $\mathcal F_{t,k}^{(i)}$-measurable. Define the local stochastic noise 
\[
\xi_{i,k}^t:=g_{i,k}^t-\nabla F_i(x_{i,k}^t).
\]
For the stochastic setting, we make the following assumption.
\begin{assumption}
\label{ass:stochastic-noise}
The noise variables $\{\xi_{i,k}^t\}$ are conditionally independent and satisfy
\begin{equation*}
    \E[\xi_{i,k}^t\mid\mathcal F_{t,k}^{(i)}]=0,\qquad \E\left[\exp\left(\frac{\|\xi_{i,k}^t\|^2}{\sigma^2}\right)\middle|\mathcal F_{t,k}^{(i)}\right]\leq\exp(1)
\end{equation*}
for all $t\ge 1$, $i\in[m]$, and $k\in\{0,\ldots,K-1\}$.
\end{assumption}
Assumption~\ref{ass:stochastic-noise} is stated for $\sigma>0$. When $\sigma=0$, it means $\xi_{i,k}^t=0$ almost surely. This assumption is also used in~\citep{li2019convergence,nemirovski2009robust}. It allows unbounded noise but is stronger than bounded conditional variance. Indeed, under Assumption~\ref{ass:stochastic-noise}, the convexity of $\exp(\cdot)$ implies
\begin{equation}
    \E[\|\xi_{i,k}^t\|^2\mid\mathcal F_{t,k}^{(i)}]\leq\sigma^2.
    \label{eq:sg-implied-variance}
\end{equation}
However, the bounded variance assumption alone is insufficient to guarantee convergence of Algorithm~\ref{alg:FL-adagrad}, as indicated by the following proposition. The proof is deferred to Appendix~\ref{app:proof-couterexample}.
\begin{proposition}[Bounded variance is insufficient]\label{prop:fs-variance-obstruction}
Fix any $m,K\geq1$. Let $\{\tau_t\}$ be chosen arbitrarily and set $\gamma=\eta=1$. Consider the following convex, $1$-smooth, and $1$-Lipschitz client objectives:
\[
    F_i(x)=F(x):=\sqrt{1+x^2}-1, \quad i=1,\ldots,m.
\]
If $x_1=0$, then for any $T$ divisible by 4, there exist independent noises satisfying $\E[\xi_{i,k}^t\mid\mathcal F_{t,k}^{(i)}]=0$ and $\E[|\xi_{i,k}^t|^2\mid\mathcal F_{t,k}^{(i)}]\leq1$ such that
\[
    \E[|F'(\bar x_T)|^2]\geq\frac1{16384mK^2},\qquad \E[F(\bar x_T)-F^\star]\geq\frac{\sqrt T}{2048mK^2}.
\]
\end{proposition}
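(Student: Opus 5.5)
The plan is to build, for each oracle call, a \emph{state-dependent two-point noise} that is conditionally mean zero with conditional second moment at most $1$, yet has a systematic effect on the iterates. On the typical outcome the noise pushes the server iterate \emph{away} from $x^\star=0$. In the rare outcome it produces a large spike that inflates $q_t$ and freezes the method. The mechanism is as follows. Since all clients share $F$, \eqref{eq:nocv-outer-method} gives $x_{t+1}=x_t-\bar g_t/q_t$ with $q_{t+1}^2=q_t^2+\|\bar g_t\|^2$. If on a high-probability event every $\bar g_t$ has sign opposite to $x_t$ and magnitude of order one, then the AdaGrad normalization yields steps of size about $1/\sqrt t$, all pointing outward. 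Consequently $|x_t|$ grows like $\sqrt t$. Because $F(x)\ge |x|-1$ and $|F'(x)|\to1$ for large $|x|$, this growth yields both lower bounds once we average over the uniformly random output index.

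\textbf{Step 1: the noise.} Given $\mathcal F_{t,k}^{(i)}$, let $s:=\mathrm{sign}(x_t)$, with $s:=1$ if $x_t=0$. Set $\xi_{i,k}^t=-s\,a$ with probability $1-p$ and $\xi_{i,k}^t=s\,a(1-p)/p$ with probability $p$. This has mean zero and second moment $a^2(1-p)/p$. The natural choice $a^2=p$ meets the variance constraint, but then $a\ll1$ and the drift is dominated by $F'(x)\in(-1,1)$. I therefore expect the constraint to force a mixed design, in which only a few of the $mK$ calls per round carry the large outward noise. This is where the factors $m$ and $K^2$ in the bounds would come from. The average $\bar g_t$ dilutes a single call's contribution by $1/(mK)$. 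A further factor $1/K$ enters because the push acts through the local trajectory, which is only $K\tau_t\eta_t$-scale. I would tune $p$ and $a$ so that, on the event $\mathcal G$ that no spike occurs in rounds $1,\dots,T/4$, we have $\mathrm{sign}(\bar g_t)=-s$ and $|\bar g_t|\ge c/(mK)$. I would also make $\Pr(\mathcal G)\ge 1/2$, for instance by taking $p\approx 1/(mKT)$.

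\textbf{Step 2: growth on $\mathcal G$.} On $\mathcal G$, since $|\bar g_t|\le C$ (gradients are bounded by $1$ and the noise is bounded on $\mathcal G$), we get $q_t\le \sqrt{1+C^2 t}$. The outward displacement then satisfies $|x_{t+1}|\ge |x_t|+c/(mK\sqrt{1+C^2t})$, so $|x_t|\gtrsim \sqrt t/(mK)$ for $t\ge T/4$. After round $T/4$ the noise is switched off, and $F'$ pulls the iterate back. The return speed is again of order $1/\sqrt t$, and over the remaining rounds up to $T$ it removes only a controlled fraction of the distance. This is the role of $4\mid T$: for all $t\in[T/4,T/2]$, say, $|x_t|\gtrsim\sqrt T/(mK^2)$. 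Hence $|F'(x_t)|$ is bounded below by a constant multiple of $\min\{1,\sqrt T/(mK^2)\}$, and $F(x_t)\gtrsim \sqrt T/(mK^2)$ once this exceeds $1$. When it does not, I would use $F(x)\ge x^2/4$ for $|x|\le1$ instead.

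\textbf{Step 3: averaging.} Since $\bar x_T$ is uniform over $\{x_t\}_{t=1}^T$, we have $\E[\phi(\bar x_T)]\ge \Pr(\mathcal G)\cdot\frac14\min_{t\in[T/4,T/2]}\phi(x_t)$ for $\phi=F$ and for $\phi=|F'|^2$. This produces the stated constants after tracking $\Pr(\mathcal G)\ge1/2$ and the constants from Step 2. The same construction works for every choice of $\{\tau_t\}$, because $\tau_t$ cancels at the server.

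\textbf{Main obstacle.} The delicate part is Step 1: reconciling the variance budget $\E|\xi|^2\le1$ with an outward bias strong enough to beat $F'$, which is of order one. The rare spikes are what make this possible. When a spike does occur, it inflates $q$ and freezes the iterate, and this only helps the lower bound, so I would handle that case by monotonicity rather than discard it. The precise balance between $p$, $a$, and the number of perturbed calls per round is the step I expect to need the most care to reach exactly the $1/(mK^2)$ scaling.
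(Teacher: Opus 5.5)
\textbf{There is a genuine gap in Step 1, and tuning $p$, $a$, or the number of perturbed calls cannot fix it.} No mean-zero noise with conditional second moment at most $1$ can produce a persistent outward drift on an event of probability $\ge 1/2$. Let $S:=\sum_{t=1}^{T/4}\bar\xi_t$. Its increments are martingale differences with $\E_t[|\bar\xi_t|^2]\le 1/(mK)$; the computation behind \eqref{eq:noise-ave-bd} uses only conditional unbiasedness and the second-moment bound. Hence $\E[S^2]\le T/(4mK)$ for \emph{any} design, whether state-dependent or mixed.

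Now take $K=1$, where $\bar g_t=F'(x_t)+\bar\xi_t$ exactly. The same works for any $K$ with $\tau_t$ small enough that the local paths stay on the side of $x_t$. On your event $\mathcal G$ you have $s\equiv1$ and $x_t\ge 0$, so $F'\ge0$ along the paths. Your requirement $\bar g_t\le -c/(mK)$ then forces $\bar\xi_t\le -c/(mK)$ for all $t\le T/4$, i.e.\ $S\le -cT/(4mK)$ on $\mathcal G$. With $\mathbb P(\mathcal G)\ge1/2$ this gives $c^2T^2/(32m^2K^2)\le T/(4mK)$, i.e.\ $T\le 8mK/c^2$. Concretely, your two-point noise needs $a\le\sqrt{p/(1-p)}$, and keeping spikes unlikely over $T/4$ rounds forces $a\lesssim 1/\sqrt T$. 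That push is cancelled by $F'(x)\approx x$ as soon as $|x|\gtrsim a$. So the $\sqrt t/(mK)$ growth in Step 2, and everything built on it, is unavailable.

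Two smaller issues. First, $\tau_t$ cancels in \eqref{eq:nocv-outer-method}, but $\bar g_t$ still depends on the local paths and hence on $\tau_t$, so your construction is not automatically uniform over arbitrary $\{\tau_t\}$. Second, a noise whose sign depends on $x_t$ is not an independent family, which the statement requires.

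\textbf{The missing idea is that the lower bound is a rare-event phenomenon.} The factor $1/(mK^2)$ in both bounds is a \emph{probability}, not a displacement scale from dilution over clients and local steps. The paper takes i.i.d.\ noise $\xi=\pm M$ with probability $1/(2M^2)$ each and $\xi=0$ otherwise, with $M=16mK\sqrt T$, so the variance is exactly $1$. It then considers the event that exactly one of the $mKT$ draws is nonzero and that it falls on the last local step of some client in a round $j\in\{T/2+1,\dots,3T/4\}$. This event has probability at least $mT/(8M^2)=1/(2048mK^2)$.

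On this event:
\begin{itemize}
\item Since $F'(0)=0$, you have $x_t=0$ and $q_t=1$ up to round $j$, for every choice of $\tau_t$.
\item Because the accumulator is delayed, the whole spike acts at once: $\bar g_j=\pm16\sqrt T$ and $x_{j+1}=\mp16\sqrt T$, and only afterwards $q_{j+1}\ge16\sqrt T$.
\item From then on $|\bar g_t|\le1$, so the iterate moves by at most $\sqrt T/16$ in total. Hence $|x_t|\ge8\sqrt T$ during the last $T-j\ge T/4$ rounds, where $|F'(x_t)|^2\ge1/2$ and $F(x_t)\ge4\sqrt T$.
\end{itemize}
Averaging over the output index and multiplying by the event probability gives exactly $1/(16384mK^2)$ and $\sqrt T/(2048mK^2)$.

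Your closing remark, that a spike inflates $q$ and freezes the iterate, is precisely this mechanism. It has to be the driver of the bound, not a side case handled by monotonicity.
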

The counterexample exploits rare but large noise realizations permitted under bounded variance. A single such realization can move the server iterate far from the minimizer, and also produce a large server displacement before it is incorporated into the adaptive accumulator. The resulting increase in the accumulator then makes subsequent steps too small for the iterates to recover within the remaining rounds. %

\subsection{Main results}
We are now ready to state our main convergence results, which establish universal convergence for three distinct problem classes: smooth nonconvex, smooth convex, and Lipschitz convex. For ease of presentation, we define the following quantities:
\begin{align}
    &\mathcal U_T:=1+\sigma\sqrt{\frac{T}{mK}}+\sigma^2\bigl(1+\log(mKT)\bigr)\log\bigl(1+\sigma^2(1+\log(mKT))\bigr),\notag\\
    &\mathcal C_T:=\bigl(1+\sigma^2(1+\log(mKT))\bigr)\bigl(1+\log\mathcal U_T\bigr),\label{def:mathcal_C_T}\\
    &\mathcal V_T:=\left(1+\sigma^2(1+\log(mKT))\right)\left(1+\log\big(\sqrt{T}+\sigma\sqrt{T/(mK)}\big)\right)+(1+\sigma)\log T.\label{def:mathcal_V_T}
\end{align}
We use $\lesssim$ to hide constants depending only on fixed quantities (e.g., $L, G, \gamma, \eta$) hereafter, i.e., $A\lesssim B$ if there exists a constant $c>0$ such that $A\leq c B$. We also use $\bigOtilde(\cdot)$ to hide polylogarithmic factors.

\begin{theorem}[Universal convergence of Algorithm~\ref{alg:FL-adagrad}]\label{thm:unified-convergence}
Let Assumptions~\ref{ass:lower-bounded} and~\ref{ass:stochastic-noise} hold. Let $\{x_t\}_{t\geq1}$ be the iterates generated by Algorithm~\ref{alg:FL-adagrad} with arbitrary $\gamma,\eta>0$ and $\tau_t$ chosen as
\begin{equation}\label{eq:partial-tau}
\tau_t=1~\text{if}~K=1; \qquad \tau_t\leq\frac{1}{(K-1)t}~\text{if}~K\geq2.
\end{equation} 
Then, for any $T\geq1$, we have the following results:

\vspace{0.5em}

\noindent\textbf{(i) Smooth nonconvex.} If each $F_i$ is $L$-smooth, then
\begin{equation*}
    \E[\|\nabla F(\bar x_T)\|^2]\lesssim\frac{\mathcal C_T^2}{T}+\frac{\sigma\mathcal C_T}{\sqrt{mKT}}.
\end{equation*}

\noindent\textbf{(ii) Smooth convex.} If each $F_i$ is $L$-smooth and convex, then
\begin{equation*}
    \E[F(\bar x_T)-F^\star]\lesssim\frac{\mathcal C_T^2}{T}+\frac{\sigma\mathcal C_T}{\sqrt{mKT}}.
\end{equation*}

\noindent\textbf{(iii) Lipschitz convex.} If each $F_i$ is convex and $G$-Lipschitz, then
\begin{equation*}
    \E[F(\bar x_T)-F^\star]\lesssim\left(\frac1{\sqrt T}+\frac{\sigma}{\sqrt{mKT}}\right)\mathcal V_T.
\end{equation*}
\end{theorem}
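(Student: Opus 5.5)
The proof works with the server recursion \eqref{eq:nocv-outer-method}, $x_{t+1}=x_t-\bar g_t/q_t$ and $q_{t+1}^2=q_t^2+\|\bar g_t\|^2/\eta^2$, and treats $\bar g_t$ as a perturbed version of $\nabla F(x_t)$. We decompose
\[
\bar g_t=\nabla F(x_t)+e_t+\bar\xi_t,\qquad e_t:=\frac1{mK}\sum_{i,k}\bigl(\nabla F_i(x_{i,k}^t)-\nabla F_i(x_t)\bigr),\qquad \bar\xi_t:=\frac1{mK}\sum_{i,k}\xi_{i,k}^t .
\]
The first step controls the drift term $e_t$ by the server displacement. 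Each local displacement is $x_{i,k}^t-x_t=-(\tau_t/q_t)\sum_{j<k}g_{i,j}^t$. In the smooth case, $L$-smoothness and $\tau_t\le 1/((K-1)t)$ let a discrete Gr\"onwall argument bound $\|x_{i,k}^t-x_t\|$ by $\tau_t K/q_t$ times the norm of the local gradients and noise. Because $\tau_t(K-1)\le 1/t$, this drift becomes a summable perturbation, bounded by $\tfrac{L}{t}\bigl(\|x_{t+1}-x_t\|+\text{noise}/q_t\bigr)$. In the Lipschitz case, subgradients are bounded by $G$, so the drift only enters through $\langle e_t,x_t-x^\star\rangle$. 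There it is handled by the trajectory-wise convexity inequality $F_i(x_{i,k}^t)-F_i(x^\star)\le\langle\nabla F_i(x_{i,k}^t),x_{i,k}^t-x^\star\rangle$, together with $\|x_{i,k}^t-x_t\|\lesssim (G+\text{noise})/(tq_t)$.

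The second step handles the delayed accumulator: $q_t$ does not include $\|\bar g_t\|^2$. We would use a shifted summation lemma of the form
\[
\sum_{t}\frac{a_t}{b_{t}+\sum_{s<t}a_s}\le \log\Bigl(1+\frac{\sum_t a_t}{b}\Bigr)+\frac{\max_t a_t}{b}.
\]
This makes the cost of the delay proportional to $\max_t\|\bar g_t\|^2$. In the deterministic smooth case $\|\bar g_t\|$ is bounded by $L$ times the iterate gap plus $\|\nabla F(x_t)\|$, and the resulting quantity is self-bounded. In the stochastic case we bound it almost surely by $\max_{t,i,k}\|\xi_{i,k}^t\|^2$. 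Assumption~\ref{ass:stochastic-noise} then gives $\E\max\|\xi\|^2\le\sigma^2(1+\log(mKT))$, which is exactly the factor appearing in $\mathcal C_T$ and $\mathcal V_T$. The noise-averaging benefit $\sigma/\sqrt{mK}$ comes from conditional independence: $\E\|\bar\xi_t\|^2\le\sigma^2/(mK)$, so $\E\sum_t\|\bar g_t\|^2$ contains a term $\sigma^2T/(mK)$ and $q_T\lesssim \sqrt{\cdots}$ yields the $\sigma\sqrt{T/(mK)}$ term in $\mathcal U_T$.

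The third step is a separate case analysis for each regime.

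\textbf{(i)} We apply the descent lemma to $F(x_{t+1})$, take conditional expectations (the martingale term $\langle\nabla F(x_t),\bar\xi_t\rangle/q_t$ vanishes because $q_t$ is $\mathcal F_t$-measurable), and sum over $t$. This gives $\sum_t\E\|\nabla F(x_t)\|^2/q_t\lesssim F(x_1)-F^\star+L\sum_t\E\|\bar g_t\|^2/q_t^2$. The last sum is logarithmic by the summation lemma. A standard AdaGrad argument then shows that $q_T$ is bounded by a constant plus a noise term: if $q_t$ exceeded roughly $L$, the iterates would descend monotonically. Combining this with Cauchy--Schwarz, $\sum\|\nabla F\|^2\le q_T\sum\|\nabla F\|^2/q_t$, gives the rate $\mathcal C_T^2/T+\sigma\mathcal C_T/\sqrt{mKT}$.

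\textbf{(ii)} We expand $\|x_{t+1}-x^\star\|^2$, use convexity and smoothness to get $\langle\nabla F(x_t),x_t-x^\star\rangle\ge F(x_t)-F^\star$ and $\|\nabla F\|^2\le 2L(F-F^\star)$, and show that $\|x_t-x^\star\|$ stays bounded in expectation up to log factors. This stability bound is what lets us avoid a bounded-domain assumption. We then combine it with the bound on $q_T$ from (i).

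\textbf{(iii)} We use the same distance recursion, with $\|\bar g_t\|\le G+\|\bar\xi_t\|$. The bound $q_T\lesssim\sqrt{T}+\sigma\sqrt{T/(mK)}$ gives the $\mathcal V_T$ log factor, and the drift contributes $\sum_t 1/t\lesssim\log T$, which is the $(1+\sigma)\log T$ term.

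The main obstacle is the stability of the iterates, i.e.\ bounding $\max_t\|x_t-x^\star\|$ in the convex cases without a bounded domain, when the accumulator is both delayed and noisy. This is exactly where Proposition~\ref{prop:fs-variance-obstruction} shows that bounded variance fails. The plan is to prove an almost-sure bound of the form $\|x_{t}-x^\star\|^2\lesssim\|x_1-x^\star\|^2+\eta^2\log(q_t)+\eta^2\max\|\xi\|^2/q_1^2$. This uses the identity $\sum_s\|\bar g_s\|^2/q_s^2\le \eta^2\log(q_{t}^2/q_1^2)+\max\|\bar g\|^2/q_1^2$, and then the sub-Gaussian maximal bound to take expectations.
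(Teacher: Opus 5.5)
Your proposal has genuine gaps, concentrated in the smooth case and in the stability step.

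\textbf{Drift and the delayed accumulator (smooth case).} First, the bound $\|e_t\|\lesssim\frac{L}{t}(\|x_{t+1}-x_t\|+\text{noise}/q_t)$ is false with heterogeneous clients. Your Gr\"onwall step gives $\|x_{i,k}^t-x_t\|\lesssim\frac{k\tau_t}{q_t}(\|\nabla F_i(x_t)\|+\text{noise})$, but the individual $\|\nabla F_i(x_t)\|$ are not controlled by the average $\|\bar g_t\|=q_t\|x_{t+1}-x_t\|$. With exact gradients at a stationary point of $F$ one has $\bar g_t=e_t$, typically nonzero, and your bound would then force $q_t\lesssim L/t$. The paper instead writes $\|\nabla F_i(x_t)\|\le\|\nabla F_i(x_1)\|+L\|x_t-x_1\|$. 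The constant part is summable because $\sum_t\tau_t^2\lesssim(K-1)^{-2}$. The part $\|x_t-x_1\|=\|\sum_{j<t}(x_{j+1}-x_j)\|$ is handled by $\tau_t\lesssim 1/((K-1)(t-1))$ and Hardy's inequality. This gives $\sum_t\E_t\|e_t\|^2\lesssim 1+\mathcal R_T$ with $\mathcal R_T:=\sum_t\|x_{t+1}-x_t\|^2$, so the drift is tied to the \emph{cumulative} displacement, not the current one.

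Second, your shifted summation lemma is valid: $\sum_t a_t/A_t\le\log(A_{T+1}/A_1)+\max_t a_t/A_1$ follows from $x-\log(1+x)\le x^2/(1+x)$. But it charges $\max_t\|\bar g_t\|^2/q_1^2$. That is fine for Lipschitz objectives, where $\|\bar g_t\|\le G+M_T$ with $M_T$ the maximal noise norm. For smooth objectives, however, $\bar g_t$ contains $\nabla F(x_t)$, which grows with the a priori unbounded trajectory. So it is not almost surely bounded by $\max\|\xi\|^2$. Routing through $\|\nabla F(x_t)\|^2\le 2L(F(x_t)-F^\star)$ and the descent inequality produces a term of order $L^2\mathcal R_T/q_1^2$, which can be absorbed only if $q_1=\gamma/\eta$ is large relative to $L$.

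The missing idea is to bound the ratio $\|x_{t+1}-x_t\|=\|\bar g_t\|/q_t$ instead:
\begin{itemize}
\item write $\bar g_t=\bar g_{t-1}+(g_t-g_{t-1})+(b_t-b_{t-1})$ with $g_t=\nabla F(x_t)$ and $b_t=\bar g_t-g_t$;
\item use $q_t\ge\|\bar g_{t-1}\|/\eta$ and $\|g_t-g_{t-1}\|/q_t\le L\|\bar g_{t-1}\|/(q_{t-1}q_t)\le L\eta/q_1$;
\item bound $\|b_t\|$ pathwise via $\|x_t-x_1\|\le\eta\sqrt{t-1}\,q_t/q_1$ and $\sup_t\sqrt{t-1}\,\tau_t<\infty$.
\end{itemize}
This yields $\|x_{t+1}-x_t\|\le R_0+R_1M_T$ almost surely, after which the standard log-sum lemma applies with a random constant.

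\textbf{Stability in the stochastic case.} The almost-sure bound you plan, $\|x_t-x^\star\|^2\lesssim\|x_1-x^\star\|^2+\eta^2\log q_t+\eta^2\max\|\xi\|^2/q_1^2$, cannot hold when $\sigma>0$. The cross term $-2\langle\bar g_t,x_t-x^\star\rangle/q_t$ is nonpositive (up to the local error $\epsilon_t$) only in conditional expectation. The remainder $-2\langle\zeta_t,x_t-x^\star\rangle/q_t$, with $\zeta_t:=\bar g_t-\E_t[\bar g_t]$, is a martingale increment that can accumulate. For instance, take $F(x)=\sqrt{1+x^2}-1$, $x_1=0$, $m=K=1$, and i.i.d.\ noise $\pm M$ with $M>1$ (admissible with $\sigma=M$). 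The event that every noise equals $+M$ has positive probability, and on it $|x_{T+1}|\gtrsim\eta\frac{M-1}{M+1}\sqrt T$, while your right-hand side is $O(\log T)$.

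The paper keeps this martingale and bounds its maximum by Doob's $L^2$ inequality, $\E[\max_t|\mathcal M_t|^2]\le 16\,\E[D_T\mathcal R_T]$ with $D_T:=\max_{t\le T+1}\|x_t-x^\star\|^2$, using $\E_t\|\zeta_t\|^2/q_t^2\le\E_t\|x_{t+1}-x_t\|^2$. It then closes a self-bounding inequality for $\sqrt{\E[D_T^2]}$. In the smooth convex case it also absorbs the $\tau_t^2\|x_t-x^\star\|^2$ drift through a product $\prod_t(1+\alpha_t)$ with $\alpha_t\propto\tau_t^2$, which is bounded because $\sum_t\tau_t^2<\infty$.

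Relatedly, your final step $\sum_t\|\nabla F(x_t)\|^2\le q_{T+1}\sum_t\|\nabla F(x_t)\|^2/q_t$ requires $\E[q_{T+1}^2]$ and the second moment of $\sum_t\|\nabla F(x_t)\|^2/q_t$. These in turn need $\E[\mathcal R_T^2]$ and higher moments of $M_T$. The argument that ``once $q_t\gtrsim L$ the iterates descend monotonically'' is deterministic and does not supply these bounds in the stochastic setting.
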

Case~(i) is proved as Theorem~\ref{thm:sg-smooth-nonconvex} in Appendix~\ref{app:smooth-nonconvex}, case~(ii) as Theorem~\ref{thm:sg-smooth-convex} in Appendix~\ref{app:smooth-convex}, and case~(iii) as Theorem~\ref{thm:sg-lipschitz-convex} in Appendix~\ref{app:lipschitz-convex}. 
In fact, for the smooth convex case and $K\geq2$, we can use the more relaxed choice $\tau_t\leq\frac{\tau}{(K-1)t^p}$ for any $p>1/2$ and $\tau>0$ without degrading the convergence rate, see Theorem~\ref{thm:sg-smooth-convex} for details.

To provide more explicit convergence rates in the stochastic setting, note that $\mathcal C_T\lesssim \log^2(mKT)$ and $\mathcal V_T\lesssim \log^2(mKT)$. Consequently, the three convergence rates in Theorem~\ref{thm:unified-convergence} can be simplified to
\begin{equation}\label{eq:thm-rates}
\bigOtilde\!\left(\frac{1}{T}+\frac{\sigma}{\sqrt{mKT}}\right),\quad \bigOtilde\!\left(\frac{1}{T}+\frac{\sigma}{\sqrt{mKT}}\right), \quad \bigOtilde\!\left(\frac{1}{\sqrt T}+\frac{\sigma}{\sqrt{mKT}}\right),
\end{equation}
respectively.
In every setting, the rate matches that of classical AdaGrad in the non-federated case (up to logarithmic factors captured by $\mathcal C_T$ and $\mathcal V_T$), while the $\sigma/\sqrt{mKT}$ term shows that the convergence benefits jointly from the number of clients $m$ and local updates $K$.

\subsection{Proof sketch}
Here we provide a proof sketch for the smooth nonconvex case, which also serves as the basis of the analysis for convex cases. Besides stochasticity, the main challenges in the analysis arise from \emph{local drift} and \emph{delayed accumulator}, which are absent in the classical AdaGrad analysis.

Note that $\bar g_t$ averages gradients along moving local paths, whereas stationarity is measured by $g_t:=\nabla F(x_t)$. To estimate their difference, we decompose 
$
\bar g_t=g_t+\hat b_t+\bar\xi_t,
$
where $\bar\xi_t$ denotes average of the stochastic errors and $\hat b_t$ is the local gradient drift. Then it follows that
\begin{equation}
    \E_t[\|\bar g_t\|^2]\leq2\|g_t\|^2+2\E_t[\|\hat b_t\|^2]+\frac{2\sigma^2}{mK}.
    \label{eq:gt_sq-main}
\end{equation}
To further bound $\E_t[\|\hat b_t\|^2]$, we unroll the local updates and obtain the local distance $\E_t[\|x_{i,k}^t-x_t\|^2]\lesssim(k\tau_t/q_t)^2(\|\nabla F_i(x_t)\|+\sigma)^2$, which together with $L$-smoothness then implies
\begin{equation*}
    \E_t[\|\hat b_t\|^2]\leq\frac{L^2}{mK}\sum_{i=1}^m\sum_{k=0}^{K-1}\E_t[\|x_{i,k}^t-x_t\|^2]\lesssim\frac{\tau_t^2}{mq_t^2}\sum_{i=1}^m\bigl(\|\nabla F_i(x_t)\|+\sigma\bigr)^2.
\end{equation*}
To proceed,  we use smoothness to write $\|\nabla F_i(x_t)\|\leq\|\nabla F_i(x_1)\|+L\|x_t-x_1\|$ and then bound sum of $\tau_t^2\|x_t-x_1\|^2/q_t$. Set $r_t:=x_{t+1}-x_t$ and $\mathcal R_T:=\sum_{t=1}^T\|r_t\|^2$. Since $x_t-x_1=\sum_{j<t}r_j$, Hardy's inequality gives $ \sum_{t=1}^T\tau_t^2\|x_t-x_1\|^2/q_t^2\lesssim\mathcal R_T$. Consequently, we obtain
\begin{equation}\label{eq:bt-sum-main}
    \sum_{t=1}^T\E_t[\|\hat b_t\|^2]\lesssim1+\mathcal R_T.\\[-0.25em]
\end{equation}
Note that when $K=1$, $\hat b_t=0$ exactly.

Since~\eqref{eq:bt-sum-main} converts the local gradient drift into server displacement, we may now focus on the server-level analysis, where the natural next step is to analyze $\mathcal R_T$. However, delayed accumulator directly introduces a significant challenge: $q_t$ does not contain $\bar g_t$, so $\|r_t\|=\|\bar g_t\|/q_t$ is not automatically bounded, which prevents us to apply an AdaGrad-type summation property (cf. Lemma~\ref{lem:summation}). To address this issue, we use the decomposition $\bar g_t=\bar g_{t-1}+(g_t-g_{t-1})+(b_t-b_{t-1})$ to write
\[
    \|r_t\|=\frac{\|\bar g_t\|}{q_t}\leq\frac{\|\bar g_{t-1}\|}{q_t}+\frac{\|g_t-g_{t-1}\|}{q_t}+\frac{\|b_t\|+\|b_{t-1}\|}{q_t},
\] 
where $b_t=\hat b_t+\bar \xi_t$. Then the first term is bounded trivially due to $q_t\ge \|\bar g_{t-1}\|/\eta$. Moreover, smoothness and~\eqref{eq:nocv-outer-method} also gives $\|g_t-g_{t-1}\|/q_t\leq L\|x_t-x_{t-1}\|/q_t=L\|\bar g_{t-1}\|/(q_{t-1}q_t)\leq L\eta/q_1$. To bound the last term, let $M_T:=\max_{t\leq T,i,k}\|\xi_{i,k}^t\|$ be the largest noise, then by bounding $\hat b_t$ similarly as before and noting that $\bar \xi_t\leq M_T$, we can show $\|b_t\|\lesssim 1+M_T$. Combining these bounds together gives
\begin{equation*}
    \|r_t\|\lesssim 1+M_T.
\end{equation*}
This means $\|r_t\|$ is controlled \emph{almost surely} for all $t\leq T$, which allows us to apply the summation property in Lemma~\ref{lem:summation} to obtain
\begin{equation*}
    \mathcal R_T\lesssim\bigl(1+M_T^2\bigr)\log\frac{q_{T+1}}{q_1}.
\end{equation*}
Assumption~\ref{ass:stochastic-noise} implies $(\E[M_T^{2p}])^{1/p}=\bigOtilde(\sigma^2)$ for $p\geq1$, which further gives $\E[\mathcal R_T]=\bigOtilde\left(\log\E[q_{T+1}]\right)=\bigOtilde(1)$. This property matches the classic AdaGrad up to logarithmic factors and reveals the reason why Algorithm~\ref{alg:FL-adagrad} preserves the rate of AdaGrad. In contrast, bounded variance assumption~\eqref{eq:sg-implied-variance} may instead lead to a weaker linear reliance $\E[M_T^{2}]\lesssim\sigma^2 T$.

Let $\mathcal G_T:=\sum_{t=1}^T\|g_t\|^2/q_t$ and $\zeta_t:=\bar g_t-\E_t[\bar g_t]$. Smoothness and decomposition $\bar g_t=g_t+\E_t[\hat b_t]+\zeta_t$  yield
\begin{equation*}
    F(x_{t+1})\leq F(x_t)-\frac{\|g_t\|^2}{2q_t}+\frac{\E_t[\|\hat b_t\|^2]}{2q_t}-\frac{\langle g_t,\zeta_t\rangle}{q_t}+\frac{L}{2}\|r_t\|^2.
\end{equation*}
Summing this inequality and plugging~\eqref{eq:bt-sum-main}, we obtain $\E[\mathcal G_T]\lesssim 1+\E[\mathcal R_T]$ since $\zeta_t$ is unbiased. To bound $\E[q_{T+1}]$, we use $q_{T+1}^2=q_1^2+\sum_{t=1}^T\|\bar g_t\|^2/\eta^2$ and apply~\eqref{eq:gt_sq-main} and~\eqref{eq:bt-sum-main} to obtain
\begin{align*}
    \E[q_{T+1}]\lesssim1+\sigma\sqrt{\frac{T}{mK}}+\E[\mathcal G_T]+\E[\mathcal R_T],
\end{align*}
where we also use $\E\big[(\sum_{t=1}^T\|g_t\|^2)^{1/2}\big]\leq\E[\sqrt{q_{T+1}\mathcal G_T}]\leq\sqrt{\E[q_{T+1}]}\sqrt{\E[\mathcal G_T]}$ to solve a self-bounding inequality for $\E[q_{T+1}]$.
Then substituting the established bounds on $\E[\mathcal G_T]$ and $\E[\mathcal R_T]$, we derive $\E[q_{T+1}]\lesssim\mathcal U_T$, which further gives $\E[\mathcal R_T]\lesssim\mathcal C_T$ and $\E[\mathcal G_T]\lesssim\mathcal C_T$.

It remains to bound the second moments $\E[\mathcal G_T^2]$ and $\E[q_{T+1}^2]$. Using the 4-th momentum $\E[M_T^{4}]$ and similar arguments for estimating $\E[\mathcal R_T]$, we can show $\sqrt{\E[\mathcal R_T^2]}\lesssim\mathcal C_T$, which further implies $\sqrt{\E[\mathcal G_T^2]}\lesssim\mathcal C_T$. Then by similarly using $q_{T+1}^2=q_1^2+\sum_{t=1}^T\|\bar g_t\|^2/\eta^2$ and applying~\eqref{eq:gt_sq-main} and~\eqref{eq:bt-sum-main}, we obtain
\[
    \E[q_{T+1}^2]\lesssim 1+\frac{T\sigma^2}{mK}+\E[\mathcal G_T^2]+\E[\mathcal R_T]\lesssim\mathcal C_T^2+\frac{T\sigma^2}{mK}.
\]
Finally, $q_t\leq q_{T+1}$ and Cauchy--Schwarz imply
\begin{equation*}
    \E[\|\nabla F(\bar x_T)\|^2]=\frac1T\sum_{t=1}^T\E[\|g_t\|^2]\leq\frac{1}{T}\sqrt{\E[q_{T+1}^2]}\sqrt{\E[\mathcal G_T^2]}\lesssim\frac{\mathcal C_T^2}{T}+\frac{\sigma\mathcal C_T}{\sqrt{mKT}}.
\end{equation*} %
\section{Numerical Experiments}
\label{sec:numerical-experiments}

In this section, we evaluate the empirical performance of Algorithm~\ref{alg:FL-adagrad} with the primary goal of examining its adaptation to the underlying problem class. In particular, our experiments test whether a common parameterization adapts across the smooth nonconvex, smooth convex, and nonsmooth convex regimes, and whether increasing the number of local steps improves performance under stochastic gradients, as predicted by Theorem~\ref{thm:unified-convergence}. The implementation of AdaFed is available at \url{https://github.com/DadrasAli/AdaFed}.

For all experiments, each local objective is a logistic loss with a regime-specific penalty. Specifically, if client $i$ holds $n_i$ samples $(a_{ij},y_{ij})$ with $y_{ij}\in\{-1,+1\}$, we set
\begin{equation}
  F_i(x)=\frac{1}{n_i}\sum_{j=1}^{n_i}
  \log\!\left(1+\exp(-y_{ij}a_{ij}^{\top}x)\right) + \lambda r(x).
  \label{eq:experiment-objective}
\end{equation}
We set $\lambda\!=\!0.01$ throughout, with 
$r(x)\!=\!\|x\|_1$ for the nonsmooth convex case, 
$\frac12\|x\|_2^2$ for the smooth convex case, and 
$\sum_j x_j^2/(1+x_j^2)$ in the smooth nonconvex case.

We use 32,000 examples from the LIBSVM \citep{chang2011libsvm} IJCNN1 binary-classification dataset ($d=22$), partitioned equally among $m=32$ clients. To induce strong data heterogeneity, we group examples with similar per-example gradients at a centralized reference solution into the same client using balanced $k$-means clustering, so that each client holds 1,000 examples
and the local objectives differ substantially across clients. Additional datasets and experimental details are provided in Appendix~\ref{app:additional-experiments}.

For evaluation, we report $\|\nabla F(x)\|^2$ for the smooth nonconvex
problem and the objective residual $F(x)-F^\star$ for the convex problems.
The reference value $F^\star$ is computed to high accuracy using a
centralized solver (damped Newton for the smooth convex problem and FISTA
followed by active-set Newton for the nonsmooth convex problem).
Following Theorem~\ref{thm:unified-convergence}, the expected value for the uniform-random output metric is computed by averaging the corresponding metric along the
trajectory. We additionally report the last-iterate metric for reference.

\textbf{Adaptation across problem classes and local steps.}~
We first examine whether Algorithm~\ref{alg:FL-adagrad} adapts to the underlying problem class under a common parameterization and how its behavior changes with the number of local steps.
Figure~\ref{fig:rates-ijcnn-b1-pgf} shows the results for
$K\in\{1,10,100\}$, with exact local gradients in the top row and single-sample stochastic gradients in the bottom row.
Under exact gradients, the two smooth regimes exhibit the faster $1/T$ behavior, whereas the nonsmooth convex regime exhibits the slower $\smash{1/\sqrt{T}}$ behavior, consistently across all three choices of $K$.
Under stochastic gradients, increasing $K$ delays the transition from the faster $1/T$ behavior to the noise-dominated $\smash{1/\sqrt{KT}}$ regime in the two smooth settings. 
In the nonsmooth convex setting, increasing $K$ instead improves the constant while preserving the $\smash{1/\sqrt{T}}$ behavior.
These observations are consistent with the dependence on the problem class, $T$, and $K$ predicted by Theorem~\ref{thm:unified-convergence}.

\begin{figure}[t]
\centering
\includegraphics[width=\linewidth]{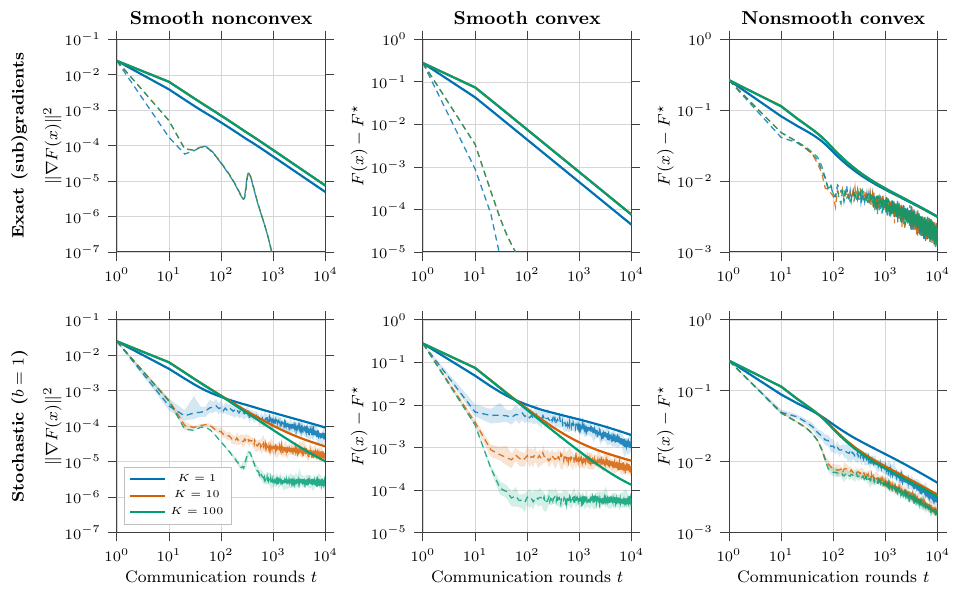}
\caption{Performance of AdaFed on IJCNN1 across the three problem classes. Solid and dashed curves show the uniform-random and last-iterate metrics, respectively. Stochastic results show the mean over twenty seeds, with shaded regions indicating the upward and downward RMS deviations.}
\label{fig:rates-ijcnn-b1-pgf}
\end{figure}

\textbf{Comparison with adaptive FL methods.}~
Figure~\ref{fig:comparison-selected-ijcnn1-32-pgf} compares Algorithm~\ref{alg:FL-adagrad} with representative methods using server-side, client-side, and both server- and client-side adaptation:
FedAdagrad~\citep{reddi2021adaptive}, FedLADA~\citep{sun2023efficient}, and FAFED~\citep{wu2023faster}, respectively.

We use $K=10$ local steps with full client participation and consider both exact local gradients and stochastic local gradients (minibatch size $b=32$). 
Each method's main parameters are selected from short tuning runs, then fixed for the 10,000-round comparison (see Appendix~\ref{app:comparison-protocol} for the tuning details). 

Across all three problem classes, Algorithm~\ref{alg:FL-adagrad} is competitive with the baselines while consistently following the rate behavior associated with the corresponding regime. 
In contrast, the baselines stagnate in some settings. 
This distinction is particularly clear where the last-iterate metrics of all three baselines reach certain accuracy floors under both exact and stochastic gradients, while Algorithm~\ref{alg:FL-adagrad} continues to decrease. 
These results further illustrate the cross-regime adaptation predicted by Theorem~\ref{thm:unified-convergence}.

\textbf{Additional experiments.}~
Appendix~\ref{app:additional-experiments} provides additional experiments on synthetic problems, larger minibatches, parameter sensitivity for $\eta$ and $\gamma$, and partial participation. 
It also extends the comparison to FedAdam, FedUR, Local AdaAlter, FedSPS, and FedAda2, covering a broader range of adaptive FL methods.

\begin{figure}[t]
\centering
\includegraphics[width=\linewidth]{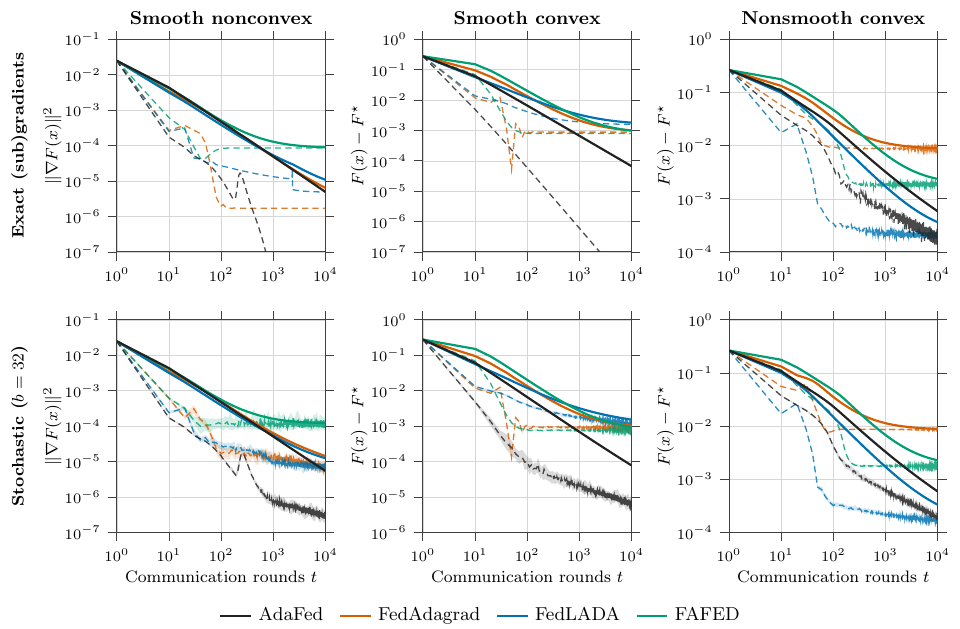}
\caption{Performance of AdaFed compared with
representative adaptive FL methods on IJCNN1 with $K=10$ local steps. 
FedAdagrad, FedLADA, and FAFED represent server-, client-, and both-level
adaptation, respectively. 
Solid and dashed curves show the uniform-random and last-iterate metrics, respectively. 
Stochastic results show the mean over twenty seeds, with shaded regions indicating the upward and downward RMS deviations.}
\label{fig:comparison-selected-ijcnn1-32-pgf}
\end{figure}

\FloatBarrier
\section{Conclusion}
\label{sec:conclusion}

Our results show that universal convergence can be achieved in federated learning (FL) by using AdaGrad-type step sizes, despite heterogeneous client objectives and multiple local updates. The key is to accumulate the server displacements, which reflect global rather than client-level stationarity from a consensus viewpoint of the FL problem. Additionally, the scaled local step controls client drift without altering the aggregated server update. From the analytical perspective, relating the local drift to server displacements and controlling the delayed accumulator yields the essential argument without bounded-iterate assumptions, which results in convergence rates comparable to centralized AdaGrad across the three problem classes.
Our experiments support these theoretical findings, showing the predicted cross-regime behavior under a common parameterization and competitive performance against existing adaptive FL methods. 
We further provide extensions to nonuniform partial participation and H\"older-smoothness in the exact local-gradient setting in the appendix.

Several directions remain open. A natural next step is to extend both the nonuniform partial-participation and H\"older-smooth analyses to stochastic local gradients. This may be possible using moment estimates similar to those in the full-participation analysis, but the details are nontrivial and require careful investigation. Another question is whether the logarithmic factors caused by delayed accumulation can be removed through a sharper analysis or a modified update rule. Finally, combining the proposed method with acceleration may lead to improved convergence rates while preserving universality for convex problems.

\section*{Acknowledgments}
This work was supported by the Swedish Research Council under registration number 2023-05476. This work was partially supported by the Wallenberg AI, Autonomous Systems and Software Program (WASP) funded by the Knut and Alice Wallenberg Foundation. Computational resources were provided on the Berzelius system funded by the Knut and Alice Wallenberg Foundation and operated by NAISS.

\bibliographystyle{abbrvnat}
\bibliography{references}

\begin{thebibliography}{44}
\providecommand{\natexlab}[1]{#1}
\providecommand{\url}[1]{\texttt{#1}}
\expandafter\ifx\csname urlstyle\endcsname\relax
  \providecommand{\doi}[1]{doi: #1}\else
  \providecommand{\doi}{doi: \begingroup \urlstyle{rm}\Url}\fi

\bibitem[Attia and Koren(2023)]{attia2023sgd}
A.~Attia and T.~Koren.
\newblock {SGD} with {AdaGrad} stepsizes: Full adaptivity with high probability
  to unknown parameters, unbounded gradients and affine variance.
\newblock In \emph{Proceedings of the 40th International Conference on Machine
  Learning}, 2023.

\bibitem[Chang and Lin(2011)]{chang2011libsvm}
C.-C. Chang and C.-J. Lin.
\newblock Libsvm: A library for support vector machines.
\newblock \emph{ACM transactions on intelligent systems and technology (TIST)},
  2\penalty0 (3):\penalty0 1--27, 2011.

\bibitem[Chen et~al.(2025)Chen, Wang, Zhang, Han, Liu, and
  Brinton]{chen2025gradient}
E.~Chen, S.~Wang, J.~Zhang, D.-J. Han, C.~Liu, and C.~Brinton.
\newblock Gradient correction in federated learning with adaptive optimization.
\newblock \emph{arXiv preprint arXiv:2502.02727}, 2025.

\bibitem[Chezhegov et~al.(2024)Chezhegov, Skorik, Khachaturov, Shalagin,
  Avetisyan, Tak{\'a}{\v{c}}, Kholodov, and Beznosikov]{chezhegov2024local}
S.~Chezhegov, S.~Skorik, N.~Khachaturov, D.~Shalagin, A.~Avetisyan,
  M.~Tak{\'a}{\v{c}}, Y.~Kholodov, and A.~Beznosikov.
\newblock Local methods with adaptivity via scaling.
\newblock \emph{arXiv preprint arXiv:2406.00846}, 2024.

\bibitem[Duchi et~al.(2011)Duchi, Hazan, and Singer]{duchi2011adaptive}
J.~Duchi, E.~Hazan, and Y.~Singer.
\newblock Adaptive subgradient methods for online learning and stochastic
  optimization.
\newblock \emph{Journal of Machine Learning Research}, 12:\penalty0 2121--2159,
  2011.

\bibitem[Durrett(2019)]{durrett2019probability}
R.~Durrett.
\newblock \emph{Probability: Theory and Examples}.
\newblock Number~49 in Cambridge Series in Statistical and Probabilistic
  Mathematics. Cambridge University Press, Cambridge, 5 edition, 2019.

\bibitem[Ene et~al.(2021)Ene, Nguyen, and Vladu]{ene2021adaptive}
A.~Ene, H.~L. Nguyen, and A.~Vladu.
\newblock Adaptive gradient methods for constrained convex optimization and
  variational inequalities.
\newblock In \emph{Proceedings of the AAAI Conference on Artificial
  Intelligence}, 2021.

\bibitem[Faw et~al.(2022)Faw, Tziotis, Caramanis, Mokhtari, Shakkottai, and
  Ward]{faw2022power}
M.~Faw, I.~Tziotis, C.~Caramanis, A.~Mokhtari, S.~Shakkottai, and R.~Ward.
\newblock The power of adaptivity in {SGD}: Self-tuning step sizes with
  unbounded gradients and affine variance.
\newblock In \emph{Proceedings of the Thirty-Fifth Conference on Learning
  Theory}, 2022.

\bibitem[Jhunjhunwala et~al.(2022)Jhunjhunwala, Sharma, Nagarkatti, and
  Joshi]{jhunjhunwala2022fedvarp}
D.~Jhunjhunwala, P.~Sharma, A.~Nagarkatti, and G.~Joshi.
\newblock {FedVARP}: Tackling the variance due to partial client participation
  in federated learning.
\newblock In \emph{Proceedings of the Thirty-Eighth Conference on Uncertainty
  in Artificial Intelligence}, 2022.

\bibitem[Karimireddy et~al.(2020)Karimireddy, Kale, Mohri, Reddi, Stich, and
  Suresh]{karimireddy2020scaffold}
S.~P. Karimireddy, S.~Kale, M.~Mohri, S.~Reddi, S.~Stich, and A.~T. Suresh.
\newblock {SCAFFOLD}: Stochastic controlled averaging for federated learning.
\newblock In \emph{Proceedings of the 37th International Conference on Machine
  Learning}, 2020.

\bibitem[Karimireddy et~al.(2021)Karimireddy, Jaggi, Kale, Mohri, Reddi, Stich,
  and Suresh]{karimireddy2021mime}
S.~P. Karimireddy, M.~Jaggi, S.~Kale, M.~Mohri, S.~J. Reddi, S.~U. Stich, and
  A.~T. Suresh.
\newblock {Mime}: Mimicking centralized stochastic algorithms in federated
  learning.
\newblock In \emph{International Conference on Learning Representations}, 2021.

\bibitem[Kavis et~al.(2019)Kavis, Levy, Bach, and Cevher]{kavis2019unixgrad}
A.~Kavis, K.~Y. Levy, F.~Bach, and V.~Cevher.
\newblock {UniXGrad}: A universal, adaptive algorithm with optimal guarantees
  for constrained optimization.
\newblock In \emph{Advances in Neural Information Processing Systems}, 2019.

\bibitem[Kavis et~al.(2022)Kavis, Levy, and Cevher]{kavis2022high}
A.~Kavis, K.~Y. Levy, and V.~Cevher.
\newblock High probability bounds for a class of nonconvex algorithms with
  {AdaGrad} stepsize.
\newblock In \emph{International Conference on Learning Representations}, 2022.

\bibitem[Kim et~al.(2024)Kim, Toghani, Uribe, and Kyrillidis]{kim2024adaptive}
J.~L. Kim, M.~T. Toghani, C.~Uribe, and A.~Kyrillidis.
\newblock Adaptive federated learning with auto-tuned clients.
\newblock In \emph{International Conference on Learning Representations}, 2024.

\bibitem[Kingma and Ba(2015)]{kingma2015adam}
D.~P. Kingma and J.~Ba.
\newblock Adam: A method for stochastic optimization.
\newblock In \emph{International Conference on Learning Representations}, 2015.

\bibitem[Lee et~al.(2026)Lee, Sharma, Zaheer, and Li]{lee2026efficient}
S.~H. Lee, S.~Sharma, M.~Zaheer, and T.~Li.
\newblock Efficient adaptive federated optimization.
\newblock In \emph{Advances in Neural Information Processing Systems}, 2026.

\bibitem[Levy(2017)]{levy2017online}
K.~Levy.
\newblock Online to offline conversions, universality and adaptive minibatch
  sizes.
\newblock In \emph{Advances in Neural Information Processing Systems}, 2017.

\bibitem[Levy et~al.(2018)Levy, Yurtsever, and Cevher]{levy2018online}
K.~Y. Levy, A.~Yurtsever, and V.~Cevher.
\newblock Online adaptive methods, universality and acceleration.
\newblock In \emph{Advances in Neural Information Processing Systems}, 2018.

\bibitem[Li et~al.(2024)Li, Huang, and Huang]{li2024fedda}
J.~Li, F.~Huang, and H.~Huang.
\newblock Fedda: Faster adaptive gradient methods for federated constrained
  optimization.
\newblock In \emph{The Twelfth International Conference on Learning
  Representations}, 2024.

\bibitem[Li and Orabona(2019)]{li2019convergence}
X.~Li and F.~Orabona.
\newblock On the convergence of stochastic gradient descent with adaptive
  stepsizes.
\newblock In \emph{Proceedings of the Twenty-Second International Conference on
  Artificial Intelligence and Statistics}, 2019.

\bibitem[Li et~al.(2020)Li, Huang, Yang, Wang, and Zhang]{li2020fedavg}
X.~Li, K.~Huang, W.~Yang, S.~Wang, and Z.~Zhang.
\newblock On the convergence of {FedAvg} on non-{IID} data.
\newblock In \emph{International Conference on Learning Representations}, 2020.

\bibitem[Liu et~al.(2026)Liu, Panda, Pandey, Cook, Slota, Wang, Chen, and
  Xu]{liu2026lodadac}
W.~Liu, A.~Panda, U.~Pandey, H.~Cook, G.~M. Slota, N.~Wang, J.~Chen, and Y.~Xu.
\newblock Lodadac: a unified local training-based decentralized framework with
  adaptive gradients and compressed communication.
\newblock \emph{arXiv preprint arXiv:2604.09970}, 2026.

\bibitem[Malitsky and Mishchenko(2020)]{malitsky2020adaptive}
Y.~Malitsky and K.~Mishchenko.
\newblock Adaptive gradient descent without descent.
\newblock In \emph{Proceedings of the 37th International Conference on Machine
  Learning}, 2020.

\bibitem[McMahan et~al.(2017)McMahan, Moore, Ramage, Hampson, and Ag{\"u}era~y
  Arcas]{mcmahan2017communication}
H.~B. McMahan, E.~Moore, D.~Ramage, S.~Hampson, and B.~Ag{\"u}era~y Arcas.
\newblock Communication-efficient learning of deep networks from decentralized
  data.
\newblock In \emph{Proceedings of the 20th International Conference on
  Artificial Intelligence and Statistics}, 2017.

\bibitem[Mukherjee et~al.(2023)Mukherjee, Loizou, and
  Stich]{mukherjee2023locally}
S.~Mukherjee, N.~Loizou, and S.~U. Stich.
\newblock Locally adaptive federated learning.
\newblock \emph{arXiv preprint arXiv:2307.06306}, 2023.

\bibitem[Nemirovski et~al.(2009)Nemirovski, Juditsky, Lan, and
  Shapiro]{nemirovski2009robust}
A.~Nemirovski, A.~Juditsky, G.~Lan, and A.~Shapiro.
\newblock Robust stochastic approximation approach to stochastic programming.
\newblock \emph{SIAM Journal on optimization}, 19\penalty0 (4):\penalty0
  1574--1609, 2009.

\bibitem[Nesterov(2015)]{nesterov2015universal}
Y.~Nesterov.
\newblock Universal gradient methods for convex optimization problems.
\newblock \emph{Mathematical Programming}, 152\penalty0 (1--2):\penalty0
  381--404, 2015.
\newblock \doi{10.1007/s10107-014-0790-0}.

\bibitem[Orabona(2023)]{orabona2023normalized}
F.~Orabona.
\newblock Normalized gradients for all.
\newblock \emph{arXiv preprint arXiv:2308.05621}, 2023.

\bibitem[Reddi et~al.(2021)Reddi, Charles, Zaheer, Garrett, Rush,
  Kone{\v{c}}n{\'y}, Kumar, and McMahan]{reddi2021adaptive}
S.~Reddi, Z.~Charles, M.~Zaheer, Z.~Garrett, K.~Rush, J.~Kone{\v{c}}n{\'y},
  S.~Kumar, and H.~B. McMahan.
\newblock Adaptive federated optimization.
\newblock In \emph{International Conference on Learning Representations}, 2021.

\bibitem[Reddi et~al.(2018)Reddi, Kale, and Kumar]{reddi2018convergence}
S.~J. Reddi, S.~Kale, and S.~Kumar.
\newblock On the convergence of {Adam} and beyond.
\newblock In \emph{International Conference on Learning Representations}, 2018.

\bibitem[Rodomanov et~al.(2024)Rodomanov, Jiang, and
  Stich]{rodomanov2024universality}
A.~Rodomanov, X.~Jiang, and S.~Stich.
\newblock Universality of {AdaGrad} stepsizes for stochastic optimization:
  Inexact oracle, acceleration and variance reduction.
\newblock In \emph{Advances in Neural Information Processing Systems}, 2024.

\bibitem[Sun et~al.(2023)Sun, Shen, Sun, Ding, and Tao]{sun2023efficient}
Y.~Sun, L.~Shen, H.~Sun, L.~Ding, and D.~Tao.
\newblock Efficient federated learning via local adaptive amended optimizer
  with linear speedup.
\newblock \emph{IEEE Transactions on Pattern Analysis and Machine
  Intelligence}, 45\penalty0 (12):\penalty0 14453--14464, 2023.

\bibitem[Takakura et~al.(2025)Takakura, Liew, and Hasegawa]{takakura2025feddua}
S.~Takakura, S.~P. Liew, and S.~Hasegawa.
\newblock {FedDuA}: Doubly adaptive federated learning.
\newblock \emph{arXiv preprint arXiv:2505.11126}, 2025.

\bibitem[Wang et~al.(2023)Wang, Zhang, Ma, and Chen]{wang2023adagrad}
B.~Wang, H.~Zhang, Z.~Ma, and W.~Chen.
\newblock Convergence of {AdaGrad} for non-convex objectives: Simple proofs and
  relaxed assumptions.
\newblock In \emph{Proceedings of Thirty Sixth Conference on Learning Theory},
  2023.

\bibitem[Wang et~al.(2025)Wang, Chen, Pappas, Yang, Quek, and
  Poor]{wang2025adaptive}
C.~Wang, Z.~Chen, N.~Pappas, H.~H. Yang, T.~Q. Quek, and H.~V. Poor.
\newblock Adaptive federated learning over the air.
\newblock \emph{IEEE Transactions on Signal Processing}, 73:\penalty0
  3187--3202, 2025.

\bibitem[Wang et~al.(2021)Wang, Xu, Garrett, Charles, Liu, and
  Joshi]{wang2021local}
J.~Wang, Z.~Xu, Z.~Garrett, Z.~Charles, L.~Liu, and G.~Joshi.
\newblock Local adaptivity in federated learning: Convergence and consistency.
\newblock \emph{arXiv preprint arXiv:2106.02305}, 2021.

\bibitem[Wang et~al.(2022)Wang, Lin, and Chen]{wang2022communication}
Y.~Wang, L.~Lin, and J.~Chen.
\newblock Communication-efficient adaptive federated learning.
\newblock In \emph{Proceedings of the 39th International Conference on Machine
  Learning}, 2022.

\bibitem[Wang et~al.(2024)Wang, Wang, Lu, and Chen]{wang2024fadas}
Y.~Wang, S.~Wang, S.~Lu, and J.~Chen.
\newblock {FADAS}: Towards federated adaptive asynchronous optimization.
\newblock In \emph{Proceedings of the 41st International Conference on Machine
  Learning}, 2024.

\bibitem[Wang and Yurtsever(2026)]{wang2026universal}
Z.~Wang and A.~Yurtsever.
\newblock Universal adaptive proximal gradient methods via gradient mapping
  accumulation.
\newblock \emph{arXiv preprint arXiv:2605.05944}, 2026.

\bibitem[Ward et~al.(2020)Ward, Wu, and Bottou]{ward2020adagrad}
R.~Ward, X.~Wu, and L.~Bottou.
\newblock {AdaGrad} stepsizes: Sharp convergence over nonconvex landscapes.
\newblock \emph{Journal of Machine Learning Research}, 21\penalty0
  (219):\penalty0 1--30, 2020.

\bibitem[Wu et~al.(2023)Wu, Huang, Hu, and Huang]{wu2023faster}
X.~Wu, F.~Huang, Z.~Hu, and H.~Huang.
\newblock Faster adaptive federated learning.
\newblock In \emph{Proceedings of the {AAAI} Conference on Artificial
  Intelligence}, 2023.

\bibitem[Xie et~al.(2020)Xie, Koyejo, Gupta, and Lin]{xie2020local}
C.~Xie, O.~Koyejo, I.~Gupta, and H.~Lin.
\newblock {Local AdaAlter}: Communication-efficient stochastic gradient descent
  with adaptive learning rates.
\newblock In \emph{OPT 2020: 12th Annual Workshop on Optimization for Machine
  Learning}, 2020.

\bibitem[Yuan and Wang(2025)]{yuan2025aggregated}
W.~Yuan and X.~Wang.
\newblock Aggregated gradients-based adaptive learning rate design in federated
  learning.
\newblock In \emph{Proceedings of the 34th ACM International Conference on
  Information and Knowledge Management}, 2025.

\bibitem[Zhang et~al.(2023)Zhang, Zeng, and Lin]{zhang2023fedur}
H.~Zhang, K.~Zeng, and S.~Lin.
\newblock {FedUR}: Federated learning optimization through adaptive centralized
  learning optimizers.
\newblock \emph{IEEE Transactions on Signal Processing}, 71:\penalty0
  2622--2637, 2023.

\end{thebibliography}

\newpage
\appendix
\section{Preliminary}
\label{app:preliminary}
\subsection{Auxiliary Lemmas}
\begin{lemma}\label{lem:log-solution}
Let $a, x>0$ and $b\in\rbb$. If $x \leq a \log x+b$, then $x \leq 2 a \log 2a - 2a+2 b.$
\end{lemma}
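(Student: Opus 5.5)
We bound $\log x$ by a linear function of $x$ via the tangent line of the concave function $\log$, then absorb the resulting linear term into the left-hand side. For any $c>0$, concavity of $\log$ gives
\[
\log x \le \log c + \frac{x}{c} - 1 \qquad \text{for all } x>0,
\]
which is equivalent to the elementary inequality $\log y \le y-1$ with $y=x/c$.

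We choose $c=2a$, so that the coefficient of $x$ on the right becomes $a/c=1/2$. Substituting into the hypothesis $x\le a\log x+b$ yields
\[
x \le a\log(2a) + \frac{x}{2} - a + b .
\]
Since $x$ is finite, we may subtract $x/2$ from both sides, which gives $\frac{x}{2}\le a\log(2a)-a+b$. Multiplying by $2$ gives exactly the claimed bound $x\le 2a\log 2a-2a+2b$.

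No step here is a real obstacle. The only point requiring care is the choice $c=2a$: it balances the slope so that exactly half of $x$ is absorbed into the left-hand side, and it produces the stated constants $2a\log 2a-2a$ with no slack. The sign of $b$ plays no role, and the positivity of $a$ and $x$ is used only so that $\log x$ and $\log 2a$ are defined and the tangent-line bound applies.
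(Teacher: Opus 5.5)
Your proof is correct and matches the paper's argument: both apply $\log u\le u-1$ with $u=x/(2a)$, substitute into $x\le a\log x+b$, and absorb the resulting $x/2$ into the left-hand side. Your tangent-line framing is the same inequality, and the choice $c=2a$ is exactly the paper's.
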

\begin{proof}
Applying the basic inequality $\log u \leq u-1$ with $u=x/(2a)$ gives 
\[
\log x \leq \frac{x}{2a}+\log 2a -1,
\]
which together with $x \leq a \log x+b$ implies that
\[
x \leq \frac{x}{2}+a\log 2a -a +b.
\]
We obtain the desired result after rearranging the terms.
\end{proof}

\begin{lemma}
\label{lem:sublinear-power-bound}
Fix $\alpha\in(0,1)$ and let $a,b,x\geq0$. If $x\leq ax^\alpha+b$, then $x\leq (2a)^{1/(1-\alpha)}+2b$.
\end{lemma}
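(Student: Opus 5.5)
We split into two cases according to whether $x$ is small or large relative to $b$, and use the power structure directly, in the same spirit as the proof of Lemma~\ref{lem:log-solution}.

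\emph{Case 1: $ax^\alpha\le x/2$.} Then $x\le ax^\alpha+b\le x/2+b$, which rearranges to $x\le 2b$. This is at most the claimed bound.

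\emph{Case 2: $ax^\alpha> x/2$.} We must have $x>0$, since at $x=0$ both sides vanish. Dividing by $x^\alpha>0$ gives $x^{1-\alpha}<2a$. Because $1-\alpha>0$, taking the $1/(1-\alpha)$ power is monotone and yields $x<(2a)^{1/(1-\alpha)}$. This is again at most the claimed bound, since $b\ge0$.

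In either case $x\le \max\{(2a)^{1/(1-\alpha)},2b\}\le (2a)^{1/(1-\alpha)}+2b$, where the last step uses the nonnegativity of both terms.

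The argument has no real obstacle. The only care needed is the degenerate cases $x=0$ or $a=0$. When $a=0$, Case 2 cannot occur, and Case 1 directly gives $x\le 2b$.

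An alternative route is Young's inequality, $ax^\alpha\le x/2+c_\alpha a^{1/(1-\alpha)}$. This leads to a similar bound, but the case split gives exactly the stated constant, so we use it.
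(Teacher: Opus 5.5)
Your proof is correct and is essentially the paper's argument. The paper splits on $x\le 2b$ versus $x>2b$ and in the second case derives $x/2<ax^\alpha$, whereas you split directly on $ax^\alpha\le x/2$ versus $ax^\alpha>x/2$; both routes reach the same two inequalities and the same constant.
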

\begin{proof}
If $x\leq2b$, the result is immediate. Otherwise if $x>2b$, then the given inequality implies
\[
    \frac{x}{2}<x-b\leq ax^\alpha.
\]
Therefore, $x^{1-\alpha}\leq2a$ and hence $x\leq(2a)^{1/(1-\alpha)}$, which completes the proof.
\end{proof}

\begin{lemma}\label{lem:positive-power-log-bound}
For any $c,\eta>0$, $0<a<b$, and $x\geq0$, we have
\[
x^a-cx^b \leq\left(\eta^a+c^{-a/(b-a)}\right)\log\left(1+\frac{x^a}{\eta^a}\right).
\]
\end{lemma}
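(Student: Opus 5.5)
The plan is a two-case split at the threshold where the subtracted term $cx^b$ overtakes $x^a$. Set
\[
x_0:=c^{-1/(b-a)},
\]
so that $x_0^a=c^{-a/(b-a)}$ and $c\,x_0^{b-a}=1$. Write $u:=x^a\ge0$. The right-hand side then reads $(\eta^a+x_0^a)\log(1+u/\eta^a)$.

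\emph{Case $x> x_0$.} Since $b-a>0$, we have $x^{b-a}> x_0^{b-a}$. Hence
\[
cx^b=c\,x^{b-a}x^a\ge c\,x_0^{b-a}x^a=x^a,
\]
so the left-hand side is at most $0$. The right-hand side is nonnegative because $u\ge0$, so the inequality holds.

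\emph{Case $0\le x\le x_0$.} Here $u\le x_0^a$. We drop the term $-cx^b\le 0$ and use the elementary bound $\log(1+s)\ge s/(1+s)$ for $s\ge0$, with $s=u/\eta^a$. This gives
\[
(\eta^a+x_0^a)\log\Bigl(1+\frac{u}{\eta^a}\Bigr)\ge(\eta^a+x_0^a)\,\frac{u}{\eta^a+u}.
\]
Since $u\le x_0^a$, the denominator satisfies $\eta^a+u\le \eta^a+x_0^a$, so the right-hand side above is at least $u=x^a\ge x^a-cx^b$.

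Combining the two cases proves the claim. There is no genuine obstacle. The only point needing care is choosing the threshold $x_0$ so that $x_0^a$ is exactly the constant $c^{-a/(b-a)}$ appearing in the statement. That choice makes the first case trivial and makes the denominator comparison in the second case tight.
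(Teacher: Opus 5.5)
Your proof is correct and follows essentially the same route as the paper's. The paper splits on the sign of $x^a-cx^b$ rather than on $x$ versus $x_0=c^{-1/(b-a)}$, which is equivalent, and then uses the same bound $\log(1+z)\geq z/(1+z)$ together with $x^a<c^{-a/(b-a)}$ to compare denominators.
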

\begin{proof}
If $x^a-cx^b\leq0$, then the result is immediate.
Otherwise we have $1-cx^{b-a}>0$, which implies $x<c^{-1/(b-a)}$ and hence $x^a<c^{-a/(b-a)}$.
Using $\log(1+z)\geq z/(1+z)$ for $z\geq0$, we obtain
\[
\log\left(1+\frac{x^a}{\eta^a}\right)\geq\frac{x^a}{\eta^a+x^a}\geq\frac{x^a}{\eta^a+c^{-a/(b-a)}}\ge \frac{x^a-cx^b}{\eta^a+c^{-a/(b-a)}},
\]
which completes the proof.
\end{proof}

\begin{lemma}[\citep{wang2026universal}]\label{lem:summation}
Let ${\{a_k\}}_{k\ge 1}$ be a nonnegative sequence and define $A_{k+1} = A_1+\sum_{i=1}^k a_i$ with $A_1>0$. Then for all $t\ge 1$:
\[
2(\sqrt{A_{t+1}} - \sqrt{A_1}) \leq \sum_{k=1}^t \frac{a_k}{\sqrt{A_{k}}}.
\]
Moreover, if there exists $c > 0$ such that $a_k \leq c A_{k}$ for all $k \geq 1$, then
\begin{align*}
&\sum_{k=1}^t \frac{a_k}{\sqrt{A_{k}}} \leq (1+\sqrt{1+c})(\sqrt{A_{t+1}} - \sqrt{A_1}),\\
&\sum_{k=1}^t \frac{a_k}{A_k} \leq (1+c)\log\frac{A_{t+1}}{A_1}.
\end{align*}
\end{lemma}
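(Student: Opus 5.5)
We prove the three claims in order, treating $A_k$ as a nondecreasing positive sequence with $A_{k+1}=A_k+a_k$.

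\textbf{First inequality.} For each $k$ we use the elementary bound
\[
\sqrt{A_{k+1}}-\sqrt{A_k}=\frac{a_k}{\sqrt{A_{k+1}}+\sqrt{A_k}}\leq\frac{a_k}{2\sqrt{A_k}},
\]
which holds because $A_{k+1}\geq A_k$. Summing over $k=1,\dots,t$ telescopes the left-hand side to $\sqrt{A_{t+1}}-\sqrt{A_1}$, and multiplying by $2$ gives the claim.

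\textbf{Second inequality.} Now assume $a_k\leq cA_k$. Then $A_{k+1}\leq(1+c)A_k$, so $\sqrt{A_{k+1}}\leq\sqrt{1+c}\,\sqrt{A_k}$. Hence
\[
\sqrt{A_{k+1}}-\sqrt{A_k}=\frac{a_k}{\sqrt{A_{k+1}}+\sqrt{A_k}}\geq\frac{a_k}{(1+\sqrt{1+c})\sqrt{A_k}}.
\]
Rearranging gives $a_k/\sqrt{A_k}\leq(1+\sqrt{1+c})(\sqrt{A_{k+1}}-\sqrt{A_k})$, and summing telescopes to the stated bound.

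\textbf{Third inequality.} Set $z_k:=a_k/A_k\in[0,c]$, so that
\[
\log\frac{A_{k+1}}{A_k}=\log(1+z_k).
\]
We use $\log(1+z)\geq z/(1+z)$ for $z\geq0$. Since $z_k\leq c$, this yields $\log(1+z_k)\geq z_k/(1+c)$. Summing over $k$ telescopes the logarithms to $\log(A_{t+1}/A_1)$, which gives $\sum_k a_k/A_k\leq(1+c)\log(A_{t+1}/A_1)$.

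The only step needing care is the lower bound in the second inequality, where the ratio control $A_{k+1}\leq(1+c)A_k$ is essential. All remaining steps are routine telescoping.
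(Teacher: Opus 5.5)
Your proof is correct. Each bound comes from the per-step identity $\sqrt{A_{k+1}}-\sqrt{A_k}=a_k/(\sqrt{A_{k+1}}+\sqrt{A_k})$, with the denominator bounded below by $2\sqrt{A_k}$ for the first claim and above by $(1+\sqrt{1+c})\sqrt{A_k}$ via $A_{k+1}\le(1+c)A_k$ for the second; the third comes from $\log(1+z_k)\ge z_k/(1+z_k)\ge z_k/(1+c)$, and all three then telescope. The paper gives no proof of its own (the lemma is cited from \citet{wang2026universal}), but this is the standard argument, using the same elementary bounds the paper applies in Lemmas~\ref{lem:pp-predictable} and~\ref{lem:positive-power-log-bound}.
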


\begin{lemma}
\label{lem:log-power-concavity}
For every $a\in\mathbb R$ and $p\ge 1$, the function $x\mapsto(a+\log x)^p$ is concave on $[\exp(p-1-a),\infty)$.
\end{lemma}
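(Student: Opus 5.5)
The plan is to verify concavity directly through the second derivative. Set $h(x):=(a+\log x)^p$ on the given interval. If $x\geq \exp(p-1-a)$, then $a+\log x\geq p-1\geq 0$, so $h$ is well defined, real-valued, and twice differentiable on the interior of the interval. (The base is nonnegative, so non-integer $p$ causes no issue. The only delicate point is $p=1$ at the left endpoint, where the base can vanish; there $h$ is affine in $\log x$, so this is harmless.)

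First, differentiate. Write $u:=a+\log x$. Then
\[
h'(x)=\frac{p\,u^{p-1}}{x},\qquad
h''(x)=\frac{p(p-1)u^{p-2}-p\,u^{p-1}}{x^2}=\frac{p\,u^{p-2}\bigl((p-1)-u\bigr)}{x^2}.
\]
For $x>\exp(p-1-a)$ we have $u>p-1$. Hence $(p-1)-u<0$ and $u^{p-2}>0$, the latter because $u>0$ whenever $u>p-1\geq 0$. Therefore $h''(x)\leq 0$ on the open interval.

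Second, extend to the closed interval. Because $h$ is continuous on $[\exp(p-1-a),\infty)$ and has nonpositive second derivative on the interior, $h$ is concave on the closed interval. One way to see this is that $h'$ is nonincreasing on the interior. The other is that the concavity inequality holds for interior points and passes to the endpoint by continuity.

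The case $p=1$ can be handled separately: $h''=-1/x^2<0$ directly, without using $u^{p-2}$. For $1<p<2$ the factor $u^{p-2}$ blows up at the endpoint, but only interior points are needed, so this creates no difficulty.

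There is no real obstacle here. The only care needed is the sign of $u^{p-2}$ and the behavior at the endpoint, both of which are handled by continuity.
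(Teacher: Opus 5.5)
Your proof is correct and matches the paper's argument: treat $p=1$ separately, and for $p>1$ show that $h''(x)=p\,(a+\log x)^{p-2}\bigl(p-1-a-\log x\bigr)/x^2\le 0$ on the interval. One small slip, which does not affect the proof: for $1<p<2$ the factor $u^{p-2}$ does not blow up at the endpoint, since there $u=p-1>0$; the base vanishes only when $p=1$, and you already handle that case separately.
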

\begin{proof}
If $p=1$, then $a+\log x$ is concave on $(0,\infty)$. Next we assume $p>1$. For $x\geq\exp(p-1-a)$, we have $a+\log x\geq p-1>0$ and
\[
    \frac{d^2}{dx^2}(a+\log x)^p=\frac{p(a+\log x)^{p-2}(p-1-a-\log x)}{x^2}\leq0,
\]
which completes the proof.
\end{proof}

\begin{lemma}[Conditional $L^p$ triangle inequality]
\label{lem:conditional-l2-triangle}
Let $p\geq1$, let $X$ and $Y$ be random vectors such that $\E[\|X\|^p]<\infty$ and $\E[\|Y\|^p]<\infty$, and let $\mathcal G$ be a sub-$\sigma$-algebra. Then, almost surely,
\[
    \left(\E[\|X+Y\|^p\mid\mathcal G]\right)^{1/p}\leq\left(\E[\|X\|^p\mid\mathcal G]\right)^{1/p}+\left(\E[\|Y\|^p\mid\mathcal G]\right)^{1/p}.
\]
\end{lemma}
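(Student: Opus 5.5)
The plan is to apply Minkowski's inequality to the regular conditional distribution of $(X,Y)$ given $\mathcal G$. Since $X$ and $Y$ take values in $\R^d$, a Polish space, there is a regular conditional distribution $\mu_\omega$ of the pair $(X,Y)$ given $\mathcal G$. For every nonnegative measurable $h$ with $\E[h(X,Y)]<\infty$, we then have almost surely
\[
\E[h(X,Y)\mid\mathcal G](\omega)=\int h\,d\mu_\omega .
\]
This lets us compute $\E[\|X+Y\|^p\mid\mathcal G]$, $\E[\|X\|^p\mid\mathcal G]$ and $\E[\|Y\|^p\mid\mathcal G]$ simultaneously, for a fixed $\omega$ outside a single null set, as ordinary integrals against one probability measure $\mu_\omega$ on $\R^d\times\R^d$. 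Integrability of the left-hand side follows from $\|X+Y\|^p\le 2^{p-1}(\|X\|^p+\|Y\|^p)$.

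For each such $\omega$, the classical Minkowski inequality in $L^p(\mu_\omega)$, applied to the functions $(x,y)\mapsto x$ and $(x,y)\mapsto y$, gives
\[
\Bigl(\int\|x+y\|^p\,d\mu_\omega\Bigr)^{1/p}\le\Bigl(\int\|x\|^p\,d\mu_\omega\Bigr)^{1/p}+\Bigl(\int\|y\|^p\,d\mu_\omega\Bigr)^{1/p}.
\]
This is exactly the claim at $\omega$. The pointwise bound $\|x+y\|\le\|x\|+\|y\|$ is used inside Minkowski. The only care needed is that the conditional-expectation identities hold off a common null set; a countable union of three null sets is null, so this is immediate.

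If one wants to avoid regular conditional distributions, there is an alternative route through the dual (Hölder) argument. Set $N=\E[\|X+Y\|^p\mid\mathcal G]$ and write
\[
\|X+Y\|^p\le\|X\|\,\|X+Y\|^{p-1}+\|Y\|\,\|X+Y\|^{p-1}.
\]
Then apply conditional Hölder with exponents $p$ and $p/(p-1)$ to get
\[
N\le\Bigl(\E[\|X\|^p\mid\mathcal G]^{1/p}+\E[\|Y\|^p\mid\mathcal G]^{1/p}\Bigr)N^{(p-1)/p}.
\]
On $\{N>0\}$ we divide by $N^{(p-1)/p}$, and on $\{N=0\}$ the claim is trivial. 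The case $p=1$ follows directly from the triangle inequality and monotonicity of conditional expectation.

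The main obstacle is bookkeeping rather than substance: either justifying the regular conditional distribution or establishing conditional Hölder. I would take the regular-conditional-distribution route as the cleanest.
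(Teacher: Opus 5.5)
Your proposal is correct, and your second (``dual'') argument is exactly the paper's proof. The paper splits $\|X+Y\|^p\le\|X\|\,\|X+Y\|^{p-1}+\|Y\|\,\|X+Y\|^{p-1}$, applies conditional H\"older with exponents $p$ and $p/(p-1)$ (so that $\E[\|X+Y\|^{(p-1)q}\mid\mathcal G]=N$), divides by $N^{(p-1)/p}$, and treats $p=1$ separately by the triangle inequality. You are slightly more explicit than the paper, which only says ``after simplification''. You separate the set $\{N=0\}$, and the division on $\{N>0\}$ is legitimate because $N<\infty$ almost surely, by the bound $\|X+Y\|^p\le 2^{p-1}(\|X\|^p+\|Y\|^p)$ you record.

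Your preferred route, through a regular conditional distribution $\mu_\omega$ of $(X,Y)$, is genuinely different. It reduces the claim, off a single null set, to the unconditional Minkowski inequality in $L^p(\mu_\omega)$. It handles $p=1$ and $p>1$ uniformly, with no case analysis or division. It would also convert any deterministic integral inequality into its conditional version in the same way. The cost is importing the disintegration theorem, which needs a standard Borel state space; that is harmless here, since $\R^d\times\R^d$ is Polish. The paper's argument needs only conditional H\"older, which follows from Young's inequality applied to variables normalized by $\mathcal G$-measurable factors. Either route is fully adequate for this lemma.
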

\begin{proof}
For $p=1$, the claim follows directly from $\|X+Y\|\leq\|X\|+\|Y\|$. Suppose $p>1$ and let $q=p/(p-1)$, so that $1/p+1/q=1$. The pointwise triangle inequality and conditional H\"older inequality give
\begin{align*}
    &\E[\|X+Y\|^p\mid\mathcal G]\leq\E[\|X\|\|X+Y\|^{p-1}\mid\mathcal G]+\E[\|Y\|\|X+Y\|^{p-1}\mid\mathcal G]\\
    \leq&\left(\E[\|X\|^p\mid\mathcal G]\right)^{1/p}\left(\E[\|X+Y\|^{(p-1)q}\mid\mathcal G]\right)^{1/q}
    +\left(\E[\|Y\|^p\mid\mathcal G]\right)^{1/p}\left(\E[\|X+Y\|^{(p-1)q}\mid\mathcal G]\right)^{1/q}\\
    =&\left(\left(\E[\|X\|^p\mid\mathcal G]\right)^{1/p}+\left(\E[\|Y\|^p\mid\mathcal G]\right)^{1/p}\right)\left(\E[\|X+Y\|^p\mid\mathcal G]\right)^{(p-1)/p},
\end{align*} 
which proves the result after simplification.
\end{proof}

\begin{lemma}[Doob's $L^2$ maximal inequality~{\citep[Theorem~4.4.4]{durrett2019probability}}]\label{lem:doob-l2}
Let $\{\mathcal M_t\}_{t=0}^T$ be a sequence of square-integrable martingale. Then
\[
    \E\left[\max_{0\leq t\leq T}|\mathcal M_t|^2\right]\leq4\E[|\mathcal M_T|^2].
\]
\end{lemma}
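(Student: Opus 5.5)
This is Doob's $L^2$ maximal inequality, which the paper cites from Durrett; the plan is the standard argument, Doob's weak-type maximal inequality followed by a layer-cake computation. Write $M^\ast:=\max_{0\le t\le T}|\mathcal M_t|$. Since $T$ is finite, $M^\ast\le\sum_t|\mathcal M_t|$ is square-integrable, so every quantity below is finite and no truncation is needed.

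\textbf{Step 1 (submartingale).} Because $x\mapsto|x|$ is convex, conditional Jensen shows that $|\mathcal M_t|$ is a nonnegative submartingale.

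\textbf{Step 2 (weak-type bound).} Fix $\lambda>0$. Let $\tau:=\min\{t:|\mathcal M_t|\ge\lambda\}$ and set $A:=\{M^\ast\ge\lambda\}=\{\tau\le T\}$. The event $\{\tau=t\}$ is $\mathcal F_t$-measurable, and on it $|\mathcal M_t|\ge\lambda$. The submartingale property gives $\E[|\mathcal M_t|\mathbf 1_{\{\tau=t\}}]\le\E[|\mathcal M_T|\mathbf 1_{\{\tau=t\}}]$. Summing over $t$ therefore yields
\[
\lambda\,\mathbb P(A)\le\E\bigl[|\mathcal M_T|\mathbf 1_{A}\bigr].
\]

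\textbf{Step 3 (integrate over $\lambda$).} Use $\E[(M^\ast)^2]=\int_0^\infty 2\lambda\,\mathbb P(M^\ast\ge\lambda)\,d\lambda$ and insert Step 2. Exchanging the order of integration by Tonelli gives
\[
\E[(M^\ast)^2]\le\int_0^\infty 2\,\E\bigl[|\mathcal M_T|\mathbf 1_{\{M^\ast\ge\lambda\}}\bigr]d\lambda=2\,\E\bigl[|\mathcal M_T|M^\ast\bigr].
\]
Cauchy--Schwarz then gives $\E[(M^\ast)^2]\le2\sqrt{\E[|\mathcal M_T|^2]}\sqrt{\E[(M^\ast)^2]}$.

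\textbf{Step 4 (conclude).} If $\E[(M^\ast)^2]=0$ the claim is trivial. Otherwise divide by $\sqrt{\E[(M^\ast)^2]}$, which is finite by the opening remark, and square to obtain the factor $4$.

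The only delicate point is the finiteness of $\E[(M^\ast)^2]$, needed before dividing in Step 4. Here it follows immediately from the finite horizon, so the argument has no real obstacle.
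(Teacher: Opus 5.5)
Your argument is correct and is the standard one: Doob's weak-type inequality for the nonnegative submartingale $|\mathcal M_t|$, then the layer-cake formula, then Cauchy--Schwarz. The paper gives no proof of its own and simply cites Durrett's Theorem~4.4.4, whose proof follows the same route for general $p$; Durrett truncates to $M^\ast\wedge N$ to guarantee finiteness, whereas you get $M^\ast\in L^2$ directly from the finite horizon and square-integrability.
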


\begin{lemma}\label{lem:pp-predictable}
Let $Z_t\geq0$ be integrable and $\mathcal F_{t+1}$-measurable for all $t\geq1$. For any constant $c>0$ and $T\geq1$, we have
\[
    \E\left[\sqrt{c+\sum_{t=1}^T Z_t}\right]\leq\sqrt2\,\E\left[\sqrt{c+\sum_{t=1}^T \E_t[Z_t]}\right].
\]
\end{lemma}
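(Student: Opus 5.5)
Write $S=\sum_{t=1}^T Z_t$ and $\tilde S=\sum_{t=1}^T\E_t[Z_t]$. The plan is to compare $\sqrt{c+S}$ with $\sqrt{c+\tilde S}$ through a predictable normalizer. A direct martingale argument on $S-\tilde S$ will not work, because the square root is concave; Jensen then runs in the wrong direction for a pathwise bound. Instead, introduce the predictable quantity
\[
D_t:=\sqrt{c+\textstyle\sum_{s=1}^{t}\E_s[Z_s]},
\]
which is $\mathcal F_t$-measurable since each $\E_s[Z_s]$ with $s\le t$ is. Also note that $D_t\le D_T=\sqrt{c+\tilde S}$ for all $t$.

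\textbf{Step 1: upper bound via the final normalizer.} Use the elementary inequality $\sqrt{c+S}\le \frac{c+S}{2D}+\frac D2$, valid for any $D>0$ by AM--GM, with $D=D_T$:
\[
\sqrt{c+S}\le \frac{D_T}{2}+\frac{c}{2D_T}+\frac{1}{2}\sum_{t=1}^T\frac{Z_t}{D_T}\le \frac{D_T}{2}+\frac{c}{2D_T}+\frac12\sum_{t=1}^T\frac{Z_t}{D_t}.
\]
The last step uses $D_t\le D_T$ and $Z_t\ge0$; replacing $D_T$ by the smaller predictable $D_t$ makes each term larger, so the inequality holds pathwise.

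\textbf{Step 2: take expectations using predictability.} Since $D_t$ is $\mathcal F_t$-measurable and positive, and $Z_t$ is integrable with $Z_t/D_t\le Z_t/\sqrt c$, the tower property gives $\E[Z_t/D_t]=\E[\E_t[Z_t]/D_t]$. Next apply the first half of Lemma~\ref{lem:summation} in the reverse direction; more precisely, use the standard bound $a_k/\sqrt{A_{k+1}}\le 2(\sqrt{A_{k+1}}-\sqrt{A_k})$ with $A_{k+1}=c+\sum_{s\le k}\E_s[Z_s]$, so that $A_{t+1}=D_t^2$. Telescoping then gives
\[
\sum_{t=1}^T\frac{\E_t[Z_t]}{D_t}\le 2(D_T-\sqrt c).
\]
The key design choice is to normalize by $D_t$, which includes the current term $\E_t[Z_t]$, rather than by $D_{t-1}$. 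This avoids needing the hypothesis $a_k\le cA_k$ that appears in the second part of Lemma~\ref{lem:summation}.

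\textbf{Step 3: combine.} Steps 1 and 2 give
\[
\E\sqrt{c+S}\le \E\Bigl[\tfrac{D_T}{2}+\tfrac{c}{2D_T}+D_T-\sqrt c\Bigr].
\]
Since $c/D_T\le\sqrt c$, this is at most $\E[\tfrac32 D_T-\tfrac12\sqrt c]\le \tfrac32\E[D_T]$. That constant is worse than $\sqrt2$, so the estimate must be sharpened; this is the main obstacle. To recover $\sqrt2$, rebalance the AM--GM split: for $\lambda>0$,
\[
\sqrt{c+S}\le \frac{c+S}{2\lambda D_T}+\frac{\lambda D_T}{2}.
\]
With the bounds above, the right side is at most $\frac{\lambda D_T}{2}+\frac{c}{2\lambda D_T}+\frac{D_T-\sqrt c}{\lambda}$. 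Using $c\le\sqrt c\,D_T$, this is at most $\bigl(\frac\lambda2+\frac1\lambda\bigr)D_T-\frac{\sqrt c}{2\lambda}\le\bigl(\frac\lambda2+\frac1\lambda\bigr)D_T$. Choosing $\lambda=\sqrt2$ minimizes the coefficient to exactly $\sqrt2$, yielding $\E\sqrt{c+S}\le\sqrt2\,\E\sqrt{c+\tilde S}$.

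The points to double-check are measurability, namely that $D_t$ and $\E_t[Z_t]$ are $\mathcal F_t$-measurable, and integrability. The latter is ensured because $D_t\ge\sqrt c>0$ and each $Z_t$ is integrable.
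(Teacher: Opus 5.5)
Your proof is correct and follows essentially the paper's route. Both normalize by the predictable $D_t=\sqrt{c+\sum_{s\le t}\E_s[Z_s]}$, which includes the current conditional term, use the tower property, and telescope via $a_t/\sqrt{A_t}\le 2(\sqrt{A_t}-\sqrt{A_{t-1}})$ to get $\E[(c+S)/D_T]\le 2\E[D_T]$. The only difference is the final step: the paper applies Cauchy--Schwarz to $\E\bigl[\sqrt{X_T/\sqrt{A_T}}\,A_T^{1/4}\bigr]$, while you use a weighted AM--GM with $\lambda=\sqrt2$, which gives the same bound with the optimal deterministic weight. Note that your Step 3 combination holds in expectation, not pathwise as your wording suggests.
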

\begin{proof}
Let $a_t:=\E_t[Z_t]$, $X_T:=c+\sum_{t=1}^T Z_t$, and $A_t:=c+\sum_{j=1}^t a_j$. Since $A_T\geq A_t\geq c$, we have
\[
\begin{aligned}
    \E\left[\frac{X_T}{\sqrt{A_T}}\right]&\leq\sqrt c+\sum_{t=1}^T\E\left[\frac{Z_t}{\sqrt{A_t}}\right]=\sqrt c+\sum_{t=1}^T\E\left[\frac{a_t}{\sqrt{A_t}}\right]\\
    &\leq\sqrt c+2\E\left[\sum_{t=1}^T(\sqrt{A_t}-\sqrt{A_{t-1}})\right]=\sqrt c+2\E[\sqrt{A_T}]-2\sqrt{A_0}\leq2\E[\sqrt{A_T}],
\end{aligned}
\]
where the second inequality follows from
\[
\frac{a_t}{\sqrt{A_t}}=\frac{A_t-A_{t-1}}{\sqrt{A_t}}\leq\frac{2(A_t-A_{t-1})}{\sqrt{A_t}+\sqrt{A_{t-1}}}=2(\sqrt{A_t}-\sqrt{A_{t-1}}).
\]
Therefore, Cauchy--Schwarz further yields
\[
    \E[\sqrt{X_T}]=\E\left[\sqrt{\frac{X_T}{\sqrt{A_T}}}\,A_T^{1/4}\right]\leq\sqrt{\E\left[\frac{X_T}{\sqrt{A_T}}\right]}\sqrt{\E[\sqrt{A_T}]}\leq\sqrt2\,\E[\sqrt{A_T}],
\]
which completes the proof.
\end{proof}

\subsection{Proof of Proposition~\ref{prop:fs-variance-obstruction}}\label{app:proof-couterexample}

\begin{proof}
Note that $x^\star=0$ and $F^\star=F(0)=0$. Moreover, direct calculation gives
\[
    F'(x)=\frac{x}{\sqrt{1+x^2}},\qquad F''(x)=\frac{1}{(1+x^2)^{3/2}},\qquad |F'(x)|\leq1,\qquad 0<F''(x)\leq1.
\]
Thus every $F_i=F$ is convex, $1$-smooth, and $1$-Lipschitz. Since all client objectives are identical, their heterogeneity is zero.

Fix $T$ divisible by 4, let $M:=16mK\sqrt T$ and all local noises $\{\xi_{i,k}^t\}$ be independent and have discrete distribution
\[
    \mathbb P(\xi=M)=\mathbb P(\xi=-M)=\frac1{2M^2},\qquad \mathbb P(\xi=0)=1-\frac1{M^2},
\]
which satisfies the unbiasedness and bounded variance conditions:
\[
    \E[\xi]=M\frac1{2M^2}-M\frac1{2M^2}=0,
    \qquad
    \E[\xi^2]=M^2\left(\frac1{2M^2}+\frac1{2M^2}\right)=1.
\]

Let $\mathcal E_T$ be the event: exactly one of the $mKT$ noise realizations is nonzero and this realization occurs at the last local step of some client in a round $j\in\{T/2+1,\ldots,3T/4\}$. 
Then there are $mT/4$ possible locations for the unique nonzero noise, which implies
\begin{equation}\label{eq:prob-ecal_T}
    \mathbb P(\mathcal E_T)=\frac{mT}{4M^2}\left(1-\frac1{M^2}\right)^{mKT-1}\geq\frac{mT}{4M^2}\left(1-\frac{mKT-1}{M^2}\right)\geq\frac{mT}{8M^2}=\frac1{2048mK^2}.
\end{equation}
Here we use $(1+x)^p \geq 1+px$ for $x \ge -1$ and $p \ge 1$, and
\[
    \frac{mKT-1}{M^2}\leq\frac{mKT}{256m^2K^2T}=\frac1{256mK}\leq\frac12.
\]
Now let us consider the event $\mathcal E_T$. Since $F'(x_1)=0$, all local gradients vanish except for the unique nonzero noise. Thus induction over the preceding local and server updates gives $x_t=x_{i,k}^t=0$ and $q_t=1$ until the exceptional noise occurs. In particular, it follows that $x_j=0$ and $q_j=1$. If the nonzero noise occurs at client $i$ and has sign $s\in\{-1,1\}$, then its occurrence at local step $K-1$ gives
\[
    \bar g_j^{(i)}=\frac{sM}{K},
    \qquad
    \bar g_j^{(i')}=0\quad \forall i'\neq i.
\]
Consequently, the server update~\eqref{eq:nocv-outer-method} yields
\[
    \bar g_j=\frac{sM}{mK}=16s\sqrt T,\qquad x_{j+1}=-16s\sqrt T,\qquad q_{j+1}=\sqrt{q_j^2+|\bar g_j|^2}=\sqrt{1+256T}.
\]
All later noises also vanish on $\mathcal E_T$. Hence, for every $t\geq j+1$,
\[
    |\bar g_t|\leq\frac1{mK}\sum_{i=1}^m\sum_{k=0}^{K-1}|F'(x_{i,k}^t)|\leq1,
    \qquad
    q_t\geq q_{j+1}\geq16\sqrt T.
\]
Unrolling the server update now gives, for $j+1\leq t\leq T+1$,
\[
    |x_t-x_{j+1}|\leq\sum_{\ell=j+1}^{t-1}\frac{|\bar g_\ell|}{q_\ell}\leq\frac{t-j-1}{16\sqrt T}\leq\frac{\sqrt T}{16}.
\]
It follows that
\begin{equation}\label{eq:xt-after-j}
    |x_t|\geq| x_{j+1} | - |x_t - x_{j+1}| \geq 16\sqrt T - \frac{\sqrt T}{16} = \frac{255}{16}\sqrt T \geq 8\sqrt T,
\end{equation}
which also implies that $x_t$ has the same sign as $x_{j+1}$. Since $j\leq3T/4$ and $|F'(x)|^2=x^2/(1+x^2)\geq1/2$ for $|x|\geq1$, we further have
\[
    \frac1T\sum_{t=1}^T|F'(x_t)|^2\ge\frac1T\sum_{t=j+1}^T|F'(x_t)|^2
    \geq\frac{T-j}{T}\frac12
    \geq\frac18,
\]
Moreover, since $x_t=0$ for $t\leq j$ and the remaining ones have the same sign, it follows from $j\leq3T/4$, $F^\star=F(0)=0$, and~\eqref{eq:xt-after-j} that
\begin{align*}
    \sum_{t=1}^T F(x_t) -F^\star=&\sum_{t=j+1}^T F(x_t)=\sum_{t=j+1}^T(\sqrt{1+x_t^2}-1)\ge \sum_{t=j+1}^T(|x_t|-1)\\
    \ge& \sum_{t=j+1}^T(8\sqrt{T}-1)\ge (T-j)4\sqrt{T}\ge T^{3/2}.
\end{align*}
Therefore, we conclude from~\eqref{eq:prob-ecal_T} that
\begin{align*}
    &\E[|F'(\bar x_T)|^2]=\frac1T\sum_{t=1}^T\E[|F'(x_t)|^2]\geq\frac18\mathbb P(\mathcal E_T)\geq\frac1{16384mK^2},\\
    &\E[F(\bar x_T)-F^\star]=\frac1T\sum_{t=1}^T\E[F(x_t)-F^\star]\geq\sqrt T\,\mathbb P(\mathcal E_T)\geq\frac{\sqrt T}{2048mK^2},
\end{align*}
which completes the proof.
\end{proof}

\subsection{Additional notations for analysis}
Write $g_t:=\nabla F(x_t)$ and define
\begin{align*}
    \hat b_t&:=\frac1{mK}\sum_{i=1}^m\sum_{k=0}^{K-1}\bigl(\nabla F_i(x_{i,k}^t)-\nabla F_i(x_t)\bigr),\\
    \bar\xi_t&:=\frac1{mK}\sum_{i=1}^m\sum_{k=0}^{K-1}\xi_{i,k}^t,\qquad b_t:=\bar g_t-g_t=\hat b_t+\bar\xi_t.
\end{align*}
Fix any $\tau>0$, we choose the local-path scale according to
\begin{align}
    K=1:~&\tau_t=\tau,\notag\\
    K\ge 2:~&\tau_t\leq
        \begin{cases}
            \displaystyle\frac{\tau}{(K-1)t}, & \text{smooth nonconvex or Lipschitz convex},\\[0.8em]
            \displaystyle\frac{\tau}{(K-1)t^p},~\forall p>\frac12, &\text{smooth convex}.\label{eq:tau_t-scheme}
        \end{cases}
\end{align}
In particular, we can always choose $\tau_t\leq \frac{1}{(K-1)t}$ for $K\ge 2$, which is exactly~\eqref{eq:partial-tau}.
Under~\eqref{eq:tau_t-scheme}, we have that $\{\tau_t\}_{t\ge 1}$ is upper bounded by $\tau$.
Let
\begin{equation*}
    c_K:=\frac1K\sum_{k=0}^{K-1}k^2=\frac{(K-1)(2K-1)}{6},\quad \Theta:=\left(1+\frac{L\tau}{q_1}\right)^{K-1}
\end{equation*}
and
\[
    B^2:=\frac{c_K\Theta^2 L^2}{m}\sum_{i=1}^m\|\nabla F_i(x_1)\|^2, \qquad \widetilde B^2:=\frac{c_K\Theta^2L^2}{m}\sum_{i=1}^m\bigl(\|\nabla F_i(x_1)\|+\sigma\bigr)^2.
\]
For $t\geq1$, denote the finite sums of $\tau_k$ and $\tau_k^2$ by
\[
    \mathcal T_t:=\sum_{k=1}^t\tau_k,\qquad \widetilde{\mathcal T}_t:=\sum_{k=1}^t\tau_k^2.
\]
If $K\ge 2$ and $\tau_t$ is chosen according to~\eqref{eq:tau_t-scheme}, then
\begin{equation}\label{eq:quantities-estimates}
c_K\lesssim (K-1)^2,\quad \Theta\lesssim1,\quad B\lesssim K-1, \quad \widetilde B\lesssim K-1,\quad \widetilde{\mathcal T}_t\lesssim \frac{1}{(K-1)^2},
\end{equation}
where we use $(1+1/t)^t\leq e$ to obtain $\Theta\lesssim1$. Moreover, for Lipschitz convex objectives, we have $\mathcal T_t\lesssim \log t/(K-1)$.

Let $\zeta_t:=\bar g_t-\E_t[\bar g_t]$. Set 
\[
\mathcal A_T:=\eta^2+2R_0^2+2R_1^2\sigma^2(\log(mKT)+2).
\] 
Given $T\ge 1$, let $M_T\ge 0$ be the maximum noise norm over the first $T$ rounds:
\[
    M_T:=\max_{1\leq t\leq T,\,1\leq i\leq m,\,0\leq k<K}\|\xi_{i,k}^t\|.
\]

Let $r_t:=x_{t+1}-x_t$. Define the accumulated trajectory quantities:
\[
    \mathcal R_T:=\sum_{t=1}^T\|r_t\|^2,\qquad \mathcal G_T:=\sum_{t=1}^T\frac{\|g_t\|^2}{q_t}.
\]

We focus on the nontrivial case $K\ge2$ in the proofs. The special case $K=1$ can be easily proved by noting that the coefficient of $\tau_t$ always involve $(K-1)$ (possibly through quantities $c_K$, $B$, and $\widetilde B$) in the analysis, which means that $\tau_t$ can be chosen arbitrarily for $K=1$. 

\section{Smooth nonconvex objectives}\label{app:smooth-nonconvex}
Assume throughout this section that every $F_i$ is $L$-smooth. 
From unbiasedness of $\xi_{i,k}^t$, we have $\E_t[\bar\xi_t]=0$ and hence
\begin{equation}
    \E_t[\bar g_t]=g_t+\E_t[\hat b_t].
\label{eq:sg-averaged-noise-moments}
\end{equation}
Additionally, the independence among the local noises $\{\xi_{i,k}^t\}$ gives
\begin{equation}
    \label{eq:noise-ave-bd}
    \E_t[\|\bar\xi_t\|^2]=\frac1{m^2K^2}\sum_{i=1}^m\sum_{k=0}^{K-1}\E_t[\|\xi_{i,k}^t\|^2]\leq\frac{mK\sigma^2}{m^2K^2}=\frac{\sigma^2}{mK}.
\end{equation}
Finally, since $\bar g_t=g_t+\hat b_t+\bar\xi_t$, we apply $\|a+b\|^2\leq2\|a\|^2+2\|b\|^2$ to get
\[
    \E_t[\|\bar g_t\|^2]\leq2\E_t[\|g_t+\bar\xi_t\|^2]+2\E_t[\|\hat b_t\|^2]=2\|g_t\|^2+2\E_t[\|\bar\xi_t\|^2]+2\E_t[\|\hat b_t\|^2],
\]
where the equality follows from $\E_t[\ip{g_t}{\bar\xi_t}]=\ip{g_t}{\E_t[\bar\xi_t]}=0$. This together with \eqref{eq:noise-ave-bd} gives
\begin{equation}
    \label{eq:gt_sq-exp}
    \E_t[\|\bar g_t\|^2]\leq2\|g_t\|^2+2\E_t[\|\hat b_t\|^2]+\frac{2\sigma^2}{mK}
\end{equation}

Before presenting the main theorem and its proof, we first introduce some auxiliary lemmas that provide bounds for trajectory-related quantities.
\begin{lemma}\label{lem:sg-local-moments}
Let Assumptions~\ref{ass:lower-bounded} and~\ref{ass:stochastic-noise} hold. Assume each $F_i$ is $L$-smooth. Let $\kappa_T:=\max_{t\in[T]} (t-1)\tau_t$.
Then for every $T\geq1$,
\begin{equation*}
    \sum_{t=1}^T\E_t[\|\hat b_t\|^2]\leq\frac{2\widetilde B^2\widetilde{\mathcal T}_T}{q_1^2}+\frac{8c_K\Theta^2L^4\kappa_T^2}{q_1^2}\mathcal R_T.
\end{equation*}
\end{lemma}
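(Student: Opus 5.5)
Fix a round $t$. By $L$-smoothness of each $F_i$ and Jensen,
\[
\|\hat b_t\|^2\leq\frac{L^2}{mK}\sum_{i=1}^m\sum_{k=0}^{K-1}\|x_{i,k}^t-x_t\|^2,
\]
so the task reduces to a conditional second-moment bound on the local deviations $\delta_{i,k}:=x_{i,k}^t-x_t$.

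\textbf{Step 1: local recursion.} Unroll the local recursion
\[
\delta_{i,k+1}=\delta_{i,k}-\frac{\tau_t}{q_t}\bigl(\nabla F_i(x_{i,k}^t)+\xi_{i,k}^t\bigr)
\]
and write $\nabla F_i(x_{i,k}^t)=\nabla F_i(x_t)+(\nabla F_i(x_{i,k}^t)-\nabla F_i(x_t))$. Then
\[
\delta_{i,k}=-\frac{\tau_t}{q_t}\Bigl(k\nabla F_i(x_t)+\sum_{j<k}(\nabla F_i(x_{i,j}^t)-\nabla F_i(x_t))+\sum_{j<k}\xi_{i,j}^t\Bigr).
\]

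\textbf{Step 2: conditional $L^2$ bound.} Apply the conditional $L^2$ triangle inequality (Lemma~\ref{lem:conditional-l2-triangle}) with $\mathcal G=\mathcal F_t$. The noise sum is a martingale, so its conditional second moment is at most $k\sigma^2\leq k^2\sigma^2$ by \eqref{eq:sg-implied-variance}. The drift term contributes at most $L\sum_{j<k}\|\delta_{i,j}\|$ in $L^2$. Setting $u_k:=(\E_t\|\delta_{i,k}\|^2)^{1/2}$, this gives
\[
u_k\leq\frac{\tau_t}{q_t}\Bigl(k(\|\nabla F_i(x_t)\|+\sigma)+L\sum_{j<k}u_j\Bigr).
\]
A discrete Gr\"onwall induction, using $q_t\geq q_1$ and $\tau_t\leq\tau$, yields
\[
u_k\leq\frac{\tau_t}{q_t}\,k\bigl(1+L\tau/q_1\bigr)^{k-1}\bigl(\|\nabla F_i(x_t)\|+\sigma\bigr)\leq\frac{\tau_t k\Theta}{q_t}\bigl(\|\nabla F_i(x_t)\|+\sigma\bigr).
\]
Summing over $k$ with $c_K=\frac1K\sum_k k^2$, we get
\[
\E_t\|\hat b_t\|^2\leq\frac{c_K\Theta^2L^2\tau_t^2}{m\,q_t^2}\sum_{i=1}^m\bigl(\|\nabla F_i(x_t)\|+\sigma\bigr)^2.
\]

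\textbf{Step 3: return to $x_1$.} Use $\|\nabla F_i(x_t)\|\leq\|\nabla F_i(x_1)\|+L\|x_t-x_1\|$ and $(a+b)^2\leq2a^2+2b^2$, together with $q_t\geq q_1$. This gives
\[
\E_t\|\hat b_t\|^2\leq\frac{2\widetilde B^2\tau_t^2}{q_1^2}+\frac{2c_K\Theta^2L^4}{q_1^2}\,\tau_t^2\|x_t-x_1\|^2.
\]
Summing the first term over $t$ produces exactly $2\widetilde B^2\widetilde{\mathcal T}_T/q_1^2$.

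\textbf{Step 4: discrete Hardy inequality (main obstacle).} It remains to show
\[
\sum_{t=1}^T\tau_t^2\|x_t-x_1\|^2\leq4\kappa_T^2\mathcal R_T.
\]
For $t\geq2$, $\tau_t\leq\kappa_T/(t-1)$, and the $t=1$ term vanishes. Hence the left side is at most
\[
\kappa_T^2\sum_{t\geq2}\Bigl(\frac{1}{t-1}\Bigl\|\sum_{j<t}r_j\Bigr\|\Bigr)^2\leq\kappa_T^2\sum_{s\geq1}\Bigl(\frac1s\sum_{j\leq s}\|r_j\|\Bigr)^2.
\]
The classical discrete Hardy inequality with $p=2$, whose constant is $4$, bounds the last sum by $4\sum_j\|r_j\|^2$, so the whole expression is at most $4\kappa_T^2\mathcal R_T$. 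This gives the coefficient $8c_K\Theta^2L^4\kappa_T^2/q_1^2$.

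The only delicate points are in Steps 2 and 4. In Step 2, the Gr\"onwall constant must come out exactly as $\Theta$; the $k^2$-weighted sum is what produces $c_K$. In Step 4, Hardy is applied to the norms $\|r_j\|$ after the triangle inequality, which is legitimate since the $r_j$ are random but the inequality is pathwise.
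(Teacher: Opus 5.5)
Your proof is correct and follows essentially the same route as the paper. You use a conditional $L^2$ bound on the local deviations that yields the factor $k\tau_t\Theta/q_t$, return to $x_1$ via smoothness, and apply the discrete Hardy inequality with constant $4$ pathwise. The differences are cosmetic: you unroll the local recursion first and use martingale orthogonality for the noise (giving $\sqrt{k}\,\sigma\le k\sigma$), whereas the paper uses a one-step recursion. You also replace $q_t$ by $q_1$ before Hardy, whereas the paper uses $q_t\geq q_j$ inside the averaged sum.
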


\begin{proof}
We start by using Lemma~\ref{lem:conditional-l2-triangle} with $\mathcal G=\mathcal F_t$ and $L$-smoothness to obtain
\begin{equation}
\begin{aligned}
    \sqrt{\E_t[\|\nabla F_i(x_{i,k}^t)\|^2]}
    &\leq\|\nabla F_i(x_t)\|+\sqrt{\E_t[\|\nabla F_i(x_{i,k}^t)-\nabla F_i(x_t)\|^2]}\\
    &\leq\|\nabla F_i(x_t)\|+L\sqrt{\E_t[\|x_{i,k}^t-x_t\|^2]}.
\end{aligned}
\label{eq:sg-local-gradient-l2}
\end{equation}
Applying Lemma~\ref{lem:conditional-l2-triangle},~\eqref{eq:sg-local-gradient-l2}, and~\eqref{eq:sg-implied-variance} gives
\[
\begin{aligned}
    \sqrt{\E_t[\|x_{i,k+1}^t-x_t\|^2]}&=\sqrt{\E_t\!\left[\|x_{i,k}^t-x_t-\tau_t(\nabla F_i(x_{i,k}^t)+\xi_{i,k}^t)/q_t\|^2\right]}\\
    &\leq \sqrt{\E_t[\|x_{i,k}^t-x_t\|^2]}+\frac{\tau_t}{q_t}\sqrt{\E_t[\|\nabla F_i(x_{i,k}^t)+\xi_{i,k}^t\|^2]}\\
    &\leq \sqrt{\E_t[\|x_{i,k}^t-x_t\|^2]}+\frac{\tau_t}{q_t}\left(\sqrt{\E_t[\|\nabla F_i(x_{i,k}^t)\|^2]}+\sqrt{\E_t[\|\xi_{i,k}^t\|^2]}\right)\\
    &\leq\left(1+\frac{L\tau_t}{q_t}\right)\sqrt{\E_t[\|x_{i,k}^t-x_t\|^2]}+\frac{\tau_t}{q_t}\bigl(\|\nabla F_i(x_t)\|+\sigma\bigr).
\end{aligned}
\]
Since $x_{i,0}^t=x_t$, applying the above inequality recursively and then using $\tau_t\leq\tau$ and $q_t\geq q_1$ yields
\begin{align}
    &\sqrt{\E_t[\|x_{i,k}^t-x_t\|^2]}\leq\frac{\tau_t}{q_t}\bigl(\|\nabla F_i(x_t)\|+\sigma\bigr)\sum_{s=0}^{k-1}\left(1+\frac{L\tau_t}{q_t}\right)^s\notag\\
    \leq&\frac{\tau_t}{q_t}\bigl(\|\nabla F_i(x_t)\|+\sigma\bigr)k\left(1+\frac{L\tau}{q_1}\right)^{k-1}\leq\frac{k\tau_t\Theta}{q_t}\bigl(\|\nabla F_i(x_t)\|+\sigma\bigr).\label{eq:sqrt-xk-xt}
\end{align}
Then it follows from the convexity of $\|\cdot\|^2$, the $L$-smoothness of $F_i$, and~\eqref{eq:sqrt-xk-xt} that
\begin{align*}
    \E_t[\|\hat b_t\|^2]\leq&\frac{1}{mK}\sum_{i=1}^m\sum_{k=0}^{K-1}\E_t[\|\nabla F_i(x_{i,k}^t)-\nabla F_i(x_t)\|^2]\leq\frac{L^2}{mK}\sum_{i=1}^m\sum_{k=0}^{K-1}\E_t[\|x_{i,k}^t-x_t\|^2]\\
    \leq&\frac{\Theta^2L^2\tau_t^2}{mq_t^2K}\sum_{i=1}^m\sum_{k=0}^{K-1}k^2\bigl(\|\nabla F_i(x_t)\|+\sigma\bigr)^2=\frac{c_K\Theta^2L^2\tau_t^2}{mq_t^2}\sum_{i=1}^m\bigl(\|\nabla F_i(x_t)\|+\sigma\bigr)^2.
\end{align*}
To proceed, applying $\|\nabla F_i(x_t)\|\leq\|\nabla F_i(x_1)\|+L\|x_t-x_1\|$ into the above inequality and using $(a+b)^2\leq2a^2+2b^2$ yield
\begin{equation}
\begin{aligned}
    \E_t[\|\hat b_t\|^2]&\leq\frac{c_K\Theta^2L^2\tau_t^2}{mq_t^2}\sum_{i=1}^m\left(\|\nabla F_i(x_1)\|+\sigma+L\|x_t-x_1\|\right)^2\\
    &\leq\frac{2\tau_t^2}{q_t^2}\left(\widetilde B^2+c_K\Theta^2L^4\|x_t-x_1\|^2\right).
\end{aligned}
\label{eq:sg-drift-around-initial-point}
\end{equation}
For $2\leq t\leq T$, note that $x_t-x_1=\sum_{j=1}^{t-1}r_j$, hence we have from $\tau_t\leq\kappa_T/(t-1)$ and $q_t\geq q_j$ for $j\leq t$ that
\begin{equation}
\begin{aligned}
    \sum_{t=1}^T\frac{\tau_t^2}{q_t^2}\|x_t-x_1\|^2&=\sum_{t=2}^T\frac{\tau_t^2}{q_t^2}\left\|\sum_{j=1}^{t-1}r_j\right\|^2\leq\kappa_T^2\sum_{t=2}^T\left(\frac1{t-1}\sum_{j=1}^{t-1}\frac{\|r_j\|}{q_j}\right)^2\\
    &\leq4\kappa_T^2\sum_{j=1}^{T-1}\frac{\|r_j\|^2}{q_j^2}\leq\frac{4\kappa_T^2}{q_1^2}\sum_{t=1}^T\|r_t\|^2,
\end{aligned}
\label{eq:sg-weighted-hardy}
\end{equation}
where the second inequality holds from Hardy's inequality $\sum_{t=1}^{T}\bar a_t^2\leq 4\sum_{t=1}^{T}a_t^2$ with $\bar a_t$ being the average of nonnegative $a_1, a_2, \ldots, a_t$.
Summing~\eqref{eq:sg-drift-around-initial-point} over $t=1,2,\ldots,T$ and applying~\eqref{eq:sg-weighted-hardy} then gives
\[
    \sum_{t=1}^T\E_t[\|\hat b_t\|^2]\leq\frac{2\widetilde B^2}{q_1^2}\sum_{t=1}^T\tau_t^2+2c_K\Theta^2L^4\sum_{t=1}^T\frac{\tau_t^2}{q_t^2}\|x_t-x_1\|^2\leq\frac{2\widetilde B^2\widetilde{\mathcal T}_T}{q_1^2}+\frac{8c_K\Theta^2L^4\kappa_T^2}{q_1^2}\sum_{t=1}^T\|r_t\|^2,
\]
which completes the proof.
\end{proof}

Before introducing the next lemma, we first bound moments $\E[M_T^{2p}]$ for any $p\geq1$. Suppose $\sigma>0$, let $N:=mKT$. Reindex the noises $\{\xi_{i,k}^t\}$ as $\xi_1,\ldots,\xi_N$, and denote
\[
    Z_j:=\frac{\|\xi_j\|^2}{\sigma^2}\geq0,\qquad A_N:=\frac1N\sum_{j=1}^N\exp(Z_j)\geq1.
\]
Then it follows from Assumption~\ref{ass:stochastic-noise} that $\E[\exp(Z_j)]\leq\exp(1)$ for every $j$, and hence $\E[A_N]\leq\exp(1)$. Moreover,
\begin{equation}\label{eq:sg-noise-max-accumulator}
    \frac{M_T^2}{\sigma^2}=\max_{1\leq j\leq N}Z_j\leq\log\left(\sum_{j=1}^N\exp(Z_j)\right)=\log N+\log A_N.
\end{equation}
For $p\geq1$, Lemma~\ref{lem:log-power-concavity} with $a=p-1+\log N$ shows that the function $x\mapsto(p-1+\log N+\log x)^p$ is concave on $[1/N,\infty)$, hence on $[1,\infty)$.
Applying Jensen's inequality on~\eqref{eq:sg-noise-max-accumulator} therefore yields
\[
    \frac{\E[M_T^{2p}]}{\sigma^{2p}}\leq\E\left[(p-1+\log N+\log A_N)^p\right]\leq(p-1+\log N+\log\E[A_N])^p\leq(\log(mKT)+p)^p,
\]
which is equivalent to
\begin{equation}
    \left(\E[M_T^{2p}]\right)^{1/p}\leq\sigma^2\bigl(\log(mKT)+p\bigr),\quad \forall p\geq 1.
    \label{eq:sg-noise-envelope}
\end{equation}
Note that~\eqref{eq:sg-noise-envelope} is a consequence of Assumption~\ref{ass:stochastic-noise}, which holds regardless of the problem settings.

\begin{lemma}\label{lem:sg-energy}
Let Assumption~\ref{ass:stochastic-noise} hold. Assume each $F_i$ is $L$-smooth.
If $\rho_\tau:=\sup_{t\ge 1}\sqrt{t-1}\,\tau_t<\infty$, define deterministic constants
\begin{align*}
    R_0&:=\max\left\{\frac{\|g_1\|}{q_1},\eta\left(1+\frac L{q_1}\right)\right\}+\frac{2B\tau+2\eta\sqrt{c_K}\,\Theta L^2\rho_\tau}{q_1^2},\\
    R_1&:=\frac2{q_1}\left(1+\frac{\sqrt{c_K}\,\Theta L\tau}{q_1}\right).
\end{align*}
Then for every $T\geq1$, we have $\max_{1\leq t\leq T}\|r_t\|\leq R_0+R_1M_T$ almost surely and
\begin{align}
    \E[\mathcal R_T]\leq&2\mathcal A_T\left(1+\log\frac{\E[q_{T+1}]}{q_1}\right),\label{eq:sg-expected-energy}\\
    \sqrt{\E[\mathcal R_T^2]}\leq&2\bigl(\mathcal A_T+4R_1^2\sigma^2\bigr)\left(4+\log\frac{\E[q_{T+1}]}{q_1}\right).
\label{eq:sg-nonconvex-energy-second-moment}
\end{align}
\end{lemma}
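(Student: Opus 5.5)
The proof has three stages: an almost-sure bound on each $\|r_t\|$, a pathwise bound on $\mathcal R_T$, and then expectations via the moment bound \eqref{eq:sg-noise-envelope}.

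\textbf{Step 1: almost-sure bound on $\|r_t\|$.} For $t=1$ we have $r_1=-\bar g_1/q_1$ and $\bar g_1=g_1+\hat b_1+\bar\xi_1$. For $t\ge2$ we use the decomposition from the proof sketch, $\bar g_t=\bar g_{t-1}+(g_t-g_{t-1})+(b_t-b_{t-1})$. This gives
\[
\|r_t\|\le\frac{\|\bar g_{t-1}\|}{q_t}+\frac{L\|r_{t-1}\|}{q_t}+\frac{\|b_t\|+\|b_{t-1}\|}{q_t}.
\]
The first two terms are handled by the accumulator. Since $q_t\ge\|\bar g_{t-1}\|/\eta$, the first is at most $\eta$. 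Since $\|r_{t-1}\|=\|\bar g_{t-1}\|/q_{t-1}\le\eta q_t/q_{t-1}\cdot\ldots$, more precisely $\|r_{t-1}\|/q_t\le \eta/q_{t-1}\le\eta/q_1$, the second is at most $L\eta/q_1$. Together these produce the $\eta(1+L/q_1)$ entry of $R_0$.

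\textbf{Step 1, drift term.} For $\|b_t\|\le\|\hat b_t\|+M_T$ we need a pathwise version of \eqref{eq:sqrt-xk-xt}. Repeat the recursion of Lemma~\ref{lem:sg-local-moments} pathwise, with $\|\xi_{i,k}^t\|\le M_T$ in place of $\sigma$:
\[
\|x_{i,k}^t-x_t\|\le \frac{k\tau_t\Theta}{q_t}\bigl(\|\nabla F_i(x_t)\|+M_T\bigr).
\]
Then $\|\hat b_t\|\le \frac{\sqrt{c_K}\Theta L\tau_t}{q_t}\bigl(\text{rms}_i\|\nabla F_i(x_t)\|+M_T\bigr)$. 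Next insert $\|\nabla F_i(x_t)\|\le\|\nabla F_i(x_1)\|+L\|x_t-x_1\|$, which produces $B$. The term $\tau_t\|x_t-x_1\|/q_t$ needs care. Bound $\|x_t-x_1\|\le\sum_{j<t}\|r_j\|$ and $\|r_j\|\le\eta q_{j+1}/q_j$; this is awkward. A cleaner route is Cauchy--Schwarz with $\|r_j\|/q_{j+1}$ summable. By Lemma~\ref{lem:summation}, $\sum_j\|\bar g_j\|^2/q_{j+1}^2\cdot$ gives $\|x_t-x_1\|\le\sqrt{t-1}\,(\sum_j\|r_j\|^2)^{1/2}$. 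Since $\|r_j\|^2\le\|\bar g_j\|^2/q_j^2$, we also have $\sum_j\|\bar g_j\|^2/q_{j+1}^2\le \eta^2\log$. I expect instead to use
\[
\|x_t-x_1\|/q_t\le\sum_{j<t}\|\bar g_j\|/(q_jq_t)\le\sqrt{t-1}\Bigl(\sum_j\|\bar g_j\|^2/q_{j+1}^2\Bigr)^{1/2}/q_1 .
\]
The sum is bounded by $\eta^2$ times something, since $q_{j+1}^2-q_j^2=\|\bar g_j\|^2/\eta^2$ gives $\sum\|\bar g_j\|^2/q_{j+1}^2\le\eta^2\log(q_t^2/q_1^2)$. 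That is a log, not a constant. To get the constant in $R_0$ I would use instead $\|\bar g_j\|/q_{j+1}\le\eta$ together with $\tau_t\sqrt{t-1}\le\rho_\tau$, matching the $\rho_\tau$ appearing in $R_0$. \textbf{This is the main obstacle}: obtaining $\tau_t\|x_t-x_1\|/q_t\lesssim\eta\rho_\tau/q_1$ without logs. I would try a telescoping argument, $\sum_{j<t}\|\bar g_j\|/q_t\le\sum_j(\|\bar g_j\|/q_{j+1})\cdot(q_{j+1}/q_t)$, combined with $\sum_j\|\bar g_j\|^2/q_t^2\le\eta^2$. The latter holds because $q_t^2\ge\sum_{j<t}\|\bar g_j\|^2/\eta^2$. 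Then Cauchy--Schwarz gives $\sum_{j<t}\|\bar g_j\|/q_t\le\eta\sqrt{t-1}$, hence $\tau_t\|x_t-x_1\|/q_t\le\tau_t\eta\sqrt{t-1}/q_1\le\eta\rho_\tau/q_1$. This works. Collecting constants yields $\|r_t\|\le R_0+R_1M_T$.

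\textbf{Step 2: pathwise bound on $\mathcal R_T$.} Write $\|r_t\|^2=\eta^2(q_{t+1}^2-q_t^2)/q_t^2$ and apply Lemma~\ref{lem:summation} with $A_t=q_t^2$ and $a_t=\|\bar g_t\|^2/\eta^2$. The condition $a_t\le cA_t$ holds with $c=(R_0+R_1M_T)^2/\eta^2$ by Step 1. This gives
\[
\mathcal R_T\le(\eta^2+(R_0+R_1M_T)^2)\log\frac{q_{T+1}^2}{q_1^2}\le 2(\eta^2+2R_0^2+2R_1^2M_T^2)\log\frac{q_{T+1}}{q_1}.
\]

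\textbf{Step 3: expectations.} The difficulty is that $M_T^2$ and $\log q_{T+1}$ are dependent. Use $xy\le$ a splitting: $M_T^2\log(q/q_1)\le \E[M_T^2]\log(q/q_1)+(M_T^2-\E M_T^2)_+\log(q/q_1)$, or, more simply, the entropy/Young-type inequality $M^2\,\ell\le \sigma^2(\log N+2)\,\ell+\sigma^2 e^{M^2/\sigma^2}/N$-style. Concretely, $uv\le v\log v+e^{u}$ with $u=M_T^2/\sigma^2-\log N-2$ and $v=\ell$ gives the $\mathcal A_T$ coefficient plus an $O(1)$ remainder via $\E[\sum e^{Z_j}]/N\le e$. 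Finally apply Jensen to $\E\log q_{T+1}\le\log\E q_{T+1}$, absorbing remainders into the ``$1+$''.

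For the second moment, square the Step~2 bound. Use Cauchy--Schwarz or the same Young-type trick with $(\E M_T^4)^{1/2}\le\sigma^2(\log N+2)$ from \eqref{eq:sg-noise-envelope}, and use the concavity of $(a+\log x)^2$ (Lemma~\ref{lem:log-power-concavity}) to move the expectation inside the log. This produces the shifted constant $4$ and the extra $4R_1^2\sigma^2$.
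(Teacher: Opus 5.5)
Your Steps 1 and 2 are correct. Once the abandoned attempts are removed, they are the paper's argument: the pathwise drift bound with $M_T$ in place of $\sigma$, and the estimate $\|x_t-x_1\|\le\eta\sqrt{t-1}\,q_t/q_1$ obtained from $q_t^2\ge\sum_{j<t}\|\bar g_j\|^2/\eta^2$. The same goes for the decomposition $\bar g_t=\bar g_{t-1}+(g_t-g_{t-1})+(b_t-b_{t-1})$ and the log-sum part of Lemma~\ref{lem:summation} with $c=(R_0+R_1M_T)^2/\eta^2$.

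The gap is in Step 3, where you must decouple $M_T^2$ from $\ell:=\log(q_{T+1}/q_1)$. Your Fenchel--Young device $uv\le e^u+v\log v$ with $v=\ell$ leaves the term $\E[\ell\log\ell]$. That term is neither $O(1)$ nor $\lesssim 1+\E[\ell]$: already for deterministic $\ell\approx\log T$ it equals $\log T\log\log T$. So it cannot be ``absorbed into the $1+$'', and this route gives at best an extra $\log\log\E[q_{T+1}]$ factor rather than \eqref{eq:sg-expected-energy}. Your other splitting, $M_T^2\ell\le\E[M_T^2]\ell+(M_T^2-\E[M_T^2])_+\ell$, does not help either, because the second product is still a product of dependent variables. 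The paper simply uses Cauchy--Schwarz,
\[
\E[\mathcal R_T]\le2\Bigl(\E\bigl[(\eta^2+2R_0^2+2R_1^2M_T^2)^2\bigr]\Bigr)^{1/2}\Bigl(\E\bigl[(1+\ell)^2\bigr]\Bigr)^{1/2}.
\]
This is followed by Minkowski, the bound $\sqrt{\E[M_T^4]}\le\sigma^2(\log(mKT)+2)$ from \eqref{eq:sg-noise-envelope} with $p=2$, and Jensen for the concave map $x\mapsto(1+\log x)^2$ on $[1,\infty)$ (Lemma~\ref{lem:log-power-concavity} with $a=1$, $p=2$). That concavity is where the ``$1+$'' comes from.

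These are exactly the ingredients you list in your second-moment paragraph, but there they are insufficient. After squaring you must bound $\E[X^2\ell^2]$ with $X:=\eta^2+(R_0+R_1M_T)^2$. By Cauchy--Schwarz this needs $(\E[X^4])^{1/4}$ and $(\E[\ell^4])^{1/4}$. Hence you need $(\E[M_T^8])^{1/4}\le\sigma^2(\log(mKT)+4)$, which is the envelope with $p=4$, and concavity of $(4+\log x)^4$, which is Lemma~\ref{lem:log-power-concavity} with $a=p=4$. These fourth-order versions are what produce the shift $4$ and the extra $4R_1^2\sigma^2$, via $2R_1^2\sigma^2(\log(mKT)+4)=2R_1^2\sigma^2(\log(mKT)+2)+4R_1^2\sigma^2$. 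With only $(\E[M_T^4])^{1/2}$ and $(a+\log x)^2$ you cannot control $\E[X^2\ell^2]$.
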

\begin{proof}
We first establish a deterministic bound on $\|b_t\|$. Using smoothness and definition of $M_T$, we obtain
\begin{align*}
    \|x_{i,k+1}^t-x_t\|&\leq\|x_{i,k}^t-x_t\|+\frac{\tau_t}{q_t}\bigl(\|\nabla F_i(x_{i,k}^t)\|+M_T\bigr)\\
    &\leq\|x_{i,k}^t-x_t\|+\frac{\tau_t}{q_t}\bigl(\|\nabla F_i(x_t)\|+L\|x_{i,k}^t-x_t\|+M_T\bigr)\\
    &=\left(1+\frac{L\tau_t}{q_t}\right)\|x_{i,k}^t-x_t\|+\frac{\tau_t}{q_t}\bigl(\|\nabla F_i(x_t)\|+M_T\bigr).
\end{align*}
Unrolling this recursion similarly as in the proof of Lemma~\ref{lem:sg-local-moments} gives (with $\sigma$ replaced by $M_T$), for $t\leq T$,
\begin{align*}
    &\|x_{i,k}^t-x_t\|\leq\frac{\tau_t}{q_t}\bigl(\|\nabla F_i(x_t)\|+M_T\bigr)\sum_{s=0}^{k-1}\left(1+\frac{L\tau_t}{q_t}\right)^s\leq \frac{k\tau_t\Theta}{q_t}\bigl(\|\nabla F_i(x_t)\|+M_T\bigr),\\
    &\|\hat b_t\|^2 \leq\frac{c_K\Theta^2L^2\tau_t^2}{mq_t^2}\sum_{i=1}^m\bigl(\|\nabla F_i(x_t)\|+M_T\bigr)^2.
\end{align*}
Then by applying Lemma~\ref{lem:conditional-l2-triangle} and the smoothness of $F_i$, we further have
\begin{align}
    \|\hat b_t\|&\leq\frac{\tau_tL\Theta\sqrt{c_K}}{q_t}\Big[\frac1m\sum_{i=1}^m\bigl(\|\nabla F_i(x_t)\|+M_T\bigr)^2\Big]^{1/2}\notag\\
    &\leq\frac{\tau_tL\Theta\sqrt{c_K}}{q_t}\Big[\frac1m\sum_{i=1}^m\bigl(\|\nabla F_i(x_1)\|+L\|x_t-x_1\|+M_T\bigr)^2\Big]^{1/2}\notag\\
    &\leq\frac{\tau_tL\Theta\sqrt{c_K}}{q_t}\Big[\Big(\frac1m\sum_{i=1}^m\|\nabla F_i(x_1)\|^2\Big)^{1/2}+L\|x_t-x_1\|+M_T\Big]\notag\\
    &=\frac{\tau_t}{q_t}\left(B+\sqrt{c_K}\,\Theta LM_T+\sqrt{c_K}\,\Theta L^2\|x_t-x_1\|\right).\label{eq:hat-b-norm1}
\end{align}
To bound $\|x_t-x_1\|$, we use the update rule \eqref{eq:nocv-outer-method} and $q_t\geq q_1$ to write
\[
    \|x_t-x_1\|^2=\Big\|\sum_{j=1}^{t-1}r_j\Big\|^2\leq(t-1)\sum_{j=1}^{t-1}\|r_j\|^2\leq\frac{t-1}{q_1^2}\sum_{j=1}^{t-1}\|\bar g_j\|^2=\frac{(t-1)\eta^2}{q_1^2}(q_t^2-q_1^2),
\]
which further implies
\[
    \|x_t-x_1\|\leq\frac{\eta\sqrt{t-1}}{q_1}\sqrt{q_t^2-q_1^2}\leq\frac{\eta\sqrt{t-1}}{q_1}q_t.
\]
Substituting the above inequality into \eqref{eq:hat-b-norm1} and noting that $b_t=\hat b_t+\bar\xi_t$ and $\|\bar \xi_t\|\leq M_T$ gives
\begin{equation}
    \|b_t\|\leq\|\hat b_t\|+\|\bar\xi_t\|\leq\frac{B\tau_t+\eta\sqrt{c_K}\,\Theta L^2\sqrt{t-1}\,\tau_t}{q_1}+\left(1+\frac{\sqrt{c_K}\,\Theta L\tau_t}{q_1}\right)M_T.
    \label{eq:sg-pathwise-bias}
\end{equation}

Next, we show that $\|r_t\|\leq R_0+R_1M_T$ for all $1\leq t\leq T$.
For $t=1$, using $\bar g_1=g_1+b_1$,~\eqref{eq:sg-pathwise-bias}, and $\tau_t\leq \tau$ gives 
\[
\|r_1\|\leq \frac{\|g_1\|+\|b_1\|}{q_1}\leq \frac{\|g_1\|}{q_1}+\frac{B\tau}{q_1^2}+\frac{1}{q_1}\left(1+\frac{\sqrt{c_K}\,\Theta L\tau}{q_1}\right)M_T\leq R_0+R_1M_T.
\] 
For $t\geq2$, note that $q_t^2\geq\|\bar g_{t-1}\|^2/\eta^2$ and the smoothness of $F$ implies:
\[
    \frac{\|g_t-g_{t-1}\|}{q_t}\leq\frac{L\|x_t-x_{t-1}\|}{q_t}=\frac{L\|\bar g_{t-1}\|}{q_{t-1}q_t}\leq\frac{L\eta}{q_1}.
\]
Consequently, we have from the decomposition $\bar g_t=\bar g_{t-1}+(g_t-g_{t-1})+(b_t-b_{t-1})$ and~\eqref{eq:sg-pathwise-bias} that
\begin{align*}
    \|r_t\|=\frac{\|\bar g_t\|}{q_t}\leq&\frac{\|\bar g_{t-1}\|+\|g_t-g_{t-1}\|+\|b_t\|+\|b_{t-1}\|}{q_t}\\
    \leq&\eta\left(1+\frac L{q_1}\right)+\frac{\|b_t\|+\|b_{t-1}\|}{q_1}\\
    \leq&\eta\left(1+\frac L{q_1}\right)+\frac{2B\tau+2\eta\sqrt{c_K}\,\Theta L^2\rho_\tau}{q_1^2}+\frac{2}{q_1}\left(1+\frac{\sqrt{c_K}\,\Theta L\tau}{q_1}\right)M_T\\
    \leq& R_0+R_1 M_T.
\end{align*}

We next bound the moments of $\mathcal R_T$. Since $\|r_t\|\leq R_0+R_1M_T$ for $t\leq T$ almost surely, we have
\[
    \frac{\|\bar g_t\|^2/\eta^2}{q_t^2}=\frac{\|r_t\|^2}{\eta^2}\leq\frac{(R_0+R_1M_T)^2}{\eta^2}.
\]
Therefore, we apply the log-sum in Lemma~\ref{lem:summation} with $a_t=\|r_t\|^2/\eta^2$, $A_t=q_t^2$, and $c=(R_0+R_1M_T)^2/\eta^2$ to obtain
\begin{equation}
    \label{eq:sum-r-sq}
    \mathcal R_T=\eta^2\sum_{t=1}^T\frac{\|\bar g_t\|^2/\eta^2}{q_t^2}\leq\bigl(\eta^2+(R_0+R_1M_T)^2\bigr)\log\frac{q_{T+1}^2}{q_1^2}.
\end{equation}
Taking expectations and applying Cauchy--Schwarz inequality further gives
\begin{align*}
    \E[\mathcal R_T]\leq& 2\sqrt{\E\left[\bigl(\eta^2+2R_0^2+2R_1^2M_T^2\bigr)^2\right]}\sqrt{\E\left[\left(\log\frac{q_{T+1}}{q_1}\right)^2\right]}\notag\\
    \leq& 2\Bigl(\eta^2+2R_0^2+2R_1^2\sqrt{\E[M_T^4]}\Bigr)\sqrt{\E\left[\left(1+\log\frac{q_{T+1}}{q_1}\right)^2\right]}\notag\\
    \leq& 2\Bigl[\eta^2+2R_0^2+2R_1^2\sigma^2\bigl(\log(mKT)+2\bigr)\Bigr]\left(1+\log\frac{\E[q_{T+1}]}{q_1}\right),
\end{align*}
where the second inequality uses Lemma~\ref{lem:conditional-l2-triangle} and $q_{T+1}\geq q_1$, and the last inequality follows from Lemma~\ref{lem:log-power-concavity} with $a=1$ and $p=2$, as well as~\eqref{eq:sg-noise-envelope} with $p=2$.
Squaring both sides of~\eqref{eq:sum-r-sq} gives
\[
    \mathcal R_T^2\leq4\left(\eta^2+(R_0+R_1M_T)^2\right)^2\left(\log\frac{q_{T+1}}{q_1}\right)^2.
\]
Taking expectations and applying Cauchy--Schwarz inequality $\E[XY]\leq(\E[X^2])^{1/2}(\E[Y^2])^{1/2}$ further gives
\begin{align}
    \sqrt{\E[\mathcal R_T^2]}&\leq2\left(\E\left[\left(\eta^2+(R_0+R_1M_T)^2\right)^2\left(\log\frac{q_{T+1}}{q_1}\right)^2\right]\right)^{1/2}\notag\\
    &\leq2\left(\E\left[\left(\eta^2+(R_0+R_1M_T)^2\right)^4\right]\right)^{1/4}\left(\E\left[\left(\log\frac{q_{T+1}}{q_1}\right)^4\right]\right)^{1/4}.\label{eq:sg-energy-l2-holder}
\end{align}
For the first term on the RHS of~\eqref{eq:sg-energy-l2-holder}, we have from $(a+b)^2\leq2a^2+2b^2$ and Lemma~\ref{lem:conditional-l2-triangle} with $p=4$ that
\begin{align}
    \left(\E\left[\left(\eta^2+(R_0+R_1M_T)^2\right)^4\right]\right)^{1/4}&\leq\eta^2+2R_0^2+2R_1^2\left(\E[M_T^8]\right)^{1/4}\notag\\
    &\leq\mathcal A_T+4R_1^2\sigma^2,\label{eq:sg-energy-factor-l4}
\end{align}
where the last inequality follows from~\eqref{eq:sg-noise-envelope} with $p=4$.
To bound the other term in~\eqref{eq:sg-energy-l2-holder}, we apply Lemma~\ref{lem:log-power-concavity} with $a=4$ and $p=4$ to derive
\begin{equation}\label{eq:sg-log-factor-l4}
    \E\left[\left(\log\frac{q_{T+1}}{q_1}\right)^4\right]\leq\E\left[\left(4+\log\frac{q_{T+1}}{q_1}\right)^4\right]\leq\left(4+\log\frac{\E[q_{T+1}]}{q_1}\right)^4.   
\end{equation}
Substituting~\eqref{eq:sg-energy-factor-l4} and~\eqref{eq:sg-log-factor-l4} into~\eqref{eq:sg-energy-l2-holder} yields~\eqref{eq:sg-nonconvex-energy-second-moment}.
The proof is complete.
\end{proof}

\begin{lemma}\label{lem:sg-nonconvex-second-moments}
Let Assumptions~\ref{ass:lower-bounded} and~\ref{ass:stochastic-noise} hold. Assume each $F_i$ is $L$-smooth. Let $\kappa_T:=\max_{t\in[T]} (t-1)\tau_t$. Then for every $T\geq1$,
\begin{align}
    \E[\mathcal G_T]\leq&2(F(x_1)-F^\star)+\frac{2\widetilde B^2\widetilde{\mathcal T}_T}{q_1^3}+\left(L+\frac{8c_K\Theta^2L^4\kappa_T^2}{q_1^3}\right)\E[\mathcal R_T],\label{eq:gcal-moment1}\\
    \sqrt{\E[\mathcal G_T^2]}\leq&2(F(x_1)-F^\star)+\frac{2\widetilde B^2\widetilde{\mathcal T}_T}{q_1^3}+\left(L+\frac{8c_K\Theta^2L^4\kappa_T^2}{q_1^3}\right)\sqrt{\E[\mathcal R_T^2]}\notag\\
    &+\frac{2\sigma}{\sqrt{mq_1}}\left(1+\frac{(K-1)L\tau\Theta}{2q_1}\right)\sqrt{\E[\mathcal G_T]}.\label{eq:gcal-moment2}
\end{align}
\end{lemma}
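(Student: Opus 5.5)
The plan is a one-step descent inequality from $L$-smoothness of $F$, summed over $t$ and combined with Lemma~\ref{lem:sg-local-moments}. The drift then becomes server displacement, and only the martingale term needs separate care. With $r_t=-\bar g_t/q_t$ and the decomposition $\bar g_t=g_t+\E_t[\hat b_t]+\zeta_t$ from~\eqref{eq:sg-averaged-noise-moments}, smoothness gives
\[
F(x_{t+1})\le F(x_t)-\frac{\|g_t\|^2}{q_t}-\frac{\langle g_t,\E_t[\hat b_t]\rangle}{q_t}-\frac{\langle g_t,\zeta_t\rangle}{q_t}+\frac L2\|r_t\|^2 .
\]
Young's inequality $|\langle g_t,\E_t\hat b_t\rangle|\le \tfrac12\|g_t\|^2+\tfrac12\E_t[\|\hat b_t\|^2]$ turns this into the inequality of the proof sketch. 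Summing over $t\le T$, using $F(x_{T+1})\ge F^\star$ and $q_t\ge q_1$, and multiplying by $2$ gives the pathwise bound
\[
\mathcal G_T\le 2(F(x_1)-F^\star)+\frac1{q_1}\sum_{t=1}^T\E_t[\|\hat b_t\|^2]+L\mathcal R_T-2\sum_{t=1}^T\frac{\langle g_t,\zeta_t\rangle}{q_t}.
\]
Lemma~\ref{lem:sg-local-moments} bounds the drift sum by $2\widetilde B^2\widetilde{\mathcal T}_T/q_1^2+8c_K\Theta^2L^4\kappa_T^2\mathcal R_T/q_1^2$. After dividing by $q_1$, this yields exactly the constants in~\eqref{eq:gcal-moment1} and~\eqref{eq:gcal-moment2}.

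For~\eqref{eq:gcal-moment1}, I take expectations. Since $g_t$ and $q_t$ are $\mathcal F_t$-measurable and $\E_t[\zeta_t]=0$, the martingale sum has zero mean. Integrability follows from the a.s.\ bound $\|r_t\|\le R_0+R_1M_T$ of Lemma~\ref{lem:sg-energy} and the finite moments~\eqref{eq:sg-noise-envelope}.

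For~\eqref{eq:gcal-moment2}, I take $L^2$ norms of both sides of the pathwise bound. Since $\mathcal G_T\ge0$, I first bound it by the positive part of the right-hand side and then apply Minkowski (Lemma~\ref{lem:conditional-l2-triangle} with $p=2$ and trivial $\mathcal G$). The deterministic and $\mathcal R_T$ terms give the first line of~\eqref{eq:gcal-moment2}. It remains to show
\[
\sqrt{\E\big[\mathcal M_T^2\big]}\le \frac{\sigma}{\sqrt{mq_1}}\Big(1+\frac{(K-1)L\tau\Theta}{2q_1}\Big)\sqrt{\E[\mathcal G_T]},\qquad \mathcal M_T:=\sum_{t\le T}\frac{\langle g_t,\zeta_t\rangle}{q_t}.
\]
By orthogonality of martingale increments, $\E[\mathcal M_T^2]=\sum_t\E[\|g_t\|^2\E_t\|\zeta_t\|^2/q_t^2]$. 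So I need $\E_t[\|\zeta_t\|^2]\le \frac{\sigma^2}{mK}(\cdots)^2\cdot\frac{K}{q_t}$-type control, or more simply $\E_t[\|\zeta_t\|^2]\le \frac{\sigma^2}{m}\big(1+\frac{(K-1)L\tau\Theta}{2q_1}\big)^2$. Then $\E[\mathcal M_T^2]\le \frac{\sigma^2}{mq_1}(\cdots)^2\E[\mathcal G_T]$ using $1/q_t^2\le 1/(q_1q_t)$, which gives the claim.

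The main obstacle is this conditional variance bound on $\zeta_t$. The difficulty is that $\zeta_t$ contains not only the averaged noise $\bar\xi_t$ but also the fluctuation $\hat b_t-\E_t\hat b_t$, and the local iterates depend on earlier noises. My approach is to write $\zeta_t=\frac1m\sum_i\zeta^{(i)}_t$, with clients conditionally independent given $\mathcal F_t$, so $\E_t\|\zeta_t\|^2=\frac1{m^2}\sum_i\E_t\|\zeta_t^{(i)}\|^2$. For one client I couple the noisy trajectory to the one driven by the same noise but viewed as a function of $(\xi_{i,0},\dots,\xi_{i,K-1})$. An Efron--Stein/martingale-difference decomposition over $k$ then shows that the effect of $\xi_{i,j}$ on $\nabla F_i(x^t_{i,k})$ for $k>j$ is at most $L(\tau_t/q_t)(1+L\tau_t/q_t)^{k-j-1}\|\xi_{i,j}\|\le L\tau\Theta\|\xi_{i,j}\|/q_1$. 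Summing over $k>j$ gives at most $(K-1)$ such contributions, averaged by $1/K$. Combining with $\E\|\xi_{i,j}\|^2\le\sigma^2$ via~\eqref{eq:sg-implied-variance} and Minkowski yields $\sqrt{\E_t\|\zeta^{(i)}_t\|^2}\le\sigma\big(1+\frac{(K-1)L\tau\Theta}{2q_1}\big)$. The factor $\tfrac12$ would come from averaging $\frac1K\sum_j (K-1-j)$, and tightening exactly to that constant is where I expect the most care.
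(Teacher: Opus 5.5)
Your outer argument is the paper's proof. It uses the descent inequality with Young's inequality, the pathwise bound, Lemma~\ref{lem:sg-local-moments}, the zero-mean martingale term for \eqref{eq:gcal-moment1}, and Minkowski for \eqref{eq:gcal-moment2}. It also uses orthogonality of increments, cross-client independence, and $1/q_t^2\le 1/(q_1q_t)$. One small slip: your displayed identity for $\E[\mathcal M_T^2]$ should be an inequality, via $\langle g_t,\zeta_t\rangle^2\le\|g_t\|^2\|\zeta_t\|^2$.

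The genuine gap is the step you flagged, the per-client bound $\E_t[\|\zeta_t^{(i)}\|^2]\le\sigma^2\bigl(1+\frac{(K-1)L\tau\Theta}{2q_1}\bigr)^2$. Your sensitivity estimate $L(\tau_t/q_t)(1+L\tau_t/q_t)^{k-j-1}\|\xi_{i,j}^t\|$ for the effect of $\xi_{i,j}^t$ on $\nabla F_i(x_{i,k}^t)$ requires the later noises $\xi_{i,k}^t$, $k>j$, to stay unchanged when $\xi_{i,j}^t$ is perturbed. That holds only if the local path is a fixed map of independent, additive, location-independent noises. Assumption~\ref{ass:stochastic-noise} gives much less: $\xi_{i,k}^t=g_{i,k}^t-\nabla F_i(x_{i,k}^t)$ is only conditionally centered and sub-Gaussian given $\mathcal F_{t,k}^{(i)}$, and its law may depend on $x_{i,k}^t$, as in the typical case $g_{i,k}^t=\nabla\ell_i(x_{i,k}^t;z)$. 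Then the $\xi_{i,k}^t$ are not independent inputs of a fixed map. Perturbing $\xi_{i,j}^t$, or the sample behind it, moves $x_{i,k}^t$ and so changes the later noises themselves. $L$-smoothness of $F_i$ does not control that change; you would need each $\ell_i(\cdot;z)$ to be smooth.

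A Doob decomposition over $k$ hits the same obstacle. Bounding $\E[\nabla F_i(x_{i,k}^t)\mid\mathcal F_{t,j+1}^{(i)}]-\E[\nabla F_i(x_{i,k}^t)\mid\mathcal F_{t,j}^{(i)}]$ by a multiple of $\|\xi_{i,j}^t\|$ requires coupling future paths started from different states, which is not available. For additive location-independent noise your route would go through, and would even give a sharper bound. It does not cover the setting of the lemma.

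The missing idea, which is what the paper uses, is to compare with the \emph{noiseless} local path instead of tracking sensitivity to each noise. Let $\widetilde x_{i,0}^t=x_t$, $\widetilde x_{i,k+1}^t=\widetilde x_{i,k}^t-\frac{\tau_t}{q_t}\nabla F_i(\widetilde x_{i,k}^t)$, and $\widetilde g_t^{(i)}=\frac1K\sum_{k}\nabla F_i(\widetilde x_{i,k}^t)$. These are $\mathcal F_t$-measurable, so the conditional variance is at most the conditional second moment about them: $\E_t[\|\bar g_t^{(i)}-\E_t[\bar g_t^{(i)}]\|^2]\le\E_t[\|\bar g_t^{(i)}-\widetilde g_t^{(i)}\|^2]$. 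Lemma~\ref{lem:conditional-l2-triangle}, $L$-smoothness of $F_i$, and $\E_t[\|\xi_{i,k}^t\|^2]\le\sigma^2$ (tower property applied to \eqref{eq:sg-implied-variance}) give
\[
\sqrt{\E_t\bigl[\|x_{i,k+1}^t-\widetilde x_{i,k+1}^t\|^2\bigr]}\le\Bigl(1+\frac{L\tau_t}{q_t}\Bigr)\sqrt{\E_t\bigl[\|x_{i,k}^t-\widetilde x_{i,k}^t\|^2\bigr]}+\frac{\tau_t\sigma}{q_t},
\]
hence $\sqrt{\E_t[\|x_{i,k}^t-\widetilde x_{i,k}^t\|^2]}\le k\tau\Theta\sigma/q_1$. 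One more Minkowski step yields $\sqrt{\E_t[\|\bar g_t^{(i)}-\widetilde g_t^{(i)}\|^2]}\le\frac{\sigma}{K}\sum_{k=0}^{K-1}\bigl(1+\frac{kL\tau\Theta}{q_1}\bigr)=\sigma\bigl(1+\frac{(K-1)L\tau\Theta}{2q_1}\bigr)$. This needs no independence or coupling within a client, and the factor $\tfrac12$ is just $\frac1K\sum_k k=\frac{K-1}{2}$, with no extra tightening.
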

\begin{proof}
Define the noiseless local quantities at round $t$ by
\[
    \widetilde x_{i,0}^t:=x_t,\qquad \widetilde x_{i,k+1}^t:=\widetilde x_{i,k}^t-\frac{\tau_t}{q_t}\nabla F_i(\widetilde x_{i,k}^t),\qquad \widetilde g_t^{(i)}:=\frac1K\sum_{k=0}^{K-1}\nabla F_i(\widetilde x_{i,k}^t).
\]
Then it follows that
\[
    x_{i,k+1}^t-\widetilde x_{i,k+1}^t=x_{i,k}^t-\widetilde x_{i,k}^t-\frac{\tau_t}{q_t}\bigl(\nabla F_i(x_{i,k}^t)-\nabla F_i(\widetilde x_{i,k}^t)\bigr)-\frac{\tau_t}{q_t}\xi_{i,k}^t.
\]
Lemma~\ref{lem:conditional-l2-triangle} with $\mathcal G=\mathcal F_t$, $L$-smoothness, and~\eqref{eq:sg-implied-variance} therefore give
\[
\begin{aligned}
    \sqrt{\E_t[\|x_{i,k+1}^t-\widetilde x_{i,k+1}^t\|^2]}&\leq\sqrt{\E_t[\|x_{i,k}^t-\widetilde x_{i,k}^t\|^2]}
    +\frac{\tau_t}{q_t}\sqrt{\E_t[\|\nabla F_i(x_{i,k}^t)-\nabla F_i(\widetilde x_{i,k}^t)\|^2]}+\frac{\tau_t}{q_t}\sqrt{\E_t[\|\xi_{i,k}^t\|^2]}\\
    &\leq\left(1+\frac{L\tau_t}{q_t}\right)\sqrt{\E_t[\|x_{i,k}^t-\widetilde x_{i,k}^t\|^2]}+\frac{\tau_t\sigma}{q_t}.
\end{aligned}
\]
Since $x_{i,0}^t=\widetilde x_{i,0}^t=x_t$, applying the above inequality recursively gives
\begin{equation}
    \sqrt{\E_t[\|x_{i,k}^t-\widetilde x_{i,k}^t\|^2]}\leq\frac{\tau_t\sigma}{q_t}\sum_{s=0}^{k-1}\left(1+\frac{L\tau_t}{q_t}\right)^s\leq\frac{k\tau_t\Theta\sigma}{q_t}\leq\frac{k\tau\Theta\sigma}{q_1}.
    \label{eq:sqrt-xk-tilde_xk}
\end{equation}
Note that $\bar g_t^{(i)}-\widetilde g_t^{(i)}=\frac1K\sum_{k=0}^{K-1}\bigl(\nabla F_i(x_{i,k}^t)-\nabla F_i(\widetilde x_{i,k}^t)+\xi_{i,k}^t\bigr)$, we obtain by applying Lemma~\ref{lem:conditional-l2-triangle}, smoothness, and~\eqref{eq:sqrt-xk-tilde_xk} that
\[
\begin{aligned}
    \sqrt{\E_t[\|\bar g_t^{(i)}-\widetilde g_t^{(i)}\|^2]}&\leq\frac1K\sum_{k=0}^{K-1}\left(L\sqrt{\E_t[\|x_{i,k}^t-\widetilde x_{i,k}^t\|^2]}+\sqrt{\E_t[\|\xi_{i,k}^t\|^2]}\right)\\
    &\leq\frac\sigma K\sum_{k=0}^{K-1}\left(1+\frac{kL\tau\Theta}{q_1}\right)=\sigma\left(1+\frac{(K-1)L\tau\Theta}{2q_1}\right).
\end{aligned}
\]
For different clients $i\ne j$, conditional independence gives
\[
    \E_t\left[\left\langle\bar g_t^{(i)}-\E_t[\bar g_t^{(i)}],\bar g_t^{(j)}-\E_t[\bar g_t^{(j)}]\right\rangle\right]=0.
\]
Since $\widetilde g_t^{(i)}$ is $\mathcal F_t$-measurable, we therefore have
\begin{equation}
\begin{aligned}
    \E_t[\|\zeta_t\|^2]&=\frac1{m^2}\sum_{i=1}^m\E_t\left[\|\bar g_t^{(i)}-\E_t[\bar g_t^{(i)}]\|^2\right]\\
    &=\frac1{m^2}\sum_{i=1}^m\left(\E_t\left[\|\bar g_t^{(i)}-\widetilde g_t^{(i)}\|^2\right]-\left\|\E_t[\bar g_t^{(i)}]-\widetilde g_t^{(i)}\right\|^2\right)\\
    &\leq\frac1{m^2}\sum_{i=1}^m\E_t\left[\|\bar g_t^{(i)}-\widetilde g_t^{(i)}\|^2\right]\leq\frac{\sigma^2}{m}\left(1+\frac{(K-1)L\tau\Theta}{2q_1}\right)^2.
\end{aligned}
    \label{eq:sg-centered-direction-variance}
\end{equation}
Note $\bar g_t=g_t+\E_t[\hat b_t]+\zeta_t$ from~\eqref{eq:sg-averaged-noise-moments}, we have from $L$-smoothness of $F$ and $r_t=-\bar g_t/q_t$ that
\begin{align*}
    F(x_{t+1})&\leq F(x_t)+\langle g_t,r_t\rangle+\frac L2\|r_t\|^2\\
    &=F(x_t)-\frac{\|g_t\|^2}{q_t}-\frac{\langle g_t,\E_t[\hat b_t]\rangle}{q_t}-\frac{\langle g_t,\zeta_t\rangle}{q_t}+\frac L2\|r_t\|^2\\
    &\leq F(x_t)-\frac{\|g_t\|^2}{2q_t}+\frac{\E_t[\|\hat b_t\|^2]}{2q_t}-\frac{\langle g_t,\zeta_t\rangle}{q_t}+\frac L2\|r_t\|^2,
\end{align*}
where the last inequality uses Young's inequality and $\|\E_t[\hat b_t]\|^2\leq\E_t[\|\hat b_t\|^2]$. Summing the above inequality from $t=1$ to $T$ and rearranging, we get
\begin{equation}\label{eq:sg-pathwise-weighted-gradient}
    \mathcal G_T\leq2(F(x_1)-F^\star)+\sum_{t=1}^T\frac{\E_t[\|\hat b_t\|^2]}{q_t}+L\mathcal R_T-2\sum_{t=1}^T\frac{\langle g_t,\zeta_t\rangle}{q_t}.
\end{equation}
Applying Lemma~\ref{lem:sg-local-moments} into~\eqref{eq:sg-pathwise-weighted-gradient} and using $q_t\geq q_1$ then implies
\begin{equation}\label{eq:sum-core1}
    \mathcal G_T\leq2(F(x_1)-F^\star)+\frac{2\widetilde B^2\widetilde{\mathcal T}_T}{q_1^3}+\left(L+\frac{8c_K\Theta^2L^4\kappa_T^2}{q_1^3}\right)\mathcal R_T-2\sum_{t=1}^T\frac{\langle g_t,\zeta_t\rangle}{q_t}.
\end{equation}
Taking expectations and using $\E[\langle g_t,\zeta_t\rangle]=\E[\langle g_t,\E_t[\zeta_t]\rangle]=0$ for all $t$ proves~\eqref{eq:gcal-moment1}. 
Additionally, for any two rounds $s<t$, we have
\[
    \E[\langle g_s,\zeta_s\rangle\langle g_t,\zeta_t\rangle]=\E[\langle g_s,\zeta_s\rangle\langle g_t,\E_t[\zeta_t]\rangle]=0.
\]
Therefore, we obtain from~\eqref{eq:sg-centered-direction-variance} that
\begin{align}
    &\E\left[\left|2\sum_{t=1}^T\frac{\langle g_t,\zeta_t\rangle}{q_t}\right|^2\right]=4\sum_{t=1}^T\E\left[\frac{\langle g_t,\zeta_t\rangle^2}{q_t^2}\right]\leq \frac{4}{q_1}\sum_{t=1}^T\E\left[\frac{\|g_t\|^2\|\zeta_t\|^2}{q_t}\right]\notag\\
    =&\frac{4}{q_1}\sum_{t=1}^T\E\left[\frac{\|g_t\|^2}{q_t}\E_t[\|\zeta_t\|^2]\right]\leq\frac{4\sigma^2}{mq_1}\left(1+\frac{(K-1)L\tau\Theta}{2q_1}\right)^2\E[\mathcal G_T].\label{eq:noise-term-second-moment}
\end{align}
Finally, taking squares, expectations, and square roots on both sides of~\eqref{eq:sum-core1}, then applying Lemma~\ref{lem:conditional-l2-triangle} with $p=2$ and~\eqref{eq:noise-term-second-moment} yields
\[
\begin{aligned}
    \sqrt{\E[\mathcal G_T^2]}&\leq2(F(x_1)-F^\star)+\frac{2\widetilde B^2\widetilde{\mathcal T}_T}{q_1^3}
    +\left(L+\frac{8c_K\Theta^2L^4\kappa_T^2}{q_1^3}\right)\sqrt{\E[\mathcal R_T^2]}
    +\sqrt{\E\left[\left|2\sum_{t=1}^T\frac{\langle g_t,\zeta_t\rangle}{q_t}\right|^2\right]}\\
    &\leq2(F(x_1)-F^\star)+\frac{2\widetilde B^2\widetilde{\mathcal T}_T}{q_1^3}+\left(L+\frac{8c_K\Theta^2L^4\kappa_T^2}{q_1^3}\right)\sqrt{\E[\mathcal R_T^2]}
    +\frac{2\sigma}{\sqrt{mq_1}}\left(1+\frac{(K-1)L\tau\Theta}{2q_1}\right)\sqrt{\E[\mathcal G_T]},
\end{aligned}
\]
which proves~\eqref{eq:gcal-moment2}.
\end{proof}

\subsection{Main Result and Proof}
\begin{theorem}[Smooth nonconvex convergence]
\label{thm:sg-smooth-nonconvex}
Let Assumptions~\ref{ass:lower-bounded} and~\ref{ass:stochastic-noise} hold. Assume each $F_i$ is $L$-smooth. Let $\{x_t\}_{t\ge 1}$ be the iterates generated by Algorithm~\ref{alg:FL-adagrad} with $\tau_t$ chosen as in~\eqref{eq:tau_t-scheme} and arbitrary $\gamma,\eta>0$. Then for any $T\geq1$, we have
\begin{equation*}
    \E[\|\nabla F(\bar x_T)\|^2]\lesssim\frac{\mathcal C_T^2}{T}+\frac{\sigma\mathcal C_T}{\sqrt{mKT}},
\end{equation*}
where $\mathcal C_T>0$ is defined in~\eqref{def:mathcal_C_T}.
\end{theorem}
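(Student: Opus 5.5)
We follow the route laid out in the proof sketch. All the needed pathwise and moment bounds are already in Lemmas~\ref{lem:sg-local-moments}, \ref{lem:sg-energy}, and \ref{lem:sg-nonconvex-second-moments}. What remains is to close them into bounds on $\E[q_{T+1}]$ and $\E[q_{T+1}^2]$ and then conclude by Cauchy--Schwarz.

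\textbf{Step 1: constants.} Under~\eqref{eq:tau_t-scheme} for the nonconvex case we have $\kappa_T=\max_t(t-1)\tau_t\leq\tau/(K-1)$ and $\rho_\tau\leq\tau/(K-1)$. Combined with~\eqref{eq:quantities-estimates}, this gives that $R_0$, $R_1$, $\widetilde B^2\widetilde{\mathcal T}_T$, $c_K\kappa_T^2$ and $(K-1)\tau\Theta$ are all $\lesssim1$. Consequently
\[
\mathcal A_T\lesssim 1+\sigma^2(1+\log(mKT)) =: \mathcal S_T .
\]
Lemma~\ref{lem:sg-energy} then reads $\E[\mathcal R_T]\lesssim\mathcal S_T\bigl(1+\log\E[q_{T+1}]\bigr)$, and \eqref{eq:gcal-moment1} gives $\E[\mathcal G_T]\lesssim1+\E[\mathcal R_T]$.

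\textbf{Step 2: bound $\E[q_{T+1}]$.} Start from $q_{T+1}^2=q_1^2+\eta^{-2}\sum_t\|\bar g_t\|^2$. Apply Lemma~\ref{lem:pp-predictable} with $Z_t=\|\bar g_t\|^2$ and then~\eqref{eq:gt_sq-exp}:
\[
\E[q_{T+1}]\lesssim\E\Bigl[\sqrt{1+\textstyle\sum_t\|g_t\|^2+\sum_t\E_t\|\hat b_t\|^2+T\sigma^2/(mK)}\Bigr].
\]
Lemma~\ref{lem:sg-local-moments} converts the drift term into $1+\mathcal R_T$, so after using $\sqrt{\mathcal R_T}\le 1+\mathcal R_T$ and Jensen this contributes at most $1+\E[\mathcal R_T]$. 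For the gradient term, $\sum_t\|g_t\|^2\le q_{T+1}\mathcal G_T$, so by Cauchy--Schwarz it contributes $\sqrt{\E[q_{T+1}]\E[\mathcal G_T]}$. This yields the self-bounding inequality
\[
\E[q_{T+1}]\lesssim 1+\sigma\sqrt{T/(mK)}+\E[\mathcal R_T]+\sqrt{\E[q_{T+1}]\E[\mathcal G_T]}.
\]
Resolving the square root gives $\E[q_{T+1}]\lesssim 1+\sigma\sqrt{T/(mK)}+\mathcal S_T(1+\log\E[q_{T+1}])$. Lemma~\ref{lem:log-solution}, with $a\asymp\mathcal S_T$, then gives $\E[q_{T+1}]\lesssim\mathcal U_T$ (the $a\log a$ term is exactly the last summand of $\mathcal U_T$). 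Feeding this back, $\E[\mathcal R_T],\E[\mathcal G_T]\lesssim\mathcal S_T(1+\log\mathcal U_T)=\mathcal C_T$.

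\textbf{Step 3: second moments.} By~\eqref{eq:sg-nonconvex-energy-second-moment}, $\sqrt{\E[\mathcal R_T^2]}\lesssim(\mathcal S_T+\sigma^2)(4+\log\mathcal U_T)\lesssim\mathcal C_T$. Then~\eqref{eq:gcal-moment2} gives
\[
\sqrt{\E[\mathcal G_T^2]}\lesssim\mathcal C_T+\sigma\sqrt{\mathcal C_T}\lesssim\mathcal C_T,
\]
using $\sigma^2\le\mathcal S_T\le\mathcal C_T$. For $q_{T+1}^2$, take expectations of the recursion and use~\eqref{eq:gt_sq-exp} together with Lemma~\ref{lem:sg-local-moments}:
\[
\E[q_{T+1}^2]\lesssim1+T\sigma^2/(mK)+\E\textstyle\sum_t\|g_t\|^2+\E[\mathcal R_T].
\]
The gradient term is bounded by $\E[q_{T+1}\mathcal G_T]\le\sqrt{\E[q_{T+1}^2]\E[\mathcal G_T^2]}$. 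Solving this quadratic self-bound gives $\E[q_{T+1}^2]\lesssim\mathcal C_T^2+T\sigma^2/(mK)$.

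\textbf{Step 4: conclusion.} Since $\bar x_T$ is uniform over the iterates and $q_t\le q_{T+1}$,
\[
\E\|\nabla F(\bar x_T)\|^2=\tfrac1T\E\textstyle\sum_t\|g_t\|^2\le\tfrac1T\E[q_{T+1}\mathcal G_T]\le\tfrac1T\sqrt{\E[q_{T+1}^2]\E[\mathcal G_T^2]}.
\]
Substituting the Step 3 bounds gives $\mathcal C_T^2/T+\sigma\mathcal C_T/\sqrt{mKT}$, as claimed.

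The main obstacle is Step 2. The bound on $\E[\mathcal R_T]$ depends logarithmically on $\E[q_{T+1}]$, while $\E[q_{T+1}]$ depends linearly on $\E[\mathcal R_T]$ and on $\E[\mathcal G_T]$ through a square root. Closing this loop requires tracking the $\mathcal S_T$ factors carefully so that Lemma~\ref{lem:log-solution} produces exactly $\mathcal U_T$. A secondary concern is making sure Lemma~\ref{lem:pp-predictable} applies, which requires $Z_t$ to be $\mathcal F_{t+1}$-measurable and integrable.
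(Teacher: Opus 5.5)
Your proposal is correct and follows the paper's proof essentially step for step. You combine Lemma~\ref{lem:pp-predictable}, \eqref{eq:gt_sq-exp}, Lemma~\ref{lem:sg-local-moments}, and $\sum_t\|g_t\|^2\le q_{T+1}\mathcal G_T$ into a log-self-bounding inequality for $\E[q_{T+1}]$, close it with Lemma~\ref{lem:log-solution}, derive the second moments of $\mathcal R_T$, $\mathcal G_T$, and $q_{T+1}$, and finish with Cauchy--Schwarz, exactly as the paper does. One caveat: constants such as $\Theta$, $R_1$, and $(K-1)\tau\Theta$ are $\lesssim 1$ only if $\tau$ is read as the uniform bound $\sup_t\tau_t\le\tau/(K-1)$, which is the same implicit convention the paper uses to claim $\Theta\lesssim 1$ in \eqref{eq:quantities-estimates}.
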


\begin{proof}[Proof of Theorem~\ref{thm:sg-smooth-nonconvex}]
With $K\ge2$, it follows from scheme~\eqref{eq:tau_t-scheme} that 
\[
\kappa_T\lesssim \frac{1}{K-1},\qquad \widetilde{\mathcal T}_T\lesssim \frac{1}{(K-1)^2},\quad \forall T\ge1.
\]
We first bound $\E[q_{T+1}]$. To this end, we apply Lemma~\ref{lem:pp-predictable} with $c=q_1^2$ and $Z_t=\|\bar g_t\|^2/\eta^2$ to obtain
\begin{align}
    \E[q_{T+1}]&\leq\sqrt2\,\E\left[\sqrt{q_1^2+\frac1{\eta^2}\sum_{t=1}^T\E_t[\|\bar g_t\|^2]}\right]\notag\\
    &\leq\sqrt2\,\E\left[\sqrt{q_1^2+\frac2{\eta^2}\sum_{t=1}^T\|g_t\|^2+\frac2{\eta^2}\sum_{t=1}^T\E_t[\|\hat b_t\|^2]+\frac{2T\sigma^2}{\eta^2mK}}\right]\notag\\
    &\leq\sqrt2q_1+\frac2\eta\E\left[\sqrt{\sum_{t=1}^T\|g_t\|^2}\right]+\frac2\eta\E\left[\sqrt{\sum_{t=1}^T\E_t[\|\hat b_t\|^2]}\right]+\frac{2\sigma}{\eta}\sqrt{\frac{T}{mK}},\label{eq:sg-metric-comparison-first}
\end{align}
where the second inequality uses~\eqref{eq:gt_sq-exp}, and the last follows from the basic inequality $\sqrt{a+b}\leq\sqrt a+\sqrt b$. 

Next, we bound the two expectation terms in~\eqref{eq:sg-metric-comparison-first}. Since $q_t\leq q_{T+1}$ for all $t\leq T+1$, we apply Cauchy--Schwarz inequality to obtain
\begin{equation}\label{eq:nc-qT-bd1}
    \E\left[\sqrt{\sum_{t=1}^T\|g_t\|^2}\right]\leq\E\left[\sqrt{q_{T+1}\sum_{t=1}^T\frac{\|g_t\|^2}{q_t}}\right]= \E\left[\sqrt{q_{T+1}\mathcal G_T}\right]\leq\sqrt{\E[q_{T+1}]}\sqrt{\E[\mathcal G_T]}.
\end{equation}
For the other term, applying Lemma~\ref{lem:sg-local-moments} and concavity of $\sqrt{\cdot}$ gives
\[
    \E\left[\sqrt{\sum_{t=1}^T\E_t[\|\hat b_t\|^2]}\right]\leq\sqrt{\frac{2\widetilde B^2\widetilde{\mathcal T}_T}{q_1^2}+\frac{8c_K\Theta^2L^4\kappa_T^2}{q_1^2}\E[\mathcal R_T]}.
\]
Substituting the above two bounds into~\eqref{eq:sg-metric-comparison-first}, and applying $2\sqrt{ab}\leq a+b$ and $\sqrt{a+b}\leq\sqrt a+\sqrt b$ yields
\begin{align*}
    \E[q_{T+1}]\leq& \sqrt2q_1+\frac2\eta\sqrt{\E[q_{T+1}]\E[\mathcal G_T]}+\frac2\eta\sqrt{\frac{2\widetilde B^2\widetilde{\mathcal T}_T}{q_1^2}+\frac{8c_K\Theta^2L^4\kappa_T^2}{q_1^2}\E[\mathcal R_T]}+\frac{2\sigma}{\eta}\sqrt{\frac{T}{mK}}\\
    \leq& \sqrt2q_1+\frac{\E[q_{T+1}]}2+\frac2{\eta^2}\E[\mathcal G_T]+\frac{2\sqrt2\widetilde B\sqrt{\widetilde{\mathcal T}_T}}{\eta q_1}+\frac{4\sqrt{2c_K}\,\Theta L^2\kappa_T}{\eta q_1}\sqrt{\E[\mathcal R_T]}+\frac{2\sigma}{\eta}\sqrt{\frac{T}{mK}},
\end{align*}
which together with $2\sqrt{\E[\mathcal R_T]}\leq1+\E[\mathcal R_T]$ further simplifies to
\begin{equation}\label{eq:q-moment1-1}
    \E[q_{T+1}]\leq2\sqrt2q_1+\frac4{\eta^2}\E[\mathcal G_T]+\frac{4\sqrt2\widetilde B\sqrt{\widetilde{\mathcal T}_T}}{\eta q_1}+\frac{4\sqrt{2c_K}\,\Theta L^2\kappa_T}{\eta q_1}\left(1+\E[\mathcal R_T]\right)+\frac{4\sigma}{\eta}\sqrt{\frac{T}{mK}},
\end{equation}
Plugging~\eqref{eq:gcal-moment1} and~\eqref{eq:sg-expected-energy} into~\eqref{eq:q-moment1-1} and rearranging the terms gives
\begin{equation}\label{eq:q-moment1-core}
    \E[q_{T+1}]\leq c_1+\frac{4\sigma}{\eta}\sqrt{\frac{T}{mK}}+2c_2\mathcal A_T\left(1+\log\frac{\E[q_{T+1}]}{q_1}\right),
\end{equation}
where the positive constants $c_1$ and $c_2$ are defined as (we use~\eqref{eq:quantities-estimates})
\begin{align*}
    c_1&:=2\sqrt2q_1+\frac{8(F(x_1)-F^\star)}{\eta^2}+\frac{8\widetilde B^2\widetilde{\mathcal T}_T}{\eta^2q_1^3}+\frac{4\sqrt2\widetilde B\sqrt{\widetilde{\mathcal T}_T}}{\eta q_1}+\frac{4\sqrt{2c_K}\,\Theta L^2\kappa_T}{\eta q_1}\lesssim 1,\\
    c_2&:=\frac4{\eta^2}\left(L+\frac{8c_K\Theta^2L^4\kappa_T^2}{q_1^3}\right)+\frac{4\sqrt{2c_K}\,\Theta L^2\kappa_T}{\eta q_1}\lesssim 1.
\end{align*}
Solving the inequality~\eqref{eq:q-moment1-core} for $\E[q_{T+1}]$ using Lemma~\ref{lem:log-solution} gives
\begin{equation}
    \E[q_{T+1}]\leq 2c_1+\frac{8\sigma}{\eta}\sqrt{\frac{T}{mK}}+4c_2\mathcal A_T\log\frac{4c_2\mathcal A_T}{q_1}.
    \label{eq:sg-closed-metric}
\end{equation}
Since $R_0\lesssim 1$ and $R_1\lesssim 1$ from~\eqref{eq:quantities-estimates}, we have $\mathcal A_T\lesssim 1+\sigma^2(1+\log(mKT))$ and hence it follows from~\eqref{eq:sg-closed-metric} that
\begin{align}
    \E[q_{T+1}]\lesssim&1+\sigma\sqrt{\frac{T}{mK}}+\big(1+\sigma^2(1+\log(mKT))\big)\log\bigl(\exp(1)+\sigma^2(1+\log(mKT))\bigr)\notag\\
    \lesssim&1+\sigma\sqrt{\frac{T}{mK}}+\sigma^2(1+\log(mKT))\log\bigl(1+\sigma^2(1+\log(mKT))\bigr)=\mathcal U_T.\label{q-moment1-final}
\end{align}

Applying the bound~\eqref{q-moment1-final} in~\eqref{eq:sg-expected-energy} then gives
\begin{equation*}
    \E[\mathcal R_T]\leq2\mathcal A_T\left(1+\log\frac{\E[q_{T+1}]}{q_1}\right)\lesssim \bigl(1+\sigma^2(1+\log(mKT))\bigr)(1+\log\mathcal U_T)=\mathcal C_T,
\end{equation*}
applying which into~\eqref{eq:sg-nonconvex-energy-second-moment} and~\eqref{eq:gcal-moment1} respectively yields
\begin{align}
    \sqrt{\E[\mathcal R_T^2]}\leq & \bigl(\mathcal A_T+4R_1^2\sigma^2\bigr)\left(4+\log\frac{\E[q_{T+1}]}{q_1}\right)\lesssim \mathcal A_T\left(1+\log\frac{\E[q_{T+1}]}{q_1}\right)\lesssim \mathcal C_T,\label{eq:nc-RT-bd}\\
    \E[\mathcal G_T]\lesssim & 1+\E[\mathcal R_T]\lesssim 1+\mathcal C_T\lesssim \mathcal C_T.\notag
\end{align}
Applying the above two bounds into~\eqref{eq:gcal-moment2} gives
\[
    \sqrt{\E[\mathcal G_T^2]}\lesssim 1+\sqrt{\E[\mathcal R_T^2]}+\frac{\sigma}{\sqrt{m}}\sqrt{\E[\mathcal G_T]}\lesssim 1+\mathcal C_T+\frac{\sigma}{\sqrt{m}}\sqrt{\mathcal C_T}\lesssim \mathcal C_T+\sigma\sqrt{\frac{\mathcal C_T}{m}}\lesssim \mathcal C_T,
\]
where the last inequality uses $\sigma^2\lesssim \mathcal C_T$ and $m>1$.

To proceed, we obtain from the monotonicity of $q_t$ and Cauchy--Schwarz inequality that
\begin{equation}\label{eq:g_sq-sum}
    \sum_{t=1}^T\E[\|g_t\|^2]\leq\E[q_{T+1}\mathcal G_T]\leq\sqrt{\E[q_{T+1}^2]}\sqrt{\E[\mathcal G_T^2]}\lesssim \mathcal C_T\sqrt{\E[q_{T+1}^2]}.
\end{equation}
It remains to control $\E[q_{T+1}^2]$. Combining~\eqref{eq:nocv-outer-method} and~\eqref{eq:gt_sq-exp} gives
\begin{equation}\label{eq:sg-q-sq-recursion}
    \E[q_{T+1}^2]=q_1^2+\frac1 {\eta^2}\sum_{t=1}^T\E[\|\bar g_t\|^2]\leq q_1^2+\frac2{\eta^2}\sum_{t=1}^T\E[\|g_t\|^2]+\frac2{\eta^2}\sum_{t=1}^T\E[\|\hat b_t\|^2]+\frac{2T\sigma^2}{\eta^2mK}.
\end{equation}
Applying Lemma~\ref{lem:sg-local-moments} and using $\E[\mathcal R_T]\lesssim\mathcal C_T$ leads to
\[
    \sum_{t=1}^T\E[\|\hat b_t\|^2]\lesssim 1+\E[\mathcal R_T]\lesssim \mathcal C_T.
\]
Substituting the above inequality and~\eqref{eq:g_sq-sum} into~\eqref{eq:sg-q-sq-recursion} and noting that $1\lesssim \sqrt{\E[q_{T+1}^2]}$ gives
\begin{equation*}
    \E[q_{T+1}^2]\lesssim 1+\mathcal C_T\sqrt{\E[q_{T+1}^2]}+\mathcal C_T+\frac{2T\sigma^2}{\eta^2mK}\lesssim \mathcal C_T\sqrt{\E[q_{T+1}^2]}+\frac{2T\sigma^2}{\eta^2mK}.
\end{equation*}
Consequently, there exist constants $C>0$ such that
\[
    \E[q_{T+1}^2]\leq C\mathcal C_T\sqrt{\E[q_{T+1}^2]}+\frac{2T\sigma^2}{\eta^2mK}\leq\frac{\E[q_{T+1}^2]}{2}+\frac{C^2}{2}\mathcal C_T^2+\frac{2T\sigma^2}{\eta^2mK},
\]
which simplifies to
\begin{equation}
    \E[q_{T+1}^2]\lesssim \mathcal C_T^2+\frac{T\sigma^2}{mK}.
\label{eq:sg-q-sq-final}
\end{equation}
Finally, substituting~\eqref{eq:sg-q-sq-final} into~\eqref{eq:g_sq-sum} and dividing by $T$ gives
\[
    \E[\|\nabla F(\bar x_T)\|^2]=\frac1T\sum_{t=1}^T\E[\|g_t\|^2]\lesssim \frac{\mathcal C_T}{T}\sqrt{\E[q_{T+1}^2]}\leq \frac{\mathcal C_T}{T}\Big(\mathcal C_T+\sigma\sqrt{\frac{T}{mK}}\Big) = \frac{\mathcal C_T^2}{T}+\frac{\sigma\mathcal C_T}{\sqrt{mKT}},
\]
which completes the proof.
\end{proof}

\begin{corollary}[Deterministic smooth nonconvex convergence]\label{cor:sg-nonconvex-deterministic}
Consider the same setting as in Theorem~\ref{thm:sg-smooth-nonconvex}, except that the local oracle is exact, i.e., $\sigma=0$. Then for any $T\geq1$, we have
\[
    \E[\|\nabla F(\bar x_T)\|^2]=\frac1T\sum_{t=1}^T\|\nabla F(x_t)\|^2\lesssim \frac{1}{T}.
\]
\end{corollary}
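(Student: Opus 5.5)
The plan is to specialize Theorem~\ref{thm:sg-smooth-nonconvex} to $\sigma=0$ and check that every logarithmic factor collapses to a constant. Under Assumption~\ref{ass:stochastic-noise} with $\sigma=0$ we have $\xi_{i,k}^t=0$ almost surely. Hence the whole trajectory $\{x_t,q_t\}$ is deterministic, and the expectation over the uniformly random output $\bar x_T$ is exactly $\frac1T\sum_{t=1}^T\|\nabla F(x_t)\|^2$. This gives the equality in the statement.

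For the inequality, we substitute $\sigma=0$ into the definitions. This gives $\mathcal U_T=1$, so $\log\mathcal U_T=0$ and $\mathcal C_T=(1+0)(1+0)=1$. Theorem~\ref{thm:sg-smooth-nonconvex} then yields
\[
\frac1T\sum_{t=1}^T\|\nabla F(x_t)\|^2\lesssim \frac{\mathcal C_T^2}{T}+\frac{\sigma\mathcal C_T}{\sqrt{mKT}}=\frac1T .
\]

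The only point requiring care is that the hidden constants in $\lesssim$ must not secretly depend on $T$ or degenerate when $\sigma=0$. Each constant in the proof must depend only on $L,\gamma,\eta,\tau,F(x_1)-F^\star$ and $\|\nabla F_i(x_1)\|$:
\begin{itemize}
\item $c_1$ and $c_2$ are bounded uniformly via \eqref{eq:quantities-estimates} together with $\kappa_T\lesssim 1/(K-1)$ and $\widetilde{\mathcal T}_T\lesssim 1/(K-1)^2$.
\item $R_0,R_1$ are bounded uniformly in the same way.
\item $\mathcal A_T$ reduces to $\eta^2+2R_0^2$, since its $\sigma^2\log(mKT)$ term vanishes.
\item The noise-envelope bound \eqref{eq:sg-noise-envelope} becomes trivial, because $M_T=0$.
\end{itemize}
With these checks, the bound \eqref{eq:sg-closed-metric} on $q_{T+1}$ becomes a $T$-independent constant.

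As a sanity check, the argument can also be run directly and simplifies in the deterministic case. Lemma~\ref{lem:sg-energy} gives the deterministic bound $\mathcal R_T\le(\eta^2+R_0^2)\log(q_{T+1}^2/q_1^2)$. Lemma~\ref{lem:sg-nonconvex-second-moments} gives $\mathcal G_T\lesssim 1+\mathcal R_T$, and $q_{T+1}\lesssim 1+\mathcal G_T+\mathcal R_T$. Lemma~\ref{lem:log-solution} then bounds $q_{T+1}\lesssim 1$. Finally, $\sum_t\|g_t\|^2\le q_{T+1}\mathcal G_T\lesssim1$.
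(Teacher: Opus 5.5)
Your proposal is correct and takes the same route as the paper: with $\sigma=0$ you get $\mathcal U_T=1$ and hence $\mathcal C_T=1$, so Theorem~\ref{thm:sg-smooth-nonconvex} immediately gives the $1/T$ bound. Your extra check that the hidden constants are $T$-independent, and your direct deterministic argument via Lemma~\ref{lem:log-solution}, are both sound, though the paper's proof does not need them.
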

\begin{proof}
When $\sigma=0$, we have from the definition~\eqref{def:mathcal_C_T} that
\[
    \mathcal C_T=1+\log\mathcal U_T=1.
\]
Then the result follows immediately from Theorem~\ref{thm:sg-smooth-nonconvex} by plugging in $\mathcal C_T=1$ and $\sigma=0$.
\end{proof}

\section{Smooth convex objectives}\label{app:smooth-convex}
Assume in this subsection that every $F_i$ is convex and smooth, and let $x^\star$ minimize $F$. Thus all results from the smooth nonconvex analysis remain valid. 

For convex objectives, define
\begin{equation}
    \epsilon_t:=\frac1{mK}\sum_{i=1}^m\sum_{k=0}^{K-1}\E_t\left[F_i(x_t)-F_i(x_{i,k}^t)-\langle\nabla F_i(x_{i,k}^t),x_t-x_{i,k}^t\rangle\right]\ge 0,
    \label{eq:eps-def}
\end{equation}
where $\nabla F_i(x_{i,k}^t)\in\partial F_i(x_{i,k}^t)$ denotes a subgradient if $F_i$ is nonsmooth. We also denote 
\[
d_t:=x_t-x^\star,\qquad D_T:=\max_{1\leq t\leq T+1}\|d_t\|^2.
\]

\begin{lemma}
\label{lem:sg-smooth-convex-error}
Let Assumptions~\ref{ass:lower-bounded} and~\ref{ass:stochastic-noise} hold. Assume each $F_i$ is convex. Then, for every $t\geq1$,
\begin{equation}\label{eq:cvx-F-bound}
    F(y)\geq F(x_t)+\langle\E_t[\bar g_t],y-x_t\rangle-\epsilon_t,\quad \forall y\in\R^d.
\end{equation}
If additionally each $F_i$ is $L$-smooth, then
\begin{equation}
    \label{eq:eps-bd}
    \epsilon_t\leq\frac{c_K\Theta^2L\tau_t^2}{q_t^2}\left[\frac1m\sum_{i=1}^m\bigl(\|\nabla F_i(x^\star)\|+\sigma\bigr)^2+L^2\|d_t\|^2\right].
\end{equation}
\end{lemma}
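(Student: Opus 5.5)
For \eqref{eq:cvx-F-bound}, I will work client by client and local step by local step. Fix $i$, $k$ and any $y$. Convexity of $F_i$ at $x_{i,k}^t$ gives
\[
F_i(y)\geq F_i(x_{i,k}^t)+\langle\nabla F_i(x_{i,k}^t),y-x_{i,k}^t\rangle .
\]
Splitting $y-x_{i,k}^t=(y-x_t)+(x_t-x_{i,k}^t)$ and adding and subtracting $F_i(x_t)$, this becomes
\[
F_i(y)\geq F_i(x_t)+\langle\nabla F_i(x_{i,k}^t),y-x_t\rangle-\bigl[F_i(x_t)-F_i(x_{i,k}^t)-\langle\nabla F_i(x_{i,k}^t),x_t-x_{i,k}^t\rangle\bigr].
\]
Next I take $\E_t$ and average over $i\in[m]$ and $k\in\{0,\dots,K-1\}$. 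Since $y$ and $x_t$ are $\mathcal F_t$-measurable, the linear term averages to $\langle\E_t[\frac1{mK}\sum_{i,k}\nabla F_i(x_{i,k}^t)],y-x_t\rangle$. By unbiasedness of the noise and the tower property, this expectation equals $\E_t[\bar g_t]$. The bracketed terms average exactly to $\epsilon_t$, which proves \eqref{eq:cvx-F-bound}. Nonnegativity of $\epsilon_t$ is also immediate from convexity.

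For \eqref{eq:eps-bd}, I will use $L$-smoothness of $F_i$ in the form $F_i(x_t)\leq F_i(x_{i,k}^t)+\langle\nabla F_i(x_{i,k}^t),x_t-x_{i,k}^t\rangle+\frac L2\|x_t-x_{i,k}^t\|^2$. This bounds each bracket by $\frac L2\|x_{i,k}^t-x_t\|^2$, so
\[
\epsilon_t\leq\frac{L}{2mK}\sum_{i,k}\E_t\|x_{i,k}^t-x_t\|^2 .
\]
I then invoke \eqref{eq:sqrt-xk-xt} from the proof of Lemma~\ref{lem:sg-local-moments}, which gives $\E_t\|x_{i,k}^t-x_t\|^2\leq (k\tau_t\Theta/q_t)^2(\|\nabla F_i(x_t)\|+\sigma)^2$. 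Summing over $k$ produces the factor $c_K$, so
\[
\epsilon_t\leq\frac{c_K\Theta^2L\tau_t^2}{2mq_t^2}\sum_i\bigl(\|\nabla F_i(x_t)\|+\sigma\bigr)^2 .
\]

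The remaining step is to re-center at $x^\star$ instead of $x_1$. I use $\|\nabla F_i(x_t)\|\leq\|\nabla F_i(x^\star)\|+L\|d_t\|$ and then $(a+b)^2\leq2a^2+2b^2$. The factor $2$ cancels the $\frac12$, giving exactly
\[
\epsilon_t\leq\frac{c_K\Theta^2L\tau_t^2}{q_t^2}\Bigl[\frac1m\sum_i(\|\nabla F_i(x^\star)\|+\sigma)^2+L^2\|d_t\|^2\Bigr].
\]
I do not expect a real obstacle. The only delicate point is the conditional-expectation bookkeeping in the first part, namely identifying the averaged local gradients with $\E_t[\bar g_t]$ via $\E[\xi_{i,k}^t\mid\mathcal F_{t,k}^{(i)}]=0$.
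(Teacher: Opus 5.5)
Your proposal is correct and follows the paper's proof essentially step for step. Both arguments apply convexity at $x_{i,k}^t$, re-expand around $x_t$, average over $i,k$ and take $\E_t$ using unbiasedness of $\xi_{i,k}^t$. Both then bound each bracket by $\frac L2\|x_t-x_{i,k}^t\|^2$, invoke \eqref{eq:sqrt-xk-xt} to obtain the factor $c_K$, and re-center at $x^\star$ via $(a+b)^2\le 2a^2+2b^2$.
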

\begin{proof}
For each client, convexity of $F_i$ at $x_{i,k}^t$ implies
\[
\begin{aligned}
    F_i(y)&\geq F_i(x_{i,k}^t)+\langle\nabla F_i(x_{i,k}^t),y-x_{i,k}^t\rangle\\
    &=F_i(x_t)+\langle\nabla F_i(x_{i,k}^t),y-x_t\rangle-\left[F_i(x_t)-F_i(x_{i,k}^t)-\langle\nabla F_i(x_{i,k}^t),x_t-x_{i,k}^t\rangle\right].
\end{aligned}
\]
Then~\eqref{eq:cvx-F-bound} follows by averaging this inequality over $i,k$ and taking $\E_t$, and noting that
\[
    \frac1{mK}\sum_{i=1}^m\sum_{k=0}^{K-1}\E_t[\nabla F_i(x_{i,k}^t)]=\frac1{mK}\sum_{i=1}^m\sum_{k=0}^{K-1}\E_t[g_{i,k}^t-\xi_{i,k}^t]=\E_t[\bar g_t].
\]

To prove~\eqref{eq:eps-bd}, we use convexity and smoothness to obtain
\[
    0\leq F_i(x_t)-F_i(x_{i,k}^t)-\langle\nabla F_i(x_{i,k}^t),x_t-x_{i,k}^t\rangle\leq\frac L2\|x_t-x_{i,k}^t\|^2,
\]
which together with~\eqref{eq:sqrt-xk-xt} and $K^{-1}\sum_{k=0}^{K-1}k^2=c_K$ yields
\begin{align*}
    0\leq\epsilon_t\leq\frac{L}{2mK}\sum_{i=1}^m\sum_{k=0}^{K-1}\E_t[\|x_t-x_{i,k}^t\|^2]\leq\frac{c_K\Theta^2L\tau_t^2}{2mq_t^2}\sum_{i=1}^m\bigl(\|\nabla F_i(x_t)\|+\sigma\bigr)^2.
\end{align*}
Then~\eqref{eq:eps-bd} follows by plugging in the following inequality due to smoothness of $F_i$ around $x^\star$ and $(a+b)^2\leq2a^2+2b^2$
\[
    \bigl(\|\nabla F_i(x_t)\|+\sigma\bigr)^2\leq2\bigl(\|\nabla F_i(x^\star)\|+\sigma\bigr)^2+2L^2\|x_t-x^\star\|^2.
\]
The proof is complete.
\end{proof}

\begin{lemma}\label{lem:sg-smooth-convex-moments}
Let Assumptions~\ref{ass:lower-bounded} and~\ref{ass:stochastic-noise} hold. Assume each $F_i$ is convex and $L$-smooth. If $\tau_t$ is chosen as in~\eqref{eq:tau_t-scheme}, then for any $T\geq1$, we have
\[
    \sqrt{\E[D_T^2]}\lesssim\mathcal C_T,
\]
where $\mathcal C_T>0$ is defined in~\eqref{def:mathcal_C_T}.
\end{lemma}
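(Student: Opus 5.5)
We expand $\|d_{t+1}\|^2$ one step at a time, sum the expansions, and control the resulting error terms through the already-established moment bounds on $\mathcal R_T$ and $\E[q_{T+1}]$.

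\textbf{Step 1 (one-step expansion).} From the update \eqref{eq:nocv-outer-method} and the decomposition $\bar g_t=\E_t[\bar g_t]+\zeta_t$ we have
\[
\|d_{t+1}\|^2=\|d_t\|^2-\frac{2}{q_t}\langle \E_t[\bar g_t],d_t\rangle-\frac{2}{q_t}\langle\zeta_t,d_t\rangle+\|r_t\|^2 .
\]
Lemma~\ref{lem:sg-smooth-convex-error} with $y=x^\star$ gives $\langle\E_t[\bar g_t],d_t\rangle\ge F(x_t)-F^\star-\epsilon_t\ge-\epsilon_t$. Therefore
\[
\|d_{t+1}\|^2\le\|d_t\|^2+\frac{2\epsilon_t}{q_t}-\frac{2}{q_t}\langle\zeta_t,d_t\rangle+\|r_t\|^2 .
\]

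\textbf{Step 2 (summing).} Summing up to any $s\le T$ and using \eqref{eq:eps-bd} with $q_t\ge q_1$ yields
\[
\|d_{s+1}\|^2\le\|d_1\|^2+\mathcal R_T+\frac{c}{q_1^3}\sum_{t\le s}\tau_t^2\bigl(1+\sigma^2+L^2\|d_t\|^2\bigr)+2\Bigl|\sum_{t\le s}\frac{\langle\zeta_t,d_t\rangle}{q_t}\Bigr| .
\]
By \eqref{eq:quantities-estimates}, $c_K\widetilde{\mathcal T}_T\lesssim1$ for either choice of $p>1/2$. This holds because $\sum_t t^{-2p}<\infty$, which is exactly where $p>1/2$ is used. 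The drift term is therefore at most $\lesssim(1+\sigma^2)+\delta D_T$, where $\delta:=c_K\widetilde{\mathcal T}_T L^3/q_1^3$ is a constant. Absorbing $\delta D_T$ is not free in general. The plan is instead to use the weight $c_K\tau_t^2\|d_t\|^2\le c_K\tau_t^2 D_T$, sum the weights, and keep this term as $\lesssim D_T\cdot c_K\widetilde{\mathcal T}_T$. If this constant is not small, split time at $t_0$ such that $\sum_{t>t_0}c_K\tau_t^2L^3/q_1^3\le1/4$. The first $t_0$ steps then contribute a deterministic constant times $\max_{t\le t_0}\|d_t\|^2$. The latter is bounded by $\|d_1\|^2+t_0\mathcal R_T$ through $\|d_t-d_1\|\le\sum_j\|r_j\|$.

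\textbf{Step 3 (martingale term, the main obstacle).} Let $\mathcal M_s:=\sum_{t\le s}\langle\zeta_t,d_t\rangle/q_t$, which is a martingale. By Doob's inequality (Lemma~\ref{lem:doob-l2}) applied to the squares, we need $\E[\max_s\mathcal M_s^4]$ to bound $\E[D_T^2]$. Squaring the martingale bound directly would call for a fourth-moment Doob/Burkholder inequality, which the paper does not provide. The plan is to avoid it as follows. First, take the maximum over $s$ and then the square, which gives
\[
\sqrt{\E[D_T^2]}\lesssim 1+\sigma^2+\sqrt{\E[\mathcal R_T^2]}+\sqrt{\E[\max_s\mathcal M_s^2]}\cdot(\ldots).
\]
Second, bound $\max_s|\mathcal M_s|$ pathwise. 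Since $|\langle\zeta_t,d_t\rangle|\le\|\zeta_t\|\sqrt{D_T}$, we get $\sum_t\langle\zeta_t,d_t\rangle^2/q_t^2\le D_T\sum_t\|\zeta_t\|^2/q_t^2$. Third, write $\|\zeta_t\|/q_t\le\|r_t\|+\|\E_t\bar g_t\|/q_t$, so that these sums reduce to $\mathcal R_T$ plus drift terms. Using Lemma~\ref{lem:doob-l2} in squared form together with Cauchy--Schwarz, we then obtain
\[
\E[\max_s\mathcal M_s^2]\lesssim\sqrt{\E[D_T^2]}\,\sqrt{\E[(\mathcal R_T+1)^2]}.
\]
Inserting this into the bound for $\E[D_T^2]$ produces a self-bounding inequality of the form $X\le a+b\sqrt{X}$ with $X=\sqrt{\E[D_T^2]}$. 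Lemma~\ref{lem:sublinear-power-bound} then solves it as $X\lesssim a+b^2$. Finally, we plug in $\sqrt{\E[\mathcal R_T^2]}\lesssim\mathcal C_T$ from \eqref{eq:nc-RT-bd}, whose proof only used the smooth-case lemmas valid here, together with $\sigma^2\lesssim\mathcal C_T$. This gives $\sqrt{\E[D_T^2]}\lesssim\mathcal C_T$.

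The delicate points are two. The first is obtaining the fourth-moment control of the martingale through the pathwise factor $\sqrt{D_T}$, so that only second moments of $\mathcal R_T$, already bounded by \eqref{eq:nc-RT-bd}, are required. The second is making the absorption of the $\|d_t\|^2$ drift term rigorous via the tail splitting of $\sum c_K\tau_t^2$ described in Step 2.
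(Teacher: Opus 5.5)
Your Steps 1--2 are fine. The split at $t_0$ is a valid substitute for the paper's Gronwall weights $P_t=\prod_{j\le t}(1+\alpha_j)$, where $\alpha_j\propto c_K\tau_j^2$ is the coefficient of $\|d_j\|^2$ and $P_T\lesssim 1$. The genuine gap is in the final step. You import $\sqrt{\E[\mathcal R_T^2]}\lesssim\mathcal C_T$ from \eqref{eq:nc-RT-bd}, asserting that its proof uses only lemmas valid here, but it does not. \eqref{eq:nc-RT-bd} rests on $\E[q_{T+1}]\lesssim\mathcal U_T$ from \eqref{eq:q-moment1-core}. The constants $c_1,c_2$ there contain $\kappa_T=\max_{t\le T}(t-1)\tau_t$, which enters through the Hardy step in Lemma~\ref{lem:sg-local-moments}. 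The proof of Theorem~\ref{thm:sg-smooth-nonconvex} then uses $\kappa_T\lesssim 1/(K-1)$, i.e.\ $\tau_t\lesssim 1/((K-1)t)$. The present lemma allows $\tau_t\le\tau/((K-1)t^p)$ for every $p>1/2$, and you use $p>1/2$ yourself for $\widetilde{\mathcal T}_T$. For $p\in(1/2,1)$, $c_K\kappa_T^2$ can be of order $T^{2(1-p)}$, so $c_2$ grows polynomially in $T$. Redoing that computation then yields only a bound of order $\mathcal A_T\log T$ on $\sqrt{\E[\mathcal R_T^2]}$. This is not $\lesssim\mathcal C_T$: for $\sigma=0$ it is $\log T$ versus $\mathcal C_T=1$. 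As written, your argument covers only $p\ge 1$, and the paper explicitly notes that \eqref{eq:nc-RT-bd} cannot be reused here.

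The missing idea is to close the loop without $\kappa_T$, using the distance bound itself.
\begin{itemize}
\item Once $\sqrt{\E[D_T^2]}\lesssim 1+\sqrt{\E[\mathcal R_T^2]}$ is known (this needs only $c_K\widetilde{\mathcal T}_T\lesssim 1$), expand the drift around $x^\star$ instead of $x_1$, using $\|\nabla F_i(x_t)\|\le\|\nabla F_i(x^\star)\|+L\|d_t\|$. This gives $\sum_t\E[\|\hat b_t\|^2]\lesssim c_K\widetilde{\mathcal T}_T\bigl(1+\sqrt{\E[D_T^2]}\bigr)\lesssim 1+\sqrt{\E[\mathcal R_T^2]}$.
\item Then \eqref{eq:sg-pathwise-weighted-gradient} gives $\E[\mathcal G_T]\lesssim 1+\sqrt{\E[\mathcal R_T^2]}$.
\item The argument of \eqref{eq:sg-metric-comparison-first}--\eqref{eq:nc-qT-bd1} then gives $\E[q_{T+1}]\lesssim 1+\sqrt{\E[\mathcal R_T^2]}+\sigma\sqrt{T/(mK)}$.
\item Feed this into \eqref{eq:sg-nonconvex-energy-second-moment}, which needs only $\sup_t\sqrt{t-1}\,\tau_t<\infty$ and hence holds for $p\ge 1/2$. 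For $y=\sqrt{\E[\mathcal R_T^2]}$ this gives $y\lesssim\bigl(1+\sigma^2(1+\log(mKT))\bigr)\bigl(1+\log(1+y+\sigma\sqrt{T/(mK)})\bigr)$, which Lemma~\ref{lem:log-solution} solves as $y\lesssim\mathcal C_T$.
\end{itemize}

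A smaller repair is needed in Step 3. No fourth moment is needed, since $D_T$ is linear in $\max_s|\mathcal M_s|$. However, your pathwise bound $D_T\sum_t\|\zeta_t\|^2/q_t^2$ leaves the predictable piece $D_T\sum_t\|\E_t[r_t]\|^2$, which $\E[\mathcal R_T^2]$ does not directly control. Condition first instead. Since $d_t,q_t$ are $\mathcal F_t$-measurable and $\E_t[\|\zeta_t\|^2]/q_t^2\le\E_t[\|r_t\|^2]$, you get $\E[\langle\zeta_t,d_t\rangle^2/q_t^2]\le\E[\|d_t\|^2\|r_t\|^2]$. Summing over $t$ gives at most $\E[D_T\mathcal R_T]\le\sqrt{\E[D_T^2]}\sqrt{\E[\mathcal R_T^2]}$.
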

\begin{proof}
Note that $d_{t+1}=d_t-\bar g_t/q_t$ from the update rules~\eqref{eq:nocv-outer-method} and also $\bar g_t=\E_t[\bar g_t]+\zeta_t$, hence we have
\[
    \|d_{t+1}\|^2=\left\|d_t-\frac{\bar g_t}{q_t}\right\|^2=\|d_t\|^2-\frac{2}{q_t}\langle\E_t[\bar g_t],d_t\rangle-\frac{2}{q_t}\langle\zeta_t,d_t\rangle+\|r_t\|^2.
\]
Adding $2(F(x_t)-F^\star)/q_t$ to both sides of the above equality and applying $F(x_t)-F^\star\leq\langle\E_t[\bar g_t],d_t\rangle+\epsilon_t$ (which is~\eqref{eq:cvx-F-bound} with $y=x^\star$) yields
\begin{equation}\label{eq:d-recursion-1}
    \|d_{t+1}\|^2+\frac{2(F(x_t)-F^\star)}{q_t}
    \leq\|d_t\|^2+\|r_t\|^2-\frac{2}{q_t}\langle\zeta_t,d_t\rangle+\frac{2\epsilon_t}{q_t}.
\end{equation}
To bound the term $2\epsilon_t/q_t$, we apply~\eqref{eq:eps-bd} from Lemma~\ref{lem:sg-smooth-convex-error} and note $q_t\geq q_1$ to obtain
\begin{align}
    \frac{2\epsilon_t}{q_t}
    &\leq\frac{2c_K\Theta^2L\tau_t^2}{q_t^3}\left[\frac1m\sum_{i=1}^m\bigl(\|\nabla F_i(x^\star)\|+\sigma\bigr)^2+L^2\|d_t\|^2\right]\notag\\
    &\leq\frac{2c_K\Theta^2L^3\tau_t^2}{q_1^3}\|d_t\|^2+\frac{2c_K\Theta^2L\tau_t^2}{mq_1^3}\sum_{i=1}^m\bigl(\|\nabla F_i(x^\star)\|+\sigma\bigr)^2.\label{eq:eps-q-bd}
\end{align}
For ease of notation, denote the coefficients as
\[
    \alpha_t:=\frac{2c_K\Theta^2L^3\tau_t^2}{q_1^3},\qquad \beta_t:=\frac{2c_K\Theta^2L\tau_t^2}{mq_1^3}\sum_{i=1}^m\bigl(\|\nabla F_i(x^\star)\|+\sigma\bigr)^2.
\]
Then substituting~\eqref{eq:eps-q-bd} into~\eqref{eq:d-recursion-1} yields
\begin{equation}
    \|d_{t+1}\|^2+\frac{2(F(x_t)-F^\star)}{q_t}\leq(1+\alpha_t)\|d_t\|^2+\|r_t\|^2+\beta_t-\frac2{q_t}\langle\zeta_t,d_t\rangle.
\label{eq:sg-smooth-distance-recursion}
\end{equation}
To proceed, let $P_t:=\prod_{j=1}^t(1+\alpha_j)$ with $P_0:=1$, and
\[
    \mathcal M_t:=-2\sum_{s=1}^t\frac{\langle\zeta_s,d_s\rangle}{P_s q_s},\qquad 1\leq t\leq T.
\]
Since $\E_t[\zeta_t]=0$, we have
\[
    \E_t[\mathcal M_t]=\mathcal M_{t-1}-\frac{2}{P_tq_t}\bigl\langle\E_t[\zeta_t],d_t\bigr\rangle=\mathcal M_{t-1},
\]
which implies that $\{\mathcal M_t\}_{t=1}^T$ is a martingale.
Since $(1+\alpha_t)/P_t=1/P_{t-1}$, dividing both sides of~\eqref{eq:sg-smooth-distance-recursion} by $P_t$ and unrolling gives
\[
    \frac{\|d_{t+1}\|^2}{P_t}\leq\frac{\|d_t\|^2}{P_{t-1}}+\frac{\|r_t\|^2+\beta_t}{P_t}-\frac{2\langle\zeta_t,d_t\rangle}{P_tq_t}\leq\|d_1\|^2+\sum_{s=1}^t\frac{\|r_s\|^2+\beta_s}{P_s}+\mathcal M_t.
\]
Then it follows from $1\leq P_t\leq P_T$ that
\[
    D_T=\max_{1\leq t\leq T+1}\|d_t\|^2\leq P_T\left(\|d_1\|^2+\sum_{t=1}^T\beta_t+\mathcal R_T+\max_{1\leq t\leq T}|\mathcal M_t|\right),
\]
which together with Lemma~\ref{lem:conditional-l2-triangle} yields
\begin{equation}
    \label{eq:DT-moment2-1}
    \sqrt{\E[D_T^2]}\leq P_T\left(\|d_1\|^2+\sum_{t=1}^T\beta_t+\sqrt{\E[\mathcal R_T^2]}+\sqrt{\E\left[\max_{1\leq t\leq T}|\mathcal M_t|^2\right]}\right).
\end{equation}
Next, we bound the last term in~\eqref{eq:DT-moment2-1}. Since $\zeta_t/q_t=-(r_t-\E_t[r_t])$, we have
\[
    \frac{\E_t[\|\zeta_t\|^2]}{q_t^2}=\E_t[\|r_t-\E_t[r_t]\|^2]=\E_t[\|r_t\|^2]-\|\E_t[r_t]\|^2\leq\E_t[\|r_t\|^2],
\]
which together with Lemma~\ref{lem:doob-l2} and $\E[\langle g_s,\zeta_s\rangle\langle g_t,\zeta_t\rangle]=\E[\langle g_s,\zeta_s\rangle\langle g_t,\E_t[\zeta_t]\rangle]=0,~\forall s<t$ yield
\begin{align}
    \E\left[\max_{1\leq t\leq T}|\mathcal M_t|^2\right]&\leq4\E[|\mathcal M_T|^2]=16\sum_{t=1}^T\E\left[\frac{\langle\zeta_t,d_t\rangle^2}{P_t^2q_t^2}\right]\leq 16\sum_{t=1}^T\E\left[\frac{\|\zeta_t\|^2\|d_t\|^2}{q_t^2}\right]\notag\\
    &\leq16\sum_{t=1}^T\E\left[\|d_t\|^2\E_t[\|r_t\|^2]\right]=16\sum_{t=1}^T\E\left[\|d_t\|^2\|r_t\|^2\right]\notag\\
    &\leq16\E[D_T\mathcal R_T]\leq 16\sqrt{\E[D_T^2]}\sqrt{\E[\mathcal R_T^2]}.\label{eq:mcal-bd}
\end{align}
Applying~\eqref{eq:mcal-bd} into~\eqref{eq:DT-moment2-1} and using Young's inequality gives
\begin{align*}
    \sqrt{\E[D_T^2]}\leq& P_T\left(\|d_1\|^2+\sum_{t=1}^T\beta_t+\sqrt{\E[\mathcal R_T^2]}+4\sqrt{\sqrt{\E[D_T^2]}\sqrt{\E[\mathcal R_T^2]}}\right)\\
    \leq& P_T\left(\|d_1\|^2+\sum_{t=1}^T\beta_t\right)+P_T\sqrt{\E[\mathcal R_T^2]}+\frac12\sqrt{\E[D_T^2]}+8P_T^2\sqrt{\E[\mathcal R_T^2]}.
\end{align*}
Additionally, we have from $1+x\leq e^x$, and $\sum_{t=1}^T\tau_t^2=\widetilde{\mathcal T}_T$, and~\eqref{eq:quantities-estimates} that
\begin{align*}
    P_T&\leq\exp\Big(\sum_{t=1}^{T}\alpha_t\Big)=\exp\left(\frac{2c_K\Theta^2L^3}{q_1^3}\widetilde{\mathcal T}_T\right)\lesssim 1,\\
    \sum_{t=1}^T\beta_t&=\frac{2c_K\Theta^2L\widetilde{\mathcal T}_T}{mq_1^3}\sum_{i=1}^m\bigl(\|\nabla F_i(x^\star)\|+\sigma\bigr)^2\lesssim 1.
\end{align*}
Therefore, we conclude that
\begin{equation}
    \sqrt{\E[D_T^2]}\leq2P_T\left(\|d_1\|^2+\sum_{t=1}^T\beta_t\right)+(2P_T+16P_T^2)\sqrt{\E[\mathcal R_T^2]}\lesssim 1+\sqrt{\E[\mathcal R_T^2]}\lesssim \sqrt{\E[\mathcal R_T^2]}.
\label{eq:sg-smooth-max-distance-moment}
\end{equation}

It remains to establish a refined bound on $\E[\mathcal R_T^2]$. Note that we may not directly apply the result~\eqref{eq:nc-RT-bd} from the proof of Theorem~\ref{thm:sg-smooth-nonconvex}, since it relies on the stronger scheme $\tau_t\lesssim \frac{1}{(K-1)t}$. 
Instead, we start by plugging~\eqref{eq:nc-qT-bd1} into~\eqref{eq:sg-metric-comparison-first} and applying Young's inequality to obtain
    \begin{align*}
        \E[q_{T+1}]\leq&\sqrt2q_1+\frac2\eta\sqrt{\E[q_{T+1}]\E[\mathcal G_T]}+\frac2\eta\E\left[\sqrt{\sum_{t=1}^T\E_t[\|\hat b_t\|^2]}\right]+\frac{2\sigma}{\eta}\sqrt{\frac{T}{mK}}\\
        \leq& \sqrt2q_1+\frac{\E[q_{T+1}]}2+\frac2{\eta^2}\E[\mathcal G_T]+1+\frac1{\eta^2}\sum_{t=1}^T\E[\|\hat b_t\|^2]+\frac{2\sigma}{\eta}\sqrt{\frac{T}{mK}},
    \end{align*}
    which simplifies to
    \begin{equation}
        \E[q_{T+1}]\leq 2+2\sqrt{2}q_1+\frac{4}{\eta^2}\E[\mathcal G_T]+\frac2{\eta^2}\sum_{t=1}^T\E[\|\hat b_t\|^2]+\frac{4\sigma}{\eta}\sqrt{\frac{T}{mK}}.
    \label{eq:sg-qT-bd2}
    \end{equation}
    We next bound the two expectation terms on the RHS of~\eqref{eq:sg-qT-bd2}.
    To bound $\sum_{t=1}^T\E[\|\hat b_t\|^2]$, we replace $x_1$ with $x^\star$ in~\eqref{eq:sg-drift-around-initial-point} to obtain
    \begin{align}
        \sum_{t=1}^T\E[\|\hat b_t\|^2]\leq&\sum_{t=1}^T\frac{c_K\Theta^2L^2\tau_t^2}{mq_t^2}\sum_{i=1}^m\E[\left(\|\nabla F_i(x_1)\|+\sigma+L\|x_t-x^\star\|\right)^2]\notag\\
        \leq&\frac{2c_K\Theta^2L^2\widetilde{\mathcal T}_T}{q_1^2}\left[\frac1m\sum_{i=1}^m\bigl(\|\nabla F_i(x^\star)\|+\sigma\bigr)^2+L^2\sqrt{\E[D_T^2]}\right]\notag\\
        \lesssim& 1+\sqrt{\E[D_T^2]}\lesssim \sqrt{\E[\mathcal R_T^2]},\label{eq:cvx-hat_bt_sq-bd}
    \end{align}
where the second inequality holds from $(a+b)^2\leq2a^2+2b^2$ and $q_t\ge q_1$. Moreover, taking expectations on both sides of~\eqref{eq:sg-pathwise-weighted-gradient}, then using~\eqref{eq:cvx-hat_bt_sq-bd} and $\E[\mathcal R_T]\leq \sqrt{\E[\mathcal R_T^2]}$ gives
\begin{equation}
    \E[\mathcal G_T]\leq2(F(x_1)-F^\star)+\frac{1}{q_1}\sum_{t=1}^T\E[\|\hat b_t\|^2]+L\E[\mathcal R_T]\lesssim 1+\sqrt{\E[\mathcal R_T^2]}+\E[\mathcal R_T]\lesssim \sqrt{\E[\mathcal R_T^2]}.
\label{eq:cvx-GT-bd}
\end{equation}
Now substituting~\eqref{eq:cvx-hat_bt_sq-bd} and~\eqref{eq:cvx-GT-bd} into~\eqref{eq:sg-qT-bd2} gives
\begin{equation}\label{eq:cvx-qT-bd-refined}
    \E[q_{T+1}]\lesssim 1+\sqrt{\E[\mathcal R_T^2]}+\sigma\sqrt{\frac{T}{mK}}.
\end{equation}
Plugging~\eqref{eq:cvx-qT-bd-refined} into~\eqref{eq:sg-nonconvex-energy-second-moment} and noting $\mathcal A_T=\eta^2+2R_0^2+2R_1^2\sigma^2(\log(mKT)+2)\lesssim 1+\sigma^2(\log(mKT)+1)$ implies
\begin{align*}
    &\sqrt{\E[\mathcal R_T^2]}\leq2\bigl(\mathcal A_T+4R_1^2\sigma^2\bigr)\left(4+\log\frac{\E[q_{T+1}]}{q_1}\right)\\
    \lesssim&\left(1+\sigma^2(\log(mKT)+1)\right)\Big(1+\log\big(1+\sqrt{\E[\mathcal R_T^2]}+\sigma\sqrt{T/(mK)}\big)\Big).
\end{align*}
Let $y:=\sqrt{\E[\mathcal R_T^2]}$, $s_T:=\sigma\sqrt{T/(mK)}$, and $Q_T:=1+\sigma^2(\log(mKT)+1)$. Then there exists a constant $C>0$ such that
\[
    y\leq CQ_T\bigl(1+\log(1+y+s_T)\bigr).
\]
Since $1+y+s_T\leq(1+y)(1+s_T)$, we have
\[
    1+y\leq CQ_T\log(1+y)+1+CQ_T\bigl(1+\log(1+s_T)\bigr).
\]
Applying Lemma~\ref{lem:log-solution} with $x=1+y$, $a=CQ_T$, and $b=1+CQ_T(1+\log(1+s_T))$ gives
\begin{align*}
    1+y&\leq2CQ_T\log(2CQ_T)-2CQ_T+2+2CQ_T\bigl(1+\log(1+s_T)\bigr)\\
    &=2+2CQ_T\bigl(\log(2CQ_T)+\log(1+s_T)\bigr),
\end{align*}
and hence
\[
    \sqrt{\E[\mathcal R_T^2]}=y\lesssim Q_T\bigl(1+\log Q_T+\log(1+s_T)\bigr)\lesssim Q_T(1+\log\mathcal U_T)=\mathcal C_T.
\]
Finally, substituting this bound into~\eqref{eq:sg-smooth-max-distance-moment} yields the desired result.
\end{proof}

\subsection{Main Result and Proof}
\begin{theorem}[Smooth convex convergence]\label{thm:sg-smooth-convex}
Let Assumptions~\ref{ass:lower-bounded} and~\ref{ass:stochastic-noise} hold. Assume each $F_i$ is $L$-smooth and convex. Let $\{x_t\}_{t\ge 1}$ be the iterates generated by Algorithm~\ref{alg:FL-adagrad} with $\tau_t$ chosen as in~\eqref{eq:tau_t-scheme} and arbitrary $\gamma, \eta>0$. Then for any $T\geq1$, we have
\begin{equation*}
    \E[F(\bar x_T)-F^\star]\lesssim\frac{\mathcal C_T^2}{T}+\frac{\sigma\mathcal C_T}{\sqrt{mKT}},
\end{equation*}
where $\mathcal C_T>0$ is defined in~\eqref{def:mathcal_C_T}.
\end{theorem}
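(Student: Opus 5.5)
Sum the one-step distance recursion \eqref{eq:d-recursion-1} to get $\sum_t (F(x_t)-F^\star)/q_t$, and then convert that weighted sum into the unweighted average using $q_t\le q_{T+1}$ and Cauchy--Schwarz. This mirrors the last step of Theorem~\ref{thm:sg-smooth-nonconvex}, with $\|g_t\|^2$ replaced by the suboptimality gap.

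\textbf{Step 1 (weighted gap).} Define $\mathcal H_T:=\sum_{t=1}^T (F(x_t)-F^\star)/q_t$. Sum \eqref{eq:sg-smooth-distance-recursion} over $t$ without dividing by $P_t$, and bound $\alpha_t\|d_t\|^2\le\alpha_t D_T$. This gives
\[
2\mathcal H_T\le \|d_1\|^2+\Bigl(\sum_{t=1}^T\alpha_t\Bigr)D_T+\mathcal R_T+\sum_{t=1}^T\beta_t-2\sum_{t=1}^T\frac{\langle\zeta_t,d_t\rangle}{q_t}.
\]
By \eqref{eq:quantities-estimates} we have $\sum_t\alpha_t\lesssim1$ and $\sum_t\beta_t\lesssim1$. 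This holds under the relaxed choice $\tau_t\le\tau/((K-1)t^p)$ with $p>1/2$, since only $\widetilde{\mathcal T}_T\lesssim (K-1)^{-2}$ is used.

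\textbf{Step 2 (second moment of $\mathcal H_T$).} Take $L^2$ norms using Lemma~\ref{lem:conditional-l2-triangle}. The bounds $\sqrt{\E[D_T^2]}\lesssim\mathcal C_T$ from Lemma~\ref{lem:sg-smooth-convex-moments} and $\sqrt{\E[\mathcal R_T^2]}\lesssim\mathcal C_T$ from its proof control the deterministic-looking terms. For the martingale term, repeat the argument of \eqref{eq:mcal-bd}: orthogonality of increments and $\E_t[\|\zeta_t\|^2]/q_t^2\le\E_t[\|r_t\|^2]$ give
\[
\E\Bigl[\Bigl(\sum_t\langle\zeta_t,d_t\rangle/q_t\Bigr)^2\Bigr]\le\E[D_T\mathcal R_T]\le\sqrt{\E[D_T^2]\E[\mathcal R_T^2]}\lesssim\mathcal C_T^2.
\]
Together these yield $\sqrt{\E[\mathcal H_T^2]}\lesssim\mathcal C_T$.

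\textbf{Step 3 (bound $\E[q_{T+1}^2]$).} Use \eqref{eq:sg-q-sq-recursion}:
\[
\E[q_{T+1}^2]\le q_1^2+\tfrac{2}{\eta^2}\textstyle\sum_t\E\|g_t\|^2+\tfrac{2}{\eta^2}\sum_t\E\|\hat b_t\|^2+\tfrac{2T\sigma^2}{\eta^2 mK}.
\]
The drift sum is $\lesssim\mathcal C_T$ by \eqref{eq:cvx-hat_bt_sq-bd}. For the gradient sum, I would use Lemma~\ref{lem:sg-nonconvex-second-moments}-type reasoning, i.e.\ $\sum_t\E\|g_t\|^2\le\sqrt{\E[q_{T+1}^2]}\sqrt{\E[\mathcal G_T^2]}$, so I need $\sqrt{\E[\mathcal G_T^2]}\lesssim\mathcal C_T$. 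This is the main obstacle, because \eqref{eq:gcal-moment2} involves $\kappa_T$, which may be unbounded when $p<1$. To handle it, I would rederive the bound from \eqref{eq:sg-pathwise-weighted-gradient} by taking $L^2$ norms. There $\sum_t\E_t\|\hat b_t\|^2/q_t$ is bounded as in \eqref{eq:cvx-hat_bt_sq-bd} with $\|d_t\|^2\le D_T$ pathwise, giving an $L^2$ norm $\lesssim1+\sqrt{\E[D_T^2]}$. The martingale term is handled as in \eqref{eq:noise-term-second-moment}, using $\E[\mathcal G_T]\lesssim\mathcal C_T$ from \eqref{eq:cvx-GT-bd}. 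As an alternative that avoids this, smoothness plus convexity gives $\|g_t\|^2\le 2L(F(x_t)-F^\star)$, so $\mathcal G_T\le 2L\mathcal H_T$ directly. I would use this simpler route, which gives $\sqrt{\E[\mathcal G_T^2]}\lesssim\mathcal C_T$. Solving the resulting quadratic self-bound exactly as in \eqref{eq:sg-q-sq-final} then gives $\E[q_{T+1}^2]\lesssim\mathcal C_T^2+T\sigma^2/(mK)$.

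\textbf{Step 4 (conclude).} Since $q_t\le q_{T+1}$,
\[
\sum_t\E[F(x_t)-F^\star]\le\E[q_{T+1}\mathcal H_T]\le\sqrt{\E[q_{T+1}^2]\E[\mathcal H_T^2]}\lesssim\mathcal C_T\bigl(\mathcal C_T+\sigma\sqrt{T/(mK)}\bigr).
\]
Dividing by $T$ gives the claim, because $\E[F(\bar x_T)-F^\star]=\frac1T\sum_t\E[F(x_t)-F^\star]$.
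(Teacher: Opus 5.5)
Your proof is correct, but it assembles the final bound differently from the paper.

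\textbf{The paper's route.} It multiplies the distance recursion by $q_t/2$, giving \eqref{eq:sg-smooth-convex-one-step}. It then controls three sums separately:
\begin{itemize}
\item $\sum_t q_t(\|d_t\|^2-\|d_{t+1}\|^2)\le q_TD_T$, by summation by parts;
\item $\sum_t q_t\|r_t\|^2\le\eta^2B_T(q_{T+1}-q_1)$, by re-invoking the pathwise bound $\|r_t\|\le R_0+R_1M_T$, the square-root inequality of Lemma~\ref{lem:summation}, and a second-moment bound on $B_T$;
\item $\sum_t\E[\epsilon_t]$, bounded directly.
\end{itemize}
In the last step it only needs $\langle\zeta_t,d_t\rangle$ to have mean zero. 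It obtains $\E[q_{T+1}^2]$ by applying $\|\nabla F(x_t)\|^2\le2L\Delta_t$ to its bound on $\sum_t\E[\Delta_t]$.

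\textbf{Your route.} You keep the $1/q_t$ weights and show $\sqrt{\E[\mathcal H_T^2]}\lesssim\mathcal C_T$. This reuses what Lemma~\ref{lem:sg-smooth-convex-moments} already provides: $\sqrt{\E[D_T^2]}$, $\sqrt{\E[\mathcal R_T^2]}$, and the orthogonal-increment estimate behind \eqref{eq:mcal-bd}. You then close with one Cauchy--Schwarz against $q_{T+1}$, mirroring the nonconvex proof with $\mathcal H_T$ in place of $\mathcal G_T$.

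\textbf{What each buys.} Your route avoids the second use of Lemma~\ref{lem:summation} and the $B_T$ moment computation. You correctly flag that the $\kappa_T$ in \eqref{eq:gcal-moment2} can grow when $p<1$. Your inequality $\mathcal G_T\le2L\mathcal H_T$ is a clean way around it, and it matches what the paper does implicitly, since $\sum_t\E[\Delta_t]\le\E[q_{T+1}\mathcal H_T]$. The paper's route needs only first moments of the martingale in the final step. It is also the summation-by-parts template the paper reuses, essentially verbatim, for the Lipschitz and H\"older convex cases.
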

\begin{proof}[Proof of Theorem~\ref{thm:sg-smooth-convex}]
Let $\Delta_t:=F(x_t)-F^\star$. Rearranging~\eqref{eq:d-recursion-1} gives
\begin{equation}\label{eq:sg-smooth-convex-one-step}
    \Delta_t\leq\frac{q_t}{2}\bigl(\|d_t\|^2-\|d_{t+1}\|^2\bigr)+\frac{q_t}{2}\|r_t\|^2+\epsilon_t-\langle\zeta_t,d_t\rangle.
\end{equation}
We proceed by bounding the summation of each term on the RHS of~\eqref{eq:sg-smooth-convex-one-step} in expectation.
Moreover, we have from the monotonicity of $q_t$ that
\[
    \sum_{t=1}^Tq_t\bigl(\|d_t\|^2-\|d_{t+1}\|^2\bigr)=q_1\|d_1\|^2+\sum_{t=2}^T(q_t-q_{t-1})\|d_t\|^2-q_T\|d_{T+1}\|^2\leq q_TD_T,
\]
which together with Cauchy--Schwarz inequality and Lemma~\ref{lem:sg-smooth-convex-moments} implies
\begin{equation}\label{eq:cvx-rate-term1}
    \E\Big[\sum_{t=1}^Tq_t\bigl(\|d_t\|^2-\|d_{t+1}\|^2\bigr)\Big]\leq\sqrt{\E[q_{T+1}^2]}\sqrt{\E[D_T^2]}\lesssim\mathcal C_T\sqrt{\E[q_{T+1}^2]}.
\end{equation}
Since $(\|\bar g_t\|^2/\eta^2)/q_t^2=\|r_t\|^2/\eta^2\leq(R_0+R_1M_T)^2/\eta^2$ from Lemma~\ref{lem:sg-energy}, we obtain from Lemma~\ref{lem:summation} that
\begin{equation}\label{eq:cvx-rate-term2-1}
    \sum_{t=1}^Tq_t\|r_t\|^2=\eta^2\sum_{t=1}^T\frac{\|\bar g_t\|^2/\eta^2}{q_t}\leq\eta^2\left(1+\sqrt{1+(R_0+R_1M_T)^2/\eta^2}\right)(q_{T+1}-q_1).   
\end{equation}
Let $B_T:=1+\sqrt{1+(R_0+R_1M_T)^2/\eta^2}$, then applying $(a+b)^2\leq2(a^2+b^2)$ twice implies
\[
    B_T^2\leq 4+2(R_0+R_1M_T)^2/\eta^2\leq 4+\frac{4R_0^2}{\eta^2}+\frac{4R_1^2M_T^2}{\eta^2}.
\]
Note that $\E[M_T^2]\leq\sigma^2(\log(mKT)+1)$ from~\eqref{eq:sg-noise-envelope}.
Therefore, it follows from $R_0\lesssim 1$ and $R_1\lesssim 1$ that
\[
\sqrt{\E[B_T^2]}\leq\left(4+\frac{4R_0^2}{\eta^2}+\frac{4R_1^2\sigma^2(\log(mKT)+1)}{\eta^2}\right)^{1/2}\lesssim 1+\sigma\sqrt{\log(mKT)+1}\lesssim \mathcal C_T.
\] 
Taking expectations on both sides of~\eqref{eq:cvx-rate-term2-1} and plugging in the above bound gives
\begin{equation}\label{eq:cvx-rate-term2}
    \E\left[\sum_{t=1}^Tq_t\|r_t\|^2\right]\leq\eta^2\sqrt{\E[B_T^2]}\sqrt{\E[(q_{T+1}-q_1)^2]}\lesssim\mathcal C_T\sqrt{\E[q_{T+1}^2]}.
\end{equation}
Moreover, we have from~\eqref{eq:eps-bd}, $q_t\geq q_1$, $\E[\|d_t\|^2]\leq\E[D_T]\leq\sqrt{\E[D_T^2]}$, and~\eqref{eq:quantities-estimates} that
\begin{equation}\label{eq:cvx-rate-term3}
    \sum_{t=1}^T\E[\epsilon_t]\leq\frac{c_K\Theta^2L\widetilde{\mathcal T}_T}{q_1^2}\left[\frac1m\sum_{i=1}^m\bigl(\|\nabla F_i(x^\star)\|+\sigma\bigr)^2+L^2\sqrt{\E[D_T^2]}\right]\lesssim\mathcal C_T,    
\end{equation}
where the last inequality uses Lemma~\ref{lem:sg-smooth-convex-moments}.
Note that the last term in~\eqref{eq:sg-smooth-convex-one-step} has zero expectation since $\E[\langle\zeta_t,d_t\rangle]=\E[\langle\E_t[\zeta_t],d_t\rangle]=0$.
Taking expectations on both sides of~\eqref{eq:sg-smooth-convex-one-step} and plugging in~\eqref{eq:cvx-rate-term1},~\eqref{eq:cvx-rate-term2}, and~\eqref{eq:cvx-rate-term3} gives
\begin{align}
    \sum_{t=1}^T\E[\Delta_t]&\leq\frac12\E\left[\sum_{t=1}^Tq_t(\|d_t\|^2-\|d_{t+1}\|^2)\right]
      +\frac12\E\left[\sum_{t=1}^Tq_t\|r_t\|^2\right]
      +\sum_{t=1}^T\E[\epsilon_t]\notag\\
    &\lesssim \mathcal C_T\sqrt{\E[q_{T+1}^2]}+\mathcal C_T\lesssim \mathcal C_T\sqrt{\E[q_{T+1}^2]}.
    \label{eq:sg-smooth-unweighted-objective-sum}
\end{align}

It remains to control $\E[q_{T+1}^2]$. To this end, we obtain from the update rule for $q_{t+1}$ and~\eqref{eq:gt_sq-exp} that
\begin{align}
    \E[q_{T+1}^2]=&q_1^2+\frac1{\eta^2}\sum_{t=1}^T\E[\|\bar g_t\|^2]\notag\\
    \leq& q_1^2+\frac2{\eta^2}\sum_{t=1}^T\E[\|\nabla F(x_t)\|^2]+\frac2{\eta^2}\sum_{t=1}^T\E[\|\hat b_t\|^2]+\frac{2T\sigma^2}{\eta^2mK}\notag\\
    \leq& q_1^2+\frac{4L}{\eta^2}\sum_{t=1}^T\E[\Delta_t]+\frac2{\eta^2}\sum_{t=1}^T\E[\|\hat b_t\|^2]+\frac{2T\sigma^2}{\eta^2mK},\label{eq:cvx-qT-sq1}
\end{align}
where the last inequality uses the fact that $\|\nabla F(x_t)\|^2\leq 2L\Delta_t$ for smooth convex functions.
To bound $\sum_{t=1}^T\E[\|\hat b_t\|^2]$, we replace $x_1$ with $x^\star$ in~\eqref{eq:sg-drift-around-initial-point} to obtain
\begin{align*}
    \sum_{t=1}^T\E[\|\hat b_t\|^2]\leq&\sum_{t=1}^T\frac{c_K\Theta^2L^2\tau_t^2}{mq_t^2}\sum_{i=1}^m\left(\|\nabla F_i(x_1)\|+\sigma+L\|x_t-x^\star\|\right)^2\notag\\
    \leq&\frac{2c_K\Theta^2L^2\widetilde{\mathcal T}_T}{q_1^2}\left[\frac1m\sum_{i=1}^m\bigl(\|\nabla F_i(x^\star)\|+\sigma\bigr)^2+L^2\sqrt{\E[D_T^2]}\right]\lesssim\mathcal C_T,
\end{align*}
where the second inequality holds from $(a+b)^2\leq2a^2+2b^2$, $q_t\ge q_1$ and the last inequality uses Lemma~\ref{lem:sg-smooth-convex-moments}.
Substituting the above bound and~\eqref{eq:sg-smooth-unweighted-objective-sum} into~\eqref{eq:cvx-qT-sq1} leads to
\[
    \E[q_{T+1}^2]\lesssim 1+\mathcal C_T\sqrt{\E[q_{T+1}^2]}+\mathcal C_T +\frac{2T\sigma^2}{\eta^2mK}\lesssim \mathcal C_T\sqrt{\E[q_{T+1}^2]}+\frac{2T\sigma^2}{\eta^2mK}.
\]
Therefore, there exists a constant $C>0$ such that
\[
\E[q_{T+1}^2]\leq C\mathcal C_T\sqrt{\E[q_{T+1}^2]}+\frac{2T\sigma^2}{\eta^2mK}\leq \frac{\E[q_{T+1}^2]}{2}+\frac{C^2\mathcal C_T^2}{2}+\frac{2T\sigma^2}{\eta^2mK},
\]
which further implies
\[
    \sqrt{\E[q_{T+1}^2]}\leq\sqrt{C^2\mathcal C_T^2+\frac{4T\sigma^2}{\eta^2mK}}\lesssim\mathcal C_T+\sigma\sqrt{\frac{T}{mK}}.
\]
Plugging the above inequality into~\eqref{eq:sg-smooth-unweighted-objective-sum} gives
\[
    \sum_{t=1}^T\E[\Delta_t]\lesssim\mathcal C_T^2+\sigma\mathcal C_T\sqrt{\frac{T}{mK}}.
\]
Then the proof is complete after dividing both sides of the above inequality by $T$.
\end{proof}

\begin{corollary}[Deterministic smooth convex convergence]\label{cor:sg-smooth-convex-deterministic}
Consider the same setting as in Theorem~\ref{thm:sg-smooth-convex}, except that the local oracle is exact, i.e., $\sigma=0$. Then for any $T\geq1$, we have
\[
    \ebb[F(\bar x_T)-F^\star]=\frac1T\sum_{t=1}^T F(x_t)-F^\star\lesssim\frac1T.
\]
\end{corollary}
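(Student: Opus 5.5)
This corollary should follow directly from Theorem~\ref{thm:sg-smooth-convex} by specializing to $\sigma=0$, in the same way Corollary~\ref{cor:sg-nonconvex-deterministic} follows from Theorem~\ref{thm:sg-smooth-nonconvex}. The plan has three steps.

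\textbf{Step 1: the noise assumption.} When $\sigma=0$, Assumption~\ref{ass:stochastic-noise} is interpreted as $\xi_{i,k}^t=0$ almost surely. Hence $g_{i,k}^t=\nabla F_i(x_{i,k}^t)$, and the whole trajectory $\{x_t,q_t\}$ is deterministic. In particular,
\[
\E[F(\bar x_T)-F^\star]=\frac1T\sum_{t=1}^T\bigl(F(x_t)-F^\star\bigr),
\]
since $\bar x_T$ is drawn uniformly from a deterministic set. This gives the stated equality.

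\textbf{Step 2: the constants.} Plugging $\sigma=0$ into the definitions gives $\mathcal U_T=1$ and $\mathcal C_T=(1+0)(1+\log 1)=1$. The noise term $\sigma\mathcal C_T/\sqrt{mKT}$ vanishes. Theorem~\ref{thm:sg-smooth-convex} then yields
\[
\frac1T\sum_{t=1}^T\bigl(F(x_t)-F^\star\bigr)\lesssim \frac{1}{T}.
\]

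\textbf{Step 3: the only point to check.} We must confirm that the hidden constants in the theorem's $\lesssim$ remain finite at $\sigma=0$. They involve $R_0,R_1,\widetilde B,\mathcal A_T$, all of which are finite when $\sigma=0$. The moment bound~\eqref{eq:sg-noise-envelope} holds trivially at $\sigma=0$, since then $M_T=0$. This is a routine verification rather than a genuine obstacle, and it completes the proof.
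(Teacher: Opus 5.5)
Your proposal is correct and matches the paper's proof, which simply plugs $\sigma=0$ (so $\mathcal C_T=1$) into Theorem~\ref{thm:sg-smooth-convex}. Your extra checks are sound and only make explicit what the paper leaves implicit: that the trajectory is deterministic, which gives the stated equality, and that the hidden constants stay finite at $\sigma=0$.
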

\begin{proof}
    The result follows immediately from Theorem~\ref{thm:sg-smooth-convex} by plugging in $\mathcal C_T=1$ and $\sigma=0$.
\end{proof}

\section{Lipschitz convex objectives}\label{app:lipschitz-convex}

Assume throughout this subsection that every $F_i$ is convex and $G$-Lipschitz, and write $\nabla F_i(x)\in\partial F_i(x)$ for a measurable subgradient selection satisfying $\|\nabla F_i(x)\|\leq G$.

Note that
\begin{equation}\label{eq:lip-ave_grad}
\Big\|\frac1{mK}\sum_{i=1}^m\sum_{k=0}^{K-1}\nabla F_i(x_{i,k}^t)\Big\|\leq\frac1{mK}\sum_{i=1}^m\sum_{k=0}^{K-1}\|\nabla F_i(x_{i,k}^t)\|\leq G.
\end{equation}
Lemma~\ref{lem:conditional-l2-triangle} and~\eqref{eq:noise-ave-bd} therefore yields
\[
    \sqrt{\E_t[\|\bar g_t\|^2]}\leq \sqrt{\E_t\Big[\Big\|\frac1{mK}\sum_{i=1}^m\sum_{k=0}^{K-1}\nabla F_i(x_{i,k}^t)\Big\|^2\Big]}+\sqrt{\E_t[\|\bar\xi_t\|^2]}\leq G+\frac{\sigma}{\sqrt{mK}},
\]
and equivalently,
\begin{equation}
    \E_t[\|\bar g_t\|^2]\leq\left(G+\frac{\sigma}{\sqrt{mK}}\right)^2.
    \label{eq:sg-lipschitz-second-moment}
\end{equation}

\begin{lemma}\label{lem:sg-lipschitz-local}
Let Assumption~\ref{ass:stochastic-noise} hold. Assume each $F_i$ is convex and $G$-Lipschitz. Let $\epsilon _t$ be defined in~\eqref{eq:eps-def} with $\nabla F_i(x_{i,k}^t)\in\partial F_i(x_{i,k}^t)$. Then, for every $t\geq1$,
\begin{equation*}
    \epsilon_t\leq\frac{(K-1)G\sqrt{G^2+\sigma^2}}{q_t}\tau_t.
\end{equation*}
\end{lemma}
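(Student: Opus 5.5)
The plan is to bound each summand in the definition~\eqref{eq:eps-def} of $\epsilon_t$ using only Lipschitz continuity, and then control the local displacement $\|x_{i,k}^t-x_t\|$ in conditional $L^1$ through the local update rule. For each $i,k$, $G$-Lipschitz continuity of $F_i$ and $\|\nabla F_i(x_{i,k}^t)\|\leq G$ give
\[
F_i(x_t)-F_i(x_{i,k}^t)-\langle\nabla F_i(x_{i,k}^t),x_t-x_{i,k}^t\rangle\leq G\|x_t-x_{i,k}^t\|+G\|x_t-x_{i,k}^t\|=2G\|x_{i,k}^t-x_t\|.
\]
So it suffices to bound $\E_t[\|x_{i,k}^t-x_t\|]$.

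Next I unroll the local steps. Since $x_{i,k}^t-x_t=-\frac{\tau_t}{q_t}\sum_{s=0}^{k-1}\bigl(\nabla F_i(x_{i,s}^t)+\xi_{i,s}^t\bigr)$, applying Jensen and Lemma~\ref{lem:conditional-l2-triangle} (or simply the pointwise triangle inequality followed by conditional Cauchy--Schwarz on each term) yields
\[
\E_t[\|x_{i,k}^t-x_t\|]\leq\frac{\tau_t}{q_t}\sum_{s=0}^{k-1}\sqrt{\E_t\bigl[\|\nabla F_i(x_{i,s}^t)+\xi_{i,s}^t\|^2\bigr]}.
\]
Here $q_t,\tau_t$ are $\mathcal F_t$-measurable. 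For each term, I condition on $\mathcal F_{t,s}^{(i)}$: the cross term vanishes by unbiasedness, and~\eqref{eq:sg-implied-variance} gives $\E[\|\nabla F_i(x_{i,s}^t)+\xi_{i,s}^t\|^2\mid\mathcal F_{t,s}^{(i)}]\leq G^2+\sigma^2$. Taking $\E_t$ by the tower property then gives $\E_t[\|x_{i,k}^t-x_t\|]\leq k\tau_t\sqrt{G^2+\sigma^2}/q_t$. This orthogonality step is the only place needing care: I use $\sqrt{G^2+\sigma^2}$ rather than $G+\sigma$, which requires the cross term to vanish conditionally. The measurability of $x_{i,s}^t$ with respect to $\mathcal F_{t,s}^{(i)}$ guarantees this.

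Finally, I average over $i$ and $k$:
\[
\epsilon_t\leq\frac{2G}{mK}\sum_{i=1}^m\sum_{k=0}^{K-1}\frac{k\tau_t\sqrt{G^2+\sigma^2}}{q_t}=\frac{2G\tau_t\sqrt{G^2+\sigma^2}}{q_t}\cdot\frac{K-1}{2}=\frac{(K-1)G\sqrt{G^2+\sigma^2}}{q_t}\tau_t,
\]
using $\frac1K\sum_{k=0}^{K-1}k=\frac{K-1}{2}$. This matches the stated bound exactly, so the constant $2G$ from the first step is what produces the clean factor. No main obstacle is expected beyond the conditional second-moment computation.
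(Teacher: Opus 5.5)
Your proposal is correct and follows the same argument as the paper. Both bound each summand by $2G\|x_{i,k}^t-x_t\|$, unroll the local steps using $\E_t[\|g_{i,s}^t\|]\leq\sqrt{\E_t[\|g_{i,s}^t\|^2]}\leq\sqrt{G^2+\sigma^2}$, and average with $\frac1K\sum_{k=0}^{K-1}k=\frac{K-1}{2}$; your explicit conditioning on $\mathcal F_{t,s}^{(i)}$ followed by the tower property is a slightly more careful version of the paper's second-moment step, which is written unconditionally.
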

\begin{proof}
We first use $G$-Lipschitz continuity and $\|\nabla F_i(x_{i,k}^t)\|\leq G$ to get
\begin{align}
    F_i(x_t)-F_i(x_{i,k}^t)-\langle \nabla F_i(x_{i,k}^t),x_t-x_{i,k}^t\rangle
    \leq G\|x_t-x_{i,k}^t\|+\|\nabla F_i(x_{i,k}^t)\|\|x_t-x_{i,k}^t\|\leq2G\|x_t-x_{i,k}^t\|.
    \label{eq:F-lip-1}
\end{align}
On the other hand, we have from~\eqref{eq:sg-implied-variance} and unbiasedness of $\xi_{i,k}^t$ that
\[
    \E[\|g_{i,k}^t\|^2]=\E[\|\nabla F_i(x_{i,k}^t)\|^2]+2\E[\langle \nabla F_i(x_{i,k}^t),\E[\xi_{i,k}^t\mid\mathcal F_{t,k}^{(i)}]\rangle]+\E[\|\xi_{i,k}^t\|^2]\leq G^2+\sigma^2.
\]
Recall that $x_{i,k}^t-x_t=-(\tau_t/q_t)\sum_{s=0}^{k-1}g_{i,s}^t$, we thus obtain
\[
    \E_t[\|x_{i,k}^t-x_t\|]\leq\frac{\tau_t}{q_t}\sum_{s=0}^{k-1}\E_t[\|g_{i,s}^t\|]\leq\frac{\tau_t}{q_t}\sum_{s=0}^{k-1}\sqrt{\E_t[\|g_{i,s}^t\|^2]}\leq\frac{k\tau_t}{q_t}\sqrt{G^2+\sigma^2},
\]
which together with~\eqref{eq:F-lip-1} gives
\[
    \epsilon_t\leq\frac{2G}{mK}\sum_{i=1}^m\sum_{k=0}^{K-1}\E_t[\|x_{i,k}^t-x_t\|]
    \leq\frac{2G\tau_t\sqrt{G^2+\sigma^2}}{q_tK}\sum_{k=0}^{K-1}k
    =\frac{(K-1)G\sqrt{G^2+\sigma^2}}{q_t}\tau_t.
\]
The proof is complete.
\end{proof}

\begin{lemma}\label{lem:sg-lipschitz-energy}
Let Assumption~\ref{ass:stochastic-noise} hold. Assume each $F_i$ is convex and $G$-Lipschitz. Then for every $T\geq1$, we have $\max_{1\leq t\leq T}\|r_t\|\leq (G+M_T)/q_1$ almost surely, and
\begin{align}
    &\E[\mathcal R_T]\leq2\left(\eta^2+\frac{2G^2}{q_1^2}+\frac{2\sigma^2(\log(mKT)+2)}{q_1^2}\right)\left(1+\log\frac{\E[q_{T+1}]}{q_1}\right),\label{eq:lip-RT-bd}\\
    &\sqrt{\E[\mathcal R_T^2]}\leq 2\left(\eta^2+\frac{2G^2}{q_1^2}+\frac{2\sigma^2(\log(mKT)+4)}{q_1^2}\right)\left(4+\log\frac{\E[q_{T+1}]}{q_1}\right).\label{eq:lip-RT_sq-bd}
\end{align}
\end{lemma}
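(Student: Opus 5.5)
The argument mirrors the proof of Lemma~\ref{lem:sg-energy}, but Lipschitz continuity makes the pathwise step bound immediate. We do not need the delayed-accumulator decomposition $\bar g_t=\bar g_{t-1}+\cdots$.

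\textbf{Step 1: pathwise step bound.} Every local direction satisfies
\[
\|g_{i,k}^t\|\le\|\nabla F_i(x_{i,k}^t)\|+\|\xi_{i,k}^t\|\le G+M_T
\]
for $t\le T$. Averaging over $i$ and $k$ gives $\|\bar g_t\|\le G+M_T$. Since $q_t\ge q_1$, we get $\|r_t\|=\|\bar g_t\|/q_t\le (G+M_T)/q_1$ almost surely for all $t\le T$. This is the first claim.

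\textbf{Step 2: log-sum bound on $\mathcal R_T$.} Apply the log-sum part of Lemma~\ref{lem:summation} with $a_t=\|\bar g_t\|^2/\eta^2$, $A_t=q_t^2$ and $c=(G+M_T)^2/(q_1^2\eta^2)$. The hypothesis $a_t\le cA_t$ holds by Step 1, since $a_t/A_t=\|r_t\|^2/\eta^2$. This yields
\[
\mathcal R_T=\eta^2\sum_{t=1}^T\frac{a_t}{A_t}\le\Big(\eta^2+\frac{(G+M_T)^2}{q_1^2}\Big)\log\frac{q_{T+1}^2}{q_1^2}\le 2\Big(\eta^2+\frac{2G^2+2M_T^2}{q_1^2}\Big)\log\frac{q_{T+1}}{q_1}.
\]

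\textbf{Step 3: first moment.} Take expectations and apply Cauchy--Schwarz. Bound the first factor in $L^2$ by Lemma~\ref{lem:conditional-l2-triangle}, using $\sqrt{\E[M_T^4]}\le\sigma^2(\log(mKT)+2)$ from \eqref{eq:sg-noise-envelope} with $p=2$. For the second factor, use $\log(q_{T+1}/q_1)\le 1+\log(q_{T+1}/q_1)$ together with Jensen via Lemma~\ref{lem:log-power-concavity} with $a=1$, $p=2$. This is legitimate because $q_{T+1}/q_1\ge1\ge e^{0}$, so the argument lies in the concavity region. The result is \eqref{eq:lip-RT-bd}.

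\textbf{Step 4: second moment.} Square the bound from Step 2 and apply Cauchy--Schwarz in the H\"older $(4,4)$ form, exactly as in \eqref{eq:sg-energy-l2-holder}. The noise factor is controlled through $(\E[M_T^8])^{1/4}\le\sigma^2(\log(mKT)+4)$, which is \eqref{eq:sg-noise-envelope} with $p=4$. The log factor is controlled through Lemma~\ref{lem:log-power-concavity} with $a=p=4$; its concavity region $[e^{-1},\infty)$ contains $q_{T+1}/q_1\ge1$. This gives \eqref{eq:lip-RT_sq-bd}.

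Step 1 is elementary, so the only step needing care is checking the Jensen applicability conditions in Steps 3 and 4, together with the constant bookkeeping in the $L^4$ triangle inequality. This parallels the smooth case verbatim. If $\sigma=0$, the noise terms vanish and the argument is unchanged.
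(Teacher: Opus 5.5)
Your proposal is correct and follows the paper's proof essentially step for step. You take the pathwise bound $\|r_t\|\le (G+M_T)/q_1$ from Lipschitzness and the definition of $M_T$, then apply the log-sum part of Lemma~\ref{lem:summation} with $c=(G+M_T)^2/(\eta^2q_1^2)$, then Cauchy--Schwarz (resp.\ the $(4,4)$ H\"older split) together with \eqref{eq:sg-noise-envelope} at $p=2$ (resp.\ $p=4$) and Lemma~\ref{lem:log-power-concavity} with $a=1,p=2$ (resp.\ $a=p=4$), and this yields exactly the constants in \eqref{eq:lip-RT-bd} and \eqref{eq:lip-RT_sq-bd}.
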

\begin{proof}
Firstly, it follows from~\eqref{eq:lip-ave_grad} and $\|\bar\xi_t\|\leq M_T$ that
\[
    \|r_t\|=\frac{\|\bar g_t\|}{q_t}\leq\frac{G+M_T}{q_t}\leq\frac{G+M_T}{q_1},~\forall 1\leq t\leq T,
\]
which proves $\max_{1\leq t\leq T}\|r_t\|\leq (G+M_T)/q_1$ almost surely.

We proceed to bound $\E[\mathcal R_T]$. Since $(\|\bar g_t\|^2/\eta^2)/q_t^2=\|r_t\|^2/\eta^2\leq(G+M_T)^2/(\eta^2q_1^2)$, we apply Lemma~\ref{lem:summation} with $A_t=q_t^2$, $a_t=\|\bar g_t\|^2/\eta^2$, and $c=(G+M_T)^2/(\eta^2q_1^2)$ to get
\begin{equation}\label{eq:RT-bd1}
    \mathcal R_T=\sum_{t=1}^{T}\|r_t\|^2=\eta^2\sum_{t=1}^T\frac{\|\bar g_t\|^2/\eta^2}{q_t^2}\leq2\left(\eta^2+\frac{(G+M_T)^2}{q_1^2}\right)\log\frac{q_{T+1}}{q_1}.
\end{equation}
Taking expectations on both sides of~\eqref{eq:RT-bd1} and applying Cauchy--Schwarz inequality further gives
\begin{align*}
    \E[\mathcal R_T]\leq& 2\sqrt{\E\Big[\Big(\eta^2+\frac{2G^2}{q_1^2}+\frac{2M_T^2}{q_1^2}\Big)^2\Big]}\sqrt{\E\Big[\log^2\frac{q_{T+1}}{q_1}\Big]}\\
    \leq& 2\left(\eta^2+\frac{2G^2}{q_1^2}+\frac{2\sqrt{\E[M_T^4]}}{q_1^2}\right)\sqrt{\E\left[\left(1+\log\frac{q_{T+1}}{q_1}\right)^2\right]}\\
    \leq& 2\left(\eta^2+\frac{2G^2}{q_1^2}+\frac{2\sigma^2(\log(mKT)+2)}{q_1^2}\right)\left(1+\log\frac{\E[q_{T+1}]}{q_1}\right),
\end{align*}
where the second inequality uses Lemma~\ref{lem:conditional-l2-triangle} and $q_{T+1}\geq q_1$, and the last one uses Lemma~\ref{lem:log-power-concavity} with $a=1$ and $p=2$ and $\sqrt{\E[M_T^4]}\leq\sigma^2(\log(mKT)+2)$ from~\eqref{eq:sg-noise-envelope}. This proves~\eqref{eq:lip-RT-bd}.

Using similar techniques, we square both sides of~\eqref{eq:RT-bd1} and take expectations to get
\begin{align*}
    \sqrt{\E[\mathcal R_T^2]}\leq&2\left(\E\Big[\Big(\eta^2+\frac{(G+M_T)^2}{q_1^2}\Big)^2\log^2\frac{q_{T+1}}{q_1}\Big]\right)^{1/2}\\
    \leq& 2\left(\E\Big[\Big(\eta^2+\frac{2G^2}{q_1^2}+\frac{2M_T^2}{q_1^2}\Big)^4\Big]\right)^{1/4}\left(\E\Big[\log^4\frac{q_{T+1}}{q_1}\Big]\right)^{1/4}\\
    \leq& 2\left(\eta^2+\frac{2G^2}{q_1^2}+\frac{2(\E[M_T^8])^{1/4}}{q_1^2}\right)\left(\E\Big[\left(4+\log\frac{q_{T+1}}{q_1}\right)^4\Big]\right)^{1/4}\\
    \leq& 2\left(\eta^2+\frac{2G^2}{q_1^2}+\frac{2\sigma^2(\log(mKT)+4)}{q_1^2}\right)\left(4+\log\frac{\E[q_{T+1}]}{q_1}\right),
\end{align*}
where the last inequality uses Lemma~\ref{lem:log-power-concavity} with $a=4$ and $p=4$ and $(\E[M_T^8])^{1/4}\leq\sigma^2(\log(mKT)+4)$ from~\eqref{eq:sg-noise-envelope}. The proof is complete.
\end{proof}

\[
    \mathcal V_T:=\left(1+\sigma^2(\log(mKT)+1)\right)\left(1+\log\big(\sqrt{T}+\sigma\sqrt{T/(mK)}\big)\right)+(1+\sigma)\log T.
\]
\begin{lemma}
\label{lem:sg-lipschitz-moments}
Let Assumptions~\ref{ass:lower-bounded} and~\ref{ass:stochastic-noise} hold. Assume each $F_i$ is convex and $G$-Lipschitz. If $\mathcal T_T\lesssim \log(T)/(K-1)$, then for every $T\geq1$,
\begin{equation*}
    \sqrt{\E[\mathcal R_T^2]}\lesssim\mathcal V_T,\qquad\sqrt{\E[D_T^2]}\lesssim\mathcal V_T,
\end{equation*}
where $\mathcal V_T>0$ is defined in~\eqref{def:mathcal_V_T}.
\end{lemma}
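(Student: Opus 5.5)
The plan has two parts: first bound $\E[q_{T+1}]$ crudely, which in the Lipschitz case is easy, and feed it into Lemma~\ref{lem:sg-lipschitz-energy}; then rerun the distance recursion from the proof of Lemma~\ref{lem:sg-smooth-convex-moments}, replacing the smooth bound on $\epsilon_t$ by Lemma~\ref{lem:sg-lipschitz-local}.

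\emph{Step 1: moment of $\mathcal R_T$.} Since $q_{T+1}^2=q_1^2+\eta^{-2}\sum_{t=1}^T\|\bar g_t\|^2$, Jensen and~\eqref{eq:sg-lipschitz-second-moment} give
\[
\E[q_{T+1}]\leq\sqrt{\E[q_{T+1}^2]}\leq\sqrt{q_1^2+\frac{T}{\eta^2}\Big(G+\frac{\sigma}{\sqrt{mK}}\Big)^2}\lesssim 1+\sqrt T+\sigma\sqrt{\frac{T}{mK}}.
\]
Plugging this into~\eqref{eq:lip-RT_sq-bd} yields
\[
\sqrt{\E[\mathcal R_T^2]}\lesssim\bigl(1+\sigma^2(\log(mKT)+1)\bigr)\Bigl(1+\log\bigl(\sqrt T+\sigma\sqrt{T/(mK)}\bigr)\Bigr)\leq\mathcal V_T.
\]
Here the additive $1$ inside the logarithm is absorbed because $\sqrt T\geq1$.

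\emph{Step 2: distance recursion.} From~\eqref{eq:d-recursion-1}, which only uses convexity via~\eqref{eq:cvx-F-bound} and hence holds here with subgradients, drop the nonnegative term $2(F(x_t)-F^\star)/q_t$ to get
\[
\|d_{t+1}\|^2\leq\|d_t\|^2+\|r_t\|^2-\frac2{q_t}\langle\zeta_t,d_t\rangle+\frac{2\epsilon_t}{q_t}.
\]
Lemma~\ref{lem:sg-lipschitz-local} and $q_t\geq q_1$ give
\[
\frac{2\epsilon_t}{q_t}\leq\frac{2(K-1)G\sqrt{G^2+\sigma^2}\,\tau_t}{q_1^2}.
\]
Summing, and using the hypothesis $\mathcal T_T\lesssim\log T/(K-1)$, the total contribution of these terms is $\lesssim(1+\sigma)\log T$ deterministically. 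Unlike the smooth case, this term does not multiply $\|d_t\|^2$, so no product $P_t$ is needed. Unrolling then gives
\[
D_T\leq\|d_1\|^2+C(1+\sigma)\log T+\mathcal R_T+\max_{t\leq T}|\mathcal M_t|,\qquad \mathcal M_t:=-2\sum_{s\leq t}\frac{\langle\zeta_s,d_s\rangle}{q_s}.
\]
This $\mathcal M_t$ is a martingale because $d_s$ and $q_s$ are $\mathcal F_s$-measurable and $\E_s[\zeta_s]=0$.

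\emph{Step 3: martingale control and closing.} The martingale term is the delicate point, since it couples $D_T$ with itself. I would handle it exactly as in~\eqref{eq:mcal-bd}. Using $\E_t[\|\zeta_t\|^2]/q_t^2\leq\E_t[\|r_t\|^2]$, the orthogonality of martingale increments, and Doob's inequality (Lemma~\ref{lem:doob-l2}), one gets
\[
\E\bigl[\max_{t\leq T}|\mathcal M_t|^2\bigr]\leq16\sqrt{\E[D_T^2]}\sqrt{\E[\mathcal R_T^2]}.
\]
Now take $L^2$ norms in the bound on $D_T$ using Lemma~\ref{lem:conditional-l2-triangle}, and apply Young's inequality to $4\bigl(\sqrt{\E[D_T^2]}\sqrt{\E[\mathcal R_T^2]}\bigr)^{1/2}$. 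Absorbing $\tfrac12\sqrt{\E[D_T^2]}$ into the left-hand side gives
\[
\sqrt{\E[D_T^2]}\lesssim 1+(1+\sigma)\log T+\sqrt{\E[\mathcal R_T^2]}\lesssim\mathcal V_T,
\]
where the last step uses Step~1.

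The only care needed is finiteness of $\E[D_T^2]$ before absorbing, which holds since all increments have finite moments under Assumption~\ref{ass:stochastic-noise}. I expect no step beyond this bookkeeping to be a real obstacle.
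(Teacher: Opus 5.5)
Your proposal is correct and follows the paper's proof essentially step for step. You bound $\E[q_{T+1}]\le\sqrt{\E[q_{T+1}^2]}$ via \eqref{eq:sg-lipschitz-second-moment} and feed it into \eqref{eq:lip-RT_sq-bd}; you unroll the distance recursion without a product $P_t$, since the Lipschitz $\epsilon_t$ bound does not multiply $\|d_t\|^2$; and you control the martingale by Doob's inequality as in \eqref{eq:mcal-bd} before absorbing via Young's inequality. Your remark on the finiteness of $\E[D_T^2]$ before absorbing is a detail the paper leaves implicit; it holds because $\|r_t\|\le (G+M_T)/q_1$ and $M_T$ has all moments.
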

\begin{proof}
We first bound $\E[q_{T+1}]$. Note from the update rule and~\eqref{eq:sg-lipschitz-second-moment} that
\begin{equation}
    \E[q_{T+1}^2]=q_1^2+\frac1{\eta^2}\sum_{t=1}^T\E[\|\bar g_t\|^2]\leq q_1^2+\frac{T}{\eta^2}\left(G+\frac{\sigma}{\sqrt{mK}}\right)^2\lesssim T\left(1+\frac{\sigma}{\sqrt{mK}}\right)^2,
\label{eq:sg-lipschitz-accumulator-second-moment}
\end{equation}
which implies
\[
    \E[q_{T+1}]\leq\sqrt{\E[q_{T+1}^2]}\lesssim \sqrt{T}\left(1+\frac{\sigma}{\sqrt{mK}}\right)= \sqrt{T}+\sigma\sqrt{\frac{T}{mK}}.
\]
Plugging the above inequality into~\eqref{eq:lip-RT_sq-bd} gives
\[
    \sqrt{\E[\mathcal R_T^2]}\lesssim \left(1+\sigma^2(\log(mKT)+1)\right)\left(1+\log\big(\sqrt{T}+\sigma\sqrt{T/(mK)}\big)\right)\lesssim\mathcal V_T.
\]

To bound $\E[D_T^2]$, we start from~\eqref{eq:d-recursion-1} to get
\begin{equation}
    \|d_{t+1}\|^2+\frac{2(F(x_t)-F^\star)}{q_t}
    \leq\|d_t\|^2+\|r_t\|^2-\frac2{q_t}\langle\zeta_t,d_t\rangle+\frac{2\epsilon_t}{q_t}.
\label{eq:sg-lipschitz-distance-recursion-base}
\end{equation}
From Lemma~\ref{lem:sg-lipschitz-local} and $q_t\geq q_1$, we obtain
\begin{equation}\label{eq:lip-beta-def}
    \frac{2\epsilon_t}{q_t}\leq\frac{2(K-1)G\sqrt{G^2+\sigma^2}}{q_t^2}\tau_t
    \leq\frac{2(K-1)G\sqrt{G^2+\sigma^2}}{q_1^2}\tau_t=:\tilde{\beta}_t.
\end{equation}

For $1\leq t\leq T$, let $\widetilde{\mathcal{M}}_t:=-2\sum_{s=1}^t\langle\zeta_s,d_s\rangle/q_s$.
Since $\E_t[\zeta_t]=0$, we have
\[
    \E_t[\widetilde{\mathcal{M}}_t]=\widetilde{\mathcal{M}}_{t-1}-\frac{2}{q_t}\bigl\langle\E_t[\zeta_t],d_t\bigr\rangle=\widetilde{\mathcal{M}}_{t-1},
\]
which shows that $\{\widetilde{\mathcal{M}}_t\}_{t=1}^T$ is a martingale.

Applying\eqref{eq:lip-beta-def} on~\eqref{eq:sg-lipschitz-distance-recursion-base} and unrolling yields
\[
    \|d_{t+1}\|^2\leq\|d_1\|^2+\sum_{s=1}^t(\|r_s\|^2+\tilde{\beta}_s)+\widetilde{\mathcal{M}}_t,
\]
and hence
\begin{equation}\label{eq:lip-DT-bd1}
    D_T\leq\|d_1\|^2+\mathcal R_T+\sum_{t=1}^T\tilde{\beta}_t+\max_{1\leq s\leq T}|\widetilde{\mathcal{M}}_s|.
\end{equation}
Using similar arguments as~\eqref{eq:mcal-bd} in the proof of Lemma~\ref{lem:sg-smooth-convex-moments}, we have
\[
    \E\left[\max_{1\leq s\leq T}|\widetilde{\mathcal{M}}_s|^2\right]\leq 16\sqrt{\E[D_T^2]}\sqrt{\E[\mathcal R_T^2]}.
\]
Using Lemma~\ref{lem:conditional-l2-triangle} and the above inequality, then it follows from~\eqref{eq:lip-DT-bd1} that
\begin{align*}
    \sqrt{\E[D_T^2]}
    &\leq\|d_1\|^2+\sum_{t=1}^T\tilde{\beta}_t+\sqrt{\E[\mathcal R_T^2]}+\sqrt{\E\left[\max_{1\leq s\leq T}|\widetilde{\mathcal{M}}_s|^2\right]}\\
    &\leq\|d_1\|^2+\sum_{t=1}^T\tilde{\beta}_t+\sqrt{\E[\mathcal R_T^2]}
      +4\sqrt{\sqrt{\E[D_T^2]}\sqrt{\E[\mathcal R_T^2]}}\\
    &\leq\|d_1\|^2+\sum_{t=1}^T\tilde{\beta}_t+\frac12\sqrt{\E[D_T^2]}+9\sqrt{\E[\mathcal R_T^2]},
\end{align*}
where the last inequality uses Young's inequality. Simplifying the above inequality and noting~\eqref{eq:lip-beta-def} gives
\begin{align*}
    \sqrt{\E[D_T^2]}\leq&2\left(\|d_1\|^2+\frac{2(K-1)G(G+\sigma)}{q_1^2}\mathcal T_T\right)+18\sqrt{\E[\mathcal R_T^2]}\\
    \lesssim&1+(1+\sigma)\log T+ \mathcal V_T\lesssim\mathcal V_T,
\end{align*}
where the second inequality uses $\mathcal T_T\lesssim \log(T)/(K-1)$. The proof is complete.
\end{proof}

\subsection{Main Result and Proof}
\begin{theorem}[Convex Lipschitz convergence]\label{thm:sg-lipschitz-convex}
Let Assumptions~\ref{ass:lower-bounded} and~\ref{ass:stochastic-noise} hold. Assume each $F_i$ is convex and $G$-Lipschitz. Let $\{x_t\}_{t\ge 1}$ be the iterates generated by Algorithm~\ref{alg:FL-adagrad} with $\tau_t$ chosen as in~\eqref{eq:tau_t-scheme} and arbitrary $\gamma,\eta>0$. Then for any $T\geq1$, we have
\[
    \E[F(\bar x_T)-F^\star]\lesssim \Big(\frac{1}{\sqrt{T}}+\frac{\sigma}{\sqrt{mKT}}\Big)\mathcal V_T,
\]
where $\mathcal V_T>0$ is defined in~\eqref{def:mathcal_V_T}.
\end{theorem}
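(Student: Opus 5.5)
The plan mirrors the smooth convex proof of Theorem~\ref{thm:sg-smooth-convex}, with the Lipschitz-specific lemmas substituted. Set $\Delta_t:=F(x_t)-F^\star$ and rearrange~\eqref{eq:sg-lipschitz-distance-recursion-base} into the one-step bound
\[
\Delta_t\leq\frac{q_t}{2}\bigl(\|d_t\|^2-\|d_{t+1}\|^2\bigr)+\frac{q_t}{2}\|r_t\|^2+\epsilon_t-\langle\zeta_t,d_t\rangle .
\]
Before summing, we check that the choice~\eqref{eq:tau_t-scheme} ($\tau_t\leq\tau/((K-1)t)$ in the Lipschitz case) gives $\mathcal T_T\lesssim\log T/(K-1)$. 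This is exactly the hypothesis of Lemma~\ref{lem:sg-lipschitz-moments}, so that lemma yields $\sqrt{\E[D_T^2]}\lesssim\mathcal V_T$. We then sum the one-step bound over $t$, take expectations, and treat the four terms separately.

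\textbf{Telescoping term.} By the same summation by parts as in~\eqref{eq:cvx-rate-term1}, this term is at most $q_TD_T$. Cauchy--Schwarz then gives the bound $\sqrt{\E[q_{T+1}^2]}\sqrt{\E[D_T^2]}$. From~\eqref{eq:sg-lipschitz-accumulator-second-moment} we have $\sqrt{\E[q_{T+1}^2]}\lesssim\sqrt T+\sigma\sqrt{T/(mK)}$, so this contribution is $\lesssim(\sqrt T+\sigma\sqrt{T/(mK)})\mathcal V_T$.

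\textbf{Displacement term.} Here $\sum_t q_t\|r_t\|^2=\eta^2\sum_t(\|\bar g_t\|^2/\eta^2)/q_t$. Lemma~\ref{lem:sg-lipschitz-energy} gives $\|r_t\|\leq(G+M_T)/q_1$ almost surely, so we can apply the square-root part of Lemma~\ref{lem:summation} with $c=(G+M_T)^2/(\eta^2q_1^2)$. This bounds the sum by $\eta^2B_T'(q_{T+1}-q_1)$, where $B_T'=1+\sqrt{1+c}$. By Cauchy--Schwarz and~\eqref{eq:sg-noise-envelope} with $p=1$, $\sqrt{\E[B_T'^2]}\lesssim1+\sigma\sqrt{1+\log(mKT)}\lesssim\mathcal V_T$. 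Multiplying by $\sqrt{\E[q_{T+1}^2]}$ again gives a contribution $\lesssim(\sqrt T+\sigma\sqrt{T/(mK)})\mathcal V_T$. A simpler alternative is to bound $\sum_t q_t\|r_t\|^2\leq q_{T+1}\mathcal R_T$ and use $\sqrt{\E[\mathcal R_T^2]}\lesssim\mathcal V_T$ from Lemma~\ref{lem:sg-lipschitz-moments}.

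\textbf{Local-drift term.} Lemma~\ref{lem:sg-lipschitz-local} with $q_t\geq q_1$ gives $\sum_t\E[\epsilon_t]\leq(K-1)G\sqrt{G^2+\sigma^2}\,\mathcal T_T/q_1$. Using $\mathcal T_T\lesssim\log T/(K-1)$, this is $\lesssim(1+\sigma)\log T$, which is at most $\mathcal V_T$ by definition and hence negligible.

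\textbf{Martingale term.} Since $\E_t[\zeta_t]=0$ and $d_t$ is $\mathcal F_t$-measurable, $\E[\langle\zeta_t,d_t\rangle]=0$, so this term vanishes in expectation.

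Adding the four contributions and dividing by $T$ gives $\E[F(\bar x_T)-F^\star]=\frac1T\sum_t\E[\Delta_t]\lesssim(1/\sqrt T+\sigma/\sqrt{mKT})\mathcal V_T$.

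I expect the only delicate point to be the coupling between the random accumulator $q_{T+1}$ and the random quantities $D_T$ and $M_T$. Cauchy--Schwarz handles it, but only because Lemma~\ref{lem:sg-lipschitz-moments} controls second moments; first moments would not suffice. Unlike the smooth case, there is no self-bounding step: the Lipschitz bound~\eqref{eq:sg-lipschitz-second-moment} gives $\E[q_{T+1}^2]$ directly.
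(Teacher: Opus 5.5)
Your proposal is correct and follows the paper's proof essentially step for step. It uses the same one-step inequality, summation by parts plus Cauchy--Schwarz against the second moments from Lemma~\ref{lem:sg-lipschitz-moments}, the square-root summation bound with the random constant $B_T$ for $\sum_t q_t\|r_t\|^2$, Lemma~\ref{lem:sg-lipschitz-local} with $\mathcal T_T\lesssim\log T/(K-1)$ for the drift term, and the direct bound~\eqref{eq:sg-lipschitz-accumulator-second-moment} on $\E[q_{T+1}^2]$; your alternative $\sum_t q_t\|r_t\|^2\le q_{T+1}\mathcal R_T$ with $\sqrt{\E[\mathcal R_T^2]}\lesssim\mathcal V_T$ also works and gives the same rate.
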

\begin{proof}[Proof of Theorem~\ref{thm:sg-lipschitz-convex}]
Let $\Delta_t:=F(x_t)-F^\star$. Rearranging~\eqref{eq:sg-lipschitz-distance-recursion-base} gives
\begin{equation}
    \Delta_t\leq\frac{q_t}{2}\bigl(\|d_t\|^2-\|d_{t+1}\|^2\bigr)+\frac{q_t}{2}\|r_t\|^2+\epsilon_t-\langle\zeta_t,d_t\rangle.
\label{eq:sg-lipschitz-convex-one-step}
\end{equation}
We bound the sum of each term on the RHS of~\eqref{eq:sg-lipschitz-convex-one-step} in expectation. First, monotonicity of $q_t$ gives
\begin{equation}
    \sum_{t=1}^Tq_t\bigl(\|d_t\|^2-\|d_{t+1}\|^2\bigr)=q_1\|d_1\|^2+\sum_{t=2}^T(q_t-q_{t-1})\|d_t\|^2-q_T\|d_{T+1}\|^2\leq q_TD_T.
\label{eq:sg-lipschitz-convex-summation-by-parts}
\end{equation}
Therefore, Cauchy--Schwarz and Lemma~\ref{lem:sg-lipschitz-moments} yield
\begin{equation}
    \E\left[\sum_{t=1}^Tq_t\bigl(\|d_t\|^2-\|d_{t+1}\|^2\bigr)\right]\leq\E[q_{T+1}D_T]\leq\sqrt{\E[q_{T+1}^2]}\sqrt{\E[D_T^2]}\lesssim\mathcal V_T\sqrt{\E[q_{T+1}^2]}.
\label{eq:sg-lipschitz-convex-term1}
\end{equation}
On the other hand, since $(\|\bar g_t\|^2/\eta^2)/q_t^2=\|r_t\|^2/\eta^2\leq(G+M_T)^2/(\eta^2q_1^2)$ from Lemma~\ref{lem:sg-lipschitz-energy}, we apply Lemma~\ref{lem:summation} with $A_t=q_t^2$, $a_t=\|\bar g_t\|^2/\eta^2$, and $c=(G+M_T)^2/(\eta^2q_1^2)$ to get
\begin{equation}
    \sum_{t=1}^Tq_t\|r_t\|^2\leq\eta^2\left(1+\sqrt{1+\frac{(G+M_T)^2}{\eta^2q_1^2}}\right)(q_{T+1}-q_1).
\label{eq:sg-lipschitz-convex-weighted-energy}
\end{equation}
Denote $B_T:=\left(1+\sqrt{1+(G+M_T)^2/(\eta^2q_1^2)}\right)$. Then it follows from $(a+b)^2\leq 2a^2+2b^2$ and~\eqref{eq:sg-noise-envelope} with $p=1$ that
\begin{align*}
    \sqrt{\E[B_T^2]}\leq& \sqrt{4+\frac{4G^2}{\eta^2q_1^2}+\frac{4\E[M_T^2]}{\eta^2q_1^2}}\leq\left(4+\frac{4G^2}{\eta^2q_1^2}+\frac{4\sigma^2(\log(mKT)+1)}{\eta^2q_1^2}\right)^{1/2}\\
    \lesssim& \sqrt{1+\sigma^2(\log(mKT)+1)}\leq 1+\sigma\sqrt{\log(mKT)+1}\lesssim\mathcal V_T.
\end{align*}
Taking expectations on~\eqref{eq:sg-lipschitz-convex-weighted-energy}, we obtain from the above inequality and applying Cauchy--Schwarz inequality that
\begin{equation}
    \E\left[\sum_{t=1}^Tq_t\|r_t\|^2\right] \leq\eta^2\sqrt{\E[B_T^2]}\sqrt{\E[(q_{T+1}-q_1)^2]}\lesssim\mathcal V_T\sqrt{\E[q_{T+1}^2]}.
\label{eq:sg-lipschitz-convex-term2}
\end{equation}
Additionally, Lemma~\ref{lem:sg-lipschitz-local} and $\mathcal T_t\lesssim \log t/(K-1)$ gives
\begin{equation}
    \sum_{t=1}^T\E[\epsilon_t]\leq\frac{(K-1)G\sqrt{G^2+\sigma^2}}{q_1}\mathcal T_T\lesssim (1+\sigma)\log T\lesssim\mathcal V_T.
\label{eq:sg-lipschitz-convex-term3}
\end{equation}
Note that the last term in~\eqref{eq:sg-lipschitz-convex-one-step} has zero expectation since $\E[\langle\zeta_t,d_t\rangle]=\E[\langle\E_t[\zeta_t],d_t\rangle]=0$.
Now summing~\eqref{eq:sg-lipschitz-convex-one-step} over $t=1,2,\ldots,T$, taking expectations, and applying~\eqref{eq:sg-lipschitz-convex-term1},~\eqref{eq:sg-lipschitz-convex-term2},~\eqref{eq:sg-lipschitz-convex-term3} yield
\begin{align}
    \sum_{t=1}^T\E[\Delta_t]&\leq\frac12\E\left[\sum_{t=1}^Tq_t(\|d_t\|^2-\|d_{t+1}\|^2)\right]
      +\frac12\E\left[\sum_{t=1}^Tq_t\|r_t\|^2\right]
      +\sum_{t=1}^T\E[\epsilon_t]\notag\\
    &\lesssim\mathcal V_T\left(1+\sqrt{\E[q_{T+1}^2]}\right)
    \lesssim\mathcal V_T\sqrt{\E[q_{T+1}^2]}.\label{eq:sg-lipschitz-convex-objective-sum}
\end{align}
Finally, plugging the bound on $\sqrt{\E[q_{T+1}^2]}$~\eqref{eq:sg-lipschitz-accumulator-second-moment} into~\eqref{eq:sg-lipschitz-convex-objective-sum} leads to
\[
    \sum_{t=1}^T\E[\Delta_t]
    \lesssim\sqrt{T}\left(1+\frac{\sigma}{\sqrt{mK}}\right)\mathcal V_T.
\]
Then the proof is complete after dividing both sides of the above inequality by $T$.
\end{proof}

\begin{corollary}[Deterministic Lipschitz convex convergence]\label{cor:sg-lipschitz-deterministic}
Consider the same setting as in Theorem~\ref{thm:sg-lipschitz-convex}, except that the local oracle is exact, i.e., $\sigma=0$. Then, for every $T\geq1$,
\[
    \E[F(\bar x_T)-F^\star]=\frac1T\sum_{t=1}^T F(x_t)-F^\star\lesssim\frac{\log(1+T)}{\sqrt T}.
\]
\end{corollary}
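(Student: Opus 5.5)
The plan is to specialize Theorem~\ref{thm:sg-lipschitz-convex} to $\sigma=0$ and then check that $\mathcal V_T$ reduces to a quantity of order $\log(1+T)$. The equality $\E[F(\bar x_T)-F^\star]=\frac1T\sum_{t=1}^T F(x_t)-F^\star$ needs no work. When $\sigma=0$ all noises vanish almost surely, so the iterates are deterministic, and $\bar x_T$ is uniform over $\{x_t\}_{t=1}^T$. Hence the expectation is simply the trajectory average.

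For the rate, set $\sigma=0$ in Theorem~\ref{thm:sg-lipschitz-convex}. This gives
\[
\E[F(\bar x_T)-F^\star]\lesssim \frac{\mathcal V_T}{\sqrt T}.
\]
From definition~\eqref{def:mathcal_V_T} with $\sigma=0$,
\[
\mathcal V_T=1+\log\sqrt T+\log T=1+\tfrac32\log T\lesssim \log(1+T).
\]
The last inequality holds for all $T\ge1$ because $\log(1+T)\ge\log 2>0$. Substituting this bound finishes the argument.

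The only step that needs care is confirming that the proof of Theorem~\ref{thm:sg-lipschitz-convex} stays valid at $\sigma=0$. Assumption~\ref{ass:stochastic-noise} was stated for $\sigma>0$, and the noise envelope~\eqref{eq:sg-noise-envelope} was derived under $\sigma>0$. With $\sigma=0$, however, $M_T=0$ almost surely, so every moment bound on $M_T$ holds trivially. Every other step (Lemmas~\ref{lem:sg-lipschitz-local}, \ref{lem:sg-lipschitz-energy}, and \ref{lem:sg-lipschitz-moments}) only uses~\eqref{eq:sg-implied-variance} and bounds on $M_T$, all of which are then trivially satisfied. The hypothesis $\mathcal T_T\lesssim\log T/(K-1)$ in Lemma~\ref{lem:sg-lipschitz-moments} follows from $\tau_t\le 1/((K-1)t)$ as noted after~\eqref{eq:quantities-estimates}, and the case $K=1$ is handled as described there.

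Alternatively, one could reread the proof directly with $\sigma=0$. Then $q_{T+1}\lesssim\sqrt T$ deterministically, $\mathcal R_T\lesssim 1+\log T$, and $D_T\lesssim 1+\log T$. Plugging these into~\eqref{eq:sg-lipschitz-convex-objective-sum} gives $\sum_t\Delta_t\lesssim\sqrt T\log(1+T)$, which is the same result.
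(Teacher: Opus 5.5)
Your proposal is correct and is exactly the paper's argument: specialize Theorem~\ref{thm:sg-lipschitz-convex} to $\sigma=0$, note $\mathcal V_T=1+\log\sqrt T+\log T\lesssim\log(1+T)$, and substitute. Your extra check that the ingredients of that theorem remain valid at $\sigma=0$ (namely $M_T=0$ almost surely and the $\mathcal T_T$ bound) is a harmless clarification that the paper leaves implicit.
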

\begin{proof}
    When $\sigma=0$, we obtain from the definition of $\mathcal V_T$~\eqref{def:mathcal_V_T} that
    \[
        \mathcal V_T=1+\log\sqrt{T}+\log T\lesssim \log(1+T).
    \]
    Then the result follows immediately from Theorem~\ref{thm:sg-lipschitz-convex} by plugging in $\sigma=0$.
\end{proof}

\section{Extension I: Partial Participation}\label{app:partial-participation}
\begin{algorithm}
    \caption{Adaptive Federated Learning Method (AdaFed) with Partial Participation}\label{alg:FL-adagrad-partial}
    \begin{algorithmic}
    \REQUIRE{} initial model $x_1\in\rbb^d$; local-step count $K\geq1$; server iterations $T\geq1$; inclusion probabilities $0<p_i\leq1$; prescribed local scales $\tau_t>0$; parameters $\gamma,\eta>0$.
    \STATE{} Initialize the step size $\eta_1:=\eta/\gamma$.
    \FOR{$t = 1, 2, \dots, T$}
    \STATE{} Sample active clients $\scal_t\subset\{1,\ldots,m\}$ independently with probabilities $p_1,\ldots,p_m$.
    \STATE{} \textbf{Server} sends $(x_t,\eta_t,\tau_t)$ to every \textbf{client} $i\in\scal_t$.
    \FOR{each \textbf{client} $i\in\scal_t$ \textbf{in parallel}}
    \STATE{} Initialize $x_{i,0}^t:=x_t$.
    \FOR{$k=0,\dots,K-1$}
    \STATE{} Update $x_{i,k+1}^t:=x_{i,k}^t-\tau_t\eta_t g_{i,k}^t$ with $g_{i,k}^t\approx\nabla F_i(x_{i,k}^t)$.
    \ENDFOR{}
    \STATE{} Set $\Delta_i^t:=x_{i,K}^t-x_t$ and send $\Delta_i^t$ to the server.
    \ENDFOR{}
    \STATE{} \textbf{Server} updates the model and the step size:
    \begin{align*}
        x_{t+1}:=x_t+\frac1{mK\tau_t}\sum_{i\in\scal_t}\frac{\Delta_i^t}{p_i},\qquad \frac{1}{\eta_{t+1}^2}:=\frac{1}{\eta_t^2}\left(1+\frac{\|x_{t+1}-x_t\|^2}{\eta^2}\right).
    \end{align*}
    \ENDFOR{}
    \STATE{} \textbf{Output:} $\bar{x}_T$ chosen uniformly at random from $\{x_t\}_{t=1}^T$.
    \end{algorithmic}
\end{algorithm}

Conditionally on $\mathcal F_t$, the inclusion indicators $\chi_t^{(i)}:=\mathbf1\{i\in\mathcal S_t\}$ are independent with $\E_t[\chi_t^{(i)}]=p_i\in(0,1]$. 
Since $\Delta_i^t=-K\tau_t\bar g_t^{(i)}/q_t$, the server update in Algorithm~\ref{alg:FL-adagrad-partial} can be rewritten as
\begin{equation}
    \tag{Core-PP}
    \widehat g_t:=\frac1m\sum_{i=1}^m\frac{\chi_t^{(i)}}{p_i}\bar g_t^{(i)},\qquad x_{t+1}=x_t-\frac{\widehat g_t}{q_t},\qquad q_{t+1}^2=q_t^2+\frac{\|\widehat g_t\|^2}{\eta^2}.
    \label{eq:pp-method}
\end{equation}
\stepcounter{equation}
Note that scaling $\bar g_t^{(i)}$ by $1/p_i$ for each active client removes the bias arising from unequal participation probabilities. Indeed, due to the independence of the inclusion indicators conditionally on $\mathcal F_t$, we have
\[
    \E_t[\widehat g_t]=\frac1m\sum_{i=1}^m\E_t\left[\frac{\chi_t^{(i)}}{p_i}\right]\E_t[\bar g_t^{(i)}]=\frac1m\sum_{i=1}^m\E_t[\bar g_t^{(i)}]=\E_t[\bar g_t].
\]

\subsection{Analysis Setup}
For simplicity, we analyze Algorithm~\ref{alg:FL-adagrad-partial} with exact local gradients $g_{i,k}^t=\nabla F_i(x_{i,k}^t)$ and $\tau_t$ chosen as in~\eqref{eq:partial-tau}.
Thus $\kappa_\tau:=\sup_{t\geq2}(t-1)\tau_t<\infty$ and $\widetilde{\mathcal T}_T<\infty$.
For analysis, define the counterfactual local path for every client (including inactive ones) by
\begin{equation}
    x_{i,0}^t=x_t,\qquad x_{i,k+1}^t=x_{i,k}^t-\frac{\tau_t}{q_t}g_{i,k}^t,\qquad \bar g_t^{(i)}:=\frac1K\sum_{k=0}^{K-1}g_{i,k}^t.
    \label{eq:pp-local}
\end{equation}
Recall $g_t=\nabla F(x_t)$, $b_t^{(i)}=\bar g_t^{(i)}-\nabla F_i(x_t)$, and $b_t:=\frac1m\sum_{i=1}^m b_t^{(i)}=\bar g_t-g_t$ from full-participation analysis. We also define the participation noise via
\[
    \nu_t:=\widehat g_t-\bar g_t=\frac1m\sum_{i=1}^m\left(\frac{\chi_t^{(i)}}{p_i}-1\right)\bar g_t^{(i)}.
\]
We first bound the conditional variance of $\widehat g_t$ in the following lemma.
\begin{lemma}\label{lem:participation-moments}
For all $t\geq 1$, we have $\E_t[\nu_t]=0$ and
\begin{equation*}
    \E_t[\|\nu_t\|^2]=\frac1{m^2}\sum_{i=1}^m\frac{1-p_i}{p_i}\|\bar g_t^{(i)}\|^2.
\end{equation*}
\end{lemma}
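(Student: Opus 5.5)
The plan is a direct conditional computation that rests on two facts. First, in the exact-gradient setting of this section, each counterfactual local path~\eqref{eq:pp-local} is a deterministic function of $(x_t,q_t)$. Hence every $\bar g_t^{(i)}$ is $\mathcal F_t$-measurable. This holds even for inactive clients, which is precisely why the counterfactual paths were introduced. Second, conditionally on $\mathcal F_t$, the indicators $\chi_t^{(i)}$ are independent Bernoulli$(p_i)$ variables. Together these reduce the lemma to computing the mean and variance of a weighted sum of independent centered scalars with fixed vector coefficients.

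For the mean, I set $\omega_i:=\chi_t^{(i)}/p_i-1$. Then $\E_t[\omega_i]=p_i/p_i-1=0$. Pulling the $\mathcal F_t$-measurable vectors $\bar g_t^{(i)}$ out of the conditional expectation gives
\[
\E_t[\nu_t]=\frac1m\sum_{i=1}^m\E_t[\omega_i]\,\bar g_t^{(i)}=0.
\]

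For the second moment, I expand the squared norm:
\[
\|\nu_t\|^2=\frac1{m^2}\sum_{i,j}\omega_i\omega_j\langle\bar g_t^{(i)},\bar g_t^{(j)}\rangle .
\]
When $i\neq j$, conditional independence gives $\E_t[\omega_i\omega_j]=\E_t[\omega_i]\E_t[\omega_j]=0$, so the cross terms vanish. The diagonal terms need the second moment of $\omega_i$. Since $(\chi_t^{(i)})^2=\chi_t^{(i)}$, we have
\[
\E_t[\omega_i^2]=\E_t\!\left[\frac{\chi_t^{(i)}}{p_i^2}\right]-2\E_t\!\left[\frac{\chi_t^{(i)}}{p_i}\right]+1=\frac1{p_i}-2+1=\frac{1-p_i}{p_i}.
\]
Summing the diagonal terms then yields exactly the stated identity, and it holds with equality.

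There is no real obstacle here. The only point needing care is to justify that $\bar g_t^{(i)}$ is $\mathcal F_t$-measurable. This uses exactness of the local oracle together with the fact that $\mathcal S_t$ is sampled independently of the local computations. If stochastic local gradients were allowed, this step would instead require conditioning on a larger $\sigma$-algebra that includes the local noise but excludes $\mathcal S_t$.
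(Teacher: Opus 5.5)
Your proposal is correct and follows essentially the same route as the paper: treat each $\bar g_t^{(i)}$ as fixed given $\mathcal F_t$, use $\E_t[\chi_t^{(i)}/p_i-1]=0$ for the mean, and expand the squared norm, with cross terms vanishing by conditional independence and diagonal terms giving $\E_t[(\chi_t^{(i)}/p_i-1)^2]=(1-p_i)/p_i$. You also justify that $\bar g_t^{(i)}$ is $\mathcal F_t$-measurable because the counterfactual paths use exact gradients, which the paper only asserts in passing.
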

\begin{proof}
Conditionally on $\mathcal F_t$, every $\bar g_t^{(i)}$ is fixed. Since $\E_t[\chi_t^{(i)}]=p_i$, we have
\[
    \E_t[\nu_t]=\E_t\left[\frac1m\sum_{i=1}^m\left(\frac{\chi_t^{(i)}}{p_i}-1\right)\bar g_t^{(i)}\right]=\frac1m\sum_{i=1}^m\left(\frac{\E_t[\chi_t^{(i)}]}{p_i}-1\right)\bar g_t^{(i)}=0.
\]
Moreover, note that for any $i\in[m]$,
\[
    \E_t\left[\left(\frac{\chi_t^{(i)}}{p_i}-1\right)^2\right]=p_i\left(\frac1{p_i}-1\right)^2+(1-p_i)=\frac{1-p_i}{p_i}.
\]
For $i\neq j$, conditional independence  gives
\[
    \E_t\left[ \left(\frac{\chi_t^{(i)}}{p_i}-1\right) \left(\frac{\chi_t^{(j)}}{p_j}-1\right) \right]=0.
\]
Expanding the squared norm therefore yields
\begin{align*}
    \E_t[\|\nu_t\|^2] =\frac1{m^2}\sum_{i=1}^m\sum_{j=1}^m \E_t\left[ \left(\frac{\chi_t^{(i)}}{p_i}-1\right) \left(\frac{\chi_t^{(j)}}{p_j}-1\right) \right] \left\langle\bar g_t^{(i)},\bar g_t^{(j)}\right\rangle=\frac1{m^2}\sum_{i=1}^m \frac{1-p_i}{p_i}\|\bar g_t^{(i)}\|^2,
\end{align*}
which completes the proof.
\end{proof}
Using Lemma~\ref{lem:participation-moments}, we have from $\|\widehat g_t\|^2=\|\bar g_t+\nu_t\|^2$ and $\E_t[\nu_t]=0$ that
\begin{equation}
    \label{eq:hat_gt_bd}
    \E_t[\|\widehat g_t\|^2]=\|\bar g_t\|^2+\frac1{m^2}\sum_{i=1}^m\frac{1-p_i}{p_i}\|\bar g_t^{(i)}\|^2.
\end{equation}

As indicated in Lemma~\ref{lem:participation-moments}, partial participation creates conditional sampling noise even though every active client uses exact gradients. In particular, the sampling variance can remain positive at a stationary point of $F$ when the individual client gradients do not vanish there. The accumulator $q_t$ can therefore grow continuously with $t$, unlike in the full-participation case.
To control the magnitude of the participation variance, we introduce the following bounded-heterogeneity condition. 
\begin{assumption}[Bounded heterogeneity]\label{ass:pp-heterogeneity}
There exists a constant $\delta\geq0$ such that
\[
\frac1m\sum_{i=1}^m\|\nabla F_i(x)-\nabla F(x)\|^2\leq\delta^2,\qquad x\in\mathbb R^d.
\]
\end{assumption}
Note that Assumption~\ref{ass:pp-heterogeneity} is only required for the analysis of smooth objectives. It is not needed for the Lipschitz convex result.
For ease of notation, we define
\begin{equation*}
    A_p:=2+\frac2m\max_{i\in[m]}\frac{1-p_i}{p_i},\qquad \sigma_p^2:=\frac{2\delta^2}{m}\max_{i\in[m]}\frac{1-p_i}{p_i}.
\end{equation*}
Note that $\sigma_p=0$ under full participation or zero heterogeneity.

\begin{lemma}\label{lem:pp-displacement}
Let Assumption~\ref{ass:pp-heterogeneity} hold and assume each $F_i$ is $L$-smooth. Then there exist nonnegative constants $M_p$, $R_p$, and $B_p$ indenpendent of $t$, such that for all $t\geq1$,
\begin{equation*}
    \frac{\|g_t\|}{q_t}\leq M_p,\qquad \|r_t\|\leq R_p,\qquad \frac1m\sum_{i=1}^m\|b_t^{(i)}\|^2\leq B_p^2\tau_t^2,\qquad \|b_t\|\leq B_p\tau_t.
\end{equation*}
\end{lemma}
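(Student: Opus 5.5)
The argument is a forward induction over rounds. With exact local gradients every quantity is deterministic given $\mathcal F_t$ and the participation set $\scal_t$, so all the bounds can be pathwise. The plan is to reduce everything to the single ratio $u_t:=\|g_t\|/q_t$: first show that the local bias and the displacement are controlled by $u_t$, then show that $u_t$ stays in a bounded invariant set.

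\textbf{Step 1 (bias in terms of $u_t$).} The recursion behind~\eqref{eq:sqrt-xk-xt} still applies with $\sigma=0$ and no expectation, and gives $\|x_{i,k}^t-x_t\|\leq k\tau_t\Theta\|\nabla F_i(x_t)\|/q_t$. By $L$-smoothness,
\[
\frac1m\sum_{i=1}^m\|b_t^{(i)}\|^2\leq\frac{c_K\Theta^2L^2\tau_t^2}{q_t^2}\cdot\frac1m\sum_{i=1}^m\|\nabla F_i(x_t)\|^2\leq\frac{2c_K\Theta^2L^2\tau_t^2}{q_t^2}\bigl(\|g_t\|^2+\delta^2\bigr),
\]
where the last inequality uses Assumption~\ref{ass:pp-heterogeneity}. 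Since $q_t\geq q_1$, once $u_t\leq M_p$ is known we can set $B_p^2:=2c_K\Theta^2L^2(M_p^2+\delta^2/q_1^2)$. Jensen's inequality then gives $\|b_t\|\leq B_p\tau_t$.

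\textbf{Step 2 (structure of $\widehat g_t$).} Let $s_t:=\frac1m\sum_i\chi_t^{(i)}/p_i$. Expand $\bar g_t^{(i)}=g_t+(\nabla F_i(x_t)-g_t)+b_t^{(i)}$ to get
\[
\widehat g_t=s_tg_t+e_t,\qquad \|e_t\|\leq\frac{1}{mp_{\min}}\sum_{i\in\scal_t}\bigl(\|\nabla F_i(x_t)-g_t\|+\|b_t^{(i)}\|\bigr).
\]
Cauchy--Schwarz and Step 1 give $\|e_t\|\leq C_1s_t(\delta+\tau u_t)$, where $C_1$ depends only on $m$, $p_{\min}:=\min_ip_i$, $K$, $L$, $\Theta$, and $s_t\in\{0\}\cup[1/m,1/p_{\min}]$. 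Two consequences follow. First, $\|r_t\|=\|\widehat g_t\|/q_t\leq s_t\bigl(u_t(1+C_1\tau)+C_1\delta/q_1\bigr)$, so $R_p$ follows from $M_p$. Second, $q_{t+1}^2\geq q_t^2+s_t^2\bigl((\|g_t\|-C_1\delta-C_1\tau\|g_t\|/q_1)_+\bigr)^2/\eta^2$.

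\textbf{Step 3 (invariant set for $u_t$; main obstacle).} The accumulator is delayed, and $\widehat g_t$ can be tiny (for example, when $\scal_t=\emptyset$). So the crude full-participation bound $\|\widehat g_t\|\leq\eta q_t$ is unavailable, and the naive estimate $u_{t+1}\leq u_t+L\eta/q_1$ only grows linearly in $t$. The way around this is that movement and accumulator growth are driven by the \emph{same} scalar $s_t$:
\[
\|g_{t+1}\|\leq\|g_t\|+L\|r_t\|\leq\|g_t\|\bigl(1+C_2s_t/q_1\bigr)+C_3s_t,
\qquad
q_{t+1}\geq q_t\sqrt{1+c\,s_t^2u_t^2}\quad\text{when }u_t\geq u_\ast .
\]
Here $u_\ast$ is a constant large enough that the perturbation $e_t$ absorbs at most half of $s_tg_t$. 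If $s_t=0$, then $x_{t+1}=x_t$ and $u_{t+1}\leq u_t$. Otherwise $s_t\geq1/m$, and
\[
u_{t+1}\leq\frac{u_t(1+C_2s_t/q_1)+C_3s_t/q_1}{\sqrt{1+c\,s_t^2u_t^2}}.
\]
Since $s_t$ ranges over a compact interval bounded away from zero, the right-hand side is at most $u_t$ whenever $u_t\geq\bar u$, for an explicit threshold $\bar u$. For $u_t<\bar u$ it is at most $\bar u(1+C_2/(p_{\min}q_1))+C_3/(p_{\min}q_1)$. Induction from $u_1=\|g_1\|/q_1$ therefore yields
\[
u_t\leq M_p:=\max\Bigl\{u_1,\ \bar u\bigl(1+C_2/(p_{\min}q_1)\bigr)+C_3/(p_{\min}q_1)\Bigr\}
\]
for every $t$. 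The delicate point is the constant bookkeeping in choosing $u_\ast$ and $\bar u$, in particular ensuring the $\tau u_t$ part of the bias does not defeat the lower bound on $\|\widehat g_t\|$. If needed, this is handled by $\tau_t\leq1/((K-1)t)$, so that $C_1\tau$ is small relative to $q_1$; otherwise the constant is absorbed into $c$. Steps 1 and 2 then give $R_p$ and $B_p$.
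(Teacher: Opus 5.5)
\textbf{Gap in Step~3.} The point you flag as delicate is a real hole, and neither of your suggested remedies closes it.

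The bias part of $e_t$ is $s_t\beta_t\|g_t\|$ with $\beta_t=(K-1)L\Theta\tau_t/(2q_t)$. It scales with $\|g_t\|$ exactly as $s_tg_t$ does. So when $\beta_t$ is not small, no threshold $u_\ast$ on $u_t$ yields $\|\widehat g_t\|\ge s_t\|g_t\|/2$, and hence none yields $q_{t+1}\ge q_t\sqrt{1+c\,s_t^2u_t^2}$.

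This happens for admissible parameters. Take identical $F_i(x)=\frac L2x^2$ (so $\delta=0$), $K=2$, $\tau_1=1$, and $\gamma/\eta=q_1=L/2$. One local step gives $x_{i,1}^1=-x_1$, so every $\bar g_1^{(i)}=0$. Hence $\widehat g_1=0$ while $s_1\|g_1\|>0$ for any nonempty $\mathcal S_1$, however large $u_1$ is.

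Your two remedies fail as follows.
\begin{itemize}
\item The claim that $\tau_t\le1/((K-1)t)$ makes $C_1\tau$ small relative to $q_1$ is false. $C_1\tau_1$ is of order $L\Theta$, while $q_1=\gamma/\eta$ is arbitrary.
\item Once the relative bias reaches $1$, your lower bound is vacuous, so there is no positive $c$ to absorb it into.
\end{itemize}

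The decay of $\tau_t$ only helps eventually. With $\kappa_\tau=\sup_{t\ge2}(t-1)\tau_t$, one has $\beta_t\le1/2$ for $t\ge n_0:=\max\{2,1+\lceil(K-1)L\Theta\kappa_\tau/q_1\rceil\}$. This gives $\|\widehat g_t\|\ge s_t\bigl(\|g_t\|/2-\tfrac32\sqrt m\,\delta\bigr)$ for those $t$. The first $n_0-1$ rounds need a separate crude bound.

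You already have one: the naive estimate $u_{t+1}\le u_t+L\eta/q_1$ that you dismissed is perfectly adequate over finitely many rounds. Use the two-phase split, and replace $\tau$ by $\tau_t$ in your bound on $e_t$. Then your invariant-set induction, started at $t=n_0$, goes through. This split is the structural core of the paper's proof. The paper handles the prefix by iterating $\|g_{t+1}\|+\sqrt m\,\delta\le\bigl(1+\frac{L\Theta}{mq_1}\sum_i p_i^{-1}\bigr)(\|g_t\|+\sqrt m\,\delta)$.

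For $t\ge n_0$, the paper closes the argument more directly than your invariant set. It rearranges the same lower bound into $\|g_t\|\le 2m\|\widehat g_t\|+3\sqrt m\,\delta$, so $\|g_{t+1}\|\le(2m+L/q_1)\|\widehat g_t\|+3\sqrt m\,\delta$. Dividing by $q_{t+1}\ge\max\{q_1,\|\widehat g_t\|/\eta\}$ then bounds $u_{t+1}$ by $\eta(2m+L/q_1)+3\sqrt m\,\delta/q_1$, independently of $u_t$. This removes the thresholds $u_\ast$ and $\bar u$. Your version also works once restricted to $t\ge n_0$, at the cost of more bookkeeping.

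Steps~1 and~2, and the derivation of $R_p$ and $B_p$ from $M_p$, match the paper.
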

\begin{proof}
Note that the counterfactual path~\eqref{eq:pp-local} overlaps with the full-participation local path with exact local gradients. Hence~\eqref{eq:sqrt-xk-xt} holds clientwise with $\sigma=0$. Averaging the distance over $k$ and using smoothness gives
\begin{equation}
    \|x_{i,k}^t-x_t\|\leq\frac{k\tau_t\Theta}{q_t}\|\nabla F_i(x_t)\|,\qquad
    \|\bar g_t^{(i)}\|\leq\Theta\|\nabla F_i(x_t)\|,\qquad
    \|b_t^{(i)}\|\leq\frac{(K-1)L\Theta\tau_t}{2q_t}\|\nabla F_i(x_t)\|.
    \label{eq:pp-local-error}
\end{equation}
On the other hand, Assumption~\ref{ass:pp-heterogeneity} implies
\begin{equation}
    \|\nabla F_i(x_t)-g_t\|\leq\sqrt m\,\delta,\qquad
    \frac1m\sum_{i=1}^m\|\nabla F_i(x_t)\|^2\leq\|g_t\|^2+\delta^2.
    \label{eq:pp-gradient-heterogeneity}
\end{equation}
Consequently, ~\eqref{eq:pp-local-error} and~\eqref{eq:pp-gradient-heterogeneity} yield
\begin{equation}
    \|\widehat g_t\|\leq\frac1m\sum_{i=1}^m\frac{1}{p_i}\|\bar g_t^{(i)}\|\leq\frac{\Theta}{m}\sum_{i=1}^m\frac1{p_i}(\|g_t\|+\sqrt m\,\delta).
    \label{eq:pp-affine-pathwise}
\end{equation}

We first show that $\|g_t\|/q_t$ is bounded. Set $n_0:=\max\{2,1+\lceil(K-1)L\Theta\kappa_\tau/q_1\rceil\}$ and $\beta_t:=(K-1)L\Theta\tau_t/(2q_t)$. Since $q_t\geq q_1$ and $\tau_t\leq\kappa_\tau/(t-1)$ for $t\geq2$, we have for any $t\geq n_0$ that
\[
\beta_t\leq\frac{(K-1)L\Theta}{2q_t}\frac{\kappa_\tau}{\lceil(K-1)L\Theta\kappa_\tau/q_1\rceil}\leq1/2.
\]
We focus on nonempty rounds, since otherwise $x_{t+1}=x_t$ and hence $g_{t+1}=g_t$ and $q_{t+1}=q_t$. For a nonempty round, we obtain from~\eqref{eq:pp-local-error} and~\eqref{eq:pp-gradient-heterogeneity} that
\[
    \|\bar g_t^{(i)}-g_t\|=\|b_t^{(i)}+\nabla F_i(x_t)-g_t\|\leq\beta_t\|g_t\|+(1+\beta_t)\sqrt m\,\delta.
\]
Let $\omega_t:=m^{-1}\sum_{i\in\mathcal S_t}p_i^{-1}\geq1/m$. Then it follows from the above inequality that
\begin{align*}
    \|\widehat g_t\|
    &=\left\|\omega_tg_t+\frac1m\sum_{i\in\mathcal S_t}\frac{\bar g_t^{(i)}-g_t}{p_i}\right\|\\
    &\geq\omega_t\|g_t\|-\frac1m\sum_{i\in\mathcal S_t}\frac{\|\bar g_t^{(i)}-g_t\|}{p_i}\\
    &\geq\omega_t\bigl((1-\beta_t)\|g_t\|-(1+\beta_t)\sqrt m\,\delta\bigr).
\end{align*}
For $t\geq n_0$, using $\beta_t\leq1/2$ and $\omega_t^{-1}\leq m$ then gives
\begin{equation}
    \|g_t\|\leq\frac{2}{\omega_t}\|\widehat g_t\|+3\sqrt m\,\delta
    \leq2m\|\widehat g_t\|+3\sqrt m\,\delta.
    \label{eq:pp-gradient-from-sample}
\end{equation}
To bound $\|g_{t+1}\|$, we use smoothness,~\eqref{eq:pp-method}, and plugging in~\eqref{eq:pp-gradient-from-sample} to get 
\[
    \|g_{t+1}\|\leq\|g_t\|+L\|x_{t+1}-x_t\|=\|g_t\|+\frac{L}{q_t}\|\widehat g_t\|
    \leq\left(2m+\frac{L}{q_t}\right)\|\widehat g_t\|+3\sqrt m\,\delta.
\]
Note that $q_{t+1}\geq q_1$ and $q_{t+1}\geq\|\widehat g_t\|/\eta$, dividing both sides of the above inequality by $q_{t+1}$ yields
\begin{equation*}
    \frac{\|g_{t+1}\|}{q_{t+1}}
    \leq\frac{(2m+L/q_t)\|\widehat g_t\|+3\sqrt m\,\delta}{q_{t+1}}
    \leq\eta(2m+L/q_1)+3\sqrt m\,\delta/q_1.
\end{equation*}
For $t<n_0$, combining~\eqref{eq:pp-affine-pathwise} and $\|g_{t+1}\|\leq \|g_t\|+L\|\widehat g_t\|/q_t$ implies
\begin{equation}
    \|g_{t+1}\|+\sqrt m\,\delta\leq\left(1+\frac{L\Theta}{mq_1}\sum_{i=1}^m\frac1{p_i}\right)(\|g_t\|+\sqrt m\,\delta).
    \label{eq:pp-prefix-gradient-recursion}
\end{equation}
Iterating~\eqref{eq:pp-prefix-gradient-recursion} for at most $n_0-1$ rounds and using $q_t\geq q_1$ then bounds $\|g_{t+1}\|/q_{t+1}$ for $t<n_0$. 
Now define the nonnegative constants $M_p$, $R_p$, and $B_p$ via:
\begin{align*}
    M_p&:=\max\left\{\Big(1+\frac{L\Theta}{mq_1}\sum_{i=1}^m\frac1{p_i}\Big)^{n_0-1}\frac{\|g_1\|+\sqrt m\,\delta}{q_1},\ \eta(2m+L/q_1)+\frac{3\sqrt m\,\delta}{q_1}\right\},\\
    R_p&:=\frac{\Theta}{m}\sum_{i=1}^m\frac1{p_i}\left(M_p+\frac{\sqrt m\,\delta}{q_1}\right),\qquad
    B_p:=\frac{(K-1)L\Theta}{2}\sqrt{M_p^2+\delta^2/q_1^2}.
\end{align*}
It then follows that $\|g_t\|/q_t\leq M_p$ for all $t\geq1$.

To verify the remaining bounds, divide~\eqref{eq:pp-affine-pathwise} by $q_t$ and use $q_t\geq q_1$ to obtain
\[
    \|r_t\|=\frac{\|\widehat g_t\|}{q_t}
    \leq\frac{\Theta}{m}\sum_{i=1}^m\frac1{p_i}
       \left(\frac{\|g_t\|}{q_t}+\frac{\sqrt m\,\delta}{q_t}\right)
    \leq R_p.
\]
Moreover, squaring and averaging the last inequality in~\eqref{eq:pp-local-error}, then using~\eqref{eq:pp-gradient-heterogeneity}, yields
\begin{align*}
    \frac1m\sum_{i=1}^m\|b_t^{(i)}\|^2&\leq\frac{(K-1)^2L^2\Theta^2\tau_t^2}{4q_t^2}\frac1m\sum_{i=1}^m\|\nabla F_i(x_t)\|^2\\
    &\leq\frac{(K-1)^2L^2\Theta^2\tau_t^2}{4}\left(\frac{\|g_t\|^2}{q_t^2}+\frac{\delta^2}{q_t^2}\right)
     \leq B_p^2\tau_t^2.
\end{align*}
Convexity of $\|\cdot\|^2$ then gives $\|b_t\|^2\leq m^{-1}\sum_{i=1}^m\|b_t^{(i)}\|^2\leq B_p^2\tau_t^2$, which completes the proof.
\end{proof}

Using $\|\bar g_t^{(i)}\|^2\leq2\|\nabla F_i(x_t)\|^2+2\|b_t^{(i)}\|^2$, then it follows from Lemma~\ref{lem:participation-moments} and Lemma~\ref{lem:pp-displacement} that
\[
    \E_t[\|\nu_t\|^2]\leq\frac2m\max_{i\in[m]}\frac{1-p_i}{p_i}(\|g_t\|^2+\delta^2+B_p^2\tau_t^2),
\]
which together with $\widehat g_t=g_t+b_t+\nu_t$ implies
\begin{align}
    &\E_t[\|\widehat g_t\|^2]=\|g_t+b_t\|^2+\E_t[\|\nu_t\|^2]\notag\\
    \leq&2\|g_t\|^2+2B_p^2\tau_t^2+\frac2m\max_{i\in[m]}\frac{1-p_i}{p_i}(\|g_t\|^2+\delta^2+B_p^2\tau_t^2)=A_p\|g_t\|^2+\sigma_p^2+A_pB_p^2\tau_t^2.\label{eq:pp-oracle-bounds}
\end{align}

In the analysis, we will use the following bounds on trajectory quantities:
\begin{align}
    \sum_{t=1}^T\|\widehat g_t\|^2 &=\eta^2(q_{T+1}^2-q_1^2),
    \label{eq:pp-accumulator-identity}\\
    \sum_{t=1}^T\|r_t\|^2 &\leq (\eta^2+R_p^2)\log\frac{q_{T+1}^2}{q_1^2},
    \label{eq:pp-displacement-energy}\\
    \sum_{t=1}^T\frac{\|\widehat g_t\|^2}{q_t} &\leq\eta^2\left(1+\sqrt{1+R_p^2/\eta^2}\right)(q_{T+1}-q_1).
    \label{eq:pp-weighted-direction-energy}
\end{align}
The first identity follows directly from~\eqref{eq:pp-method}. The remaining two bounds are obtained by applying Lemma~\ref{lem:summation} with $A_t=q_t^2$, $a_t=\|\widehat g_t\|^2/\eta^2$, and $c=R_p^2/\eta^2$ due to $a_t/A_t=\|r_t\|^2/\eta^2\leq R_p^2/\eta^2$.

\subsection{Smooth nonconvex convergence}
For ease of presentation, we define
\begin{equation*}
    \mathcal P_T:=1+\log(1+\sigma_p\sqrt T).
\end{equation*}
Thus $\mathcal P_T=1$ when $\sigma_p=0$, while $\mathcal P_T\lesssim\log(1+T)$ for $\sigma_p>0$.

\begin{theorem}[Smooth nonconvex convergence]\label{thm:pp-nonconvex}
Let Assumptions~\ref{ass:lower-bounded}and~\ref{ass:pp-heterogeneity} hold and assume each $F_i$ is $L$-smooth. Let $\{x_t\}_{t\ge 1}$ be the iterates generated by Algorithm~\ref{alg:FL-adagrad-partial} with $\tau_t$ chosen as in~\eqref{eq:partial-tau} and arbitrary $\gamma,\eta>0$. Then for any $T\geq1$, we have
\begin{equation*}
    \E[\|\nabla F(\bar x_T)\|^2]=\frac1T\sum_{t=1}^T\E[\|\nabla F(x_t)\|^2]\lesssim\frac{\mathcal P_T^2}{T}+\frac{\sigma_p\mathcal P_T}{\sqrt T}.
\end{equation*}
\end{theorem}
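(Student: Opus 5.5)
The plan is to mirror the full-participation argument of Theorem~\ref{thm:sg-smooth-nonconvex}, with the sampling noise $\nu_t$ taking the place of the stochastic noise. Lemma~\ref{lem:pp-displacement} gives a deterministic bound $\|r_t\|\le R_p$, so no maximal-noise moment is needed. \textbf{Descent step.} Write $\widehat g_t=g_t+b_t+\nu_t$ with $\E_t[\nu_t]=0$ and $b_t$ being $\mathcal F_t$-measurable (exact local gradients). $L$-smoothness of $F$ and $r_t=-\widehat g_t/q_t$ give
\[
F(x_{t+1})\le F(x_t)-\frac{\|g_t\|^2}{2q_t}+\frac{\|b_t\|^2}{2q_t}-\frac{\langle g_t,\nu_t\rangle}{q_t}+\frac L2\|r_t\|^2 .
\]
Summing and using $\|b_t\|\le B_p\tau_t$ with $\widetilde{\mathcal T}_T\lesssim1$, together with~\eqref{eq:pp-displacement-energy}, yields the pathwise bound
\[
\mathcal G_T\le 2(F(x_1)-F^\star)+\frac{B_p^2\widetilde{\mathcal T}_T}{q_1}+L(\eta^2+R_p^2)\log\frac{q_{T+1}^2}{q_1^2}-2\sum_{t=1}^T\frac{\langle g_t,\nu_t\rangle}{q_t}.
\]
Hence $\E[\mathcal G_T]\lesssim 1+\log\E[q_{T+1}]$ by Jensen. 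For the second moment, the martingale term has $\E[|\cdot|^2]\le 4\sum_t\E[\|g_t\|^2\E_t\|\nu_t\|^2/q_t^2]$. Bounding $\E_t\|\nu_t\|^2\lesssim \|g_t\|^2+\delta^2+\tau_t^2$ and using $\|g_t\|/q_t\le M_p$ and $q_t\ge q_1$ gives that this is $\lesssim \E[\mathcal G_T]+\sigma_p^2\E[\mathcal G_T]/q_1$, hence $\lesssim\E[\mathcal G_T]$. Using Lemma~\ref{lem:log-power-concavity} for the $\log^2$ moment, we then get $\sqrt{\E[\mathcal G_T^2]}\lesssim 1+\log\E[q_{T+1}]$.

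\textbf{Controlling $q_{T+1}$.} By Lemma~\ref{lem:pp-predictable} with $Z_t=\|\widehat g_t\|^2/\eta^2$ and~\eqref{eq:pp-oracle-bounds},
\[
\E[q_{T+1}]\lesssim 1+\E\Big[\sqrt{\textstyle\sum_t\|g_t\|^2}\Big]+\sigma_p\sqrt T,
\]
using $\sum_t\tau_t^2\lesssim1$. As in~\eqref{eq:nc-qT-bd1}, $\E[\sqrt{\sum\|g_t\|^2}]\le\sqrt{\E[q_{T+1}]\E[\mathcal G_T]}$, and Young's inequality gives $\E[q_{T+1}]\lesssim 1+\sigma_p\sqrt T+\log\E[q_{T+1}]$. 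Lemma~\ref{lem:log-solution} then yields $\E[q_{T+1}]\lesssim 1+\sigma_p\sqrt T$. Consequently $\log\E[q_{T+1}]\lesssim\mathcal P_T$, and both $\E[\mathcal G_T]$ and $\sqrt{\E[\mathcal G_T^2]}$ are $\lesssim\mathcal P_T$.

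\textbf{Conclusion.} From~\eqref{eq:pp-accumulator-identity} and~\eqref{eq:pp-oracle-bounds},
\[
\E[q_{T+1}^2]\le q_1^2+\eta^{-2}\Big(A_p\sum_t\E\|g_t\|^2+\sigma_p^2T+A_pB_p^2\widetilde{\mathcal T}_T\Big),
\]
and $\sum_t\E\|g_t\|^2\le\sqrt{\E[q_{T+1}^2]}\sqrt{\E[\mathcal G_T^2]}\lesssim\mathcal P_T\sqrt{\E[q_{T+1}^2]}$. Solving this quadratic inequality gives $\sqrt{\E[q_{T+1}^2]}\lesssim\mathcal P_T+\sigma_p\sqrt T$, hence $\sum_t\E\|g_t\|^2\lesssim\mathcal P_T^2+\sigma_p\mathcal P_T\sqrt T$. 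Dividing by $T$ gives the claim.

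The main obstacle is the second moment of the martingale term. The bound $\E_t\|\nu_t\|^2$ carries the heterogeneity constant $\delta^2$ (not only $\|g_t\|^2$), so I have to check carefully that $\sum_t\E[\|g_t\|^2\delta^2/q_t^2]$ is absorbed. I would route it through $\|g_t\|^2/q_t^2\le \|g_t\|^2/(q_1q_t)$, which gives $\delta^2\E[\mathcal G_T]/q_1$. A second subtlety is that the log-moment bounds need $\E[q_{T+1}]$ rather than $q_{T+1}$ itself; this is handled by the concavity Lemma~\ref{lem:log-power-concavity}, exactly as in Lemma~\ref{lem:sg-energy}.
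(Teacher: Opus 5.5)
Your descent inequality, the first-moment bound $\E[q_{T+1}]\lesssim 1+\sigma_p\sqrt T$, and the final quadratic step for $\E[q_{T+1}^2]$ all match the paper. The second-moment step for the martingale $\mathcal N_T:=-2\sum_t\langle g_t,\nu_t\rangle/q_t$ does not go through. The trouble is not the $\delta^2$ part, which is harmless as you say, but the $\|g_t\|^2$ part of $\E_t[\|\nu_t\|^2]$.

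The sampling noise has variance proportional to $\|g_t\|^2$, so $\E[\mathcal N_T^2]$ contains $\sum_t\E[\|g_t\|^4/q_t^2]$. From $\|g_t\|/q_t\le M_p$ you only get $\|g_t\|^4/q_t^2\le M_p^2\|g_t\|^2$. That is a multiple of $\sum_t\E[\|g_t\|^2]$, not of $\E[\mathcal G_T]$. Reaching $\E[\mathcal G_T]$ would need $\|g_t\|^2/q_t$ to be uniformly bounded, and nothing in your argument provides that.

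Feeding the bound back in does not rescue it. Write $M_p^2\sum_t\E\|g_t\|^2\le M_p^2\sqrt{\E[q_{T+1}^2]}\sqrt{\E[\mathcal G_T^2]}$ and use $\sqrt{\E[q_{T+1}^2]}\lesssim\sqrt{\E[\mathcal G_T^2]}+1+\sigma_p\sqrt T$. You then get a term $C\sqrt{\E[\mathcal G_T^2]}$ on the right, with a problem-dependent constant $C$ that is not small, so it cannot be absorbed. Even if it could, the result would be $\sqrt{\E[\mathcal G_T^2]}\lesssim\mathcal P_T+\sigma_p\sqrt T$, which destroys the rate.

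The missing idea is to control $\|g_t\|^2$ through function values. For $L$-smooth $F$ bounded below, $\|g_t\|^2\le 2L(F(x_t)-F^\star)$. The paper therefore tracks the maximal potential
\[
\mathcal Z_T^\ast:=\max_{0\le t\le T}\Bigl[2(F(x_{t+1})-F^\star)+\sum_{s\le t}\|g_s\|^2/q_s\Bigr].
\]
Your pathwise descent inequality gives $\mathcal Z_T^\ast\lesssim1+\mathcal R_T+\max_t|\mathcal N_t|$. The martingale is then bounded with Doob's inequality and the identity $\nu_t/q_t=-(r_t-\E_t[r_t])$, which yields $\E_t[\|\nu_t\|^2]/q_t^2\le\E_t[\|r_t\|^2]$:
\[
\E\Bigl[\max_{t\le T}|\mathcal N_t|^2\Bigr]\le16\sum_{t=1}^T\E\bigl[\|g_t\|^2\|r_t\|^2\bigr]\le16L\,\E[\mathcal Z_T^\ast\mathcal R_T]\le16L\sqrt{\E[(\mathcal Z_T^\ast)^2]}\sqrt{\E[\mathcal R_T^2]}.
\]
The bound $\sqrt{\E[\mathcal R_T^2]}\lesssim\mathcal P_T$ comes from squaring~\eqref{eq:pp-displacement-energy} and using the first-moment bound on $q_{T+1}$. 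Combining it with the display gives a self-bounding inequality, $\sqrt{\E[(\mathcal Z_T^\ast)^2]}\lesssim\mathcal P_T$. Since $\mathcal G_T\le\mathcal Z_T^\ast$, this gives $\sqrt{\E[\mathcal G_T^2]}\lesssim\mathcal P_T$, after which your concluding step goes through unchanged.
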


\begin{proof}[Proof of Theorem~\ref{thm:pp-nonconvex}]
We first estimate $\E[q_{T+1}]$ and $\E[\mathcal G_T]$.
Since $q_t$ is $\mathcal F_t$-measurable, smoothness of $F$ gives
\begin{align}
    F(x_{t+1})\leq& F(x_t)-\frac{\langle g_t,\widehat g_t\rangle}{q_t}+\frac L2\|r_t\|^2\notag\\
    =& F(x_t)-\frac{\langle g_t,b_t\rangle}{q_t}-\frac{\langle g_t,\nu_t\rangle}{q_t}-\frac{\|g_t\|^2}{q_t}+\frac L2\|r_t\|^2\notag\\
    \leq& F(x_t)-\frac{\|g_t\|^2}{2q_t}+\frac{\|b_t\|^2}{2q_t}-\frac{\langle g_t,\nu_t\rangle}{q_t}+\frac L2\|r_t\|^2.
    \label{eq:pp-nonconvex-pathwise-descent}
\end{align}
Taking conditional expectations on both sides of~\eqref{eq:pp-nonconvex-pathwise-descent} and using $\E_t[\nu_t]=0$ gives
\[
    \E_t[F(x_{t+1})] \leq F(x_t)-\frac{\|g_t\|^2}{2q_t} +\frac{\|b_t\|^2}{2q_t} +\frac L2\E_t[\|r_t\|^2].
\]
Summing the above inequality, we obtain from $q_t\geq q_1$, $\|b_t\|\leq B_p\tau_t$, $\widetilde{\mathcal T}_T\lesssim 1$ under~\eqref{eq:tau_t-scheme}, and~\eqref{eq:pp-displacement-energy} that
\begin{align}
    \E[\mathcal G_T]\leq2\bigl(F(x_1)-F^\star\bigr)+\frac{B_p^2}{q_1}\widetilde{\mathcal T}_T+L(\eta^2+R_p^2)\E\left[\log\frac{q_{T+1}^2}{q_1^2}\right]\lesssim1+\log\E[q_{T+1}].
    \label{eq:pp-first-moment-descent}
\end{align}
Applying Lemma~\ref{lem:pp-predictable} with $c=q_1^2$ and $Z_t=\|\widehat g_t\|^2/\eta^2$, together with~\eqref{eq:pp-accumulator-identity},~\eqref{eq:pp-oracle-bounds}, and $\sqrt{a+b}\leq\sqrt a+\sqrt b$, we get
\begin{align}
    \E[q_{T+1}]&=\E\left[\sqrt{q_1^2+\frac1{\eta^2}\sum_{t=1}^T\|\widehat g_t\|^2}\right]\leq\sqrt2\,\E\left[\sqrt{q_1^2+\frac1{\eta^2}\sum_{t=1}^T\E_t[\|\widehat g_t\|^2]}\right]\notag\\
    &\leq\sqrt2\,\E\left[\sqrt{q_1^2+\frac{A_p}{\eta^2}\sum_{t=1}^T\|g_t\|^2+\frac{\sigma_p^2T}{\eta^2}+\frac{A_p B_p^2}{\eta^2}\widetilde{\mathcal T}_T}\right]\notag\\
    &\lesssim1+\E\left[\sqrt{\sum_{t=1}^T\|g_t\|^2}\right]+\sigma_p\sqrt T.\label{eq:partial-qT-bd1}
\end{align}
On the other hand, monotonicity of $q_t$ and then Cauchy--Schwarz yield
\[
    \E\left[\sqrt{\sum_{t=1}^T\|g_t\|^2}\right]\leq\E\left[\sqrt{q_{T+1}\sum_{t=1}^T\frac{\|g_t\|^2}{q_t}}\right]\leq\sqrt{\E[q_{T+1}]\E[\mathcal G_T]}.
\]
Substituting the above bound into~\eqref{eq:partial-qT-bd1} therefore gives
\[
    \E[q_{T+1}]\lesssim1+\sqrt{\E[q_{T+1}]\E[\mathcal G_T]}+\sigma_p\sqrt T,
\]
which further imply that for a constant $C$,
\begin{align*}
   \E[q_{T+1}]\leq&\frac{\E[q_{T+1}]}{2}+C(1+\sigma_p\sqrt T)+\frac{C^2}{2}\E[\mathcal G_T].
\end{align*}
Simplifying the above and applying~\eqref{eq:pp-first-moment-descent} then gives
\[
\E[q_{T+1}]\lesssim 1+\sigma_p\sqrt T+\log\E[q_{T+1}],
\]
solving which with Lemma~\ref{lem:log-solution} implies
\begin{equation}
    \E[q_{T+1}]\lesssim1+\sigma_p\sqrt T,
    \qquad \E[\mathcal G_T]\lesssim1+\log(1+\sigma_p\sqrt T)=\mathcal P_T.
    \label{eq:pp-first-moment-bounds}
\end{equation}

We next obtain the second moment $\E[q_{T+1}^2]$ and $\E[\mathcal G_T^2]$.
For $0\leq t\leq T$, define
\[
    \mathcal Z_t:=2\bigl(F(x_{t+1})-F^\star\bigr)+\sum_{s=1}^t\frac{\|g_s\|^2}{q_s},\quad \mathcal Z_T^\ast:=\max_{0\leq t\leq T}\mathcal Z_t,
    \quad \mathcal N_t:=-2\sum_{s=1}^t\frac{\langle g_s,\nu_s\rangle}{q_s},
\]
The process $\{\mathcal N_t\}$ is a martingale since $\E_t[\nu_t]=0$. Summing~\eqref{eq:pp-nonconvex-pathwise-descent} over $s=1$ to $t$, then maximizing over $t$ and using $\sum_{t=1}^{T}\|b_t\|^2\lesssim\widetilde{\mathcal T}_T\lesssim1$ yields
\begin{equation}
    \mathcal Z_T^\ast\lesssim1+\mathcal R_T+\max_{0\leq t\leq T}|\mathcal N_t|.
    \label{eq:pp-nonconvex-potential-bound}
\end{equation}
Squaring~\eqref{eq:pp-displacement-energy} and then plugging~\eqref{eq:pp-first-moment-bounds} imply
\begin{equation}
\begin{aligned}
    \sqrt{\E[\mathcal R_T^2]}
    &\leq2(\eta^2+R_p^2)\sqrt{\E\left[\log^2\frac{q_{T+1}}{q_1}\right]}\\
    &\leq2(\eta^2+R_p^2)\left(1+\log\frac{\E[q_{T+1}]}{q_1}\right)
     \lesssim\mathcal P_T,
\end{aligned}
\label{eq:pp-energy-second-moment}
\end{equation}
where the second inequality follows from concavity of the function $u\mapsto(1+\log u)^2$ on $[1,\infty)$. Additionally, by smoothness, $\|g_t\|^2\leq 2L\bigl(F(x_t)-F^\star\bigr)\leq L\mathcal Z_T^\ast$.
\begin{align}
    \E\left[\max_{0\leq t\leq T}|\mathcal N_t|^2\right]\leq& 4\E[|\mathcal N_T|^2]\leq16\sum_{t=1}^T\E\left[\frac{\langle g_t,\nu_t\rangle^2}{q_t^2}\right]\leq16\sum_{t=1}^T\E\left[\frac{\|g_t\|^2\|\nu_t\|^2}{q_t^2}\right]\notag\\
    \leq& 16L\E[\mathcal Z_T^\ast\mathcal R_T]\leq 16L\sqrt{\E[(\mathcal Z_T^\ast)^2]}\sqrt{\E[\mathcal R_T^2]}\lesssim\sqrt{\E[(\mathcal Z_T^\ast)^2]}\mathcal P_T,\label{eq:martingale_sq-max}
\end{align}
where the fourth inequality follows from $\E_t[\|\nu_t/q_t\|^2]\leq\E_t[\|r_t\|^2]$ due to $\nu_t/q_t=-(r_t-\E_t[r_t])$.
Taking $L^2$ norms in~\eqref{eq:pp-nonconvex-potential-bound} and using~\eqref{eq:martingale_sq-max} then gives
\[
    \sqrt{\E[(\mathcal Z_T^\ast)^2]}
    \lesssim1+\mathcal P_T+\sqrt{\mathcal P_T\sqrt{\E[(\mathcal Z_T^\ast)^2]}}
    \leq\frac12\sqrt{\E[(\mathcal Z_T^\ast)^2]}+C\mathcal P_T
\]
for some constant $C>0$. Rearranging the above yields $\sqrt{\E[(\mathcal Z_T^\ast)^2]}\lesssim\mathcal P_T$, which together with $\mathcal G_T\leq\mathcal Z_T\leq\mathcal Z_T^\ast$ implies that
\begin{equation}
    \sqrt{\E[\mathcal G_T^2]}\lesssim\mathcal P_T.
    \label{eq:pp-weighted-gradient-second-moment}
\end{equation}

It remains to bound $\E[q_{T+1}^2]$. By monotonicity of $q_t$ and~\eqref{eq:pp-weighted-gradient-second-moment}, we have
\begin{equation}\label{eq:partial-gt_sq-sum1}
    \sum_{t=1}^T\E[\|g_t\|^2]\leq\E[q_{T+1}\mathcal G_T]
    \leq\sqrt{\E[q_{T+1}^2]}\sqrt{\E[\mathcal G_T^2]}
    \lesssim\mathcal P_T\sqrt{\E[q_{T+1}^2]}.
\end{equation}
Therefore, it follows from~\eqref{eq:pp-accumulator-identity},~\eqref{eq:pp-oracle-bounds}, and $\widetilde{\mathcal T}_T\lesssim1$ that
\begin{align*}
    \E[q_{T+1}^2]
    &\leq q_1^2+\frac{A_p}{\eta^2}\sum_{t=1}^T\E[\|g_t\|^2]
       +\frac{\sigma_p^2T}{\eta^2}
       +\frac{A_pB_p^2}{\eta^2}\widetilde{\mathcal T}_T\\
    &\lesssim1+\mathcal P_T\sqrt{\E[q_{T+1}^2]}+\sigma_p^2T,
\end{align*}
which yields 
\begin{equation}\label{eq:partial-qT_sq-bd}
    \sqrt{\E[q_{T+1}^2]}\lesssim\mathcal P_T+\sigma_p\sqrt T.
\end{equation}
Therefore, we obtain from~\eqref{eq:partial-gt_sq-sum1} that
\[
    \sum_{t=1}^T\E[\|g_t\|^2]\lesssim\mathcal P_T^2+\sigma_p\mathcal P_T\sqrt T.
\]
Finally, since $\bar x_T$ is chosen uniformly from $\{x_t\}_{t=1}^T$, it follows that
\[
    \E[\|\nabla F(\bar x_T)\|^2]=\frac1T\sum_{t=1}^T\E[\|g_t\|^2]\lesssim\frac{\mathcal P_T^2}{T}+\frac{\sigma_p\mathcal P_T}{\sqrt T},
\]
which completes the proof.
\end{proof}

\subsection{Smooth convex convergence}
For convex problems, we similarly define the counterfactual local error
\begin{equation*}
    \epsilon_t:=\frac1{mK}\sum_{i=1}^m\sum_{k=0}^{K-1}\left[F_i(x_t)-F_i(x_{i,k}^t)-\langle\nabla F_i(x_{i,k}^t),x_t-x_{i,k}^t\rangle\right]\geq0.
\end{equation*}
Then the same convexity calculation as in~\eqref{eq:cvx-F-bound} gives
\begin{equation}
    F(y)\geq F(x_t)+\langle\bar g_t,y-x_t\rangle-\epsilon_t,\qquad y\in\mathbb R^d.
    \label{eq:pp-approx-subgradient}
\end{equation}

\begin{theorem}[Smooth convex convergence]\label{thm:pp-convex-radius}
Let Assumptions~\ref{ass:lower-bounded} and~\ref{ass:pp-heterogeneity} hold and assume each $F_i$ is $L$-smooth and convex. Let $\{x_t\}_{t\ge 1}$ be the iterates generated by Algorithm~\ref{alg:FL-adagrad-partial} with $\tau_t$ chosen as in~\eqref{eq:partial-tau} and arbitrary $\gamma,\eta>0$. Then for any $T\geq1$, we have
\begin{equation*}
    \E[F(\bar x_T)-F(x^\star)]
    \lesssim\frac{\mathcal P_T^2}{T}+\frac{\sigma_p\mathcal P_T}{\sqrt T}.
\end{equation*}
\end{theorem}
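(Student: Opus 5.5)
We will combine the one-step distance recursion used in the full-participation smooth convex analysis with the partial-participation tools of Theorem~\ref{thm:pp-nonconvex}. Set $d_t:=x_t-x^\star$, $\Delta_t:=F(x_t)-F^\star$ and $D_T:=\max_{1\le t\le T+1}\|d_t\|^2$.

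\textbf{One-step inequality.} Since $d_{t+1}=d_t-\widehat g_t/q_t$ and $\widehat g_t=\bar g_t+\nu_t$, we have
\[
\|d_{t+1}\|^2=\|d_t\|^2-\tfrac{2}{q_t}\langle\bar g_t,d_t\rangle-\tfrac{2}{q_t}\langle\nu_t,d_t\rangle+\|r_t\|^2 .
\]
Applying \eqref{eq:pp-approx-subgradient} with $y=x^\star$ replaces $\langle\bar g_t,d_t\rangle$ by $\Delta_t-\epsilon_t$, which gives
\[
\Delta_t\le\tfrac{q_t}{2}(\|d_t\|^2-\|d_{t+1}\|^2)+\tfrac{q_t}{2}\|r_t\|^2+\epsilon_t-\langle\nu_t,d_t\rangle .
\]
Here $\epsilon_t$ is deterministic given $\mathcal F_t$.

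\textbf{Bounding $\epsilon_t$.} By smoothness and the clientwise bound \eqref{eq:sqrt-xk-xt} with $\sigma=0$,
\[
\epsilon_t\lesssim\frac{\tau_t^2}{q_t^2}\cdot\frac1m\sum_{i=1}^m\|\nabla F_i(x_t)\|^2\le\tau_t^2\Bigl(\frac{\|g_t\|^2}{q_t^2}+\frac{\delta^2}{q_1^2}\Bigr).
\]
The second step uses \eqref{eq:pp-gradient-heterogeneity}. Lemma~\ref{lem:pp-displacement} gives $\|g_t\|/q_t\le M_p$, so $\epsilon_t\lesssim\tau_t^2$ pathwise. Hence $\sum_t\epsilon_t\lesssim\widetilde{\mathcal T}_T\lesssim1$. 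This removes the need for the $P_t$ product weights used in Lemma~\ref{lem:sg-smooth-convex-moments}.

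\textbf{Bounding $D_T$.} Drop $\Delta_t\ge0$ from the inequality above and unroll. With the martingale $\widetilde{\mathcal M}_t:=-2\sum_{s\le t}\langle\nu_s,d_s\rangle/q_s$, this gives
\[
D_T\le\|d_1\|^2+\mathcal R_T+C+\max_t|\widetilde{\mathcal M}_t| .
\]
Doob's inequality (Lemma~\ref{lem:doob-l2}) and $\E_t\|\nu_t/q_t\|^2\le\E_t\|r_t\|^2$ give
\[
\E\max_t|\widetilde{\mathcal M}_t|^2\le16\sqrt{\E D_T^2}\sqrt{\E\mathcal R_T^2},
\]
exactly as in \eqref{eq:mcal-bd}. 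Young's inequality then yields $\sqrt{\E D_T^2}\lesssim1+\sqrt{\E\mathcal R_T^2}$.

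For $\sqrt{\E\mathcal R_T^2}$ we use \eqref{eq:pp-energy-second-moment}, which needs $\E[q_{T+1}]\lesssim1+\sigma_p\sqrt T$. That bound follows from the first-moment argument in the proof of Theorem~\ref{thm:pp-nonconvex}, since it only used smoothness. Its log-solving step goes through \eqref{eq:pp-first-moment-descent}, which needs Assumption~\ref{ass:lower-bounded} and holds here as well. So $\sqrt{\E D_T^2}\lesssim\mathcal P_T$.

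\textbf{Summing.} Summation by parts gives
\[
\textstyle\sum_t q_t(\|d_t\|^2-\|d_{t+1}\|^2)\le q_{T+1}D_T ,
\]
whose expectation is at most $\sqrt{\E q_{T+1}^2}\,\mathcal P_T$ by Cauchy--Schwarz. Next, \eqref{eq:pp-weighted-direction-energy} gives $\sum_t q_t\|r_t\|^2=\sum_t\|\widehat g_t\|^2/q_t\lesssim q_{T+1}$. The martingale term has zero mean. Altogether
\[
\textstyle\sum_t\E\Delta_t\lesssim\mathcal P_T\sqrt{\E q_{T+1}^2}+1 .
\]

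\textbf{Main obstacle: closing the bound on $\E q_{T+1}^2$.} I plan to use \eqref{eq:pp-accumulator-identity} and \eqref{eq:pp-oracle-bounds} together with $\|g_t\|^2\le2L\Delta_t$:
\[
\E q_{T+1}^2\lesssim1+\textstyle\sum_t\E\Delta_t+\sigma_p^2T\lesssim1+\mathcal P_T\sqrt{\E q_{T+1}^2}+\sigma_p^2T .
\]
This self-bounding inequality gives $\sqrt{\E q_{T+1}^2}\lesssim\mathcal P_T+\sigma_p\sqrt T$. Substituting back,
\[
\textstyle\sum_t\E\Delta_t\lesssim\mathcal P_T^2+\sigma_p\mathcal P_T\sqrt T ,
\]
and dividing by $T$ completes the proof.

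The delicate point is $\E\mathcal R_T^2$. If I reuse the nonconvex bound on $\E q_{T+1}$, the argument is fine. Otherwise I would fall back on the convex self-bounding argument of Lemma~\ref{lem:sg-smooth-convex-moments} via Lemma~\ref{lem:log-solution}.
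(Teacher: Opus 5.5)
Your proposal is correct and follows the paper's proof step for step. The shared steps are the pathwise bound $\epsilon_t\lesssim\tau_t^2$ from Lemma~\ref{lem:pp-displacement}, and the Doob/Young argument giving $\sqrt{\E[D_T^2]}\lesssim 1+\sqrt{\E[\mathcal R_T^2]}\lesssim\mathcal P_T$ through \eqref{eq:pp-energy-second-moment} and the first-moment bound \eqref{eq:pp-first-moment-bounds}. The paper then also uses summation by parts together with \eqref{eq:pp-weighted-direction-energy}, as you do.

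The only deviation is in closing the bound on $\E[q_{T+1}^2]$. The paper imports $\sqrt{\E[q_{T+1}^2]}\lesssim\mathcal P_T+\sigma_p\sqrt T$ directly from \eqref{eq:partial-qT_sq-bd} in the proof of Theorem~\ref{thm:pp-nonconvex}. You instead re-derive it with a self-bounding inequality via $\|g_t\|^2\le 2L\Delta_t$, as in Theorem~\ref{thm:sg-smooth-convex}. This is equally valid and avoids relying on the second-moment bound $\sqrt{\E[\mathcal G_T^2]}\lesssim\mathcal P_T$ from the nonconvex proof.
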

\begin{proof}[Proof of Theorem~\ref{thm:pp-convex-radius}]
Let $d_t=x_t-x^\star$, $D_T:=\max_{1\leq t\leq T+1}\|d_t\|^2$, and $\Delta_t:=F(x_t)-F(x^\star)$.
Since Lemma~\ref{lem:sg-smooth-convex-error} applies to the exact counterfactual paths, we apply it and then use~\eqref{eq:pp-gradient-heterogeneity} and Lemma~\ref{lem:pp-displacement} to obtain
\begin{equation}
\label{eq:pp-smooth-epsilon}
    \epsilon_t\leq\frac{c_KL\Theta^2\tau_t^2}{2mq_t^2}\sum_{i=1}^m\|\nabla F_i(x_t)\|^2\leq\frac{c_KL\Theta^2\tau_t^2}{2}\left(\frac{\|g_t\|^2}{q_t^2} +\frac{\delta^2}{q_t^2}\right)\lesssim\tau_t^2.
\end{equation}
Expand $\E_t[\|d_{t+1}\|^2]$, then it follows from $\E_t[\widehat g_t]=\bar g_t$,~\eqref{eq:pp-approx-subgradient} with $y=x^\star$ that
\begin{equation}
\begin{aligned}
    \E_t[\|d_{t+1}\|^2] &=\|d_t\|^2 -\frac2{q_t}\langle\bar g_t,d_t\rangle +\E_t[\|r_t\|^2]\\
       &\leq\|d_t\|^2 -\frac2{q_t}\Delta_t +\E_t[\|r_t\|^2]+\frac{2\epsilon_t}{q_t}.
\end{aligned}
\label{eq:pp-convex-distance-recursion}
\end{equation}
Summing~\eqref{eq:pp-convex-distance-recursion} over $t=1$ to $T$, and applying~\eqref{eq:pp-smooth-epsilon},~\eqref{eq:pp-first-moment-bounds},~\eqref{eq:pp-displacement-energy}, and $\widetilde{\mathcal T}_T\lesssim 1$ give
\begin{equation*}
    \E[\|d_{T+1}\|^2]
    +2\sum_{t=1}^T\E\left[\Delta_t/q_t\right]
    \lesssim\mathcal P_T.
\end{equation*}
To proceed, consider the martingale
\[
    \mathcal M_s:=-2\sum_{t=1}^s\frac{\langle\nu_t,d_t\rangle}{q_t},\qquad 0\leq s\leq T.
\]
Expanding $\|d_{t+1}\|^2$ and using~\eqref{eq:pp-approx-subgradient} leads to
\begin{equation}
    \|d_{t+1}\|^2+\frac{2\Delta_t}{q_t}\leq\|d_t\|^2+\|r_t\|^2+\frac{2\epsilon_t}{q_t}-\frac{2\langle\nu_t,d_t\rangle}{q_t}.
    \label{eq:pp-convex-pathwise-distance}
\end{equation}
Sum this inequality to any $s\leq T$ and maximize over $s$, then it follows from~\eqref{eq:pp-smooth-epsilon}, $\Delta_t\geq0$, and $\widetilde{\mathcal T}_T\lesssim 1$ that
\begin{equation}\label{eq:partial-DT-bd1}
    D_T\lesssim1+\mathcal R_T+\max_{0\leq s\leq T}|\mathcal M_s|.
\end{equation}
On the other hand, using similar arguments as in~\eqref{eq:martingale_sq-max}, we have
\begin{equation}
    \E\left[\max_{0\leq s\leq T}|\mathcal M_s|^2\right]\leq16\sum_{t=1}^T\E\left[\|d_t\|^2\E_t[\|r_t\|^2]\right]\leq16\E[D_T\mathcal R_T]\lesssim\mathcal P_T\sqrt{\E[D_T^2]}.
    \label{eq:pp-convex-distance-martingale}
\end{equation}
Therefore, squaring~\eqref{eq:partial-DT-bd1} and using~\eqref{eq:pp-convex-distance-martingale} and~\eqref{eq:pp-energy-second-moment} give
\[
    \sqrt{\E[D_T^2]}\lesssim1+\mathcal P_T+\sqrt{\mathcal P_T\sqrt{\E[D_T^2]}}
    \leq\frac12\sqrt{\E[D_T^2]}+C\mathcal P_T,
    \qquad \sqrt{\E[D_T^2]}\lesssim\mathcal P_T.
\]

It remains to bound $\sum_{t=1}^T\E[\Delta_t]$. Rearranging~\eqref{eq:pp-convex-pathwise-distance} and plugging in $\|r_t\|=\|\widehat g_t/q_t\|s$ gives
\begin{equation}\label{eq:partial-Delta_t-1}
    \Delta_t\leq \frac{q_t}{2}\left(\|d_t\|^2-\|d_{t+1}\|^2\right) +\frac{\|\widehat g_t\|^2}{2q_t} +\epsilon_t-\langle\nu_t,d_t\rangle.
\end{equation}
Note that the last term in~\eqref{eq:partial-Delta_t-1} has zero expectation. For the sum of the first term, we have
\[
    \sum_{t=1}^T q_t \left(\|d_t\|^2-\|d_{t+1}\|^2\right)=q_1\|d_1\|^2+\sum_{t=2}^T(q_t-q_{t-1})\|d_t\|^2-q_T\|d_{T+1}\|^2\leq q_T D_T.
\]
Consequently, sum~\eqref{eq:partial-Delta_t-1} over $t$, then use the above inequality,~\eqref{eq:pp-weighted-direction-energy}, and~\eqref{eq:partial-qT_sq-bd} from the proof of Theorem~\ref{thm:pp-nonconvex}, we derive
\begin{align*}
    \sum_{t=1}^T\E[\Delta_t]\lesssim&\E[q_T D_T]+\E[q_{T+1}]+1\lesssim\sqrt{\E[D_T^2]}\sqrt{\E[q_{T+1}^2]}+\sqrt{\E[q_{T+1}^2]}\\
    \lesssim&\mathcal P_T\sqrt{\E[q_{T+1}^2]}\lesssim \mathcal P_T^2+\sigma_p\mathcal P_T\sqrt T,
\end{align*}
which completes the proof after dividing both sides by $T$.
\end{proof}

\subsection{Convex Lipschitz convergence}
Note that in this subsection, we do not require the bounded heterogeneity assumption in Assumption~\ref{ass:pp-heterogeneity}.

\begin{theorem}[Lipschitz convex  convergence]\label{thm:pp-convex-lipschitz}
Let Assumption~\ref{ass:lower-bounded} hold. Assume each $F_i$ is $G$-Lipschitz and convex. Let $\{x_t\}_{t\ge 1}$ be the iterates generated by Algorithm~\ref{alg:FL-adagrad-partial} with $\tau_t$ chosen as in~\eqref{eq:partial-tau} and arbitrary $\gamma,\eta>0$. Then for any $T\geq1$, we have
\begin{equation*}
    \E[F(\bar x_T)-F(x^\star)]\lesssim\frac{\log(1+T)}{\sqrt T}.
\end{equation*}
\end{theorem}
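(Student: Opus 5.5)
We mirror the proof of Theorem~\ref{thm:sg-lipschitz-convex}, with the participation noise $\nu_t$ playing the role that $\zeta_t$ played there and with $\sigma=0$. There is no smoothness, so Assumption~\ref{ass:pp-heterogeneity} is not needed: every deterministic bound comes from $G$-Lipschitzness.

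\textbf{Pathwise bounds.} On the counterfactual exact local paths~\eqref{eq:pp-local} we have $\|g_{i,k}^t\|\leq G$, hence $\|\bar g_t^{(i)}\|\leq G$. It follows that
\[
\|\widehat g_t\|\leq \frac{G}{m}\sum_{i=1}^m\frac1{p_i}=:G_p,
\]
deterministically. Consequently $\|r_t\|\leq G_p/q_1$ and $q_{T+1}^2\leq q_1^2+TG_p^2/\eta^2$, so $q_{T+1}\lesssim\sqrt T$ almost surely. We then apply Lemma~\ref{lem:summation} with $A_t=q_t^2$, $a_t=\|\widehat g_t\|^2/\eta^2$ and $c=G_p^2/(\eta^2q_1^2)$. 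This gives
\[
\mathcal R_T\lesssim\log q_{T+1}\lesssim\log(1+T),\qquad \sum_{t=1}^T\frac{\|\widehat g_t\|^2}{q_t}\lesssim q_{T+1}\lesssim\sqrt T,
\]
both almost surely. The local error $\epsilon_t$, defined on the counterfactual paths, is bounded exactly as in Lemma~\ref{lem:sg-lipschitz-local} with $\sigma=0$. Since $\|x_{i,k}^t-x_t\|\leq k\tau_tG/q_t$, we get $\epsilon_t\leq(K-1)G^2\tau_t/q_t$. Under~\eqref{eq:partial-tau} this yields $\sum_{t\le T}\epsilon_t\lesssim\log(1+T)$.

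\textbf{Distance control.} By convexity, inequality~\eqref{eq:pp-approx-subgradient} still holds. Together with $\E_t[\widehat g_t]=\bar g_t$, it gives the pathwise recursion~\eqref{eq:pp-convex-pathwise-distance}. Summing and maximizing yields
\[
D_T\leq\|d_1\|^2+\mathcal R_T+\frac{2}{q_1}\sum_{t=1}^T\epsilon_t+\max_{s\le T}|\mathcal M_s|,\qquad \mathcal M_s=-2\sum_{t\le s}\frac{\langle\nu_t,d_t\rangle}{q_t}.
\]
To control the martingale term we use Doob's inequality (Lemma~\ref{lem:doob-l2}) together with $\E_t[\|\nu_t\|^2]/q_t^2\leq\E_t[\|r_t\|^2]$, exactly as in~\eqref{eq:mcal-bd}. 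This gives
\[
\E\Big[\max_{s\le T}|\mathcal M_s|^2\Big]\leq16\E[D_T\mathcal R_T]\leq16\sqrt{\E[D_T^2]}\sqrt{\E[\mathcal R_T^2]}.
\]
Since $\mathcal R_T\lesssim\log(1+T)$ almost surely, taking $L^2$ norms and applying Young's inequality gives $\sqrt{\E[D_T^2]}\lesssim\log(1+T)$. This is the main step to check: the self-bounding argument needs $\E[D_T^2]<\infty$ a priori. That holds because $\|d_t\|\leq\|d_1\|+TG_p/q_1$ deterministically.

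\textbf{Rate.} Rearranging~\eqref{eq:pp-convex-pathwise-distance} gives~\eqref{eq:partial-Delta_t-1}, whose last term has zero expectation. For the first term, summation by parts gives
\[
\sum_{t=1}^T q_t\bigl(\|d_t\|^2-\|d_{t+1}\|^2\bigr)\leq q_TD_T,
\]
and therefore
\[
\E[q_TD_T]\lesssim\sqrt T\,\E[D_T]\lesssim\sqrt T\log(1+T).
\]
The second term is $\lesssim\sqrt T$ by the weighted-energy bound above, and the third is $\lesssim\log(1+T)$. Summing,
\[
\sum_{t=1}^T\E[\Delta_t]\lesssim\sqrt T\log(1+T),
\]
and dividing by $T$ yields the stated bound $\log(1+T)/\sqrt T$. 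The $K=1$ case is handled as before, since in that case $\epsilon_t=0$.
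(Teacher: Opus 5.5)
Your proof is correct, but it takes a heavier route than the paper. You reproduce the template of Theorem~\ref{thm:sg-lipschitz-convex}: you bound $\sqrt{\E[D_T^2]}$ through the pathwise recursion, Doob's inequality and a self-bounding step, and then combine summation by parts with the weighted energy bound $\sum_t\|\widehat g_t\|^2/q_t\lesssim q_{T+1}$.

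The paper instead takes conditional expectations in the distance recursion, using $\E_t[\widehat g_t]=\bar g_t$. Summing then gives $\E[\|d_{T+1}\|^2]+2\sum_t\E[\Delta_t/q_t]\lesssim\log(1+T)$. This needs only $\E[\mathcal R_T]\lesssim\log(1+T)$ and $\sum_t\tau_t\lesssim\log(1+T)/(K-1)$. The paper then turns the weighted sum into the unweighted one with the deterministic bound $q_t\le q_{T+1}\lesssim\sqrt T$, so that $\sum_t\E[\Delta_t]\lesssim\sqrt T\sum_t\E[\Delta_t/q_t]$.

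You already establish $q_{T+1}\lesssim\sqrt T$ almost surely, and you even use it in $\E[q_TD_T]\lesssim\sqrt T\,\E[D_T]$. So this shortcut was available to you. With it, the maximal-distance control, the martingale maximal inequality, the a priori integrability check, the bound on $\E[\mathcal R_T^2]$ and the weighted-energy bound all become unnecessary.

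Your route does buy something. It gives the extra information $\sqrt{\E[D_T^2]}\lesssim\log(1+T)$ on the iterates. Its structure also does not depend on $q_{T+1}$ being almost surely bounded. When the accumulator is random and only controlled in moments, as in the stochastic-noise setting, the step from $\sum_t\Delta_t/q_t$ to $\sum_t\Delta_t$ is no longer free. That is why the full-participation proof uses the $D_T$ machinery you borrowed.
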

\begin{proof}[Proof of Theorem~\ref{thm:pp-convex-lipschitz}]
Let $d_t=x_t-x^\star$ and $\Delta_t:=F(x_t)-F(x^\star)$. For this proof only, set
\[
    \widetilde{G}:=\frac{G}{m}\sum_{i=1}^{m}\frac1{p_i},
    \qquad
    M_G:=G^2\left(1+\frac1{m^2}\sum_{i=1}^{m}\frac{1-p_i}{p_i}\right).
\]
Then it follows from the definition of $\widehat g_t$ and~\eqref{eq:hat_gt_bd} that
\begin{equation}
    \|\widehat g_t\|\leq\frac{G}{m}\sum_{i\in\mathcal S_t}p_i^{-1}\leq \widetilde{G},\qquad
    \E_t[\|\widehat g_t\|^2]=\|\bar g_t\|^2+\frac1{m^2}\sum_{i=1}^{m}\frac{1-p_i}{p_i}\|\bar g_t^{(i)}\|^2\leq M_G.
    \label{eq:pp-lipschitz-oracle-bounds}
\end{equation}
Consequently,~\eqref{eq:pp-accumulator-identity} and~\eqref{eq:pp-lipschitz-oracle-bounds} give
\[
    q_{T+1}\leq\sqrt{q_1^2+\frac{T\widetilde{G}^2}{\eta^2}}\lesssim\sqrt T,
    \qquad
    \E[q_{T+1}^2]\leq q_1^2+\frac{TM_G}{\eta^2}.
\]
Since $\|r_t\|=\|\widehat g_t\|/q_t\leq \widetilde{G}/q_1$, we further obtain from~\eqref{eq:pp-displacement-energy} with $R_p=\widetilde{G}/q_1$ that
\begin{equation}
    \sum_{t=1}^T\E[\|r_t\|^2]\leq\left(\eta^2+\frac{\widetilde{G}^2}{q_1^2}\right)\E\left[\log\frac{q_{T+1}^2}{q_1^2}\right]\lesssim \log\left(1+T\right),
    \label{eq:pp-lipschitz-energy}
\end{equation}

On the other hand, rearranging~\eqref{eq:pp-convex-distance-recursion} and noting that $\epsilon_t\leq(K-1)G^2\tau_t/q_t$ from Lemma~\ref{lem:sg-lipschitz-local} with $\sigma=0$ yields
\[
    \E_t[\|d_{t+1}\|^2]+\frac{2\Delta_t}{q_t}
    \leq\|d_t\|^2+\E_t[\|r_t\|^2]+\frac{2\epsilon_t}{q_t}
    \leq\|d_t\|^2+\E_t[\|r_t\|^2]+\frac{2(K-1)G^2\tau_t}{q_1^2}.
\]
Summing the above inequality from $t=1$ to $T$, applying~\eqref{eq:pp-lipschitz-energy}, and using $\mathcal T_T\lesssim\log(1+T)$ give
\[
    \E[\|d_{T+1}\|^2]
    +2\sum_{t=1}^T\E\left[\frac{\Delta_t}{q_t}\right]
    \lesssim\log(1+T).
\]
Finally, it follows from $q_t\lesssim\sqrt T$ that
\[
    \E[F(\bar x_T)-F(x^\star)]=\frac1T\sum_{t=1}^T\E[\Delta_t]\lesssim\frac1{\sqrt T}\sum_{t=1}^T\E\left[\frac{\Delta_t}{q_t}\right]\lesssim\frac{\log(1+T)}{\sqrt T},
\]
which completes the proof.
\end{proof}

\section{Extension II: Adaptation to H\"older smoothness}\label{app:holder-convex}
For convex objectives, \citet{orabona2023normalized} showed that the standard AdaGrad with exact gradients can adapt to H\"older smoothness, which interpolates between smoothness and Lipschitzness.
Inspired by their work, we extend the convex analysis for Algorithm~\ref{alg:FL-adagrad} under the H\"older-smooth condition. Similar to \citep{orabona2023normalized} and for simplicity, we focus on Algorithm~\ref{alg:FL-adagrad} with exact local gradients, i.e., $g_{i,k}^t=\nabla F_i(x_{i,k}^t)$.

\subsection{Auxiliary Results}
Recall that a function $F:\mathbb R^d\to\mathbb R$ is $(L_\alpha,\alpha)$-H\"older smooth with $L_\alpha>0$ and $\alpha\in(0,1]$ if
\[
    \|\nabla F(x)-\nabla F(y)\|\leq L_\alpha\|x-y\|^\alpha,\qquad x,y\in\mathbb R^d.
\]
Note that the H\"older smoothness condition also implies~\citep[Theorem~1]{orabona2023normalized}:
\begin{equation}
    \label{eq:holder-descent}
    F(y)\leq F(x)+\langle\nabla F(x),y-x\rangle+\frac{L_\alpha}{1+\alpha}\|y-x\|^{1+\alpha},\qquad x,y\in\mathbb R^d.
\end{equation}
Then $\alpha=1$ corresponds to the standard $L$-smoothness, while $\alpha=0$ corresponds to Lipschitzness.
Since we have already established analysis for these two cases separately, in the remainder of this subsection, we mainly focus on $\alpha\in(0,1)$.

We first invoke the following lemma that converts gradient norms to function values.
\begin{lemma}[H\"older self-bounding property~\citep{orabona2023normalized}]\label{lem:sg-holder-self-bound}
Suppose $F$ is convex and $(L_\alpha,\alpha)$-H\"older smooth with $\alpha\in(0,1]$. Then for any $x\in\mathbb R^d$, we have
\begin{equation*}
    \|\nabla F(x)\|^{(1+\alpha)/\alpha}
    \leq\left(1+\frac1\alpha\right)L_\alpha^{1/\alpha}\bigl(F(x)-F^\star\bigr).
\end{equation*}
\end{lemma}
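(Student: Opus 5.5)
The plan is to derive the bound from the H\"older descent inequality~\eqref{eq:holder-descent}, applied along a single gradient step and then optimized over the step length, as in the classical argument for $L$-smooth functions. Fix $x\in\mathbb R^d$ and write $g:=\nabla F(x)$. If $g=0$, the claim is immediate because $F(x)-F^\star\geq0$. So assume $g\neq0$.

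\textbf{Step 1: one gradient step.} For $\lambda>0$, I apply~\eqref{eq:holder-descent} with $y=x-\lambda g$. Together with $F^\star\leq F(y)$, this gives
\[
    F^\star\leq F(x)-\lambda\|g\|^2+\frac{L_\alpha}{1+\alpha}\lambda^{1+\alpha}\|g\|^{1+\alpha}.
\]
Only the descent inequality and the definition of $F^\star$ are used here. Convexity serves only to guarantee that $F^\star$ is finite and (with Assumption~\ref{ass:lower-bounded}) attained, so that the right-hand side of the lemma is meaningful.

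\textbf{Step 2: choose $\lambda$.} I take the minimizer of the right-hand side, characterized by $\lambda^\alpha L_\alpha\|g\|^{1+\alpha}=\|g\|^2$, that is,
\[
    \lambda=\left(\frac{\|g\|^{1-\alpha}}{L_\alpha}\right)^{1/\alpha}.
\]
With this choice the linear term is
\[
    \lambda\|g\|^2=\|g\|^{(1-\alpha)/\alpha+2}L_\alpha^{-1/\alpha}=\|g\|^{(1+\alpha)/\alpha}L_\alpha^{-1/\alpha}.
\]
The correction term is $\tfrac{1}{1+\alpha}$ times the same quantity, since $\lambda^{1+\alpha}\|g\|^{1+\alpha}L_\alpha=\lambda\|g\|^2$ by the defining relation for $\lambda$.

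\textbf{Step 3: combine.} Subtracting the two terms leaves
\[
    F(x)-F^\star\geq\frac{\alpha}{1+\alpha}\,L_\alpha^{-1/\alpha}\,\|g\|^{(1+\alpha)/\alpha}.
\]
Rearranging yields exactly the stated constant $(1+1/\alpha)L_\alpha^{1/\alpha}$.

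There is no real obstacle here. The only step needing care is the exponent bookkeeping in Step 2, namely checking that $(1-\alpha)(1+\alpha)/\alpha+(1+\alpha)=(1+\alpha)/\alpha$, so that both terms scale as the same power of $\|g\|$. For $\alpha=1$ the argument reduces to the familiar bound $\|\nabla F(x)\|^2\leq 2L(F(x)-F^\star)$, which is a useful consistency check.
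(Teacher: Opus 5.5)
Your proof is correct. The single step $y=x-\lambda\nabla F(x)$ in \eqref{eq:holder-descent}, with $\lambda$ chosen so that $L_\alpha\lambda^\alpha\|\nabla F(x)\|^{1+\alpha}=\|\nabla F(x)\|^2$, gives exactly $F(x)-F^\star\geq\frac{\alpha}{1+\alpha}L_\alpha^{-1/\alpha}\|\nabla F(x)\|^{(1+\alpha)/\alpha}$, and your exponent bookkeeping checks out.

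The paper gives no proof of its own; it imports the lemma from \citet{orabona2023normalized}, and your one-step descent argument is the standard derivation of this bound. One correction to your side remark: convexity plays no role at all. Finiteness of $F^\star$ comes from Assumption~\ref{ass:lower-bounded}, not from convexity, so you have in fact shown the bound for every H\"older-smooth function that is bounded below.
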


Recall $b_t:=\bar g_t-g_t$, $d_t:=x_t-x^\star$, and $\epsilon_t$ defined in~\eqref{eq:eps-def}. We next bound the local errors $\|b_t\|$ and $\epsilon_t$ in the following lemma.
\begin{lemma}\label{lem:sg-holder-local-error}
Let Assumption~\ref{ass:lower-bounded} hold. Assume each $F_i$ is convex and $(L_\alpha,\alpha)$-H\"older smooth for some $\alpha\in(0,1)$. Consider Algorithm~\ref{alg:FL-adagrad} with exact local gradients $g_{i,k}^t=\nabla F_i(x_{i,k}^t)$ and $\tau_t$ chosen as~\eqref{eq:partial-tau}. Then
\begin{equation*}
    \sum_{t=1}^T\left(\|b_t\|^{(1+\alpha)/\alpha}+\epsilon_t\right)\lesssim 1+D_T^{\alpha(1+\alpha)/2}.
\end{equation*}
\end{lemma}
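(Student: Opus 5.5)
We follow the template of Lemma~\ref{lem:sg-local-moments} and Lemma~\ref{lem:sg-smooth-convex-error}. Everything is deterministic here because $\sigma=0$. The plan has three steps: bound the local displacements $\|x_{i,k}^t-x_t\|$; convert these into bounds on $\|b_t\|$ and $\epsilon_t$ through H\"older smoothness; then sum over $t$ using $\tau_t\le 1/((K-1)t)$.

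\emph{Step 1: local displacements.} Set $s_t:=\tau_t/q_t\le\tau_t/q_1$. For each client,
\[
\|x_{i,k+1}^t-x_t\|\le\|x_{i,k}^t-x_t\|+s_t\bigl(\|\nabla F_i(x_t)\|+L_\alpha\|x_{i,k}^t-x_t\|^\alpha\bigr).
\]
We linearize the sublinear term with $u^\alpha\le 1+u$. This gives the recursion $e_{k+1}\le(1+L_\alpha s_t)e_k+s_t(\|\nabla F_i(x_t)\|+L_\alpha)$. Unrolling it exactly as in~\eqref{eq:sqrt-xk-xt} yields
\[
\|x_{i,k}^t-x_t\|\le k\Theta_\alpha s_t\bigl(\|\nabla F_i(x_t)\|+L_\alpha\bigr),
\]
where $\Theta_\alpha\lesssim1$ because $(K-1)\tau_t\le1$. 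We then bound $\|\nabla F_i(x_t)\|\le\|\nabla F_i(x^\star)\|+L_\alpha\|d_t\|^\alpha\le C(1+D_T^{\alpha/2})$. Combining, $\|x_{i,k}^t-x_t\|\lesssim k\tau_t(1+D_T^{\alpha/2})$.

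\emph{Step 2: local errors.} By H\"older smoothness,
\[
\|b_t\|\le\frac{L_\alpha}{mK}\sum_{i,k}\|x_{i,k}^t-x_t\|^\alpha\lesssim\bigl((K-1)\tau_t\bigr)^\alpha(1+D_T^{\alpha/2})^\alpha,
\]
and therefore
\[
\|b_t\|^{(1+\alpha)/\alpha}\lesssim\bigl((K-1)\tau_t\bigr)^{1+\alpha}\bigl(1+D_T^{\alpha(1+\alpha)/2}\bigr).
\]
For the other term, convexity together with~\eqref{eq:holder-descent} (applied to each $F_i$) gives $0\le F_i(x_t)-F_i(x_{i,k}^t)-\langle\nabla F_i(x_{i,k}^t),x_t-x_{i,k}^t\rangle\le\frac{L_\alpha}{1+\alpha}\|x_t-x_{i,k}^t\|^{1+\alpha}$. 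Hence
\[
\epsilon_t\lesssim\bigl((K-1)\tau_t\bigr)^{1+\alpha}\bigl(1+D_T^{\alpha/2}\bigr)^{1+\alpha}\lesssim\bigl((K-1)\tau_t\bigr)^{1+\alpha}\bigl(1+D_T^{\alpha(1+\alpha)/2}\bigr).
\]
Here we use $(1+a)^{p}\le 2^{p}(1+a^p)$. Both bounds have the same power $D_T^{\alpha(1+\alpha)/2}$, which matches the claimed right-hand side.

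\emph{Step 3: summation.} Since $(K-1)\tau_t\le 1/t$ and $1+\alpha>1$, we get $\sum_{t\le T}((K-1)\tau_t)^{1+\alpha}\le\sum_t t^{-(1+\alpha)}\le 1+1/\alpha\lesssim1$. This closes the bound. When $K=1$, we have $b_t=0$ and $\epsilon_t=0$, so the claim is trivial.

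The main obstacle is Step 1. The H\"older term $\|x_{i,k}^t-x_t\|^\alpha$ is not Lipschitz, so the geometric unrolling of the smooth case does not apply verbatim. The linearization $u^\alpha\le1+u$ handles this at the cost of an additive $L_\alpha s_t$ term. That term is harmless because it also scales with $\tau_t$, and because we only need bounds of order $1+D_T^{\alpha(1+\alpha)/2}$ rather than sharp constants. A secondary point is that $\Theta_\alpha$ must be uniform in $t$; this follows from $q_t\ge q_1$ and $(1+c/((K-1)t))^{K-1}\le e^{c}$.
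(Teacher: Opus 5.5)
Your proof is correct. Its skeleton matches the paper's: bound the local displacements, convert them into bounds on $\|b_t\|$ and $\epsilon_t$ via H\"older smoothness and the H\"older descent inequality, then sum using $\tau_t\le 1/((K-1)t)$. Where you differ is in how you handle the sublinear term in the displacement recursion, which is the main technical step.

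The paper bounds the whole local path at once,
$\max_{0\le k<K}\|x_{i,k}^t-x_t\|\le \frac{K\tau_t}{q_t}\|\nabla F_i(x_t)\|+\frac{KL_\alpha\tau_t}{q_t}\bigl(\max_{0\le k<K}\|x_{i,k}^t-x_t\|\bigr)^\alpha$,
and solves this implicit inequality with Lemma~\ref{lem:sublinear-power-bound}. That produces a linear term $\frac{2K\tau_t}{q_t}\|\nabla F_i(x_t)\|$ plus a higher-order remainder $(2KL_\alpha\tau_t/q_t)^{1/(1-\alpha)}$. Consequently two series must be summed: $\sum_t(K\tau_t)^{1+\alpha}$ and $\sum_t(K\tau_t)^{(1+\alpha)/(1-\alpha)}$.

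You instead linearize with $u^\alpha\le 1+u$. This reduces the recursion to the Gronwall-type form already used in Lemma~\ref{lem:sg-local-moments}, with $L_\alpha$ in place of $L$ and an extra forcing term $L_\alpha s_t$. Your route is more elementary, needs no auxiliary lemma, and leaves a single series $\sum_t((K-1)\tau_t)^{1+\alpha}$.

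What you give up is homogeneity. The inequality $u^\alpha\le 1+u$ is not scale-invariant, and your remainder is only first order in $\tau_t$ rather than of order $\tau_t^{1/(1-\alpha)}$. Neither matters here, since the lemma only needs summability and the exponent $\alpha(1+\alpha)/2$ on $D_T$.

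Your per-round estimate $\|b_t\|\lesssim((K-1)\tau_t)^\alpha(1+D_T^{\alpha/2})^\alpha$ also suffices for the later use in Lemma~\ref{lem:sg-holder-trajectory}. That lemma only needs $\|b_t\|^2\lesssim 1+D_T^{\alpha}$.
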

\begin{proof}
We focus on $K\geq2$ since the claims for $K=1$ are immediate due to $b_t=\epsilon_t=0$. For any $k<K$, the local updates and $(L_\alpha,\alpha)$-H\"older smoothness of $F_i$ give
\[
    \|x_{i,k}^t-x_t\|\leq \frac{\tau_t}{q_t}\sum_{s=0}^{k-1}\|\nabla F_i(x_{i,s}^t)\| \leq \frac{K\tau_t}{q_t}\left(\|\nabla F_i(x_t)\|+L_\alpha\Big(\max_{0\leq s<K}\|x_{i,s}^t-x_t\|\Big)^\alpha\right),
\]
which further implies
\[
    \max_{0\leq k<K}\|x_{i,k}^t-x_t\|
    \leq\frac{K\tau_t}{q_t}\|\nabla F_i(x_t)\|+\frac{KL_\alpha\tau_t}{q_t}\Big(\max_{0\leq k<K}\|x_{i,k}^t-x_t\|\Big)^\alpha.
\]
Solving the above inequality using Lemma~\ref{lem:sublinear-power-bound} gives
\begin{equation}
    \max_{0\leq k<K}\|x_{i,k}^t-x_t\|
    \leq\frac{2K\tau_t}{q_t}\|\nabla F_i(x_t)\|+\Big(\frac{2KL_\alpha\tau_t}{q_t}\Big)^{\frac{1}{1-\alpha}}.
    \label{eq:sg-holder-local-distance}
\end{equation}
Moreover, using H\"older smoothness,~\eqref{eq:sg-holder-local-distance}, and $(a+b)^\alpha\leq a^\alpha+b^\alpha$ for $\alpha\in(0,1)$, we obtain
\begin{equation}
    \label{eq:holder-bt-bd}
    \|b_t\|\leq\frac{L_\alpha}{mK}\sum_{i=1}^m\sum_{k=0}^{K-1}\|x_{i,k}^t-x_t\|^\alpha\lesssim\left(\frac{K\tau_t}{q_t}\right)^\alpha\big(1+\|d_t\|^{\alpha^2}\big)+\left(\frac{K\tau_t}{q_t}\right)^{\frac{\alpha}{1-\alpha}}.
\end{equation}
where we used $\|\nabla F_i(x_t)\|\leq\|\nabla F_i(x^\star)\|+L_\alpha\|d_t\|^\alpha$ in the last inequality.
Similarly, we have from~\eqref{eq:holder-descent},~\eqref{eq:sg-holder-local-distance}, and $(a+b)^c\leq 2^{c-1}(a^c+b^c)$ for $c\ge 1$ that
\begin{equation}
    \label{eq:holder-eps-bd}
    \epsilon_t\leq\frac{L_\alpha}{(1+\alpha)mK}\sum_{i=1}^m\sum_{k=0}^{K-1}\|x_{i,k}^t-x_t\|^{1+\alpha}\lesssim\left(\frac{K\tau_t}{q_t}\right)^{1+\alpha}\left(1+\|d_t\|^{\alpha(1+\alpha)}\right)+\left(\frac{K\tau_t}{q_t}\right)^{\frac{1+\alpha}{1-\alpha}}.   
\end{equation}
Then raising~\eqref{eq:holder-bt-bd} to power $(1+\alpha)/\alpha$ and adding~\eqref{eq:holder-eps-bd} leads to
 \begin{equation}
    \|b_t\|^{(1+\alpha)/\alpha}+\epsilon_t\lesssim\left(\frac{K\tau_t}{q_t}\right)^{1+\alpha}\left(1+\|d_t\|^{\alpha(1+\alpha)}\right)+\left(\frac{K\tau_t}{q_t}\right)^{\frac{1+\alpha}{1-\alpha}}.
    \label{eq:sg-holder-local-error}
\end{equation}
Finally, note that $q_t\geq q_1$, and $\tau_t\lesssim 1/((K-1)t)$ satisfies
\[
    \sum_{t=1}^T(K\tau_t)^{1+\alpha}\lesssim 1,
    \qquad
    \sum_{t=1}^T(K\tau_t)^{(1+\alpha)/(1-\alpha)}\lesssim 1,\quad\forall\alpha\in(0,1).
\]
Summing~\eqref{eq:sg-holder-local-error} and using $\|d_t\|^{\alpha(1+\alpha)}\leq D_T^{\alpha(1+\alpha)/2}$ thus completes the proof.
\end{proof}

\begin{lemma}\label{lem:sg-holder-trajectory}
Consider the same setting as in Lemma~\ref{lem:sg-holder-local-error}.and recall $D_T:=\max_{1\leq t\leq T+1}\|d_t\|^2$. Then, for every $T\geq1$, we have
\begin{align}
    D_T&\lesssim 1+\log(q_{T+1}/q_1),
    \label{eq:sg-holder-distance}\\
    \sum_{t=1}^Tq_t\|r_t\|^2
    &\lesssim q_{T+1}\left(1+\log(q_{T+1}/q_1)\right).
    \label{eq:sg-holder-weighted-energy}
\end{align}
\end{lemma}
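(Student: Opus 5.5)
The plan is to run the deterministic version of the convex distance recursion. The key preliminary is an \emph{almost sure constant} bound $\max_{t\le T}\|r_t\|\lesssim 1$. Once that is available, both claims follow from Lemma~\ref{lem:summation} together with Lemma~\ref{lem:sublinear-power-bound} and Lemma~\ref{lem:sg-holder-local-error}. Since the oracle is exact, $\zeta_t=0$, $\bar g_t=\E_t[\bar g_t]$, and \eqref{eq:d-recursion-1} holds pathwise with no martingale term:
\[
\|d_{t+1}\|^2+\frac{2\Delta_t}{q_t}\le\|d_t\|^2+\|r_t\|^2+\frac{2\epsilon_t}{q_t}.
\]

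\textbf{Step 1 (bounded displacement).} I would copy the decomposition from the proof of Lemma~\ref{lem:sg-energy}:
\[
\|r_t\|\le\frac{\|\bar g_{t-1}\|+\|g_t-g_{t-1}\|+\|b_t\|+\|b_{t-1}\|}{q_t}.
\]
The first term is at most $\eta$, because $q_t\ge\|\bar g_{t-1}\|/\eta$. For the second term, H\"older smoothness gives
\[
\frac{\|g_t-g_{t-1}\|}{q_t}\le\frac{L_\alpha\|\bar g_{t-1}\|^\alpha}{q_{t-1}^\alpha q_t}\le\frac{L_\alpha\eta^\alpha q_t^{\alpha-1}}{q_1^\alpha}\le\frac{L_\alpha\eta^\alpha}{q_1}.
\]
The bias terms are the delicate part. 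By \eqref{eq:holder-bt-bd},
\[
\frac{\|b_t\|}{q_t}\lesssim\tau_t^\alpha\bigl(1+\|d_t\|^{\alpha^2}\bigr)+\tau_t^{\alpha/(1-\alpha)},
\]
and this depends on $\|d_t\|$, which is not yet controlled. I would handle it by writing $\|d_t\|\le\|d_1\|+(t-1)\bar R_t$, where $\bar R_t:=\max_{s<t}\|r_s\|$. With $\tau_t\lesssim 1/t$ we get $\tau_t^\alpha (t-1)^{\alpha^2}\lesssim t^{\alpha^2-\alpha}\le 1$, so
\[
\|r_t\|\le C\bigl(1+\bar R_{t}^{\alpha^2}\bigr)\le C\bigl(1+\bar R_{t+1}^{\alpha^2}\bigr).
\]
Taking the maximum over $s\le t$ and using $\alpha^2<1$, Lemma~\ref{lem:sublinear-power-bound} gives $\bar R_{T+1}\le R_\alpha$ for a constant $R_\alpha$ independent of $T$. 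The case $t=1$ is handled directly, exactly as in Lemma~\ref{lem:sg-energy}. I expect this step to be the main obstacle. The exponent bookkeeping has to confirm that the factor $t^{\alpha^2}$ coming from the distance growth is always absorbed by $\tau_t^\alpha$; this is why the stronger choice \eqref{eq:partial-tau}, $\tau_t\lesssim 1/((K-1)t)$, is used rather than $t^{-p}$.

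\textbf{Step 2 (energy and distance).} Since $\|r_t\|^2/\eta^2\le R_\alpha^2/\eta^2$, the log-sum part of Lemma~\ref{lem:summation}, applied with $A_t=q_t^2$ and $a_t=\|\bar g_t\|^2/\eta^2$, yields
\[
\mathcal R_T\le 2(\eta^2+R_\alpha^2)\log(q_{T+1}/q_1).
\]
Summing the recursion above, dropping $\Delta_t\ge0$, maximizing over the horizon, and using $q_t\ge q_1$ gives
\[
D_T\le\|d_1\|^2+\mathcal R_T+\frac{2}{q_1}\sum_{t=1}^T\epsilon_t\lesssim 1+\log(q_{T+1}/q_1)+D_T^{\alpha(1+\alpha)/2},
\]
where the last term comes from Lemma~\ref{lem:sg-holder-local-error}. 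Since $\alpha(1+\alpha)/2<1$, Lemma~\ref{lem:sublinear-power-bound} solves this self-bounding inequality and gives \eqref{eq:sg-holder-distance}.

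\textbf{Step 3 (weighted energy).} Writing $q_t\|r_t\|^2=\eta^2 a_t/\sqrt{A_t}$, the square-root part of Lemma~\ref{lem:summation} with $c=R_\alpha^2/\eta^2$ gives
\[
\sum_{t=1}^T q_t\|r_t\|^2\le\eta^2\bigl(1+\sqrt{1+R_\alpha^2/\eta^2}\bigr)(q_{T+1}-q_1)\lesssim q_{T+1}.
\]
This already implies \eqref{eq:sg-holder-weighted-energy}, so that bound holds with room to spare.
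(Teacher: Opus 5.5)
Your proof is correct, but it handles the key step, controlling the $\|r_t\|^2$ term in the distance recursion, differently from the paper.

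\textbf{The paper's route.} It never establishes an a priori displacement bound. It keeps the $-2\Delta_t/q_t$ term and, via the H\"older self-bounding property (Lemma~\ref{lem:sg-holder-self-bound}) together with $\|g_t\|^p\ge 2^{1-p}q_t^p\|r_t\|^p-\|b_t\|^p$ for $p=(1+\alpha)/\alpha>2$, turns it into a negative term $-c_\alpha q_1^{p-1}\|r_t\|^p$ plus a bias term. Lemma~\ref{lem:positive-power-log-bound} then bounds $\|r_t\|^2-c_\alpha q_1^{p-1}\|r_t\|^p$ by a multiple of $\log(1+\|r_t\|^2/\eta^2)$, which telescopes to $2\log(q_{T+1}/q_1)$. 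In effect, large displacements are cancelled by objective decrease. Only after \eqref{eq:sg-holder-distance} is in hand does the paper bound $\|r_t\|^2\lesssim 1+D_T^\alpha\lesssim(1+\log(q_{T+1}/q_1))^\alpha$ a posteriori and apply Lemma~\ref{lem:summation}. That is the source of the logarithm in \eqref{eq:sg-holder-weighted-energy}.

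\textbf{Your route.} You transplant the delayed-accumulator decomposition of Lemma~\ref{lem:sg-energy}, and the bookkeeping holds up:
\begin{itemize}
\item the gradient-difference term is at most $L_\alpha\eta^\alpha/q_1$, since $q_t^{\alpha-1}\le q_1^{\alpha-1}$;
\item the factor $\tau_t^\alpha(t-1)^{\alpha^2}\lesssim t^{\alpha^2-\alpha}\le 1$ absorbs the crude distance growth;
\item $\alpha^2<1$ lets Lemma~\ref{lem:sublinear-power-bound} close the recursion.
\end{itemize}
This gives a $T$-independent pathwise bound $\max_t\|r_t\|\le R_\alpha$. You can then discard $\Delta_t\ge0$ entirely and obtain the sharper $\sum_t q_t\|r_t\|^2\lesssim q_{T+1}$.

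\textbf{What each buys.} Your approach gives a uniform displacement bound whose derivation never touches the objective-decrease term or the self-bounding inequality. It also gives a log-free weighted energy, although the final rate in Theorem~\ref{thm:sg-holder-convex} does not improve, because $q_TD_T$ already carries the logarithm. The paper's approach does not rely on the pointwise decay of $\tau_t$ to tame displacements. It needs only the summability already used in Lemma~\ref{lem:sg-holder-local-error} and boundedness of $K\tau_t/q_t$. Your absorption step, by contrast, requires $\tau_t^\alpha t^{\alpha^2}\lesssim1$, e.g.\ $K\tau_t\lesssim t^{-s}$ with $s\ge\alpha$. This is harmless under \eqref{eq:partial-tau} but is a genuine extra requirement for slower schedules.
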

\begin{proof}
Let $\Delta_t:=F(x_t)-F^\star$ and $p:=(1+\alpha)/\alpha>2$.
Since $\bar g_t=g_t+b_t$ and $\|\bar g_t\|=q_t\|r_t\|$, we have from $(a+b)^c\leq 2^{c-1}(a^c+b^c)$ for $c\ge 1$ that
\begin{align*}
    \|g_t\|^p\geq2^{1-p}\|\bar g_t\|^p-\|b_t\|^p=2^{1-p}q_t^p\|r_t\|^p-\|b_t\|^p,
\end{align*}
which together with Lemma~\ref{lem:sg-holder-self-bound} gives
\begin{align*}
    -\frac{2\Delta_t}{q_t}\leq-\frac{2\alpha L_\alpha^{-1/\alpha}}{1+\alpha}\frac{\|g_t\|^p}{q_t}\leq-\frac{2^{2-p}\alpha L_\alpha^{-1/\alpha}}{1+\alpha}q_t^{p-1}\|r_t\|^p+\frac{2\alpha L_\alpha^{-1/\alpha}}{1+\alpha}\frac{\|b_t\|^p}{q_t}.
\end{align*}
Therefore, setting
\[
    c_\alpha:=\frac{2^{2-p}\alpha L_\alpha^{-1/\alpha}}{1+\alpha},
    \qquad
    C_\alpha:=\frac{2\alpha L_\alpha^{-1/\alpha}}{1+\alpha},
\]
we obtain from~\eqref{eq:sg-smooth-convex-one-step} with $\zeta=0$ and $q_t\ge q_1$ that
\begin{align*}
    \|d_{t+1}\|^2-\|d_t\|^2\leq\|r_t\|^2-\frac{2\Delta_t}{q_t}+\frac{2\epsilon_t}{q_t}\leq\|r_t\|^2-c_\alpha q_1^{p-1}\|r_t\|^p+\frac{C_\alpha\|b_t\|^p+2\epsilon_t}{q_t}.
\end{align*}
Applying Lemma~\ref{lem:positive-power-log-bound} to bound $\|r_t\|^2-c_\alpha q_1^{p-1}\|r_t\|^p$, we obtain
\begin{align}
    \|d_{t+1}\|^2-\|d_t\|^2&\leq\left(\eta^2+\bigl(c_\alpha q_1^{p-1}\bigr)^{-2/(p-2)}\right)\log\left(1+\frac{\|r_t\|^2}{\eta^2}\right)+\frac{C_\alpha\|b_t\|^p+2\epsilon_t}{q_t}\notag\\
    &\lesssim\log\left(1+\frac{\|r_t\|^2}{\eta^2}\right)+\frac{\|b_t\|^p+\epsilon_t}{q_t}.\label{eq:holder-dist-1}
\end{align}
The definitions of $r_t$ and $q_{t+1}$ give
\[
    \frac{\|r_t\|^2}{\eta^2}
    =\frac{\|\bar g_t\|^2}{\eta^2q_t^2}
    =\frac{q_{t+1}^2-q_t^2}{q_t^2}.
\]
Consequently, for every $1\leq s\leq T$,
\begin{align*}
    \sum_{t=1}^s\log\left(1+\frac{\|r_t\|^2}{\eta^2}\right)
    &=\sum_{t=1}^s\log\frac{q_{t+1}^2}{q_t^2}
      =2\log\frac{q_{s+1}}{q_1}
      \leq2\log\frac{q_{T+1}}{q_1}.
\end{align*}
Moreover, $q_t\geq q_1$ and Lemma~\ref{lem:sg-holder-local-error} imply
\[
    \sum_{t=1}^s\frac{\|b_t\|^p+\epsilon_t}{q_t}
    \leq\frac1{q_1}\sum_{t=1}^T\bigl(\|b_t\|^p+\epsilon_t\bigr)
    \lesssim 1+D_T^{\alpha(1+\alpha)/2}.
\]
Therefore, summing~\eqref{eq:holder-dist-1} from $t=1$ to $s$ and maximizing over $s\in\{1,\ldots,T\}$ give
\[
    D_T\lesssim 1+\log(q_{T+1}/q_1)+D_T^{\alpha(1+\alpha)/2}.
\]
Since $\alpha(1+\alpha)/2<1$, solving the above inequality yields~\eqref{eq:sg-holder-distance}.

We proceed to prove~\eqref{eq:sg-holder-weighted-energy}. Since $\nabla F(x^\star)=0$, the global H\"older condition gives
\[
    \|g_t\|^2=\|\nabla F(x_t)-\nabla F(x^\star)\|^2\leq L_\alpha^2\|d_t\|^{2\alpha}\leq L_\alpha^2D_T^\alpha.
\]
Moreover, since $K\tau_t/q_t\lesssim 1$, ~\eqref{eq:holder-bt-bd} therefore yields
\[
    \|b_t\|^2\lesssim 1+\|d_t\|^{2\alpha^2}\lesssim 1+D_T^\alpha.
\]
Thus, using $\bar g_t=g_t+b_t$ and~\eqref{eq:sg-holder-distance}, we further obtain
\begin{align*}
    \frac{\|\bar g_t\|^2/\eta^2}{q_t^2}
    \leq\frac{2}{\eta^2q_1^2}\bigl(\|g_t\|^2+\|b_t\|^2\bigr)\lesssim 1+D_T^\alpha
    \lesssim\left(1+\log(q_{T+1}/q_1)\right)^\alpha.
\end{align*}
Applying Lemma~\ref{lem:summation} to $A_t=q_t^2$, $a_t=\|\bar g_t\|^2/\eta^2$, and $c=\left(1+\log(q_{T+1}/q_1)\right)^{\alpha}$ yields
\begin{align*}
    \sum_{t=1}^Tq_t\|r_t\|^2&=\eta^2\sum_{t=1}^T\frac{\|\bar g_t\|^2/\eta^2}{q_t}\leq\eta^2(1+\sqrt{1+c})(q_{T+1} - q_1)\\
    &\lesssim q_{T+1}\left(1+\left(1+\log\frac{q_{T+1}}{q_1}\right)^{\alpha/2}\right)\lesssim q_{T+1}\left(1+\log\frac{q_{T+1}}{q_1}\right),
\end{align*}
which is~\eqref{eq:sg-holder-weighted-energy}, hence completing the proof.
\end{proof}

\subsection{Main Theorem}
\begin{theorem}[H\"older-smooth convex convergence]\label{thm:sg-holder-convex}
Let Assumption~\ref{ass:lower-bounded} hold. Assume each $F_i$ is convex and $(L_\alpha,\alpha)$-H\"older smooth for some $\alpha\in[0,1]$. 
Let $\{x_t\}_{t\geq1}$ be generated by Algorithm~\ref{alg:FL-adagrad} with exact local gradients $g_{i,k}^t=\nabla F_i(x_{i,k}^t)$, $\tau_t$ chosen as~\eqref{eq:partial-tau}, and arbitrary $\gamma,\eta>0$.
Then, for every $T\geq1$,
\begin{equation*}
    \E[F(\bar x_T)-F^\star]
    \lesssim
    \begin{cases}
        \displaystyle\frac{(\log(1+T))^{1+\alpha}}{T^{(1+\alpha)/2}}, & 0\leq \alpha<1,\\[0.8em]
        \displaystyle\frac1T, & \alpha=1.
    \end{cases}
\end{equation*}
\end{theorem}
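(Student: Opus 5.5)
The endpoint cases are already settled by earlier results. For $\alpha=1$ this is Corollary~\ref{cor:sg-smooth-convex-deterministic}. For $\alpha=0$, H\"older smoothness with exponent zero means $\|\nabla F_i(x)-\nabla F_i(y)\|\le L_0$. I would reduce this case to the Lipschitz analysis of Corollary~\ref{cor:sg-lipschitz-deterministic}, which gives $\log(1+T)/\sqrt T$. If needed, I would redo its proof: use that bounded gradient variation plus the bound $\|\bar g_t\|\lesssim 1+\|g_t\|$ gives $q_{T+1}\lesssim\sqrt T$ once $D_T$ is controlled. So I concentrate on $\alpha\in(0,1)$ with exact local gradients, where everything is deterministic.

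The first step is the one-step inequality~\eqref{eq:sg-smooth-convex-one-step} with $\zeta_t=0$, summed over $t$. The telescoping term is handled by summation by parts, giving at most $q_{T}D_T$. The term $\sum_t q_t\|r_t\|^2$ is bounded by~\eqref{eq:sg-holder-weighted-energy}, and $\sum_t\epsilon_t\lesssim 1+D_T^{\alpha(1+\alpha)/2}$ by Lemma~\ref{lem:sg-holder-local-error}. Combining these with~\eqref{eq:sg-holder-distance} yields
\[
\sum_{t=1}^T\Delta_t\lesssim q_{T+1}\bigl(1+\log(q_{T+1}/q_1)\bigr).
\]
It then remains to bound $q_{T+1}$ in terms of $\sum_t\Delta_t$ and close a self-bounding inequality.

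For this I would use $q_{T+1}^2=q_1^2+\eta^{-2}\sum_t\|\bar g_t\|^2$ together with $\|\bar g_t\|^2\le 2\|g_t\|^2+2\|b_t\|^2$. Lemma~\ref{lem:sg-holder-self-bound} gives $\|g_t\|^2\lesssim\Delta_t^{2\alpha/(1+\alpha)}$. Applying H\"older's (power-mean) inequality with exponent $2\alpha/(1+\alpha)<1$ then gives $\sum_t\|g_t\|^2\lesssim T^{(1-\alpha)/(1+\alpha)}\bigl(\sum_t\Delta_t\bigr)^{2\alpha/(1+\alpha)}$. For the drift, $\sum_t\|b_t\|^2\le T^{1-2\alpha/(1+\alpha)}\bigl(\sum_t\|b_t\|^{(1+\alpha)/\alpha}\bigr)^{2\alpha/(1+\alpha)}$, and Lemma~\ref{lem:sg-holder-local-error} with~\eqref{eq:sg-holder-distance} makes this at most $T^{(1-\alpha)/(1+\alpha)}$ times a polylog in $q_{T+1}$. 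Writing $S=\sum_t\Delta_t$ and $\ell=1+\log(q_{T+1}/q_1)$, I obtain
\[
q_{T+1}^2\lesssim 1+T^{\frac{1-\alpha}{1+\alpha}}\bigl(S^{\frac{2\alpha}{1+\alpha}}+\ell\bigr),\qquad S\lesssim q_{T+1}\ell .
\]

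The main obstacle is closing these two coupled inequalities cleanly with the logarithms present. I would first use a crude bound, $q_{T+1}\lesssim\mathrm{poly}(T)$, which follows from $\|\bar g_t\|\lesssim 1+D_T^{\alpha/2}$ and~\eqref{eq:sg-holder-distance} via Lemma~\ref{lem:log-solution}. This gives $\ell\lesssim\log(1+T)$. Substituting $S\lesssim q_{T+1}\ell$ into the bound for $q_{T+1}^2$ gives $q_{T+1}^2\lesssim 1+T^{(1-\alpha)/(1+\alpha)}(q_{T+1}\ell)^{2\alpha/(1+\alpha)}$. Since $2\alpha/(1+\alpha)<2$, Lemma~\ref{lem:sublinear-power-bound} (applied to $x=q_{T+1}^2$ with exponent $\alpha/(1+\alpha)$) solves this as $q_{T+1}\lesssim T^{(1-\alpha)/2}\ell^{\alpha}$. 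Hence $S\lesssim T^{(1-\alpha)/2}\log(1+T)^{1+\alpha}$. Dividing by $T$ gives the rate $(\log(1+T))^{1+\alpha}/T^{(1+\alpha)/2}$.
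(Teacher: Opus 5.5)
Your proposal is correct and follows the paper's proof essentially step for step. The endpoints come from the two deterministic corollaries; for $\alpha=0$ the reduction is immediate since $\|\nabla F_i(x)\|\le\|\nabla F_i(x_1)\|+L_0$, so nothing needs to be redone. You then sum the one-step inequality with summation by parts and Lemmas~\ref{lem:sg-holder-local-error}--\ref{lem:sg-holder-trajectory} to get $\sum_t\Delta_t\lesssim q_{T+1}(1+\log(q_{T+1}/q_1))$, and bound $q_{T+1}^2$ via H\"older's inequality and Lemma~\ref{lem:sg-holder-self-bound}.

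The only difference is how you close the coupled inequalities. You first prove a crude polynomial bound on $q_{T+1}$, so that $\ell\lesssim\log(1+T)$, and then apply Lemma~\ref{lem:sublinear-power-bound}. The paper instead isolates $q_{T+1}/2$ by Young's inequality and solves the remaining logarithmic inequality with Lemma~\ref{lem:log-solution}. The drift term $T^{(1-\alpha)/(1+\alpha)}\ell$ that you silently drop is harmlessly absorbed, because $\ell$ is only logarithmic in $q_{T+1}$ (or you can keep the paper's sharper factor $\ell^{\alpha^2}$).
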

\begin{proof}[Proof of Theorem~\ref{thm:sg-holder-convex}]
The special cases $\alpha=1$ and $\alpha=0$ are covered by Corollaries~\ref{cor:sg-smooth-convex-deterministic} and~\ref{cor:sg-lipschitz-deterministic}, respectively. It remains to consider $\alpha\in(0,1)$. Let $\Delta_t:=F(x_t)-F^\star$. We start from~\eqref{eq:sg-smooth-convex-one-step} with $\zeta=0$ to obtain
\[
    \Delta_t\leq \frac{q_t}{2}\bigl(\|d_t\|^2-\|d_{t+1}\|^2\bigr)+\frac{q_t}{2}\|r_t\|^2+\epsilon_t.
\]
Summing this inequality over $t=1,\ldots,T$, plugging in~\eqref{eq:sg-lipschitz-convex-summation-by-parts}, and applying Lemmas~\ref{lem:sg-holder-local-error} and~\ref{lem:sg-holder-trajectory} then yield
\begin{equation}
    \sum_{t=1}^T\Delta_t\leq q_TD_T+\frac12\sum_{t=1}^Tq_t\|r_t\|^2+\sum_{t=1}^T\epsilon_t
    \lesssim q_{T+1}\left(1+\log\frac{q_{T+1}}{q_1}\right).
    \label{eq:sg-holder-objective-master}
\end{equation}

It remains to bound $q_{T+1}$.
Applying H\"older's inequality to $\{1\}_{t=1}^T$ and $\{\|g_t\|^2\}_{t=1}^T$ with conjugate exponents $p:=\frac{1+\alpha}{1-\alpha}$ and $q:=\frac{1+\alpha}{2\alpha}$
and then using Lemma~\ref{lem:sg-holder-self-bound}, we obtain
\[
    \sum_{t=1}^T\|g_t\|^2
    \leq T^{\frac{1-\alpha}{1+\alpha}}\left(\sum_{t=1}^T\|g_t\|^{\frac{1+\alpha}{\alpha}}\right)^{\frac{2\alpha}{1+\alpha}}
    \lesssim T^{\frac{1-\alpha}{1+\alpha}}\left(\sum_{t=1}^T\Delta_t\right)^{\frac{2\alpha}{1+\alpha}}.
\]
Moreover, applying H\"older's inequality with the same exponents to $\{1\}_{t=1}^T$ and $\{\|b_t\|^2\}_{t=1}^T$, and then using Lemma~\ref{lem:sg-holder-local-error} and~\eqref{eq:sg-holder-distance}, gives
\[
    \sum_{t=1}^T\|b_t\|^2
    \leq T^{\frac{1-\alpha}{1+\alpha}}\left(\sum_{t=1}^T\|b_t\|^{\frac{1+\alpha}{\alpha}}\right)^{\frac{2\alpha}{1+\alpha}}\lesssim T^{\frac{1-\alpha}{1+\alpha}}\left(1+D_T^{\frac{\alpha(1+\alpha)}{2}}\right)^{\frac{2\alpha}{1+\alpha}}\lesssim T^{\frac{1-\alpha}{1+\alpha}}\left(1+\log\frac{q_{T+1}}{q_1}\right)^{\alpha^2}.
\]
Using $\|\bar g_t\|^2\leq2\|g_t\|^2+2\|b_t\|^2$, the above two estimates, and~\eqref{eq:sg-holder-objective-master}, we obtain
\begin{align*}
    q_{T+1}^2&=q_1^2+\frac1{\eta^2}\sum_{t=1}^T\|\bar g_t\|^2\lesssim \sum_{t=1}^T\|g_t\|^2+\sum_{t=1}^T\|b_t\|^2\\
    &\lesssim T^{\frac{1-\alpha}{1+\alpha}}q_{T+1}^{\frac{2\alpha}{1+\alpha}}\left(1+\log\frac{q_{T+1}}{q_1}\right)^{\frac{2\alpha}{1+\alpha}}+T^{\frac{1-\alpha}{1+\alpha}}\left(1+\log\frac{q_{T+1}}{q_1}\right)^{\alpha^2}.
\end{align*}
Taking square roots and using $\sqrt{a+b}\leq\sqrt a+\sqrt b$, then applying to the first term the Young's inequality
\[
    ab\leq\frac{\lambda^{p}}{p}a^{p}+\frac{\lambda^{-q}}{q}b^{q},\qquad \frac1{p}+\frac1{q}=1
\]
with $p:=(1+\alpha)/\alpha$, $q:=1+\alpha$, $\lambda:=(p/2)^{1/p}$ to isolate $q_{T+1}/2$, we derive for some constant $C>0$ that
\begin{align}
    q_{T+1}\leq& Cq_{T+1}^{\frac{\alpha}{1+\alpha}}T^{\frac{1-\alpha}{2(1+\alpha)}}\left(1+\log\frac{q_{T+1}}{q_1}\right)^{\frac{\alpha}{1+\alpha}}+CT^{\frac{1-\alpha}{2(1+\alpha)}}\left(1+\log\frac{q_{T+1}}{q_1}\right)^{\alpha^2/2}\notag\\
    \leq&\frac{q_{T+1}}2+\frac{\lambda^{-q}}{q}C^{1+\alpha} T^{\frac{1-\alpha}{2}}\left(1+\log\frac{q_{T+1}}{q_1}\right)^\alpha+C T^{\frac{1-\alpha}{2(1+\alpha)}}\left(1+\log\frac{q_{T+1}}{q_1}\right)^{\alpha^2/2}.\label{eq:holder-qT-bd1}
\end{align}
Note that the last term in~\eqref{eq:holder-qT-bd1} is bounded by the second term up to a constant since $\alpha\ge \alpha^2/2$ and $1+\alpha\geq 1$. Therefore, simplifying~\eqref{eq:holder-qT-bd1} gives
\[
    \frac{q_{T+1}}{q_1}\lesssim T^{\frac{1-\alpha}{2}}\left(1+\log\frac{q_{T+1}}{q_1}\right)^\alpha.
\]
Let $x:=(q_{T+1}/q_1)^{1/\alpha}$, then it follows from $1+\log (q_{T+1}/q_1)\ge 0$ that
\[
    x\lesssim T^{\frac{1-\alpha}{2\alpha}}(1+\alpha\log x),
\]
solving which using Lemma~\ref{lem:log-solution} and recovering to $q_{T+1}$ gives
\[
    q_{T+1}\lesssim T^{\frac{1-\alpha}{2}}(1+\log T)^\alpha.
\]
Substituting it into~\eqref{eq:sg-holder-objective-master} yields
\[
    \sum_{t=1}^T\Delta_t\lesssim T^{\frac{1-\alpha}{2}}(1+\log T)^{1+\alpha},
\]
which completes the proof by noting that $\E[F(\bar x_T)-F^\star]=T^{-1}\sum_{t=1}^T\Delta_t$.
\end{proof}

\newpage
\section{Additional Experiments}
\label{app:additional-experiments}

\subsection{Data construction and reference metrics}
\label{app:experiment-data}
The synthetic logistic problem has $m=32$ clients, $n_i=64$ examples per
client ($n=2048$), and dimension $d=32$. A deterministic Halton design is
mapped through the standard-normal quantile, centered, and orthogonalized.
Its singular values make the eigenvalues of $A^\top A/n$ a geometric sequence
from $1$ to $1/\kappa$, where $\kappa\in\{1,10,100\}$. Labels are
deterministic Bernoulli-logistic realizations from
$x_{\rm true}=2e_1-e_{11}+0.5e_{22}$. Examples are sorted by planted margin,
divided into blocks of 32, and dealt round-robin to clients to create fixed
heterogeneity.

For IJCNN1, we take the first 32,000 rows of the LIBSVM binary-classification
dataset, with $d=22$ features and $n_i=1000$ examples on each of 32 clients.
We do not append an intercept or rescale rows or columns. Examples are sorted
by margin under a high-accuracy centralized $\ell_1$-logistic reference
solution, divided into blocks of 50, and dealt round-robin to clients.
The three penalties and metrics are defined in Section~\ref{sec:numerical-experiments}.
We compute $F^\star$ to high precision using damped Newton for the
squared-$\ell_2$ problem and FISTA followed by active-set Newton polishing
for the $\ell_1$ problem. 

\subsection{Experimental setup and tuning}
\label{app:experiment-setup}

\paragraph{Run protocol.}
Exact local gradients use all $n_i$ examples. Stochastic gradients sample independently with
replacement for every (round, client, local-step) tuple. The primary experiment uses minibatch
size $b=1$; the additional panels use $b=8$ for synthetic clients and $b=32$ for IJCNN1 clients.
Stochastic curves are means over seeds 2027--2036. Exact-gradient trajectories are
deterministic and shared by the figure sets. The solid curve averages metric values along the trajectory, which represents expected value for a uniformly random iterate chosen from up to that iteration. The dashed last-iterate curves.

\paragraph{Step-size selection.}
We fix $\gamma=1$ in all experiments unless specified otherwise. For each dataset family, regime, oracle type, and stochastic batch size, we tune $\eta$ at $K=10$
and then hold it fixed across $K$. The grid consists of integer powers of ten. If the best value is on the boundary, we extend the grid until the selected point is interior. Synthetic tuning is performed only at
$\kappa=10$ and reused at $\kappa\in\{1,100\}$. The selected values are:
\begin{center}
\small
\begin{tabular}{llccc}
\toprule
Data & Regime & Exact & Stochastic $b=1$ & Additional stochastic \\
\midrule
Synthetic & Smooth nonconvex & 10 & 1 & 10 ($b=8$) \\
Synthetic & Smooth convex    & 10 & 1 &  1 ($b=8$) \\
Synthetic & Nonsmooth convex &  1 & 1 &  1 ($b=8$) \\
IJCNN1    & Smooth nonconvex & 10 & 10 & 10 ($b=32$) \\
IJCNN1    & Smooth convex    & 10 & 10 & 10 ($b=32$) \\
IJCNN1    & Nonsmooth convex & 10 & 10 & 10 ($b=32$) \\
\bottomrule
\end{tabular}
\end{center}
All selections are interior grid points. Because $\eta$ is retuned for each batch size, the
primary and additional panels are separately tuned demonstrations rather than a causal
batch-size ablation.

\subsection{AdaFed rate study on synthetic data}
\label{app:original-rate-study}
The synthetic rate study uses $T=2000$ reported rounds, full participation,
$\gamma=1$, and $K\in\{1,10,100\}$. The local schedule is $\tau_t=1$ for
$K=1$ and $\tau_t=1/((K-1)t)$ otherwise. Exact gradients use all local
examples; stochastic gradients use minibatches of size $b=1$ and pointwise
means over seeds 2027--2036. The corresponding IJCNN1 study in
Figure~\ref{fig:rates-ijcnn-b1-pgf} uses $T=10^4$ rounds.

With exact gradients, the two smooth metrics decay approximately as $1/T$,
while the nonsmooth convex gap exhibits a slower trend consistent with
$1/\sqrt{T}$. The exact-gradient curves nearly coincide across $K$. With
$b=1$, increasing $K$ from 1 to 100 improves the final synthetic
$\kappa=10$ uniform-random metric by factors $3.63$, $4.76$, and $3.57$
for smooth nonconvex, smooth convex, and nonsmooth convex objectives.

\begin{figure}[p]
\centering
\includegraphics[width=\linewidth]{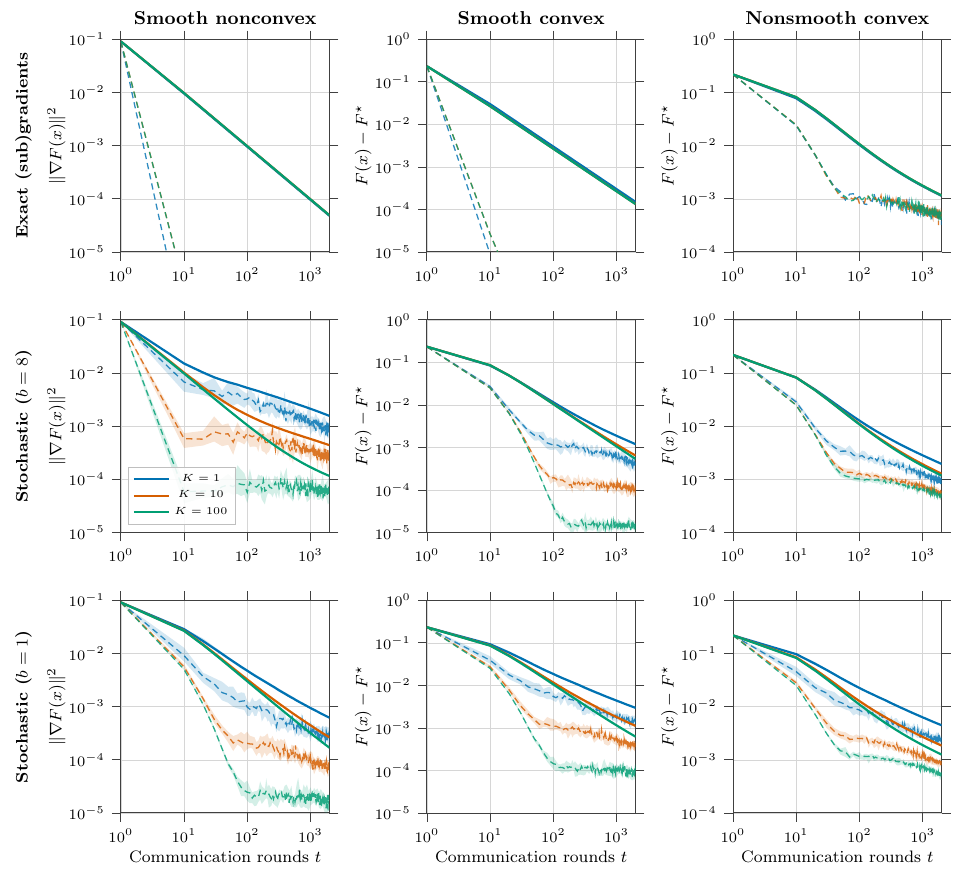}
\caption{Performance of AdaFed on synthetic logistic problems with Gram condition number $\kappa=10$ across the three problem classes and different numbers of local steps $K$. Rows show exact gradients and stochastic gradients with minibatch sizes $b=1$ and $b=8$, respectively. Solid and dashed curves show the uniform-random and last-iterate metrics, respectively. Stochastic results show the mean over ten seeds, with shaded regions indicating the upward and downward RMS deviations.}
\label{fig:rates-synthetic-b1-pgf}
\end{figure}

\begin{figure}[p]
\centering
\includegraphics[width=\linewidth]{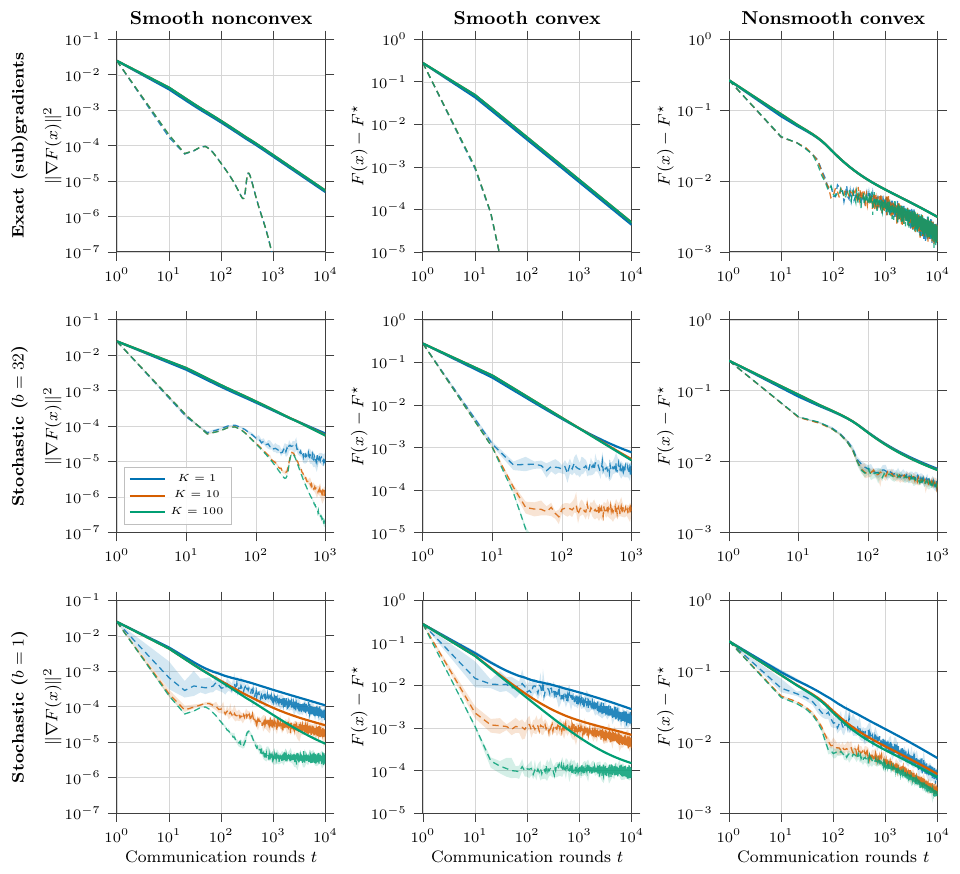}
\caption{Performance of AdaFed on IJCNN1 across the three problem classes and different numbers of local steps $K$. Rows show exact gradients and stochastic gradients with minibatch sizes $b=1$ and $b=32$, respectively. Solid and dashed curves show the uniform-random and last-iterate metrics, respectively. Stochastic results show the mean over ten seeds, with shaded regions indicating the upward and downward RMS deviations.}
\label{fig:rates-ijcnn-b32-pgf}
\end{figure}

\subsection{Sensitivity to $\eta$ and $\gamma$}

The two scalar parameters play different roles. The parameter $\eta$ sets the numerator of the
adaptive shared step, while $\gamma$ initializes its accumulator; in particular,
$\eta_1=\eta/\gamma$. We vary one parameter on an absolute power-of-ten grid while fixing the
other, with $K=10$ and full participation. Exact-gradient curves are deterministic. Stochastic
curves use $b=1$ and are pointwise means over seeds 2027--2036. 

The $\eta$ panels verify that the selected values are interior to the tested grids rather than
artifacts of a truncated search. The response to $\gamma$ is nonmonotone because changing
$\gamma$ also changes the initial step $\eta/\gamma$. On IJCNN1, $\gamma=1$ gives the smallest
final random-output metric in five of the six row--regime combinations; the exact nonconvex row
favors $0.1$. On the synthetic stochastic rows, $0.1$ has the smallest final random-output metric,
although $\gamma=1$ remains close and is used throughout the controlled rate study. Large
$\gamma$ consistently slows the initial transient.

\begin{figure}[p]
\centering
\includegraphics[width=0.87\linewidth]{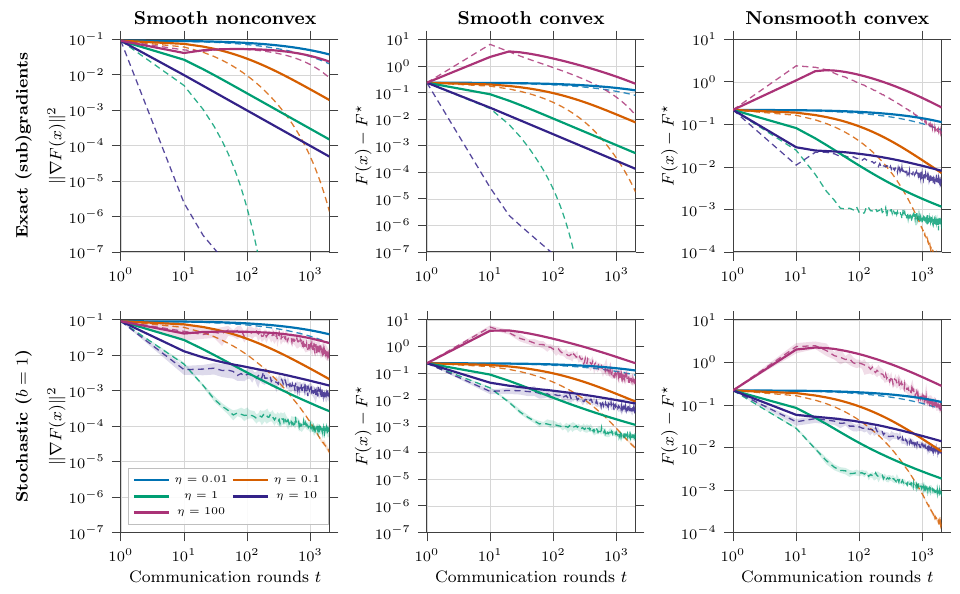}
\caption{Sensitivity of AdaFed to $\eta$ on synthetic logistic problems with Gram condition number $\kappa=10$, with $\gamma=1$ and $K=10$. Solid and dashed curves show the uniform-random and last-iterate metrics, respectively. Stochastic results show the mean over ten seeds, with shaded regions indicating the upward and downward RMS deviations.}
\label{fig:eta-synthetic-pgf}
\end{figure}

\begin{figure}[p]
\centering
\includegraphics[width=0.87\linewidth]{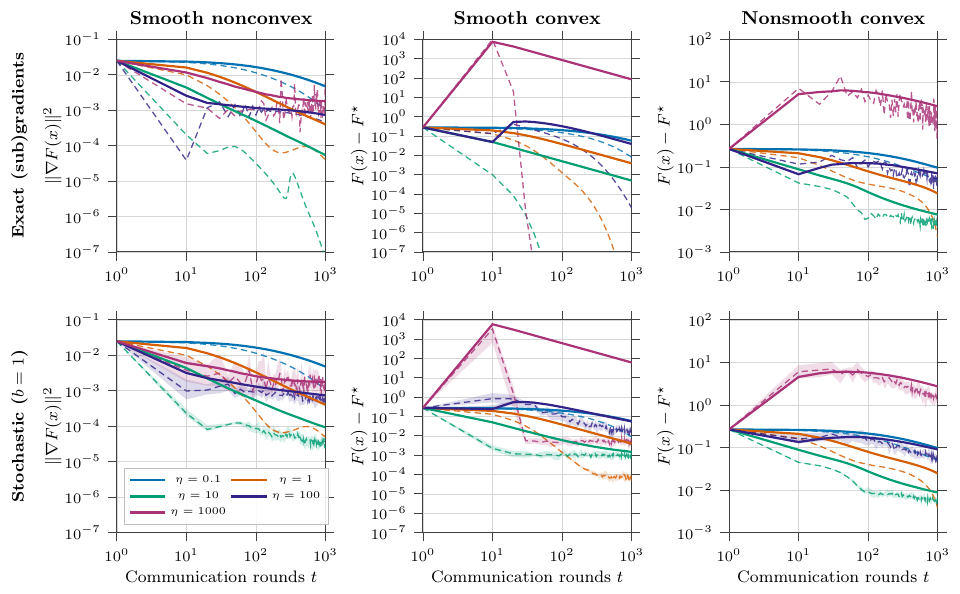}
\caption{Sensitivity of AdaFed to $\eta$ on IJCNN1, with $\gamma=1$ and $K=10$. Solid and dashed curves show the uniform-random and last-iterate metrics, respectively. Stochastic results show the mean over ten seeds, with shaded regions indicating the upward and downward RMS deviations}
\label{fig:eta-ijcnn-pgf}
\end{figure}

\begin{figure}[p]
\centering
\includegraphics[width=0.87\linewidth]{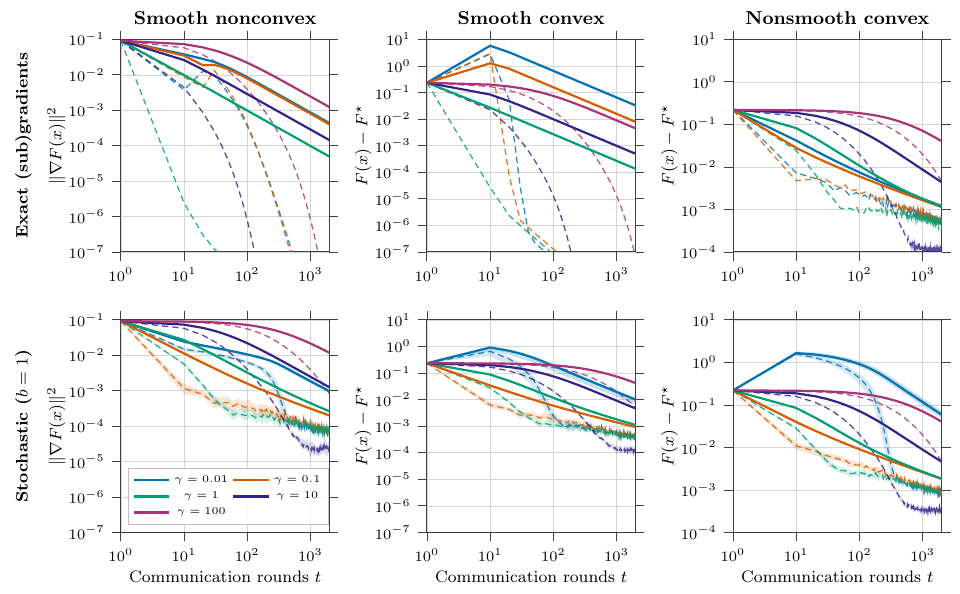}
\caption{Sensitivity of AdaFed to $\gamma$ on synthetic logistic problems with Gram condition number $\kappa=10$, with $\eta=1$ and $K=10$. Solid and dashed curves show the uniform-random and last-iterate metrics, respectively. Stochastic results show the mean over ten seeds, with shaded regions indicating the upward and downward RMS deviations.}
\label{fig:gamma-synthetic-pgf}
\end{figure}

\begin{figure}[p]
\centering
\includegraphics[width=0.87\linewidth]{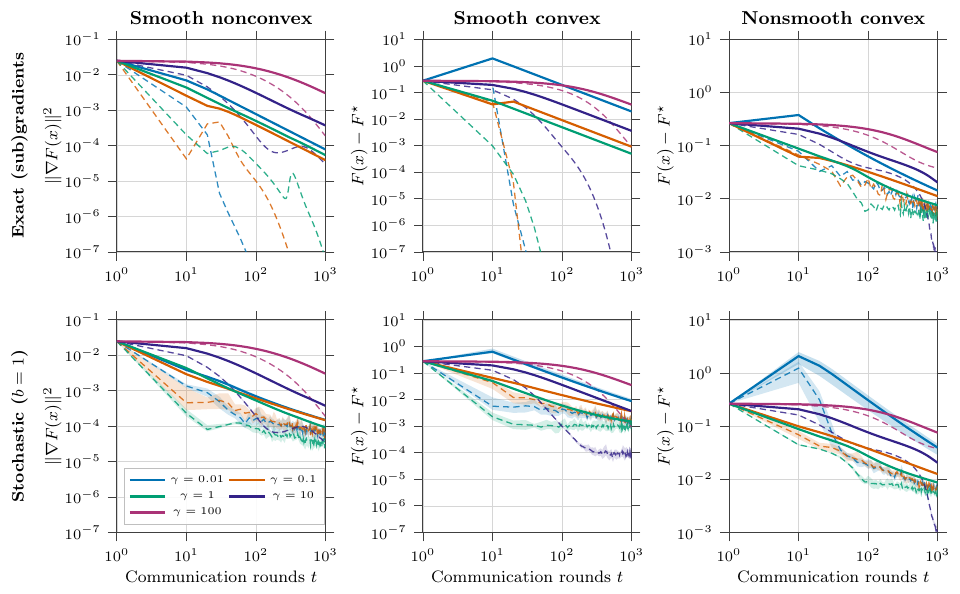}
\caption{Sensitivity of AdaFed to $\gamma$ on IJCNN1, with $\eta=1$ and $K=10$. Solid and dashed curves show the uniform-random and last-iterate metrics, respectively. Stochastic results show the mean over ten seeds, with shaded regions indicating the upward and downward RMS deviations.}
\label{fig:gamma-ijcnn-pgf}
\end{figure}

\subsection{Partial participation}

For an empirical experiment, each client participates
independently with probability $p\in\{0.25,0.5,1\}$. The server uses the same
Horvitz--Thompson-scaled update as the implementation, so its conditional mean matches the
full-participation update. We fix $K=10$, $\gamma=1$, and the corresponding tuned $\eta$.
Whenever participation is random, curves are pointwise means over seeds 2027--2036; the
full-participation exact curve is deterministic.

\begin{figure}[p]
\centering
\includegraphics[width=0.87\linewidth]{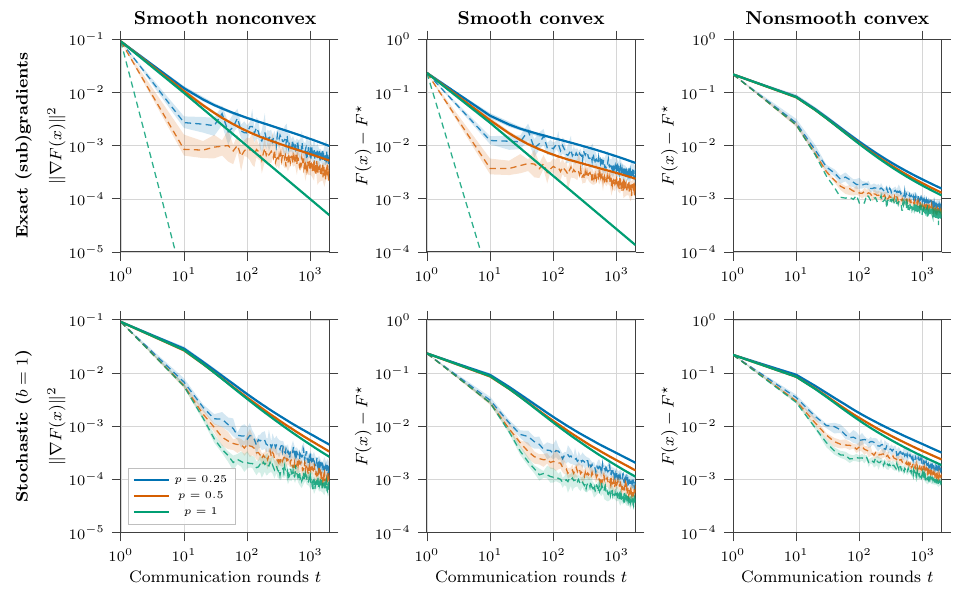}
\caption{Performance of AdaFed under partial participation on synthetic logistic problems with Gram condition number $\kappa=10$ and different participation rates $p$. Solid and dashed curves show the uniform-random and last-iterate metrics, respectively. Stochastic results show the mean over ten seeds, with shaded regions indicating the upward and downward RMS deviations.}
\label{fig:partial-synthetic-pgf}
\end{figure}

\begin{figure}[p]
\centering
\includegraphics[width=0.87\linewidth]{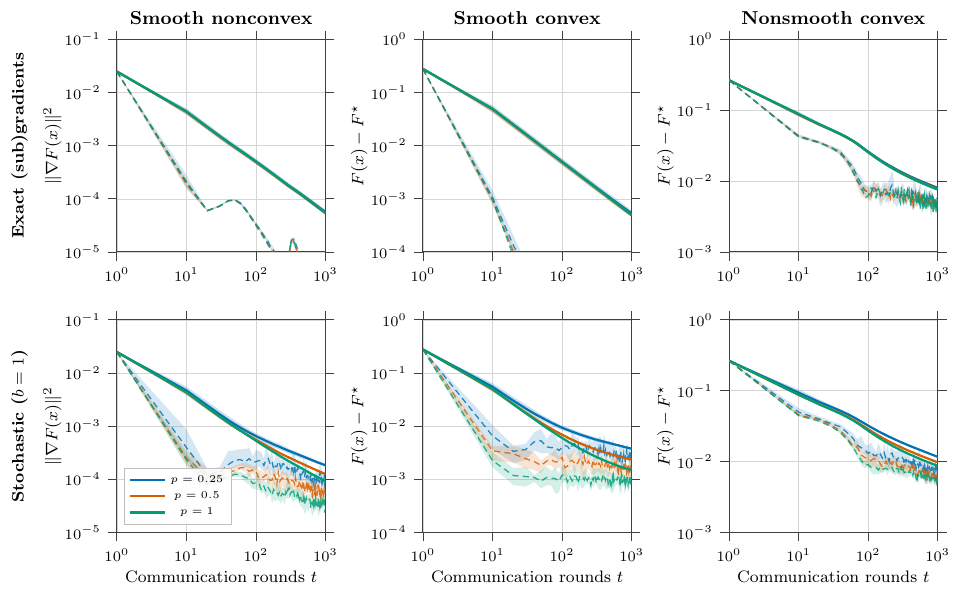}
\caption{Performance of AdaFed under partial participation on IJCNN1 with different participation rates $p$. Solid and dashed curves show the uniform-random and last-iterate metrics, respectively. Stochastic results show the mean over ten seeds, with shaded regions indicating the upward and downward RMS deviations.}
\label{fig:partial-ijcnn-pgf}
\end{figure}

Reducing participation increases the stochastic error level in every regime. At the final
checkpoint, $p=0.25$ raises the stochastic random-output metric relative to $p=1$ by factors
between $1.66$ and $1.79$ on synthetic data and between $1.35$ and $2.54$ on IJCNN1. The effect
is much larger for the exact-gradient smooth synthetic problems because client subsampling becomes
their only source of randomness.

\clearpage

\subsection{Adaptive-method comparisons}
\label{app:comparison-protocol}
These experiments use full participation of all 32 clients, $K=10$ local
steps, and $T=10^4$ rounds on synthetic data with $\kappa=1$ and
on IJCNN1. Exact local gradients use each client's
entire sample. Stochastic gradients use minibatches sampled independently
with replacement, of size $b=16$ for synthetic data and $b=32$ for IJCNN1.
The exact trajectories use seed 2027; stochastic curves are pointwise
means of seeds 2027--2029. All methods start from the zero vector and
are evaluated using the same full-objective metrics as in
Section~\ref{sec:numerical-experiments}. The solid and dashed line meanings
match Figure~\ref{fig:comparison-selected-ijcnn1-32-pgf}.

Each method is tuned separately in each dataset--objective--oracle cell.
Candidate runs use seed 1901 and 300 reported rounds on synthetic data or
150 on IJCNN1. The tuning criterion is the final cumulative-mean metric,
namely the objective gap for convex problems and squared full pooled
gradient norm for the smooth nonconvex problem. Initial Cartesian grids are
extended by a factor of ten along an axis when the selected value lies at
its edge, subject to the finite bounds in Table~\ref{tab:comparison-tuning}.
The resulting settings are held fixed throughout the 10,000-round runs;
they are short-horizon selections rather than parameters optimized for the
full horizon. The selected four-method comparison and the wider
adaptation-level comparisons reuse the same AdaFed, FedAdagrad, FedLADA,
and FAFED runs. AdaFed jointly tunes its $\eta$ and accumulator
initialization $\gamma$; its local schedule remains $\tau_t=1/((K-1)t)$.
For baselines, $c/L$ denotes a local learning rate using the pooled
logistic smoothness constant $L$.

\begin{table}[h]
\caption{Tuned quantities and finite search bounds for the adaptive-method
comparisons. $\eta_g$ is a server rate, $c$ or $c_0$ is a local scale,
$g$ is a FedSPS cap scale, and $\epsilon$ is Local AdaAlter's placeholder.}
\label{tab:comparison-tuning}
\centering\small
\begin{tabular}{@{}llp{7.1cm}@{}}
\toprule
Method & Tuned quantities & Finite search bounds \\
\midrule
AdaFed & $\eta,\gamma$ & $\eta\in[10^{-3},10^3]$; $\gamma\in[10^{-3},10^3]$ \\
FedAdagrad & $c,\eta_g$ & $c\in[1/128000,20]$, $\eta_g\in[10^{-3},100]$ \\
FedAdam & $c,\eta_g$ & $c\in[1/128000,20]$, $\eta_g\in[10^{-3},100]$ \\
FedUR & $c_0,\alpha$ & $c_0\in[10^{-4},10]$, $\alpha\in[0.1,1]$ \\
Local AdaAlter & $c,\epsilon$ & $c\in[10^{-5},100]$, $\epsilon\in[10^{-4},10]$ \\
FedSPS & $c,g$ & $c\in[10^{-3},1000]$, $g\in[10^{-4},100]$ \\
FedLADA & $c,\eta_g$ & $c\in[10^{-5},10]$, $\eta_g\in[10^{-2},10]$ \\
FAFED & $c$ & $c\in[10^{-5},0.1]$; $\eta_g=1$ fixed \\
FedAda$^2$ & $c,\eta_g$ & $c\in[10^{-6},10]$, $\eta_g\in[10^{-3},100]$ \\
\bottomrule
\end{tabular}
\end{table}

\paragraph{Initial discrete grids.}
The tested starting grids, in the order of the tuned quantities in
Table~\ref{tab:comparison-tuning}, are as follows:
\begin{itemize}[leftmargin=1.5em,topsep=2pt,itemsep=0pt]
\item AdaFed: $\eta\in\{0.01,0.1,1,10,100\}$ and
  $\gamma\in\{0.01,0.1,1,10,100\}$.
\item FedAdagrad: $c\in\{1/128,1/32,1/8,1/2,2\}$ and
  $\eta_g\in\{0.001,0.01,0.1,1,10\}$.
\item FedAdam: $c\in\{1/128,1/8,2\}$ and
  $\eta_g\in\{0.01,0.1,1,10\}$.
\item FedUR: $c_0\in\{0.001,0.01,0.1,1\}$ and
  $\alpha\in\{0.1,0.5,0.9,1\}$.
\item Local AdaAlter: $c\in\{0.001,0.01,0.1,1,10\}$ and
  $\epsilon\in\{0.01,0.1,1\}$.
\item FedSPS: $c\in\{0.1,1,10,100\}$ and
  $g\in\{0.01,0.1,1,10\}$.
\item FedLADA: $c\in\{10^{-5},10^{-4},10^{-3},10^{-2},10^{-1}\}$
  and $\eta_g\in\{0.1,1,10\}$.
\item FAFED: $c\in\{10^{-5},10^{-4},10^{-3},10^{-2},10^{-1}\}$.
\item FedAda$^2$: $c\in\{10^{-4},10^{-3},10^{-2},10^{-1},1\}$
  and $\eta_g\in\{0.01,0.1,1,10\}$.
\end{itemize}

\paragraph{Selected methods on synthetic data.}
Figure~\ref{fig:comparison-selected-synthetic-kappa1-pgf} is the synthetic
counterpart of the main-text IJCNN1 comparison. AdaFed has the smallest
final cumulative-mean metric in all six synthetic settings among this
four-method set. The nonsmooth runs of FedLADA and FAFED use the selected
$\operatorname{sign}(0)=0$ subgradient and are empirical extensions beyond
the smooth-objective assumptions in their cited analyses.

\paragraph{Server adaptation.}
Figures~\ref{fig:comparison-server-synthetic-kappa1-pgf} and
\ref{fig:comparison-server-ijcnn1-32-pgf} add FedAdam and FedUR to
FedAdagrad as server-side comparators. AdaFed has the lowest final
cumulative-mean metric in all 12 dataset--objective--oracle cells of this
group. This is an empirical comparison at equal communication rounds;
different methods may incur different local computational costs.

\paragraph{Client adaptation.}
Figures~\ref{fig:comparison-client-synthetic-kappa1-pgf} and
\ref{fig:comparison-client-ijcnn1-32-pgf} compare AdaFed with Local
AdaAlter, FedSPS, and FedLADA. AdaFed has the smallest final cumulative-mean
metric in 10 of the 12 cells. Local AdaAlter is lower in the synthetic
stochastic smooth nonconvex and smooth convex cells, with values
$3.009\times10^{-5}$ versus $5.401\times10^{-5}$ and
$9.025\times10^{-5}$ versus $1.647\times10^{-4}$, respectively.

\paragraph{Adaptation at both levels.}
Figures~\ref{fig:comparison-both-synthetic-kappa1-pgf} and
\ref{fig:comparison-both-ijcnn1-32-pgf} compare AdaFed with FAFED and
FedAda$^2$. AdaFed has the smallest final cumulative-mean metric in all
12 cells. FAFED uses about two local gradient evaluations per tracker step;
FedAda$^2$ uses coordinatewise AdaGrad at both levels with client
accumulators restarted each round. Their nonsmooth results, like those of
the other baselines, should be read as empirical tests of the implemented
subgradient extensions.

\vspace{2em}

\begin{figure}[h]
\centering
\includegraphics[width=0.8\linewidth]{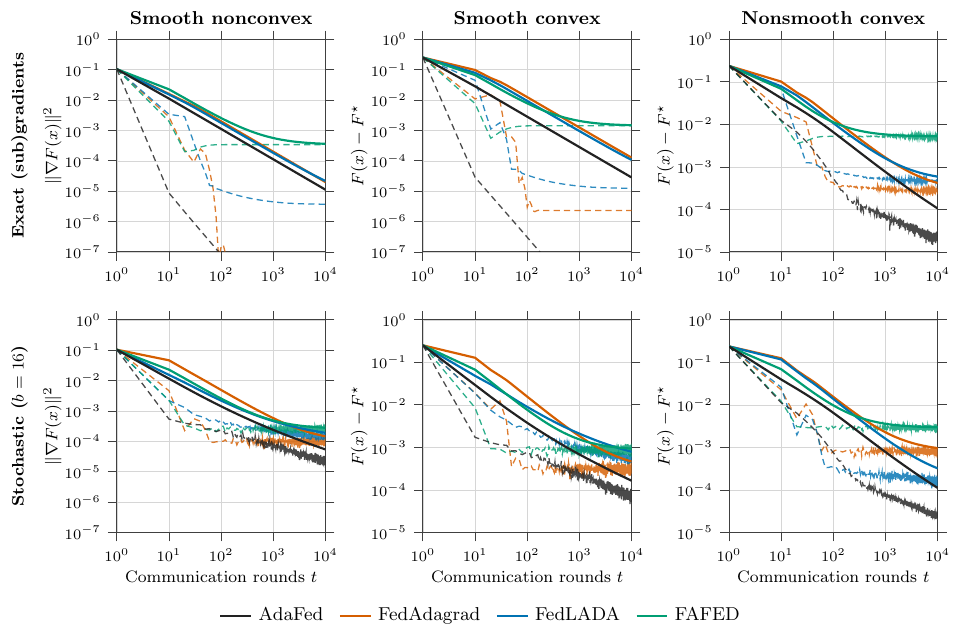}
\caption{Performance of AdaFed compared with representative adaptive FL methods on synthetic data with $\kappa=1$ and $K=10$ local steps. FedAdagrad, FedLADA, and FAFED represent server-, client-, and both-level adaptation, respectively. Solid and dashed curves show the uniform-random and last-iterate metrics, respectively. Stochastic results are shown as the mean over three seeds.}
\label{fig:comparison-selected-synthetic-kappa1-pgf}
\end{figure}

\begin{figure}[p]
\centering
\includegraphics[width=0.8\linewidth]{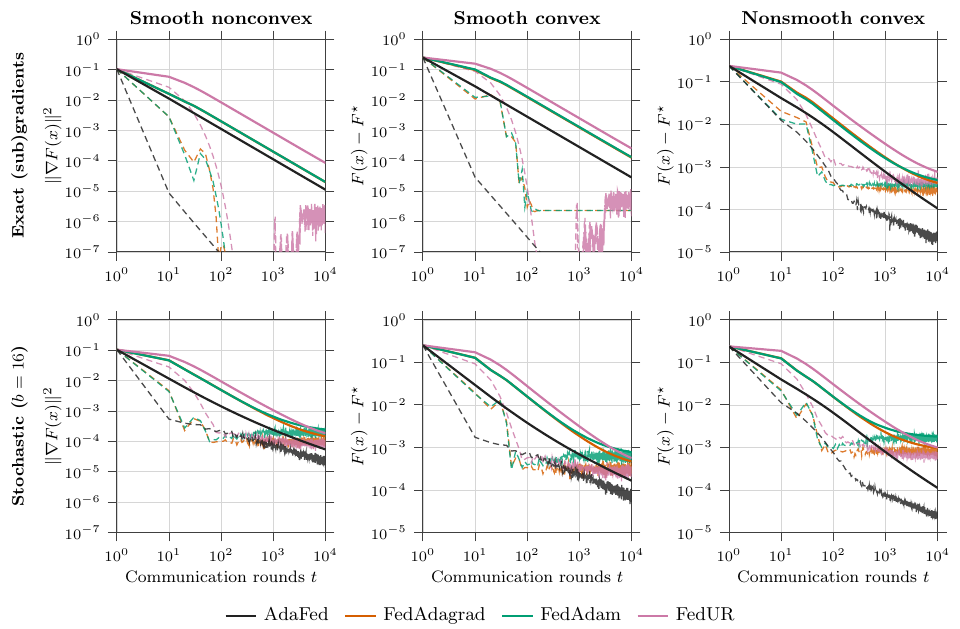}
\caption{Performance of AdaFed compared with server-adaptive FL methods on synthetic data with $\kappa=1$ and $K=10$ local steps. FedAdagrad, FedAdam, and FedUR are server-level adaptive methods. Solid and dashed curves show the uniform-random and last-iterate metrics, respectively. Stochastic results are shown as the mean over three seeds.}
\label{fig:comparison-server-synthetic-kappa1-pgf}
\end{figure}

\begin{figure}[p]
\centering
\includegraphics[width=0.8\linewidth]{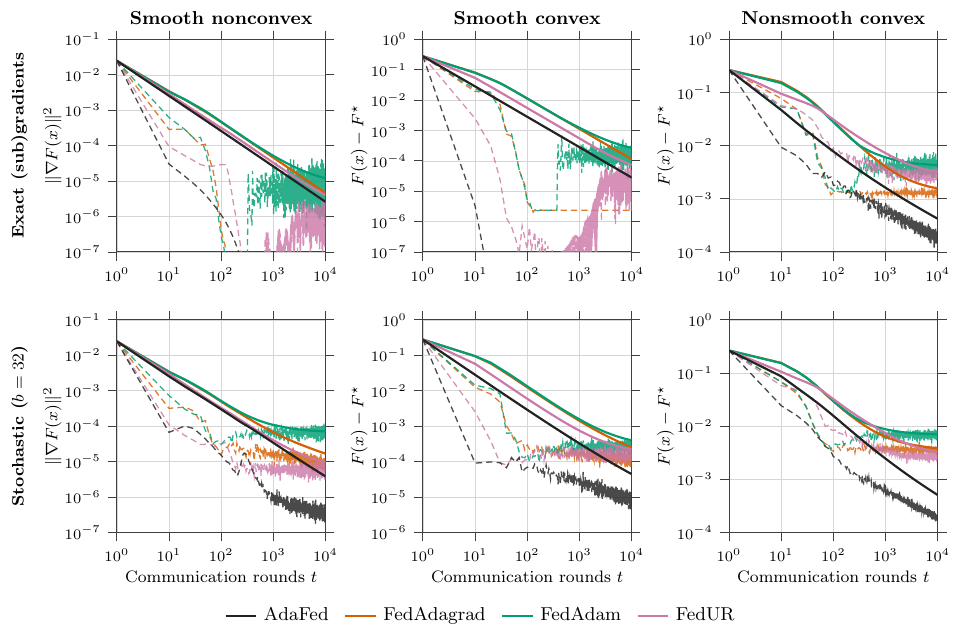}
\caption{Performance of AdaFed compared with server-adaptive FL methods on IJCNN1 with $K=10$ local steps. FedAdagrad, FedAdam, and FedUR represent server-level adaptation. Solid and dashed curves show the uniform-random and last-iterate metrics, respectively. Stochastic results are shown as the mean over three seeds.}
\label{fig:comparison-server-ijcnn1-32-pgf}
\end{figure}

\begin{figure}[p]
\centering
\includegraphics[width=0.8\linewidth]{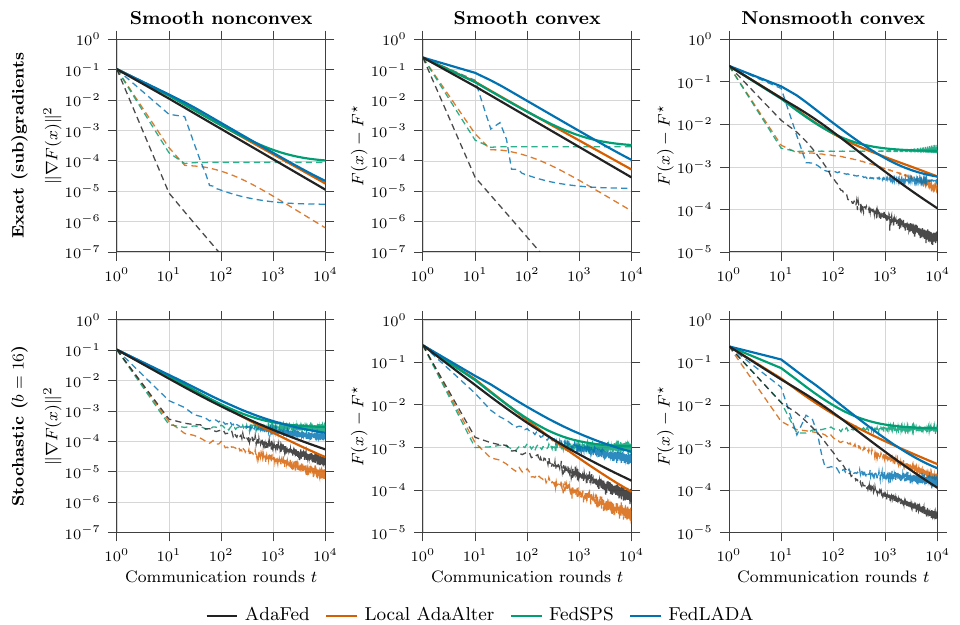}
\caption{Performance of AdaFed compared with client-adaptive FL methods on synthetic data with $\kappa=1$ and $K=10$ local steps. Local AdaAlter, FedSPS, and FedLADA represent client-level adaptation. Solid and dashed curves show the uniform-random and last-iterate metrics, respectively. Stochastic results are shown as the mean over three seeds.}
\label{fig:comparison-client-synthetic-kappa1-pgf}
\end{figure}

\begin{figure}[p]
\centering
\includegraphics[width=0.8\linewidth]{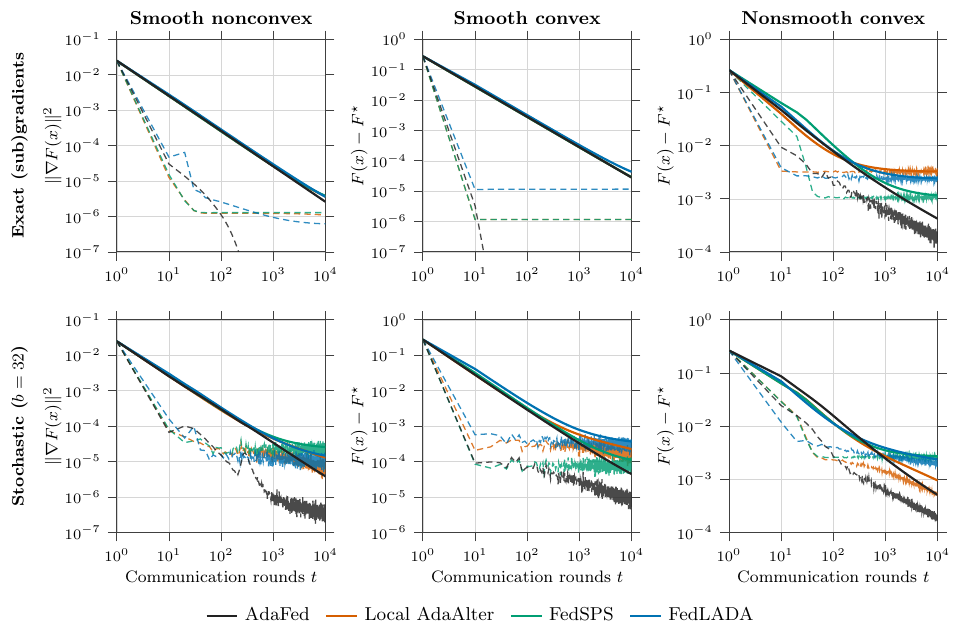}
\caption{Performance of AdaFed compared with client-adaptive FL methods on IJCNN1 with $K=10$ local steps. Local AdaAlter, FedSPS, and FedLADA represent client-level adaptation. Solid and dashed curves show the uniform-random and last-iterate metrics, respectively. Stochastic results are shown as the mean over three seeds.}
\label{fig:comparison-client-ijcnn1-32-pgf}
\end{figure}

\begin{figure}[p]
\centering
\includegraphics[width=0.8\linewidth]{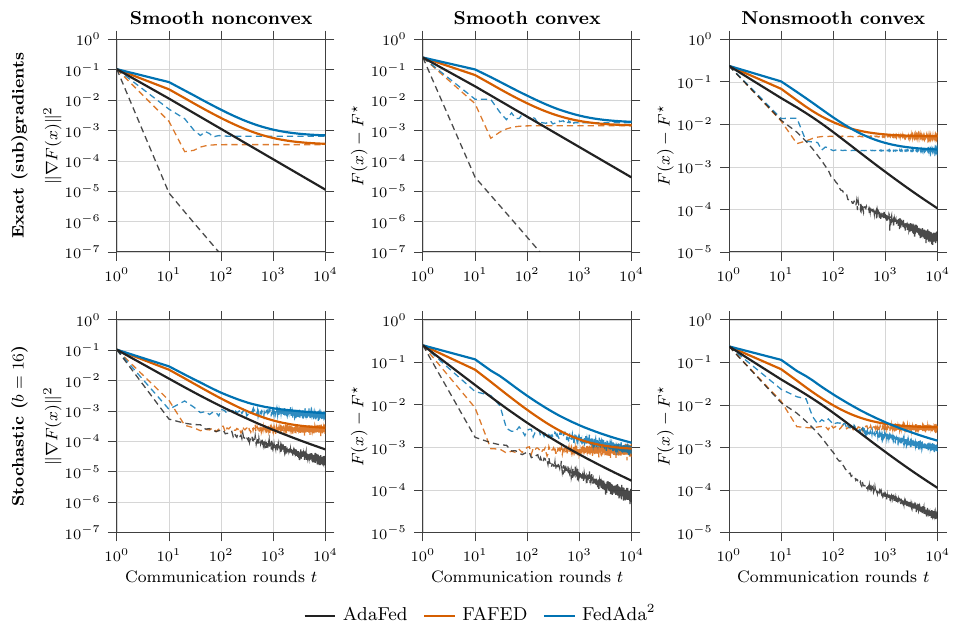}
\caption{Performance of AdaFed compared with FL methods using both server- and client-level adaptation on synthetic data with $\kappa=1$ and $K=10$ local steps. FAFED and FedAda$^2$ represent both-level adaptation. Solid and dashed curves show the uniform-random and last-iterate metrics, respectively. Stochastic results are shown as the mean over three seeds.}
\label{fig:comparison-both-synthetic-kappa1-pgf}
\end{figure}

\begin{figure}[p]
\centering
\includegraphics[width=0.8\linewidth]{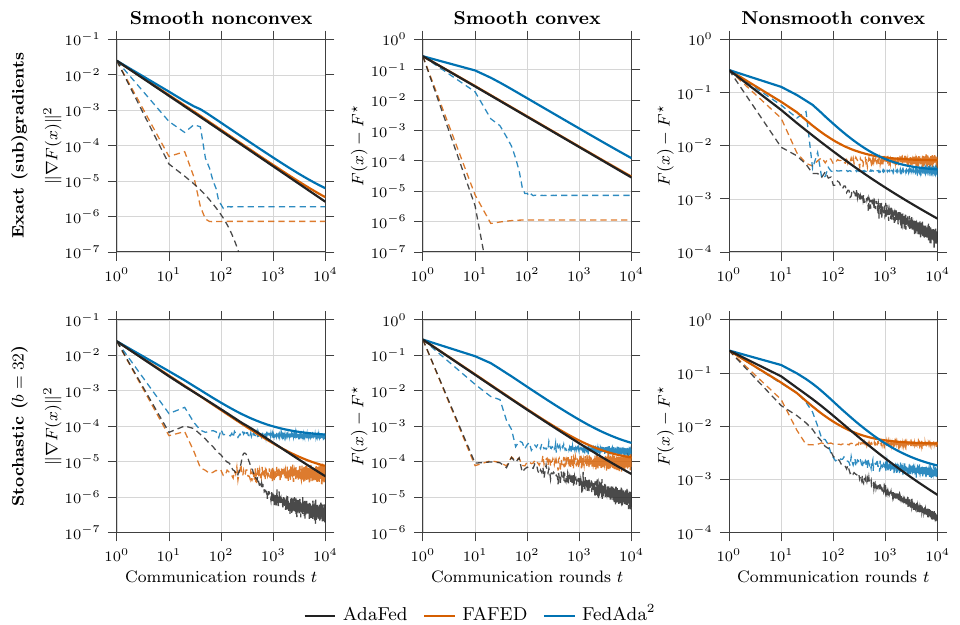}
\caption{Performance of AdaFed compared with FL methods using both server- and client-level adaptation on IJCNN1 with $K=10$ local steps. FAFED and FedAda$^2$ represent both-level adaptation. Solid and dashed curves show the uniform-random and last-iterate metrics, respectively. Stochastic results are shown as the mean over three seeds.}
\label{fig:comparison-both-ijcnn1-32-pgf}
\end{figure}

\FloatBarrier

\newpage
\NewDocumentCommand{\mildbold}{O{0.2pt} O{black} m}{%
  {\pdfrender{
    TextRenderingMode=FillStroke,
    LineWidth=#1,
    FillColor=#2,
    StrokeColor=#2
  }#3}%
}

\newcommand{\yes}{{\(\mildbold[1pt][green!45!black]{\checkmark}\)}}
\newcommand{\no}{{\(\mildbold[1pt][red!65!black]{\times}\)}}
\newcommand{\na}{\textcolor{gray!80!black}{\textbf{NA}}}
\newcommand{\hltext}[1]{\textbf{#1}}

\newcolumntype{L}[1]{>{\RaggedRight\arraybackslash}p{#1}}
\newcommand{\tblfield}[2]{\textbf{#1}\ #2\par}
\newcommand{\thickmidrule}{\specialrule{0.8pt}{3pt}{3pt}}

\section{Comparison of Adaptive FL Assumptions and Guarantees}
\label{app:adaptive-fl-assumptions}
Table~\ref{tab:adaptive-fl-literature} complements the related-work discussion with a detailed comparison of adaptive federated optimization
methods. For each result, we summarize the objective and stochastic assumptions, the participation and local-update settings, where step-size adaptation is performed, and the main theoretical guarantees. 

\begingroup
\scriptsize
\renewcommand{\arraystretch}{1.08}
\setlength{\tabcolsep}{2pt}
\setlength{\LTleft}{0pt}
\setlength{\LTright}{0pt}
\setlength{\extrarowheight}{1pt}

\begin{longtable}{@{}L{0.124\linewidth} L{0.227\linewidth} L{0.201\linewidth} L{0.187\linewidth} L{0.227\linewidth}@{}}
\caption{Detailed comparison of assumptions, adaptive mechanisms, and theoretical guarantees in related federated optimization methods.}
\label{tab:adaptive-fl-literature}\\
\toprule
\textbf{Paper / result}
& \textbf{Objective assumptions}
& \textbf{Stochastic / heterogeneity}
& \textbf{Method setting}
& \textbf{Main result / notes}
\\
\thickmidrule
\endfirsthead

\multicolumn{5}{l}{\footnotesize\tablename\ \thetable{} -- continued from previous page}\\
\toprule
\textbf{Paper / result}
& \textbf{Objective assumptions}
& \textbf{Stochastic / heterogeneity}
& \textbf{Method setting}
& \textbf{Main result / notes}
\\
\thickmidrule
\endhead

\midrule
\multicolumn{5}{r}{\footnotesize Continued on next page}\\
\endfoot

\bottomrule
\endlastfoot
\citep{wang2025adaptive}\par{\scriptsize Thm. 1}
& \tblfield{Class:}{nonconvex}\tblfield{Smoothness:}{$L$-smooth in $\|\cdot\|_\alpha$}\tblfield{Strong conv.:}{\no}\tblfield{Grad. bound:}{$\|\nabla f_n(w)\|_\infty \le C$}
& \tblfield{Variance/noise:}{$\mathbb{E}\|\xi_t\|_\alpha^\alpha \le G$}\tblfield{Heterogeneity:}{\no}
& \tblfield{Participation:}{full participation}\tblfield{Adaptation:}{\hltext{In server}}\tblfield{Local updates:}{one local step}
& Thm. 1: AdaGrad-OTA; rate $O(\log(T)/T^{1-1/\alpha})$
\\
\midrule
\citep{wang2025adaptive}\par{\scriptsize Thm. 2}
& \tblfield{Class:}{nonconvex}\tblfield{Smoothness:}{$L$-smooth in $\|\cdot\|_\alpha$}\tblfield{Strong conv.:}{\no}\tblfield{Grad. bound:}{$\|\nabla f_n(w)\|_\infty \le C$}
& \tblfield{Variance/noise:}{$\mathbb{E}\|\xi_t\|_\alpha^\alpha \le G$}\tblfield{Heterogeneity:}{\no}
& \tblfield{Participation:}{full participation}\tblfield{Adaptation:}{\hltext{In server}}\tblfield{Local updates:}{one local step}
& Thm. 2: Adam-OTA; rate $O(1/T)$
\\
\thickmidrule
\citep{chen2025gradient}\par{\scriptsize Thm. 5.2}
& \tblfield{Class:}{nonconvex}\tblfield{Smoothness:}{each $f_i$ is $L$-smooth}\tblfield{Strong conv.:}{\no}\tblfield{Grad. bound:}{\no}
& \tblfield{Variance/noise:}{$\mathbb{E}\|n_i^{(t,k)}\|^2 \le \sigma^2$}\tblfield{Heterogeneity:}{\no}
& \tblfield{Participation:}{client sampling $S^t$; minibatch}\tblfield{Adaptation:}{\hltext{In client}}\tblfield{Local updates:}{$K$ local steps}
& Thm. 5.2: FAdamGC, $\beta_2=\epsilon=0$; rate $O\!\left(\sqrt{\frac{LF\sigma^2}{nKT}}+\frac{LF}{T}\allowbreak+\frac{LK}{T}+\frac{K\sigma}{T}\right)$
\\
\midrule
\citep{chen2025gradient}\par{\scriptsize Thm. 5.4}
& \tblfield{Class:}{nonconvex}\tblfield{Smoothness:}{each $f_i$ is $L$-smooth}\tblfield{Strong conv.:}{\no}\tblfield{Grad. bound:}{$\|g_i^{(t)}\|\le G$}
& \tblfield{Variance/noise:}{$\mathbb{E}\|n_i^{(t,k)}\|^2 \le \sigma^2$}\tblfield{Heterogeneity:}{\no}
& \tblfield{Participation:}{client sampling $S^t$; minibatch}\tblfield{Adaptation:}{\hltext{In client}}\tblfield{Local updates:}{$K$ local steps}
& Thm. 5.4: FAdamGC, general Adam case; rate $O\!\Big(\sqrt{\frac{LF\sigma^2}{nKT}}+\frac{LF}{T}+\frac{KG^6}{\epsilon^2T} \allowbreak +\frac{K(\sigma^2+(1+\epsilon^2)G^2)}{\epsilon^2T}\Big)$
\\
\midrule
\citep{chen2025gradient}\par{\scriptsize Thm. C.2}
& \tblfield{Class:}{nonconvex}\tblfield{Smoothness:}{each $f_i$ is $L$-smooth}\tblfield{Strong conv.:}{\no}\tblfield{Grad. bound:}{$\|g_i^{(t)}\|\le G$}
& \tblfield{Variance/noise:}{$\mathbb{E}\|n_i^{(t,k)}\|^2 \le \sigma^2$}\tblfield{Heterogeneity:}{\no}
& \tblfield{Participation:}{client sampling $S^t$; minibatch}\tblfield{Adaptation:}{\hltext{In client}}\tblfield{Local updates:}{$K$ local steps}
& Thm. C.2: FA-NT, general Adam case; rate has worse $K^2$ final term
\\
\midrule
\citep{chen2025gradient}\par{\scriptsize Thm. C.3}
& \tblfield{Class:}{nonconvex}\tblfield{Smoothness:}{each $f_i$ is $L$-smooth}\tblfield{Strong conv.:}{\no}\tblfield{Grad. bound:}{\no}
& \tblfield{Variance/noise:}{$\mathbb{E}\|n_i^{(t,k)}\|^2 \le \sigma^2$}\tblfield{Heterogeneity:}{$\|\nabla f_i(x)-\nabla f(x)\|\le B$}
& \tblfield{Participation:}{client sampling $S^t$; minibatch}\tblfield{Adaptation:}{\hltext{In client}}\tblfield{Local updates:}{$K$ local steps}
& Thm. C.3: FA-NT, $\beta_2=\epsilon=0$; requires bounded data heterogeneity
\\
\thickmidrule
\citep{chezhegov2024local}\par{\scriptsize Thm. 1 / Cor. 2}
& \tblfield{Class:}{strongly convex}\tblfield{Smoothness:}{each $f_m$ is $L$-smooth}\tblfield{Strong conv.:}{$\mu$-strongly convex, $\mu>0$}\tblfield{Grad. bound:}{\no}
& \tblfield{Variance/noise:}{unbiased stochastic gradients; variance $\le \sigma^2$}\tblfield{Heterogeneity:}{\no}
& \tblfield{Participation:}{full participation}\tblfield{Adaptation:}{\hltext{Server-like} All clients use the same $\widehat{D}^{t_p}$.}\tblfield{Local updates:}{local steps; max gap $H$}
& Thm. 1 / Cor. 2: SAVIC for identical data; Local SGD with adaptive scaling/preconditioning
\\
\midrule
\citep{chezhegov2024local}\par{\scriptsize Thm. 2 / Cor. 3}
& \tblfield{Class:}{strongly convex}\tblfield{Smoothness:}{each $f_m(\cdot,z)$ is $L$-smooth}\tblfield{Strong conv.:}{$\mu$-strongly convex}\tblfield{Grad. bound:}{\no}
& \tblfield{Variance/noise:}{via $\sigma_{\mathrm{dif}}^2= \allowbreak \frac1M\sum_m\mathbb{E}\|\nabla f_m(x^\star,z_m)\|^2$}\tblfield{Heterogeneity:}{heterogeneity measured at $x^\star$}
& \tblfield{Participation:}{full participation}\tblfield{Adaptation:}{\hltext{Server-like} All clients use the same $\widehat{D}^{t_p}$.}\tblfield{Local updates:}{local steps; max gap $H$}
& Thm. 2 / Cor. 3: SAVIC for heterogeneous data; Local SGD with adaptive scaling/preconditioning
\\
\thickmidrule
\citep{wang2021local}\par{\scriptsize Thm. 1}
& \tblfield{Class:}{strongly convex}\tblfield{Smoothness:}{each $F_i$ is $L_i$-smooth}\tblfield{Strong conv.:}{each $F_i$ is $\mu_i$-strongly convex}\tblfield{Grad. bound:}{\no}
& \tblfield{Variance/noise:}{bounded cumulative variance $q_i(k)\sigma^2$}\tblfield{Heterogeneity:}{\no}
& \tblfield{Participation:}{full participation}\tblfield{Adaptation:}{\hltext{In client}}\tblfield{Local updates:}{$\tau_i$ local steps}
& Thm. 1: FEDOPT with local adaptive CLIENTOPT; convergence to fixed point $\tilde x$, not necessarily global minimizer.
\\
\midrule
\citep{wang2021local}\par{\scriptsize Thm. 2}
& \tblfield{Class:}{quadratic convex}\tblfield{Smoothness:}{quadratic smooth}\tblfield{Strong conv.:}{$H_i \succ 0$}\tblfield{Grad. bound:}{\no}
& \tblfield{Variance/noise:}{\no\; deterministic}\tblfield{Heterogeneity:}{\no}
& \tblfield{Participation:}{full participation}\tblfield{Adaptation:}{\hltext{In client}}\tblfield{Local updates:}{$\tau_i$ local steps}
& Thm. 2: preconditioned local GD; non-vanishing bias if $\eta_i,\tau_i,P_i$ differ across clients. The fixed point $\tilde x$ of the algo is not necessarily the global minimizer.
\\
\midrule
\citep{wang2021local}\par{\scriptsize Thm. 3}
& \tblfield{Class:}{nonconvex}\tblfield{Smoothness:}{each $F_i$ is $L$-smooth}\tblfield{Strong conv.:}{\no}\tblfield{Grad. bound:}{preconditioned gradients bounded: $\|B_i^{(t,k)}\nabla F_i(x_i^{(t,k)})\|\le G$}
& \tblfield{Variance/noise:}{\no\; deterministic}\tblfield{Heterogeneity:}{\no}
& \tblfield{Participation:}{full participation}\tblfield{Adaptation:}{\hltext{In client}}\tblfield{Local updates:}{same $\tau$ local steps}
& Thm. 3: local correction; removes non-vanishing bias; rate $O\!\left(\frac{1}{T}+\frac{\Lambda^{1/3}G^{2/3}}{T^{2/3}}\right)$
\\
\thickmidrule
\citep{kim2024adaptive}\par{\scriptsize Thm. 1}
& \tblfield{Class:}{nonconvex}\tblfield{Smoothness:}{local smoothness $\widetilde L$}\tblfield{Strong conv.:}{\no}\tblfield{Grad. bound:}{$\|\nabla f_i(x)\|\le G$}
& \tblfield{Variance/noise:}{$\mathbb{E}\|\nabla F_i(x,z)-\nabla f_i(x)\|^2\le \sigma^2$}\tblfield{Heterogeneity:}{strong-growth dissimilarity}
& \tblfield{Participation:}{partial participation}\tblfield{Adaptation:}{\hltext{In client}}\tblfield{Local updates:}{$K$ local steps}
& Thm. 1: $\Delta$-SGD; client-specific locality-adaptive step sizes; rate $O(T^{-1/2})$.
\\
\midrule
\citep{kim2024adaptive}\par{\scriptsize Thm. 5}
& \tblfield{Class:}{convex}\tblfield{Smoothness:}{locally Lipschitz gradients}\tblfield{Strong conv.:}{\no}\tblfield{Grad. bound:}{\no}
& \tblfield{Variance/noise:}{\no\; deterministic}\tblfield{Heterogeneity:}{common minimizer}
& \tblfield{Participation:}{full participation}\tblfield{Adaptation:}{\hltext{In client}}\tblfield{Local updates:}{$K=1$}
& Thm. 5: convex full-batch $\Delta$-SGD; Lyapunov convergence.
\\
\thickmidrule
\citep{lee2026efficient}\par{\scriptsize Thm. 4.1 / Cor. 4.3}
& \tblfield{Class:}{nonconvex}\tblfield{Smoothness:}{$L$-smooth local objectives}\tblfield{Strong conv.:}{\no}\tblfield{Grad. bound:}{$|[\nabla F_i(x)]_j|\le G$}
& \tblfield{Variance/noise:}{\no\; deterministic}\tblfield{Heterogeneity:}{\no}
& \tblfield{Participation:}{partial participation}\tblfield{Adaptation:}{\hltext{In server and client}}\tblfield{Local updates:}{$K$ local steps}
& Thm. 4.1 / Cor. 4.3: FedAda$^2$ and FedAda$^{2++}$; efficient joint adaptivity without transmitting preconditioners; FedAda$^{2++}$ uses memory-efficient client optimizer; rate $O(T^{-1/2})$.
\\
\midrule
\citep{lee2026efficient}\par{\scriptsize Thm. E.1}
& \tblfield{Class:}{nonconvex}\tblfield{Smoothness:}{$L$-smooth local objectives}\tblfield{Strong conv.:}{\no}\tblfield{Grad. bound:}{$|[\nabla F_i(x)]_j|\le G$}
& \tblfield{Variance/noise:}{\no\; deterministic}\tblfield{Heterogeneity:}{\no}
& \tblfield{Participation:}{partial participation}\tblfield{Adaptation:}{\hltext{In server and client}}\tblfield{Local updates:}{variable $K(O_i^\ell)$ local steps}
& Thm. E.1: federated blended optimization; allows heterogeneous client optimizer strategies such as SGD, AdaGrad, Adam, and SM3 variants, with bounded optimizer updates.
\\
\thickmidrule
\citep{li2024fedda}\par{\scriptsize Thm. 5.6 / Cor. 5.7}
& \tblfield{Class:}{nonconvex}\tblfield{Smoothness:}{sample mean-squared $L$-smoothness}\tblfield{Strong conv.:}{\no}\tblfield{Grad. bound:}{\no}
& \tblfield{Variance/noise:}{unbiased stochastic gradients; variance $\le \sigma^2$}\tblfield{Heterogeneity:}{$\|\nabla f^{(k)}(x)-\nabla f^{(\ell)}(x)\|^2\le \zeta^2$}
& \tblfield{Participation:}{full participation}\tblfield{Adaptation:}{\hltext{In server/client}}\tblfield{Local updates:}{$I$ local dual-averaging steps}
& Thm. 5.6 / Cor. 5.7: FedDA-MVR for \hltext{federated constrained optimization}; adaptive matrix fixed during local steps and updated at server; sample complexity $\widetilde O(K^{-1}\epsilon^{-1.5})$, communication complexity $\widetilde O(K^{-0.25}\epsilon^{-1.25})$. $\frac1T \sum_1^T \mathbb{E} [\mathcal{G}_t] = \tilde{\mathcal{O}} \left( T^{-2/3} K^{-2/3}\right)$
\\
\thickmidrule
\citep{liu2026lodadac}\par{\scriptsize Thm. 3.2}
& \tblfield{Class:}{nonconvex}\tblfield{Smoothness:}{$L$-smooth local objectives}\tblfield{Strong conv.:}{\no}\tblfield{Grad. bound:}{$\|g_i^t\|\le B$, $\|g_i^t\|_\infty\le B_\infty$}
& \tblfield{Variance/noise:}{unbiased stochastic gradients; bounded gradients}\tblfield{Heterogeneity:}{\no}
& \tblfield{Participation:}{all agents; decentralized neighbor communication}\tblfield{Adaptation:}{\hltext{In local agents}}\tblfield{Local updates:}{$K$ local steps per communication}
& Thm. 3.2: LoDAdaC; decentralized adaptive local training with compressed communication. Covers SGD, momentum SGD, Adam, AMSGrad, AdaGrad, and Adam-Mini; rate $O(1/\sqrt{nTK})$ under suitable conditions, with communication cost $O((1-\eta)/(nK\epsilon^4))$ for an $\epsilon$-stationary point.
\\
\thickmidrule
\citep{mukherjee2023locally}\par{\scriptsize Thm. 3(a)}
& \tblfield{Class:}{convex}\tblfield{Smoothness:}{$F_i(\cdot,\xi)$ is $L$-smooth}\tblfield{Strong conv.:}{\no}\tblfield{Grad. bound:}{\no}
& \tblfield{Variance/noise:}{stochastic gradients}\tblfield{Heterogeneity:}{finite $\sigma_f^2 = f^\star - \frac{1}{n}\sum_i \ell_i^\star$}
& \tblfield{Participation:}{full participation}\tblfield{Adaptation:}{\hltext{In client}}\tblfield{Local updates:}{$\tau$ local steps}
& Thm. 3(a): FedSPS with fully local stochastic Polyak stepsizes; sublinear convergence to a neighborhood for convex objectives.
\\
\midrule
\citep{mukherjee2023locally}\par{\scriptsize Thm. 3(b) / Cor. 5}
& \tblfield{Class:}{strongly convex}\tblfield{Smoothness:}{$F_i(\cdot,\xi)$ is $L$-smooth}\tblfield{Strong conv.:}{$\mu$-strongly convex}\tblfield{Grad. bound:}{\no}
& \tblfield{Variance/noise:}{stochastic gradients; finite $\sigma_f^2$}\tblfield{Heterogeneity:}{$\sigma_f^2 = f^\star - \frac{1}{n}\sum_i \ell_i^\star$}
& \tblfield{Participation:}{full participation}\tblfield{Adaptation:}{\hltext{In client}}\tblfield{Local updates:}{$\tau$ local steps}
& Thm. 3(b) / Cor. 5: FedSPS; linear convergence under interpolation, i.e., $\sigma_f^2=0$.
\\
\midrule
\citep{mukherjee2023locally}\par{\scriptsize Thm. 18}
& \tblfield{Class:}{\no\; nonconvex}\tblfield{Smoothness:}{$F_i(\cdot,\xi)$ is $L$-smooth}\tblfield{Strong conv.:}{\no}\tblfield{Grad. bound:}{\no}
& \tblfield{Variance/noise:}{$\mathbb{E}\|\nabla F_i(x,\xi)-\nabla f_i(x)\|^2\le \sigma^2$}\tblfield{Heterogeneity:}{$\frac1n\sum_i\|\nabla f_i(x)-\nabla f(x)\|^2\le \zeta^2$}
& \tblfield{Participation:}{full participation}\tblfield{Adaptation:}{\hltext{In client}}\tblfield{Local updates:}{$\tau$ local steps}
& Thm. 18: small-stepsize FedSPS/FedAvg for nonconvex objectives; rate for $\min_t \mathbb{E}\|\nabla f(\bar x_t)\|^2$.
\\
\thickmidrule
\citep{reddi2021adaptive}\par{\scriptsize Thm. 1 / Cor. 1}
& \tblfield{Class:}{nonconvex}\tblfield{Smoothness:}{each $F_i$ is $L$-smooth}\tblfield{Strong conv.:}{\no}\tblfield{Grad. bound:}{$|[\nabla f_i(x,z)]_j|\le G$}
& \tblfield{Variance/noise:}{local variance $\sigma_{l,j}^2$}\tblfield{Heterogeneity:}{$\frac{1}{m}\sum_i |[\nabla F_i(x)]_j-[\nabla f(x)]_j|^2\le \sigma_{g,j}^2$}
& \tblfield{Participation:}{full participation}\tblfield{Adaptation:}{\hltext{In server}}\tblfield{Local updates:}{$K$ local SGD steps}
& Thm. 1 / Cor. 1: FedAdagrad with SGD as client optimizer and adaptive server optimizer; rate essentially $O(1/\sqrt{mKT})$ for suitable $\eta_l,\eta,\tau$.
\\
\midrule
\citep{reddi2021adaptive}\par{\scriptsize Thm. 2 / Cor. 2}
& \tblfield{Class:}{nonconvex}\tblfield{Smoothness:}{each $F_i$ is $L$-smooth}\tblfield{Strong conv.:}{\no}\tblfield{Grad. bound:}{$|[\nabla f_i(x,z)]_j|\le G$}
& \tblfield{Variance/noise:}{local variance $\sigma_{l,j}^2$}\tblfield{Heterogeneity:}{$\frac{1}{m}\sum_i |[\nabla F_i(x)]_j-[\nabla f(x)]_j|^2\le \sigma_{g,j}^2$}
& \tblfield{Participation:}{full participation}\tblfield{Adaptation:}{\hltext{In server}}\tblfield{Local updates:}{$K$ local SGD steps}
& Thm. 2 / Cor. 2: FedAdam; FedYogi follows similarly. Server uses Adam/Yogi-style adaptive preconditioning on the pseudo-gradient from client model differences.
\\
\thickmidrule
\citep{sun2023efficient}\par{\scriptsize Thm. 3.5 / Cor. 3.6}
& \tblfield{Class:}{nonconvex}\tblfield{Smoothness:}{$F_i$ is $L$-smooth}\tblfield{Strong conv.:}{\no}\tblfield{Grad. bound:}{$\|g_i^t\|_\infty \le G_g$}
& \tblfield{Variance/noise:}{unbiased stochastic gradients; variance $\le \sigma_l^2$}\tblfield{Heterogeneity:}{$\|\nabla F_i(x)-\nabla F(x)\|^2 \le \sigma_g^2$}
& \tblfield{Participation:}{partial participation; $S$ clients}\tblfield{Adaptation:}{\hltext{In client}}\tblfield{Local updates:}{$K$ local adaptive steps}
& Thm. 3.5 / Cor. 3.6: FedLADA; local adaptive amended optimizer with global offset correction $g_a^t$; rate $O\!\left(\frac{1}{\sqrt{SKT}}+\frac{1}{KT}+\frac{1}{T}+\frac{1}{T^{3/2}}\right)$.
\\
\thickmidrule
\citep{takakura2025feddua}\par{\scriptsize Thm. 4.3 / Cor. 4.4}
& \tblfield{Class:}{convex}\tblfield{Smoothness:}{$F_i$ is $L$-smooth}\tblfield{Strong conv.:}{\no}\tblfield{Grad. bound:}{\no}
& \tblfield{Variance/noise:}{\no\; deterministic}\tblfield{Heterogeneity:}{$\frac{1}{M}\sum_{i} \|\nabla F_i(w^\star)\|^2 \le \sigma_\star^2$}
& \tblfield{Participation:}{full participation}\tblfield{Adaptation:}{\hltext{In server}}\tblfield{Local updates:}{$\tau$ local SGD steps}
& Thm. 4.3 / Cor. 4.4: FedDuA / FedDuAdagrad; server-side doubly adaptive global step-size based on both inter-client and coordinate-wise heterogeneity; convergence for convex objectives with full-batch local gradient steps.
\\
\thickmidrule
\citep{wu2023faster}\par{\scriptsize Thm. 2 / Rem. 1}
& \tblfield{Class:}{nonconvex}\tblfield{Smoothness:}{$f_i(\cdot;\xi)$ is $L$-smooth}\tblfield{Strong conv.:}{\no}\tblfield{Grad. bound:}{$\|\nabla f_i(x)\|\le G$}
& \tblfield{Variance/noise:}{unbiased stochastic gradients; variance $\le \sigma^2$}\tblfield{Heterogeneity:}{$\|\nabla f_i(x)-\nabla f_j(x)\|^2\le \zeta^2$}
& \tblfield{Participation:}{full participation; cross-silo}\tblfield{Adaptation:}{\hltext{In client/server}}\tblfield{Local updates:}{periodic local adaptive steps; sync every $q$ steps}
& Thm. 2 / Rem. 1: FAFED with shared adaptive learning rates and momentum-based variance reduction; adaptive matrix $A_t=\mathrm{diag}(\sqrt{\bar v_t}+\rho)$; sample complexity $\widetilde O(\epsilon^{-3})$ and communication complexity $\widetilde O(\epsilon^{-2})$.
\\
\thickmidrule
\citep{xie2020local}\par{\scriptsize Thm. 1}
& \tblfield{Class:}{nonconvex}\tblfield{Smoothness:}{$F$ and $F_i$ are $L$-smooth}\tblfield{Strong conv.:}{\no}\tblfield{Grad. bound:}{$\|G_{i,t}\|_\infty \le \rho$}
& \tblfield{Variance/noise:}{\no\; no explicit variance bound}\tblfield{Heterogeneity:}{\no}
& \tblfield{Participation:}{full participation}\tblfield{Adaptation:}{\hltext{In client}}\tblfield{Local updates:}{$H$ local adaptive steps}
& Thm. 1: Local AdaAlter; AdaGrad-style local SGD with lazy denominator updates and synchronization every $H$ iterations; convergence rate $O(1/(\eta\sqrt{T})) + O(\eta^2H^2\log(T)/\sqrt{T}) + O(\eta\log(T)/(n\sqrt{T}))$.
\\
\thickmidrule
\citep{wang2022communication}\par{\scriptsize Thm. 4.4 / 4.9}
& \tblfield{Class:}{nonconvex}\tblfield{Smoothness:}{$F_i$ is $L$-smooth}\tblfield{Strong conv.:}{\no}\tblfield{Grad. bound:}{$\|\nabla f_i(x,\xi)\|\le G$}
& \tblfield{Variance/noise:}{local variance $\le \sigma_l^2$}\tblfield{Heterogeneity:}{$\frac{1}{m}\sum_{i=1}^m\|\nabla F_i(x)-\nabla f(x)\|^2\le \sigma_g^2$}
& \tblfield{Participation:}{full/partial participation}\tblfield{Adaptation:}{\hltext{In server}}\tblfield{Local updates:}{$K$ local SGD steps}
& Thm. 4.4 / 4.9: FedAMS; AMSGrad-type server adaptive optimizer with max stabilization; rate $O(1/\sqrt{TKm})$ under full participation and partial-participation extension.
\\
\midrule
\citep{wang2022communication}\par{\scriptsize Thm. 4.17 / Thm. B.1}
& \tblfield{Class:}{nonconvex}\tblfield{Smoothness:}{$F_i$ is $L$-smooth}\tblfield{Strong conv.:}{\no}\tblfield{Grad. bound:}{$\|\nabla f_i(x,\xi)\|\le G$}
& \tblfield{Variance/noise:}{local variance $\le \sigma_l^2$}\tblfield{Heterogeneity:}{$\frac{1}{m}\sum_{i=1}^m\|\nabla F_i(x)-\nabla f(x)\|^2\le \sigma_g^2$}
& \tblfield{Participation:}{full/partial participation}\tblfield{Adaptation:}{\hltext{In server}}\tblfield{Local updates:}{$K$ local SGD steps}
& Thm. 4.17 / Thm. B.1: FedCAMS; communication-compressed FedAMS with error feedback and biased compressor $\mathbb{E}\|\mathcal{C}(x)-x\|\le q\|x\|$; same full-participation rate $O(1/\sqrt{TKm})$ as FedAMS.
\\
\thickmidrule
\citep{wang2024fadas}\par{\scriptsize Thm. 5.6 / Cor. 5.7}
& \tblfield{Class:}{nonconvex}\tblfield{Smoothness:}{$F_i$ is $L$-smooth}\tblfield{Strong conv.:}{\no}\tblfield{Grad. bound:}{$\|\nabla F_i(x;\xi)\|\le G$}
& \tblfield{Variance/noise:}{unbiased stochastic gradients; variance $\le \sigma^2$}\tblfield{Heterogeneity:}{$\frac1N\sum_{i=1}^N\|\nabla F_i(x)-\nabla f(x)\|^2\le \sigma_g^2$}
& \tblfield{Participation:}{asynchronous; buffer size $M$; uniform arrivals}\tblfield{Adaptation:}{\hltext{In server}}\tblfield{Local updates:}{$K$ local SGD steps}
& Thm. 5.6 / Cor. 5.7: FADAS; asynchronous adaptive FL with AMSGrad-type server update; rate $O\!\left(\frac{1}{\sqrt{TM}}+\frac{\tau_{\max}\tau_{\mathrm{avg}}}{T}\right)$.
\\
\midrule
\citep{wang2024fadas}\par{\scriptsize Thm. 5.9 / Cor. 5.10}
& \tblfield{Class:}{nonconvex}\tblfield{Smoothness:}{$F_i$ is $L$-smooth}\tblfield{Strong conv.:}{\no}\tblfield{Grad. bound:}{$\|\nabla F_i(x;\xi)\|\le G$}
& \tblfield{Variance/noise:}{unbiased stochastic gradients; variance $\le \sigma^2$}\tblfield{Heterogeneity:}{$\frac1N\sum_{i=1}^N\|\nabla F_i(x)-\nabla f(x)\|^2\le \sigma_g^2$}
& \tblfield{Participation:}{asynchronous; buffer size $M$; uniform arrivals}\tblfield{Adaptation:}{\hltext{In server; delay-adaptive}}\tblfield{Local updates:}{$K$ local SGD steps}
& Thm. 5.9 / Cor. 5.10: delay-adaptive FADAS; server learning rate adjusted by maximum delay $\tau_t^{\max}$; reduces dependence on worst-case delay $\tau_{\max}$.
\\
\thickmidrule
\citep{yuan2025aggregated}\par{\scriptsize Thm. 1--3}
& \tblfield{Class:}{nonconvex}\tblfield{Smoothness:}{$\beta$-Lipschitz smooth}\tblfield{Strong conv.:}{\no}\tblfield{Grad. bound:}{$\|\nabla F_i\|_2 \le P$}
& \tblfield{Variance/noise:}{\no\; deterministic}\tblfield{Heterogeneity:}{\no}
& \tblfield{Participation:}{full participation}\tblfield{Adaptation:}{\hltext{In client}}\tblfield{Local updates:}{\no\; one local update in theory}
& Thm. 1--3: FedAgile; aggregated-gradient adaptive learning rate with mean-field terms and JS-distance refinement; convergence rate $\widetilde O(T^{-1})$.
\\
\thickmidrule
\citep{zhang2023fedur}\par{\scriptsize Thm. IV.1--IV.3}
& \tblfield{Class:}{convex}\tblfield{Smoothness:}{$\beta$-smooth}\tblfield{Strong conv.:}{\no}\tblfield{Grad. bound:}{$F$ is $\rho$-Lipschitz}
& \tblfield{Variance/noise:}{\no\; no explicit variance bound}\tblfield{Heterogeneity:}{$\|G_k(w)-G(w)\|\le \delta_k$}
& \tblfield{Participation:}{partial participation; $K$ selected devices}\tblfield{Adaptation:}{\hltext{In server}}\tblfield{Local updates:}{$\tau$ local iterations}
& Thm. IV.1--IV.3: FedUR; adaptive centralized-learning optimizer for FL with client-side momentum and piecewise adaptive learning rate in global aggregation; analyzes communication-frequency and convergence-rate bounds.
\\
\end{longtable}
\endgroup

\end{document}